\documentclass[a4paper,11pt]{article}
\usepackage{amsmath,amssymb,amsthm,mathtools}
\usepackage[hmargin={26mm,26mm},vmargin={30mm,35mm}]{geometry}
\usepackage[colorlinks,allcolors=blue]{hyperref}
\usepackage[compact]{titlesec}
\usepackage{enumitem}
\usepackage{newtxtext,newtxmath}

\usepackage{authblk}
\usepackage{thmtools,thm-restate}

\newtheorem{theorem}{Theorem}
\newtheorem{proposition}[theorem]{Proposition}
\newtheorem{lemma}[theorem]{Lemma}
\newtheorem{corollary}[theorem]{Corollary}
\theoremstyle{definition}
\newtheorem{definition}[theorem]{Definition}
\newtheorem{remark}[theorem]{Remark}

\newcommand{\Real}{\mathbb{R}}
\newcommand{\DIFF}{\mathrm{d}}
\newcommand{\KOSZUL}{\kappa}
\newcommand{\Mh}{\mathcal{M}_h}
\newcommand{\Sh}{\mathcal{S}_h}
\newcommand{\card}{\operatorname{card}}
\newcommand{\FM}[1]{\Delta_{#1}}
\newcommand{\cof}[1]{\operatorname{cof}_{#1}}
\newcommand{\PL}[2]{\mathcal{P}_{#1}\Lambda^{#2}}
\newcommand{\PLtrim}[2]{\mathcal{P}_{#1}^{-}\Lambda^{#2}}
\newcommand{\norm}[2]{\left\lVert #2\right\rVert_{#1}}
\newcommand{\uX}[2]{\underline{\boldsymbol{X}}_{\ell,#2}^{#1}}
\newcommand{\uI}[2]{\underline{\boldsymbol{I}}_{\ell,#2}^{#1}}
\newcommand{\uXh}[1]{\underline{\boldsymbol{X}}_{\ell,h}^{#1}}
\newcommand{\Qtrace}[2]{\mathcal Q_{\ell,#1}^{#2}}
\newcommand{\piQ}[2]{\pi_{\mathcal Q,#1}^{#2,\ell}}
\newcommand{\Dpoly}[2]{\mathcal D_{\ell,#1}^{#2}}
\newcommand{\piD}[2]{\pi_{\mathcal D,#1}^{#2,\ell}}
\newcommand{\trc}{\operatorname{tr}}
\newcommand{\Ker}{\operatorname{Ker}}
\newcommand{\epsTF}{\epsilon_{T,F}}

\newcommand{\PLtrimz}[2]{\mathring{\mathcal P}_{#1}^{-}\Lambda^{#2}}

\newcommand{\email}[1]{\href{mailto:#1}{#1}}

\allowdisplaybreaks

\newcommand{\vvvert}{\vert\kern-0.25ex\vert\kern-0.25ex\vert}

\begin{document}

\title{Uniform discrete Poincar\'e inequalities for Hybrid High-Order differential forms on polyhedral meshes}
\author[1]{Silvano Pitassi}
\affil[1]{IMAG, Univ Montpellier, CNRS, Montpellier, France\\ \email{silvano.pitassi@umontpellier.fr}}

\maketitle

\begin{abstract}
We introduce a Hybrid High-Order framework for differential forms on general polyhedral meshes and characterise stability-admissible face spaces in terms of the traces of the polynomial kernel of the exterior derivative.
The analysis is based on a cellular-to-hybrid transfer mechanism in which the global part of the stability problem is reduced to a single cellular cochain Poincar\'e problem, and the resulting control is propagated to the hybrid level using stable polynomial skeletons and local polynomial completions.

This mechanism yields a uniform stable conforming lifting and a uniform discrete Poincar\'e inequality for every form degree in arbitrary space dimension.
The lifting preserves the prescribed reconstructed exterior derivative after projection, whereas the Poincar\'e inequality controls the broken cell polynomial form modulo the continuous conforming kernel of the exterior derivative.
Both results hold on domains with arbitrary topology and uniformly over stability-admissible choices of the face spaces.
In three dimensions, the conforming-kernel estimate recovers the hybrid Poincar\'e--Wirtinger inequality for the gradient, the second hybrid Weber estimate under the standard gauge, and the corresponding divergence-kernel estimate.
\medskip\\
\textbf{Key words.} Hybrid High-Order methods, differential forms, conforming lifting, discrete Poincar\'e inequalities, Weber inequalities, polyhedral meshes, functional double complex, cellular cochains.
\medskip\\
\textbf{MSC2020.} 65N30, 65N12, 58A10, 58J10.
\end{abstract}

\section{Introduction}

This work brings together finite element differential forms and arbitrary-order hybrid methods on polyhedral meshes.
Finite element differential forms provide a geometric language for differential operators, Hilbert complexes, and topology within finite element exterior calculus \cite{Arnold.Falk.ea:06,Arnold.Falk.ea:10,Arnold:18}, while Hybrid High-Order methods offer flexible high-order discretisations on general polyhedra through polynomial unknowns attached to mesh cells and faces.
By combining these two viewpoints, we develop a stability theory that follows the common structure of the de Rham sequence and applies uniformly to all form degrees while retaining the geometric flexibility and static-condensation properties of hybrid discretisations.

\paragraph*{Background and motivation.}

Hybrid High-Order (HHO) methods were introduced as arbitrary-order polyhedral schemes combining polynomial unknowns attached to mesh cells and faces \cite{Di-Pietro.Ern.Lemaire:14,Di-Pietro.Ern:15}.
Their applicability to general meshes and arbitrary polynomial degrees, together with the possibility of eliminating cell unknowns locally by static condensation, has made HHO a broad framework for the approximation of partial differential equations.
For monographic and pedagogical presentations of the HHO framework, see \cite{Di-Pietro.Droniou:20,Cicuttin.Ern.Pignet:21,Di-Pietro.Droniou:25b}.

Much of the corresponding stability theory has nevertheless been developed operator by operator in the language of vector calculus.
For scalar diffusion, the basic HHO construction uses a local potential reconstruction whose composition with the natural interpolator is an elliptic projector, while stability rests on a discrete Poincar\'e inequality for hybrid unknowns \cite{Di-Pietro.Droniou:20}.
For incompressible flows, the reconstructed divergence and its commuting property provide a Fortin mechanism for the discrete inf--sup condition \cite{Di-Pietro.Droniou:20}.
For curl-type problems, discrete Weber inequalities on three-dimensional hybrid spaces were first established under trivial-topology assumptions in \cite{Chave.Di-Pietro.Lemaire:22} and subsequently extended to domains with general, possibly non-trivial topology in \cite{Lemaire.Pitassi:25}.
These estimates have since been used in arbitrary-order HHO approximations of first- and second-order div--curl systems on domains with general topology \cite{Dalphin.Ducreux.Lemaire.Pitassi:25}.

Taken together, these developments provide stability mechanisms for the classical vector-calculus operators, but through operator-dependent spaces, reconstructions, norms, and proofs.
A differential-form formulation offers a natural way to expose the geometric structure shared by these constructions.
In the fully cellular Discrete de Rham setting, uniform discrete Poincar\'e inequalities have been established for the original three-dimensional complex and subsequently for differential forms in arbitrary dimension \cite{Di-Pietro.Hanot:24,Di-Pietro.Droniou.ea:25}.
The challenge addressed here is to identify a common local-to-global stability mechanism for genuinely hybrid spaces, in which only top-dimensional cell unknowns and face traces are available, and to derive from it both a stable conforming lifting and the conforming-kernel estimates used in applications.

\paragraph*{A hybrid exterior-calculus framework.}

For every form degree $k\in\{0,\ldots,n-1\}$, we introduce an HHO space $\uXh{k}$ consisting of polynomial $k$-forms on mesh cells and polynomial trace unknowns on mesh faces.
The exterior derivative is reconstructed cellwise through a discrete Stokes formula, with the cell unknown entering the volume term and the face unknowns entering the boundary term.
In three space dimensions, the vector proxies of these operators recover the usual hybrid gradient, curl, and divergence reconstructions associated with the de Rham sequence, while the same construction yields a unified family of hybrid exterior derivatives in arbitrary dimension.
This correspondence is made explicit in Subsection~\ref{sec:vector.proxies}.

The exterior-calculus viewpoint is not merely a reformulation of the usual vector-calculus constructions.
Its main advantage is that it isolates a common local-to-global stability mechanism across all form degrees and, at the local level, identifies the components that need to be represented on the mesh skeleton as the traces of the polynomial kernel of the cellwise exterior derivative, leading naturally to the requirement that the selected face spaces contain these traces.

Once these skeletal spaces have been fixed, richer reconstruction targets may be introduced at the cell level without enlarging the globally coupled skeletal space.
The stabilisation measures the remaining mismatch between the traces of the cell polynomial and the face unknowns and, together with the broken exterior derivative, defines the canonical broken--stabilised seminorm $|\cdot|_{\DIFF,h}$ across the de Rham sequence.
At the global level, the compatibility information missing from the hybrid unknowns is transferred from cellular cochains to polynomial skeletons and then completed in a conforming finite element space on the auxiliary simplicial submesh.
This construction leads to the two uniform stability results described next.

\paragraph*{A uniform conforming lifting and a conforming-kernel Poincar\'e inequality.}

The first analytical question is whether the canonical seminorm controls a conforming form whose projected exterior derivative equals the prescribed reconstructed exterior derivative.
Our first main result gives an affirmative answer uniformly over the mesh family and over every stability-admissible choice of the face spaces.

\begin{restatable}[Uniform stable conforming lifting]{theorem}{stableconforminglifting}
\label{thm:main.hybrid.poincare}
Let $k\in\{0,\ldots,n-1\}$ and $\ell\geq0$, and assume that the selected face spaces satisfy the stability-admissibility condition \eqref{eq:def.Qtrace.admissible}.
There exists a fixed polynomial degree $m_{\mathcal S}\geq1$ such that every $\underline v_h\in\uXh{k}$ admits a form
\[
  \lambda_h=\mathcal L_h^k\underline v_h
  \in \PLtrim{m_{\mathcal S}}{k}(\Sh),
\]
depending linearly on $\underline v_h$, such that
\begin{equation}\label{eq:main.lifting.projected.derivative}
  \piD{T}{k}\bigl((\DIFF\lambda_h)_{|T}\bigr)
  =\DIFF_{\ell,T}^{k}\underline v_T
  \qquad\forall T\in\mathcal T_h,
\end{equation}
and
\begin{equation}\label{eq:main.lifting.stability}
  \|\lambda_h\|_\Omega+\|\DIFF\lambda_h\|_\Omega
  \leq C_{\mathrm{lift}}^{k,\ell}|\underline v_h|_{\DIFF,h}.
\end{equation}
The degree $m_{\mathcal S}$ depends only on $n$, $k$, $\ell$, and the standing mesh-regularity parameters.
The constant $C_{\mathrm{lift}}^{k,\ell}$ may additionally depend on the fixed domain $\Omega$.
Both are independent of the mesh size and of the admissible choice of the face spaces.
\end{restatable}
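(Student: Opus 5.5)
The lifting will be built in three stages, following the cellular-to-hybrid transfer mechanism described in the introduction: a cellular cochain problem carries the global topological information, a stable polynomial skeleton carries it to the faces, and a local polynomial completion produces the conforming form. We fix $\underline v_h\in\uXh{k}$ and write $\omega_T:=\DIFF_{\ell,T}^{k}\underline v_T$. The first goal is a conforming $(k+1)$-form that is $\DIFF$-closed and reproduces the $\omega_T$ up to a defect bounded by $|\underline v_h|_{\DIFF,h}$.

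\textbf{Step 1 (cellular data and global Poincar\'e).} We integrate the face unknowns of $\underline v_h$ against the lowest-order trace moments. This gives a $k$-cochain $c$ on the polyhedral mesh. By the discrete Stokes formula defining $\DIFF_{\ell,T}^{k}$, the coboundary $\delta c$ is the cellular restriction of the moments of $(\omega_T)_T$. The cellular cochain Poincar\'e inequality, uniform on the mesh family and depending on $\Omega$, then gives a cochain $\tilde c$ with $\delta\tilde c=\delta c$ and $\|\tilde c\|\lesssim\|\delta c\|$ in the scaled cochain norm. This is the only place where the topology of $\Omega$ enters, which is why $C_{\mathrm{lift}}^{k,\ell}$ depends on $\Omega$.

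\textbf{Step 2 (stable polynomial skeleton).} Next we extend $\tilde c$ to a polynomial skeleton, meaning face-wise trace data of degree $m_{\mathcal S}$ on the simplicial submesh $\Sh$ whose cellular moments equal $\tilde c$. We choose it close, face by face, to the actual face unknowns of $\underline v_h$ (projected onto $\Qtrace{F}{k}$). Stability-admissibility \eqref{eq:def.Qtrace.admissible} is used here. Because $\Qtrace{F}{k}$ contains the traces of the polynomial kernel of $\DIFF$, the mismatch between face unknowns and cell traces, which the stabilisation measures, controls every component not fixed by $\DIFF$. Scaling arguments on each cell then give skeleton bounds in terms of $|\underline v_h|_{\DIFF,h}$ plus $\|\delta c\|\lesssim\|(\omega_T)_T\|$.

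\textbf{Step 3 (local conforming completion and derivative correction).} Cell by cell, on the simplices of $\Sh$ inside $T$, we extend the skeleton traces to a form in $\PLtrim{m_{\mathcal S}}{k}$. We use the bounded extension operators of trimmed spaces (Arnold--Falk--Winther), taking harmonic or minimal-energy extensions, which gives a conforming $\lambda_h^0$. The resulting $\piD{T}{k}(\DIFF\lambda_h^0)$ differs from $\omega_T$ only by a term whose boundary flux vanishes, since the Stokes formula is matched by the skeleton. We therefore add a local correction $\mu_T\in\PLtrimz{m_{\mathcal S}}{k}(\Sh\cap T)$ with vanishing traces and $\piD{T}{k}\DIFF\mu_T=\omega_T-\piD{T}{k}\DIFF\lambda_h^0$. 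Solvability comes from the exactness of the local complex with boundary conditions on the contractible patch $T$, taking $m_{\mathcal S}$ large enough that $\DIFF$ maps onto the relevant target of degree at most $\ell+1$ with zero total flux. Norm equivalence on a reference configuration, using mesh regularity, gives uniform bounds. Setting $\lambda_h=\lambda_h^0+\sum_T\mu_T$ yields \eqref{eq:main.lifting.projected.derivative}, and summing the local bounds yields \eqref{eq:main.lifting.stability}. Every step is linear, so $\mathcal L_h^k$ is linear.

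\textbf{Main obstacle.} The hard step is Step 2: the estimate must be uniform over all admissible face spaces, and the skeleton must match the cochain exactly while staying close to the face data. I would first reduce everything to the minimal admissible face space, namely the traces of $\Ker\DIFF$, via the projection, and then apply a compactness/scaling argument on the finitely many reference patch shapes allowed by mesh regularity.
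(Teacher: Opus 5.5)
You have a genuine gap; your outline works only for $k=n-1$. In that case it is essentially the paper's argument (Lemma~\ref{lem:top.degree.face.skeleton} followed by the cellwise completion), provided the face skeleton is taken as $\widehat\lambda_F\omega_F$ from the cochain solve rather than ``close to the face unknowns''. For $k\le n-2$, Step~1 breaks down for two reasons. First, the face unknowns $v_F\in\Qtrace{F}{k}$ are not single-valued on the $(n-2)$-skeleton, so they do not define a $k$-cochain. Second, the discrete Stokes formula defining $\DIFF_{\ell,T}^{k}\underline v_T$ couples $T$ only with its own faces. It therefore gives information on $n$-cells, not a $(k+1)$-cochain on $(k+1)$-cells whose exactness could be fed to Lemma~\ref{lem:cochain.poincare}.

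Transporting this information from dimension $n$ down to dimension $k+1$ is exactly the intermediate hierarchy the hybrid space lacks, and it is the core of the paper's proof. The paper builds $\Gamma^n$ from $\widehat\sigma_T$ and solves $\mathbb B_n^{n-1}\Xi^{n-1}=\Gamma^n$ by coface-link inverses. These are exact below top degree by the link topology and uniformly stable by the bounded link complexity. The solution is pushed down by the vertical differential $\mathbb D$ and the step is iterated to the terminal cochain $\bar\xi_f=\Xi_f^{k+1}(1)$. That cochain is shown to be a coboundary, on any topology, by comparison with the reference family $(\Upsilon^{n-1},0,\ldots,0)$ built from the face unknowns (Lemma~\ref{lem:terminal.class.invariance}). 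Nothing in your proposal replaces this. A compactness or scaling argument on reference patches cannot, because it is local.

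Step~3 cannot compensate either. If $\mu_T$ has zero trace, then $\int_T\DIFF\mu_T\wedge\beta=0$ for every closed $\beta$. The moments of $\DIFF\lambda_h$ against $\mathcal Z_T^k$ are therefore fixed by the skeleton alone, so the skeleton must already satisfy the full identity \eqref{eq:prescribed.skeleton.identity}. For $k\le n-2$, $\mathcal Z_T^k$ consists of exact forms and is much larger than the constants; for $n=3$, $k=1$ it contains all constant $1$-forms. So ``zero total flux'' is the right solvability condition only when $k=n-1$. By Stokes on the faces, \eqref{eq:prescribed.skeleton.identity} becomes moment conditions on $\DIFF_F\lambda_F$. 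These require compatible closed data on lower-dimensional cells, which is what the ascending realisations of Lemma~\ref{lem:closed.moment.completion} provide.

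Keeping the skeleton close to $v_F$ also contradicts \eqref{eq:main.lifting.stability}. Take $\underline v_h=\uI{k}{h}z$ with $z$ a global closed polynomial (a constant if $k=0$): the seminorm vanishes while $v_F\neq0$, yet the lifting must be zero. The skeleton has to be generated from $\widehat\sigma_T$, which Proposition~\ref{prop:auxiliary.projection.control} bounds by $|\underline v_h|_{\DIFF,h}$. This is where admissibility enters, via Proposition~\ref{prop:full.jump.control.forms}. The face unknowns can serve only as a range certificate. Uniformity over admissible face spaces then follows because all subsequent operators are independent of $\Qtrace{F}{k}$. It does not come from a reduction to $\mathcal S_F^{k,\ell}$, which would change $\Dpoly{T}{k}$ and hence the reconstruction being lifted.
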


The lifting preserves the prescribed reconstructed exterior derivative through \eqref{eq:main.lifting.projected.derivative}.
The construction applies on domains with arbitrary topology without requiring contractibility or an explicit decomposition into exact and harmonic components, and it controls both the conforming form and its exterior derivative in $L^2$.
Interpolating this lifting yields a stable hybrid representative with the same reconstructed exterior derivative and the corresponding hybrid Poincar\'e inequality, as stated in Corollary~\ref{cor:hybrid.poincare}.
For zero-forms, a constant correction on each connected component yields a conforming right inverse of the hybrid interpolator, relating the construction to scalar HHO smoothers for rough loads; see Remark~\ref{rem:scalar.moments.ern.zanotti}.

Our second main result concerns the broken cell polynomial form itself and the continuous conforming kernel of the exterior derivative,
\begin{equation}\label{eq:def.conforming.closed.space}
  \mathcal Z_{\mathrm c}^k(\Omega)
  \coloneqq
  H\Lambda^k(\Omega)
  \cap
  \Ker\DIFF.
\end{equation}
\begin{restatable}[Uniform Poincar\'e inequality modulo conforming closed forms]{theorem}{poincareconformingclosed}
\label{thm:poincare.conforming.closed}

Let $k\in\{0,\ldots,n-1\}$ and $\ell\geq0$, and assume that the selected face spaces satisfy the stability-admissibility condition \eqref{eq:def.Qtrace.admissible}.
There exists an integer $m_{\mathrm c}$, depending only on $n$, $k$, $\ell$, and the mesh-regularity parameters, such that every $\underline v_h\in\uXh{k}$ admits a form
\[
  z_h^{\mathrm c}
  \in\mathcal Z_{\mathrm c}^k(\Omega)\cap\PLtrim{m_{\mathrm c}}{k}(\Sh),
\]
depending linearly on $\underline v_h$, such that
\begin{equation}
\label{eq:poincare.conforming.closed.representative}
  \left(
    \sum_{T\in\mathcal T_h}
    \left\|
      v_T-\left.z_h^{\mathrm c}\right|_T
    \right\|_T^2
  \right)^{1/2}
  \leq
  C_{\mathrm{conf}}^{k,\ell}
  |\underline v_h|_{\DIFF,h}.
\end{equation}
Consequently,
\begin{equation}
\label{eq:poincare.modulo.conforming.closed.space}
  \inf_{z\in\mathcal Z_{\mathrm c}^k(\Omega)}
  \left(
    \sum_{T\in\mathcal T_h}
    \|v_T-z_{|T}\|_T^2
  \right)^{1/2}
  \leq
  C_{\mathrm{conf}}^{k,\ell}
  |\underline v_h|_{\DIFF,h}.
\end{equation}
The constant $C_{\mathrm{conf}}^{k,\ell}$ has the same dependencies as $C_{\mathrm{lift}}^{k,\ell}$ and is likewise independent of the mesh size and of the admissible choice of the face spaces.
\end{restatable}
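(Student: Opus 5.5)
The plan is to obtain Theorem~\ref{thm:poincare.conforming.closed} from the lifting of Theorem~\ref{thm:main.hybrid.poincare}, combined with a local Poincar\'e-type estimate on each cell and a discrete regular decomposition on the simplicial submesh $\Sh$. Let $\lambda_h=\mathcal L_h^k\underline v_h\in\PLtrim{m_{\mathcal S}}{k}(\Sh)$ be the lifting. By \eqref{eq:main.lifting.stability}, $\|\lambda_h\|_\Omega+\|\DIFF\lambda_h\|_\Omega\lesssim|\underline v_h|_{\DIFF,h}$.

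\textbf{Step 1: compare $v_T$ with $\lambda_h$ cellwise.} Set $w_T\coloneqq v_T-\lambda_{h|T}$, or its projection onto the cell polynomial space if $\lambda_h$ is of higher degree. The cellwise difference $\DIFF_{\ell,T}^k\underline v_T-\DIFF w_T$ is governed by \eqref{eq:main.lifting.projected.derivative} together with the discrete Stokes formula. It differs from $\DIFF\lambda_h$ only by boundary terms in the trace mismatch, and these are bounded by the stabilisation. I would then invoke a local Poincar\'e inequality on $T$. On a star-shaped cell, a polynomial form is controlled, up to an element of $\Ker\DIFF$ on $T$, by $h_T\|\DIFF\cdot\|_T$ plus trace terms. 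This gives
\[
  w_T=z_T+r_T,\qquad z_T\in\Ker\DIFF\ \text{locally},\qquad \|r_T\|_T\lesssim h_T\bigl(|\underline v_T|_{\DIFF,T}+\|\DIFF\lambda_h\|_T\bigr).
\]
The broken remainder is therefore harmless. The local closed parts $z_T$, however, are not glued into a global conforming closed form, so Step~1 alone does not suffice.

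\textbf{Step 2: project $\lambda_h$ onto the conforming closed forms.} Instead of gluing the $z_T$, I would reverse the roles. Write $\lambda_h=z_h^{\mathrm c}+\mu_h$ by a discrete Hodge-type or regular decomposition in $\PLtrim{m}{k}(\Sh)$. Here $z_h^{\mathrm c}$ is closed and conforming. Since $\PLtrim{m}{k}(\Sh)\subset H\Lambda^k$, it lies in $\mathcal Z_{\mathrm c}^k(\Omega)$. The complement $\mu_h$ is taken $L^2$-orthogonal to the discrete closed forms, so the uniform discrete Poincar\'e inequality for the conforming FEEC complex on shape-regular simplicial meshes gives $\|\mu_h\|_\Omega\lesssim\|\DIFF\lambda_h\|_\Omega$. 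With this choice, $v_T-z_{h|T}^{\mathrm c}=(v_T-\lambda_{h|T})+\mu_{h|T}$, and it remains to bound $\sum_T\|v_T-\lambda_{h|T}\|_T^2$.

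\textbf{Step 3 (main obstacle): bound $v_T-\lambda_{h|T}$ directly.} This is where the construction of the lifting has to be reopened, since only its conclusion is available from Theorem~\ref{thm:main.hybrid.poincare}. The lifting is built from a cochain Poincar\'e problem and polynomial completions that reproduce the hybrid data up to stabilisation-controlled errors. I would therefore first try to prove that $\lambda_h$ can be chosen so that $\|v_T-\lambda_{h|T}\|_T\lesssim|\underline v_T|_{\DIFF,T}$ plus a closed correction. This amounts to building $\lambda_h$ from $\underline v_h$ minus its closed part rather than from the full datum. Concretely, I would apply the cellular-to-hybrid transfer to the pair $(v_T,\text{face traces})$. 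The cochain part then carries the topological information, which contributes only to the closed component $z_h^{\mathrm c}$, while the local completions contribute errors bounded via the stabilisation.

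If this works, Steps~2 and~3 combine to \eqref{eq:poincare.conforming.closed.representative} with $m_{\mathrm c}=m_{\mathcal S}$ or a slightly larger degree, and linearity follows since each map is linear. Finally, \eqref{eq:poincare.modulo.conforming.closed.space} is immediate, because $z_h^{\mathrm c}$ is an admissible competitor in the infimum.
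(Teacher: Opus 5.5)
There is a genuine gap, and it sits where you place the ``main obstacle''. Step~3 is false for the lifting of Theorem~\ref{thm:main.hybrid.poincare}, and the modified lifting you propose instead amounts to a restatement of the theorem.

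By \eqref{eq:main.lifting.stability}, $\|\lambda_h\|_\Omega\lesssim|\underline v_h|_{\DIFF,h}$. Hence the quantity $\sum_{T}\|v_T-\lambda_{h|T}\|_T^2$, to which Step~2 reduces the problem, is bounded by the seminorm if and only if $\|v_h\|_\Omega$ is, and that fails in general. For $k=0$ and $\underline v_h=\uI{0}{h}c$ with $c\neq0$ constant, one has $|\underline v_h|_{\DIFF,h}=0$, hence $\lambda_h=0$ and your $z_h^{\mathrm c}=0$, while $\|v_h-z_h^{\mathrm c}\|_\Omega^2=c^2|\Omega|$. This is the point of Remark~\ref{rem:relation.with.ddr.lifting}: the lifting vanishes on seminorm-zero data, whereas $z_h^{\mathrm c}$ must absorb precisely the part of $v_h$ that the seminorm does not see.

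Rebuilding $\lambda_h$ so that $v_T-\lambda_{h|T}$ is small ``up to a closed correction'' only helps if that correction is global, conforming and closed, which is exactly what has to be proved. Your sketch ``apply the transfer to $(v_T,\text{face traces})$'' gives neither a construction nor an estimate. Such a $\lambda_h$ would also lose the bound \eqref{eq:main.lifting.stability} that Steps~1--2 rely on.

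The lifting is not needed at all. The route you abandoned at the end of Step~1 is the right one, but it needs two ingredients you do not supply.

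\emph{Closed parts with controlled traces.} The closed parts must be taken from $v_T$ itself, through the Koszul decomposition of Proposition~\ref{prop:stable.polynomial.kernel.decomposition}. It bounds $v_T-z_T$ in $T$ \emph{and on $\partial T$} by $h_T\|\DIFF_Tv_T\|_T$, and it gives $\trc_Fz_T\in\mathcal S_F^{k,\ell}$. Combined with the full-jump control of Proposition~\ref{prop:full.jump.control.forms}, this yields $J_{\mathrm{cl},h}^k(z_h)\lesssim|\underline v_h|_{\DIFF,h}$. That proposition is where the admissibility condition \eqref{eq:def.Qtrace.admissible} enters, and your proposal never uses it. Local closed parts chosen through an arbitrary local Poincar\'e inequality, as in your Step~1, carry no such control of their face traces.

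\emph{A gluing result.} You need that a cellwise closed polynomial family lies within $J_{\mathrm{cl},h}^k(z_h)$ of a conforming closed form. The paper proves this in three moves:
\begin{enumerate}
\item average traces over cofaces down to the $k$-skeleton, where the defects descend via a degree-zero Poincar\'e inequality on the connected coface links;
\item remove the small terminal $(k+1)$-cochain $\eta_f=\sum_{e}\epsilon_{f,e}\int_e\overline z_e$ using a small preimage supplied by the cochain Poincar\'e inequality of Lemma~\ref{lem:cochain.poincare};
\item complete upward with closed local extensions from Lemma~\ref{lem:first.local.problem}.
\end{enumerate}
Alternatively, as in Remark~\ref{rem:distributional.poincare.proof}, the gluing can be taken from the Christiansen--Licht Poincar\'e inequality for distributional forms on $\Sh$. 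Without a gluing step of this kind, neither the conforming decomposition of Step~2 nor the lifting can produce \eqref{eq:poincare.conforming.closed.representative}.
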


This estimate controls the broken cell field modulo the continuous conforming kernel that enters the standard gauges of differential problems.
The two results are complementary, and the second does not require any identification between the cell components of the discrete reconstructed kernel and the continuous conforming kernel $\mathcal Z_{\mathrm c}^k(\Omega)$.

The conforming completion underlying this second result is related to recent techniques for constructing conforming representatives of non-conforming piecewise polynomial fields.
In particular, the construction in \cite{Dong.Ern:26} provides an $\boldsymbol H(\operatorname{curl})$-conforming reconstruction on simplicial meshes whose distance from the original broken field is controlled by suitable jump seminorms, with an application to discontinuous Galerkin approximations of Maxwell problems.
The construction developed here is instead driven by the hybrid skeletal information and is formulated uniformly for every differential-form degree.

In three dimensions, the conforming-kernel estimate gives concrete consequences at all three vector-proxy levels.
For $k=0$, it yields the hybrid Poincar\'e--Wirtinger inequality and also covers, after rescaling, the mixed-order $L^2$-scaled setting underlying the HHO Poincar\'e inequality of \cite[Lemma~12]{Di-Pietro.Droniou:25b}, with degree $r-1$ cell unknowns and degree $r$ face unknowns.
For $k=1$, it controls the broken field modulo $\boldsymbol H(\operatorname{curl};\Omega)\cap\Ker(\operatorname{curl})$ and, under the standard gauge, recovers the useful variant of the second hybrid Weber inequality in \cite[Remark~16]{Lemaire.Pitassi:25}.
The same argument in the weighted $L^2$ product gives the $\mu$-weighted form used in the HHO analysis of magnetostatic and div--curl systems \cite{Dalphin.Ducreux.Lemaire.Pitassi:25}.
For $k=2$, the result gives the analogous Poincar\'e estimate modulo conforming divergence-free fields.

The only non-local analytical input in both constructions is the cellular cochain Poincar\'e inequality, whose uniform constant is denoted by $C_{\mathrm{coch}}^k$ in Lemma~\ref{lem:cochain.poincare}.
Through the Whitney--de Rham realisation of \cite[Lemma~6 and Section~3.4]{Di-Pietro.Droniou.ea:25}, this constant is controlled, up to factors depending only on mesh regularity and fixed polynomial degrees, by the Poincar\'e constant of the corresponding conforming Whitney finite element complex.

For the conforming lifting, the additional local factors arise from stable local cochain inverses, geometric extensions, polynomial moment realisations, and local first-order potential problems.
For the conforming-kernel estimate, they arise from the stable local kernel decomposition, coface averaging, and closed local trace extensions.
All these local factors are uniformly controlled and are not tracked individually.
Thus, even for $n=3$ and $k=1$, the present argument not only recovers the gauge-fixed second hybrid Weber estimate, but also identifies $C_{\mathrm{coch}}^1$, itself controlled by the Poincar\'e constant of the corresponding conforming Whitney finite element complex, as the only non-local analytical contribution to $C_{\mathrm{conf}}^{1,\ell}$.

\paragraph*{Structural difficulty and proof strategy.}

The proof of Theorem~\ref{thm:main.hybrid.poincare} is constructive and is based on a stable compatible polynomial face skeleton.
For a given hybrid unknown, this skeleton consists of single-valued polynomial $k$-forms on the mesh faces whose moments reproduce the prescribed reconstructed exterior derivative against closed test forms, while the weighted norms of the face forms and their exterior derivatives are controlled by $|\underline v_h|_{\DIFF,h}$.
Once the skeleton is available, its face components are completed in the mesh cells through local moment completions and local first-order problems with prescribed trace.
This final step produces the conforming lifting appearing in Theorem~\ref{thm:main.hybrid.poincare}.
The argument therefore separates the global topological compatibility problem from the conforming polynomial realisation, which becomes entirely cellwise once the stable face skeleton has been constructed.

Constructing this skeleton is the main mathematical difficulty.
A fully cellular DDR $k$-form carries polynomial components on mesh cells of every dimension $d\geq k$, so that the intermediate-dimensional hierarchy required by the de Rham structure is built directly into the discrete space \cite{Di-Pietro.Droniou.Pitassi:25}.
The genuinely hybrid space $\uXh{k}$ is fundamentally different because it contains only top-dimensional cell polynomials and face traces.
The intermediate cellular hierarchy needed to propagate global compatibility is therefore absent from the discrete space and must instead be generated within the analysis.

For $k\leq n-2$, this missing hierarchy is organised in a functional \emph{double complex} on auxiliary full trimmed test spaces.
Its horizontal differential is the functional trace coboundary, while its vertical differential is the transpose of the local exterior derivative.
A trace-compatible geometric decomposition transforms each horizontal row into a direct sum of local cellular cochain complexes built from the cofaces of individual mesh cells, which we refer to as coface-link complexes.
The topology of these links provides the exactness required by the local horizontal problems, while their uniformly bounded combinatorial complexity yields stable local inverses.
For general reconstructed differential data, our construction uses the full double complex to propagate both horizontal compatibility and vertical differential information through the missing dimensions.

The construction is related to the discrete distributional differential form complexes of \cite{Licht:17,Christiansen.Licht:20} and to the local-to-global homological arguments underlying bounded cochain projections in finite element exterior calculus \cite{Falk.Winther:14,Falk.Winther:15}.
In the former setting, broken finite element spaces on matching simplicial meshes are organised into distributional complexes with polynomial components on cells of all dimensions.
The present argument adapts this type of local-to-global mechanism to a genuinely hybrid polyhedral setting, in which the intermediate levels exist only analytically as functionals over auxiliary full trimmed test spaces.
In these local coordinates, the horizontal equations reduce to uniformly stable cellular problems, and the construction ultimately yields stable polynomial hierarchies on the original polyhedral mesh.

\paragraph*{A cellular-to-hybrid transfer principle.}

The conforming lifting and the conforming-kernel estimate can thus be viewed as complementary realisations of a common cellular-to-hybrid transfer principle.
Global compatibility is first controlled at the cellular level and then realised by stable polynomial data, with the final completion performed in a conforming finite element space on the auxiliary simplicial submesh.
For general reconstructed differential data, our construction uses the full functional double complex, whereas for cellwise closed data only trace compatibility has to be transferred before the conforming completion.

This common structure also suggests a route to Sobolev-type estimates, starting from a suitable estimate for the terminal cellular cochain problem and verifying the local polynomial realisations in the corresponding weighted norms.

The remainder of the paper is organised as follows.
Section~\ref{sec:setting} introduces the geometric, differential-form, and polynomial notation.
Section~\ref{sec:hho.forms} defines the HHO spaces, the reconstructed exterior derivatives, the stability-admissible face spaces, and the canonical broken--stabilised seminorm.
Section~\ref{sec:functional.framework} introduces the functional trace framework and the stable local coface-link solves.
Section~\ref{sec:poincare.proof} constructs the stable skeleton and proves the conforming lifting theorem.
Section~\ref{sec:poincare.conforming.closed} establishes the complementary estimate modulo conforming closed forms.
Subsection~\ref{sec:poincare.vector.proxies} spells out its gradient, curl, and divergence consequences in three space dimensions.
Concluding remarks and perspectives are collected in Section~\ref{sec:conclusions}.

\section{Setting}
\label{sec:setting}
We collect here the geometric, differential-form, and polynomial notation used throughout the paper.
The presentation follows the polytopal exterior-calculus setting proposed in \cite{Bonaldi.Di-Pietro.ea:24}, with the closed-cell convention used in \cite{Di-Pietro.Droniou.ea:25,Di-Pietro.Droniou.Pitassi:25}.

\subsection{Polytopal cells, meshes, and simplicial submeshes}
\label{sec:setting:mesh}

Let $\Omega\subset\Real^n$ be a bounded Lipschitz polytopal domain, not necessarily contractible.
A $d$-cell is a closed polytopal subset of a $d$-dimensional affine subspace of $\Real^n$, homeomorphic to a closed $d$-dimensional ball.
A \emph{polytopal mesh} $\Mh$ of $\Omega$ is a finite collection of cells of dimensions $d\in\{0,\ldots,n\}$ such that the union of the $n$-cells is $\overline\Omega$, the boundary of every $d$-cell is a union of $(d-1)$-cells, and the intersection of two distinct cells of the same dimension is either empty or a union of lower-dimensional cells.
Moreover, if $e$ and $f$ are distinct cells with $\dim e\leq\dim f$ and $e\cap f\neq\emptyset$, then $e\cap f\subset\partial f$.
The cells of $\Mh$, together with their incidence relations, are understood to form a finite regular cellular decomposition of $\overline\Omega$.

The set of $d$-cells is denoted by $\FM{d}(\Mh)$.
If $f$ is a cell, we write $\dim f$ for its dimension.
If $f$ has dimension at least $d$, we set
\[
  \FM{d}(f)
  \coloneqq
  \{e\in\FM{d}(\Mh):e\subset f\},
  \qquad
  \FM{d}(\partial f)
  \coloneqq
  \{e\in\FM{d}(\Mh):e\subset\partial f\}.
\]
For a cell $s\in\Mh$ and $d\geq\dim s$, we denote by
\[
  \cof{d}(s)
  \coloneqq
  \{f\in\FM{d}(\Mh)\mid s\subset f\}
\]
the set of its $d$-dimensional cofaces.
Throughout the combinatorial constructions, cells are understood as closed.
Function spaces and differential forms on a positive-dimensional cell are understood on its relative interior, while traces are taken on its topological boundary.

We reserve the notation $\mathcal T_h\coloneqq\FM{n}(\Mh)$, $\mathcal F_h\coloneqq\FM{n-1}(\Mh)$, for the sets of $n$-dimensional cells and faces, respectively.
Elements of $\mathcal T_h$ are denoted by $T$, whereas elements of $\mathcal F_h$ are denoted by $F$.
For $T\in\mathcal T_h$, we set $\mathcal F_T\coloneqq\FM{n-1}(\partial T)$.

Every cell $f\in\Mh$ is equipped with an orientation, with all vertices positively oriented.
If $\dim f\geq1$, then $h_f$ denotes its diameter.
For a vertex $v\in\FM{0}(\Mh)$, whose geometric diameter vanishes, $h_v$ denotes a positive local mesh size comparable with the diameters of the cells incident to $v$; for instance,
\[
  h_v
  \coloneqq
  \max_{\substack{T\in\mathcal T_h\\v\subset T}}h_T.
\]
For every positive-dimensional cell $f$, we also fix a point $\boldsymbol{x}_f$ in its relative interior; for a vertex $v$, we set $\boldsymbol{x}_v=v$.
These points are used only to define translated Koszul operators.

If $f'\in\FM{d-1}(\partial f)$ is a boundary cell of an oriented $d$-cell $f$, we denote by $\epsilon_{f,f'}\in\{-1,+1\}$ the relative orientation of $f'$ with respect to $f$.
Boundary integrals over $\partial f$ are always understood as oriented sums over the boundary cells with the signs $\epsilon_{f,f'}$.

We assume that $\Mh$ admits a conforming auxiliary simplicial submesh $\Sh$.
Thus every $d$-cell $f\in\FM{d}(\Mh)$ is the union of the $d$-simplices in a simplicial mesh $\Sh(f)$.
We also set
\[
  \Sh(\partial f)
  \coloneqq
  \bigcup_{f'\in\FM{d-1}(\partial f)}\Sh(f'),
\]
so that $\Sh(\partial f)$ is the induced simplicial mesh of $\partial f$.

We work with regular mesh families in the sense of \cite[Definition~1.9]{Di-Pietro.Droniou:20}.
We use the following consequences throughout the paper without further mention.
The simplicial meshes $\Sh(f)$ are uniformly shape-regular and have uniformly bounded cardinality, so $\card(\Sh(f))\lesssim 1$ for every $f\in\Mh$.
The diameter of every top-dimensional simplex of $\Sh(f)$ is uniformly comparable with $h_f$.
Consequently, the number of subcells of any mesh cell and the number of cells incident on any fixed subcell are uniformly bounded.
The local mesh sizes of incident cells are uniformly comparable; in particular, if $f'\subset\partial f$, then $h_f\simeq h_{f'}$.

Throughout the paper, $a\lesssim b$ means that $a\leq Cb$ with a constant $C$ independent of the mesh size $h$, of the particular mesh cell or simplex, and of the discrete unknowns and data.
The constant may depend on $\Omega$, on $n$, on the fixed polynomial degrees and local polynomial spaces, and on the regular mesh family parameter.
We write $a\simeq b$ when both $a\lesssim b$ and $b\lesssim a$ hold.

\subsection{Differential forms and notation conventions}
\label{sec:setting:forms}

Let $f$ be a $d$-cell and let $k\in\{0,\ldots,d\}$.
We denote by $\Lambda^k(f)$ the space of differential $k$-forms on the relative interior of $f$.
Regularity is indicated by placing the functional space before $\Lambda^k(f)$; in particular, $L^2\Lambda^k(f)$ is the space of $k$-forms with square-integrable coefficients.

The exterior derivative on $f$ is denoted by $\DIFF_f$, or simply by $\DIFF$ when no ambiguity is possible.
We set
\[
  H\Lambda^k(f)
  \coloneqq
  \left\{
    \omega\in L^2\Lambda^k(f):
    \DIFF_f\omega\in L^2\Lambda^{k+1}(f)
  \right\},
\]
with the convention $\Lambda^{d+1}(f)=\{0\}$ when $k=d$.

If $e\subset\partial f$, the trace of a sufficiently regular form on $e$ is denoted by $\trc_e\omega$, or simply by $\trc\omega$ when the cell is clear.
The Hodge star on a $d$-cell $f$ is denoted by $\star_f:\Lambda^k(f)\longrightarrow\Lambda^{d-k}(f)$, and we write simply $\star$ when the cell is clear from the context.
The $L^2$ inner product and norm are
\[
  \langle\omega,\mu\rangle_f
  \coloneqq
  \int_f\omega\wedge\star_f\mu,
  \qquad
  \|\omega\|_f
  \coloneqq
  \langle\omega,\omega\rangle_f^{1/2}.
\]
When the form degree is useful for clarity, the same norm is written $\|\omega\|_{L^2\Lambda^k(f)}$.
For a collection $\theta_{\partial f}=(\theta_e)_{e\in\FM{d-1}(\partial f)}$ on the boundary of a $d$-cell, we use
\[
  \|\theta_{\partial f}\|_{\partial f}^2
  \coloneqq
  \sum_{e\in\FM{d-1}(\partial f)}\|\theta_e\|_e^2.
\]

\paragraph*{Notation conventions.}
Whenever an operator or norm indexed by a mesh entity is applied to a global or piecewise form, restriction to that entity is understood whenever it is unambiguous.
Thus, for instance, expressions such as $\DIFF_Tw$, $\trc_Fw$, or a local projector applied to $w$ do not explicitly display the preceding restriction to the relevant entity.
For composable operators, juxtaposition denotes composition, with the rightmost operator applied first.
Finally, whenever a finite-dimensional space $\mathcal X\subset L^2\Lambda^k(f)$ is introduced, the symbol $\pi_{\mathcal X}$, with the mesh-entity, form-degree, and polynomial-degree indices carried by the notation of $\mathcal X$, denotes the $L^2$-orthogonal projector onto $\mathcal X$.
We therefore only specify the corresponding projector notation when a new finite-dimensional space is introduced.

\subsection{Polynomial and finite element spaces of differential forms}
\label{sec:setting:polynomial-forms}

Let $f\in\FM{d}(\Mh)$ be a $d$-cell.
For any integer $\ell$ and any form degree $k$, we denote by $\PL{\ell}{k}(f)$ the space of polynomial $k$-forms on $f$ whose coefficients have total degree at most $\ell$.
We use the convention $\PL{\ell}{k}(f)\coloneqq\{0\}$ if $\ell<0$, $k<0$, or $k>d$.
The $L^2$ projector onto $\PL{\ell}{k}(f)$ is denoted by $\pi_f^{k,\ell}\colon L^2\Lambda^k(f)\longrightarrow\PL{\ell}{k}(f)$.
In particular, $\pi_T^{k,\ell}$ denotes the full polynomial projector on a top-dimensional cell.

\paragraph{Koszul complements.}

We denote by $\KOSZUL_f$ the Koszul contraction associated with the shifted identity field $x\mapsto x-\boldsymbol{x}_f$.
For every polynomial degree $\ell$ and form degree $k$, define
\[
  \mathcal K_\ell^k(f)
  \coloneqq
  \KOSZUL_f\PL{\ell-1}{k+1}(f).
\]
Thus $\mathcal K_\ell^k(f)$ consists of polynomial $k$-forms of degree at most $\ell$.
In particular, $\mathcal K_0^k(f)=\{0\}$, $\mathcal K_\ell^d(f)=\{0\}$, $\mathcal K_\ell^{-1}(f)=\{0\}$.

The standard polynomial Koszul decomposition reads
\begin{subequations}\label{eq:full-polynomial-koszul-decomposition}
\begin{align}
  \PL{\ell}{0}(f)
  &=
  \PL{0}{0}(f)
  \oplus
  \mathcal K_\ell^0(f),
  \label{eq:full-polynomial-koszul-decomposition-zero}
  \\
  \PL{\ell}{k}(f)
  &=
  \DIFF_f\PL{\ell+1}{k-1}(f)
  \oplus
  \mathcal K_\ell^k(f),
  \qquad
  k\in\{1,\ldots,d\}.
  \label{eq:full-polynomial-koszul-decomposition-positive}
\end{align}
\end{subequations}
Moreover, $\DIFF_f$ is injective on $\mathcal K_{\ell+1}^{k-1}(f)$ and
\[
  \mathcal K_{\ell+1}^{k-1}(f)
  \times
  \mathcal K_\ell^k(f)
  \ni
  (\mu,\nu)
  \longmapsto
  \DIFF_f\mu+\nu
  \in
  \PL{\ell}{k}(f)
\]
is an isomorphism for every $k\geq1$.

\paragraph{Trimmed polynomial spaces.}

The trimmed polynomial spaces are defined by
\begin{subequations}\label{eq:def-trimmed-polynomial-forms}
\begin{align*}
  \PLtrim{\ell}{0}(f)
  &\coloneqq
  \PL{\ell}{0}(f),
  \\
  \PLtrim{\ell}{k}(f)
  &\coloneqq
  \DIFF_f\PL{\ell}{k-1}(f)
  \oplus
  \mathcal K_\ell^k(f),
  \qquad
  k\in\{1,\ldots,d\}.
\end{align*}
\end{subequations}
They satisfy $\PL{\ell-1}{k}(f) \subset \PLtrim{\ell}{k}(f) \subset \PL{\ell}{k}(f)$, and, in top form degree, $\PLtrim{\ell}{d}(f) = \PL{\ell-1}{d}(f)$.

\paragraph{Conforming spaces on the auxiliary simplicial submesh.}

We denote by $\PL{\ell}{k}(\Sh(f))$ the conforming finite element space of $k$-forms on $f$ whose restriction to every $S\in\Sh(f)$ belongs to $\PL{\ell}{k}(S)$.
Similarly,
\[
  \PLtrim{\ell}{k}(\Sh(f))
  \coloneqq
  \left\{
    \omega\in H\Lambda^k(f):
    \omega_{|S}\in\PLtrim{\ell}{k}(S)
    \quad
    \forall S\in\Sh(f)
  \right\},
\]
and
\[
  \PLtrimz{\ell}{k}(\Sh(f))
  \coloneqq
  \left\{
    \omega\in\PLtrim{\ell}{k}(\Sh(f)):
    \trc_{\partial f}\omega=0
  \right\}.
\]
On a simplex $S$, the notation $\PLtrimz{\ell}{k}(S)$ has the same meaning, with zero trace on $\partial S$.

On the whole auxiliary simplicial submesh, we set
\[
  \PLtrim{\ell}{k}(\Sh)
  \coloneqq
  \left\{
    \omega\in H\Lambda^k(\Omega):
    \omega_{|S}\in\PLtrim{\ell}{k}(S)
    \quad
    \forall S\in\FM{n}(\Sh)
  \right\}.
\]

The trace preserves trimmed polynomial spaces.
For every subcell $e\subset f$ and every admissible form degree $k$, $\trc_e\PLtrim{\ell}{k}(f) \subset \PLtrim{\ell}{k}(e)$, and the same property holds for the corresponding conforming spaces on the auxiliary simplicial submeshes.

\section{Hybrid High-Order spaces of differential forms}
\label{sec:hho.forms}

We introduce a family of hybrid spaces of differential forms, their interpolation operators, a class of local exterior-derivative reconstructions, and the associated stabilisation.
The presentation separates three ingredients, namely (i) the stability-admissible face spaces and the corresponding hybrid interpolation; (ii) the cell-local reconstruction spaces and commuting exterior-derivative reconstructions; and (iii) the canonical and reconstructed energies.
This separation makes it possible to keep the globally coupled face spaces as small as required by the projected-jump stability mechanism analysed here, while allowing richer cell-local reconstructions whenever needed for consistency or analysis; see, for example, their use in Section~\ref{sec:poincare.proof}.

\subsection{Stability-admissible hybrid spaces and interpolation}
\label{subsec:hybrid.spaces}

Let $k\in\{0,\ldots,n-1\}$ and let $\ell\geq0$ be the polynomial degree of the cell unknowns.
We first identify the smallest face spaces required by the projected-jump stability mechanism used below, and then allow the actual face space to be any polynomial enrichment of these spaces.

\begin{definition}[Stability-minimal and admissible face trace spaces]
\label{def:face.trace.spaces}
For every face $F\in\mathcal F_h$, define the \emph{stability-minimal} face trace space by
\[
  \mathcal S_F^{k,\ell}
  \coloneqq
  \begin{cases}
    \PL{0}{0}(F),
    & k=0,
    \\[1mm]
    \DIFF_F\PL{\ell+1}{k-1}(F),
    & k\in\{1,\ldots,n-2\},
    \\[1mm]
    \PL{\ell}{n-1}(F),
    & k=n-1.
  \end{cases}
\]
An \emph{admissible} face trace space is any polynomial space $\Qtrace{F}{k}$ such that
\begin{equation}\label{eq:def.Qtrace.admissible}
  \mathcal S_F^{k,\ell}
  \subseteq
  \Qtrace{F}{k}
  \subseteq
  \PL{\ell}{k}(F).
\end{equation}
The lower inclusion is the stability requirement, whereas the possible enrichment from $\mathcal S_F^{k,\ell}$ to $\Qtrace{F}{k}$ can be used to support a chosen exterior-derivative reconstruction or additional approximation properties.
The estimates proved below under the sole face-space assumption \eqref{eq:def.Qtrace.admissible} are uniform with respect to the particular admissible choice of $\Qtrace{F}{k}$.
The qualifier \emph{stability-minimal} refers to the projected-jump stability mechanism considered here, for which the face space must contain the traces of the polynomial kernel.
\end{definition}

For every $T\in\mathcal T_h$ and every $F\in\mathcal F_T$, polynomial exactness, surjectivity of polynomial restriction to $F$, and commutation of the trace with the exterior derivative give
\begin{equation}\label{eq:stability.minimal.kernel.trace}
  \mathcal S_F^{k,\ell}
  =
  \trc_F
  \left(
    \PL{\ell}{k}(T)
    \cap
    \Ker\DIFF_T
  \right).
\end{equation}
Indeed, the polynomial kernel consists of the constants when $k=0$ and of $\DIFF_T\PL{\ell+1}{k-1}(T)$ when $k\geq1$; in top degree on the face, polynomial exactness yields $\DIFF_F\PL{\ell+1}{n-2}(F)=\PL{\ell}{n-1}(F)$.
Thus, $\mathcal S_F^{k,\ell}$ is precisely the trace on $F$ of the polynomial kernel, and these kernel traces are the components that must be controlled directly on the skeleton.

\begin{remark}[Independent bulk and skeletal polynomial degrees]
\label{rem:bulk.skeletal.degrees}

The use of the same polynomial degree $\ell$ for cell and face unknowns is a convenient choice for the presentation, rather than an intrinsic feature of the projected-jump stability mechanism.

Indeed, let $V_T^k$ be a polynomial cell space and let $\mathcal Q_F^k$ be a polynomial face space, with polynomial degrees that need not coincide.
Provided that the cell spaces $V_T^k$ admit a uniformly stable decomposition modulo $V_T^k\cap\Ker\DIFF_T$, the local argument underlying Proposition~\ref{prop:full.jump.control.forms} requires, as far as the face spaces are concerned, only
\begin{equation}\label{eq:general.kernel.trace.inclusion}
  \trc_F
  \left(
    V_T^k\cap\Ker\DIFF_T
  \right)
  \subseteq
  \mathcal Q_F^k
  \qquad
  \forall T\in\cof{n}(F).
\end{equation}
This is precisely the property that allows the kernel component of the cell polynomial to be eliminated from the part of its trace that is not seen by the face projector.

For the full polynomial choice $V_T^k=\PL{r_T}{k}(T)$, condition \eqref{eq:general.kernel.trace.inclusion} becomes
\[
  \trc_F
  \left(
    \PL{r_T}{k}(T)\cap\Ker\DIFF_T
  \right)
  \subseteq
  \mathcal Q_F^k
  \qquad
  \forall T\in\cof{n}(F).
\]
Hence the same local stability mechanism accommodates cell-dependent polynomial degrees, as well as independently chosen face spaces, provided that each face space contains the kernel traces induced by all adjacent cells.

For $k=0$, these traces consist only of constants, independently of the cell degree, so stability alone permits a skeletal degree strictly lower than the bulk degree.
For positive form degrees, the kernel traces generally retain more of the bulk polynomial structure, so a lower-order face space is not automatically admissible and condition \eqref{eq:general.kernel.trace.inclusion} has to be checked explicitly.

The remainder of the paper is written in the uniform-degree setting for simplicity.
Allowing variable degrees would additionally require adapting the interpolation and reconstruction spaces so that the corresponding compatibility and uniformity assumptions remain valid.
\end{remark}

For $k=0$, the usual HHO choice $\Qtrace{F}{0}=\PL{\ell}{0}(F)$ is an admissible enrichment of the stability-minimal constant space.
The corresponding three-dimensional vector proxies are discussed in Subsection~\ref{sec:vector.proxies}.

In accordance with the projection convention of Subsection~\ref{sec:setting:forms}, we denote the corresponding $L^2$ projector by $\piQ{F}{k}$.

We use underlined letters for hybrid unknowns, $\underline v_h$ globally and $\underline v_T$ locally, and reserve $v_h$ for the broken cell polynomial form associated with $\underline v_h$.
The global hybrid space of $k$-forms is
\[
  \uXh{k}
  \coloneqq
  \Bigl\{
    \underline v_h
    =
    \bigl(
      (v_T)_{T\in\mathcal T_h},
      (v_F)_{F\in\mathcal F_h}
    \bigr)
    \;\Bigm|\;
    \begin{array}{l}
      v_T\in\PL{\ell}{k}(T)
      \quad\forall T\in\mathcal T_h,
      \\
      v_F\in\Qtrace{F}{k}
      \quad\forall F\in\mathcal F_h
    \end{array}
  \Bigr\}.
\]
For a cell $T\in\mathcal T_h$, the corresponding local space is $\uX{k}{T} \coloneqq \PL{\ell}{k}(T) \times \bigtimes_{F\in\mathcal F_T} \Qtrace{F}{k}$.
If $\underline v_h\in\uXh{k}$, its restriction to $T$ is denoted by $\underline v_T\coloneqq\bigl(v_T,(v_F)_{F\in\mathcal F_T}\bigr)\in\uX{k}{T}$.
Finally, we associate with $\underline v_h$ the broken polynomial form $v_h \in \bigtimes_{T\in\mathcal T_h} \PL{\ell}{k}(T)$ such that $(v_h)_{|T} \coloneqq v_T$.

\subsubsection{Interpolation operators}
\label{subsec:hybrid.reduction.forms}

For sufficiently regular $v\in H\Lambda^k(\Omega)$, we define the global interpolator $\uI{k}{h}v\in\uXh{k}$ by
\[
  \uI{k}{h}v
  \coloneqq
  \left(
    (\pi_T^{k,\ell}(v_{|T}))_{T\in\mathcal T_h},
    (\piQ{F}{k}\trc_Fv)_{F\in\mathcal F_h}
  \right),
\]
and the local interpolator by
\[
  \uI{k}{T}v
  \coloneqq
  \left(
    \pi_T^{k,\ell}v,
    (\piQ{F}{k}\trc_Fv)_{F\in\mathcal F_T}
  \right).
\]

\begin{remark}[Weak interpretation of the face projections]
\label{rem:regularity.interpolator.forms}
The face component of the interpolator is directly defined whenever the traces of $v$ on the mesh faces are $L^2$ differential forms.
At lower regularity, the same definition is understood weakly.
More precisely, whenever the trace of $v$ defines a duality pairing with the finite-dimensional space $\Qtrace{F}{k}$, denoted by $\langle\cdot,\cdot\rangle_F$, the element $\piQ{F}{k}\trc_Fv\in\Qtrace{F}{k}$ is defined by
\[
  \int_F
  \piQ{F}{k}\trc_Fv
  \wedge
  \star_F\mu_F
  =
  \langle\trc_Fv,\mu_F\rangle_F
  \qquad
  \forall\mu_F\in\Qtrace{F}{k}.
\]
Here the bracket on the right-hand side denotes the duality extension of the $L^2$ pairing introduced in Subsection~\ref{sec:setting:forms}.
This weak interpretation is sufficient for the commuting results below.
\end{remark}

\subsubsection{Stable decomposition modulo the polynomial kernel}

The identity \eqref{eq:stability.minimal.kernel.trace} provides a qualitative characterisation of the stability-minimal face spaces, while the following proposition gives the corresponding quantitative statement.
It decomposes each cell polynomial into a closed component, whose traces lie in the stability-minimal face spaces, and a remainder controlled both in the cell and on its boundary by the cellwise exterior derivative.

This decomposition is the local mechanism used below to recover the full trace jump from the projected stabilisation.
It will also be used in Proposition~\ref{prop:stable.cellwise.closed.reduction} to extract the cellwise closed family entering the Poincar\'e inequality modulo conforming closed forms.

\begin{proposition}[Stable polynomial decomposition modulo the kernel]
\label{prop:stable.polynomial.kernel.decomposition}
For every $T\in\mathcal T_h$ and every $v_T\in\PL{\ell}{k}(T)$, there exists a form $z_T\in\PL{\ell}{k}(T)\cap\Ker\DIFF_T$ such that
\begin{equation}\label{eq:stable.polynomial.kernel.decomposition}
  \|v_T-z_T\|_{L^2\Lambda^k(T)}
  +
  h_T^{1/2}
  \|\trc_{\partial T}(v_T-z_T)\|_{L^2\Lambda^k(\partial T)}
  \lesssim
  h_T
  \|\DIFF_Tv_T\|_{L^2\Lambda^{k+1}(T)}.
\end{equation}
Moreover, $\trc_Fz_T\in\mathcal S_F^{k,\ell}$ for every $F\in\mathcal F_T$.
The selection can be made linear in $v_T$, and the hidden constant is uniform on a regular mesh sequence.
\end{proposition}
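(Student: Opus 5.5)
We build $z_T$ from the Koszul decomposition \eqref{eq:full-polynomial-koszul-decomposition} centred at $\boldsymbol{x}_T$, and obtain the estimate by scaling to a reference-size cell.

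\emph{Construction of $z_T$.} For $k=0$ we take $z_T\coloneqq\pi_T^{0,0}v_T$, the mean value of $v_T$ on $T$. For $k\geq1$ we write uniquely
\[
  v_T=\DIFF_T\mu+\nu,
  \qquad
  \mu\in\mathcal K_{\ell+1}^{k-1}(T),\quad
  \nu\in\mathcal K_\ell^k(T),
\]
and set $z_T\coloneqq\DIFF_T\mu\in\PL{\ell}{k}(T)\cap\Ker\DIFF_T$. This selection is linear in $v_T$, and $v_T-z_T=\nu$.

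\emph{Face traces.} The trace commutes with the exterior derivative, so for $k\geq1$ we get $\trc_Fz_T=\DIFF_F\trc_F\mu\in\DIFF_F\PL{\ell+1}{k-1}(F)$. This space equals $\mathcal S_F^{k,\ell}$ for $k\leq n-2$. For $k=n-1$ it is contained in $\PL{\ell}{n-1}(F)=\mathcal S_F^{n-1,\ell}$. For $k=0$ the trace of a constant is a constant, which lies in $\PL{0}{0}(F)$. This is exactly \eqref{eq:stability.minimal.kernel.trace}.

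\emph{Estimate.} It remains to prove
\[
  \|\nu\|_T+h_T^{1/2}\|\trc_{\partial T}\nu\|_{\partial T}\lesssim h_T\|\DIFF_T\nu\|_T
  \qquad\text{for all }\nu\in\mathcal K_\ell^k(T),
\]
which also covers $\nu=v_T-\pi_T^{0,0}v_T$ when $k=0$. We proceed in three steps.

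1. The boundary term is handled by the discrete trace inequality $h_T^{1/2}\|\trc_{\partial T}\nu\|_{\partial T}\lesssim\|\nu\|_T$ for polynomials. We apply it on each simplex of $\Sh(T)$ that touches $\partial T$. This is valid by the mesh regularity, since there are boundedly many simplices and each has diameter comparable to $h_T$.

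2. The volume term is a Poincar\'e inequality on the Koszul complement: $\|\nu\|_T\lesssim h_T\|\DIFF_T\nu\|_T$. For $k\geq1$, the map $\DIFF_T$ is injective on $\mathcal K_\ell^k(T)$ (the restriction noted after \eqref{eq:full-polynomial-koszul-decomposition}, applied in degree $k$). For $k=0$ we use $\mathcal K_\ell^0$, or equivalently the zero-mean polynomials, modulo constants. By equivalence of norms on this finite-dimensional space, the ratio $\|\nu\|/\|\DIFF\nu\|$ is bounded on any fixed cell.

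3. The main obstacle is making the constant in step 2 uniform over all cells of a regular mesh family, which are general polytopes rather than affine images of a single reference element. We map $T$ by $x\mapsto(x-\boldsymbol{x}_T)/h_T$. This map commutes with $\DIFF$ and with the Koszul operator centred at $\boldsymbol{x}_T$, since the Koszul operator is homogeneous and the map is a translation followed by a dilation. It therefore reduces the claim to cells of unit diameter, where the factor $h_T$ appears through scaling.

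On a unit-diameter cell, regularity gives balls $B_\rho\subset T\subset B_1$ around a suitable point, with $\rho$ bounded below uniformly. For polynomials of fixed degree, $L^2$ norms on $T$, on $B_\rho$ and on $B_1$ are uniformly equivalent. This holds provided $\boldsymbol{x}_T$ lies at distance $\lesssim h_T$ from that point, so that the Koszul spaces are uniformly comparable. It is then enough to bound the inverse of $\DIFF$ restricted to $\mathcal K_\ell^k$ in the ball norms. There we use compactness over the admissible centre positions, which form a compact set, together with continuity of the Koszul decomposition in the centre. Alternatively, we can use the explicit homotopy identity $(\DIFF\KOSZUL+\KOSZUL\DIFF)\omega=(r+k)\omega$ on homogeneous components. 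That identity shows $\nu=\KOSZUL\,\theta(\DIFF\nu)$ for a bounded operator $\theta$ on homogeneous parts, giving $\|\nu\|\lesssim\|\DIFF\nu\|$ with a constant depending only on $n$, $k$ and $\ell$.
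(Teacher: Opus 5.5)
Your proof is correct and follows the paper's argument: the Koszul splitting of $v_T$ into a closed part and a Koszul-complement remainder, the scaled inverse bound $\|v_T-z_T\|_T\lesssim h_T\|\DIFF_Tv_T\|_T$ on the complement, the discrete trace inequality, and commutation of the trace with $\DIFF$ to place $\trc_Fz_T$ in $\mathcal S_F^{k,\ell}$. The differences are only cosmetic. The paper cites \cite[Lemmas~16--17]{Di-Pietro.Droniou.ea:25} for the uniform Koszul Poincar\'e estimate that you derive by scaling and the homotopy identity, and for $k=0$ you take the mean value instead of the Koszul constant $v_T(\boldsymbol x_T)$, which is harmless because the mean is the best $L^2(T)$ constant approximation, so $\|v_T-\pi_T^{0,0}v_T\|_T\leq\|v_T-v_T(\boldsymbol x_T)\|_T$.
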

\begin{proof}
The polynomial Koszul decompositions \eqref{eq:full-polynomial-koszul-decomposition-zero}-- \eqref{eq:full-polynomial-koszul-decomposition-positive} provide a unique decomposition
\[
v_T=
z_T+w_T,\qquad
z_T\in\PL{\ell}{k}(T)\cap\Ker\DIFF_T,\qquad
w_T\in\mathcal K_\ell^k(T).
\]
The restriction of $\DIFF_T$ to $\mathcal K_\ell^k(T)$ is injective and its image is $\DIFF_T\PL{\ell}{k}(T)$.
The scaled stability of the inverse on the Koszul complement follows from the standard polynomial Poincar\'e estimate; see, in particular, \cite[Lemmas~16--17]{Di-Pietro.Droniou.ea:25}.
Thus, $\|w_T\|_T \lesssim h_T \|\DIFF_Tw_T\|_T = h_T \|\DIFF_Tv_T\|_T$.
The polynomial trace inequality then yields $h_T^{1/2} \|\trc_{\partial T}w_T\|_{\partial T} \lesssim \|w_T\|_T$.
Since $w_T=v_T-z_T$, these estimates prove \eqref{eq:stable.polynomial.kernel.decomposition}.
Finally, \eqref{eq:stability.minimal.kernel.trace} gives $\trc_Fz_T\in\mathcal S_F^{k,\ell}$ for every $F\in\mathcal F_T$.
All the selections involved in the Koszul decomposition are linear.
\end{proof}

\subsection{Local exterior-derivative reconstructions and commuting properties}
\label{subsec:hybrid.exterior.derivative}

For every $T\in\mathcal T_h$, let $\mathcal R_T^{k+1}\subset L^2\Lambda^{k+1}(T)$ be a fixed finite-dimensional polynomial space, possibly defined on the auxiliary simplicial submesh $\Sh(T)$.
We assume that $\star_T\mathcal R_T^{k+1} \subset H\Lambda^{n-k-1}(T)$, and that
\[
  \trc_F(\star_Tq_T)
  \in
  L^2\Lambda^{n-k-1}(F)
  \qquad
  \forall q_T\in\mathcal R_T^{k+1},
  \quad
  \forall F\in\mathcal F_T.
\]
We denote the corresponding $L^2$ projector by $\pi_{\mathcal R,T}^{k+1}$.

For $\underline v_T\in\uX{k}{T}$, define $\DIFF_{\mathcal R,T}^{k}\underline v_T\in\mathcal R_T^{k+1}$ by
\begin{equation}\label{eq:def.local.derivative.general.forms}
\begin{aligned}
\int_T
\DIFF_{\mathcal R,T}^{k}\underline v_T
\wedge
\star_Tq_T
={}
(-1)^{k+1}
\int_T
v_T\wedge\DIFF_T(\star_Tq_T)
+
\sum_{F\in\mathcal F_T}
\epsTF
\int_F
v_F\wedge\trc_F(\star_Tq_T)
\end{aligned}
\end{equation}
for every $q_T\in\mathcal R_T^{k+1}$.
The non-degeneracy of the $L^2$ pairing on $\mathcal R_T^{k+1}$ uniquely determines the reconstruction.
The corresponding global operator is defined cellwise by
\[
  \DIFF_{\mathcal R,h}^{k}\colon
  \uXh{k}
  \longrightarrow
  \bigtimes_{T\in\mathcal T_h}
  \mathcal R_T^{k+1},
  \qquad
  (\DIFF_{\mathcal R,h}^{k}\underline v_h)_{|T}
  \coloneqq
  \DIFF_{\mathcal R,T}^{k}\underline v_T.
\]

Two reconstruction spaces play a distinguished role below.

The prescribed reconstruction space is the largest exact-image space compatible with the selected face spaces and is defined by
\[
  \Dpoly{T}{k}
  \coloneqq
  \left\{
    q_T\in\DIFF_T\PL{\ell+1}{k}(T)
    \;:\;
    \star_F^{-1}
    \trc_F(\star_Tq_T)
    \in
    \Qtrace{F}{k}
    \quad
    \forall F\in\mathcal F_T
  \right\}.
\]
The superscript $k$ records the degree of the hybrid input form, whereas $\Dpoly{T}{k}$ itself is a space of $(k+1)$-forms.
We denote the corresponding $L^2$ projector by $\piD{T}{k}$ and write $\DIFF_{\ell,T}^{k} \coloneqq \DIFF_{\mathcal R,T}^{k}$ and $\DIFF_{\ell,h}^{k} \coloneqq \DIFF_{\mathcal R,h}^{k}$ when $\mathcal R_T^{k+1}=\Dpoly{T}{k}$.
Thus
\[
\begin{aligned}
  \int_T
  \DIFF_{\ell,T}^{k}\underline v_T
  \wedge
  \star_Tq_T
  ={}&
  (-1)^{k+1}
  \int_T
  v_T\wedge\DIFF_T(\star_Tq_T)
  \\
  &+
  \sum_{F\in\mathcal F_T}
  \epsTF
  \int_F
  v_F\wedge\trc_F(\star_Tq_T)
\end{aligned}
\]
for every $q_T\in\Dpoly{T}{k}$, and $\DIFF_{\ell,h}^{k}$ is defined cellwise with values in $\bigtimes_{T\in\mathcal T_h}\Dpoly{T}{k}$.

The proof in Section~\ref{sec:poincare.proof} additionally uses the auxiliary full trimmed reconstruction space
\begin{equation}\label{eq:def.auxiliary.trimmed.derivative.space}
  \widehat{\mathcal R}_T^{k+1}
  \coloneqq
  \star_T^{-1}
  \PLtrim{m}{n-k-1}(\Sh(T)),
\end{equation}
with $m$ fixed independently of $h$.
This auxiliary reconstruction is essentially an analytical tool, as it does not modify the discrete unknowns and need not be assembled in an implementation.

The distinction between the two reconstruction spaces is structural.
The prescribed space $\Dpoly{T}{k}$ determines the discrete operator and the projection appearing in Theorem~\ref{thm:main.hybrid.poincare}.
The auxiliary full trimmed space is used only in the analysis to generate the intermediate functional hierarchy and the stable skeleton.

Other cell-local choices, including full polynomial spaces, are covered by the definition above.
They satisfy the projected commuting property below whenever the stated regularity and interpolation-compatibility conditions hold.
The computational implications of enriching the reconstruction and face spaces are discussed in Remark~\ref{rem:role.broken.reconstructed.energies}.

We say that a reconstruction space $\mathcal R_T^{k+1}$ is \emph{interpolation-compatible} if
\begin{align}
  \star_T^{-1}
  \DIFF_T(\star_Tq_T)
  &\in
  \PL{\ell}{k}(T)
  &&\forall q_T\in\mathcal R_T^{k+1},
  \label{eq:reconstruction.cell.compatibility}
  \\
  \star_F^{-1}
  \trc_F(\star_Tq_T)
  &\in
  \Qtrace{F}{k}
  &&\forall q_T\in\mathcal R_T^{k+1},
  \quad\forall F\in\mathcal F_T.
  \label{eq:reconstruction.face.compatibility}
\end{align}

The following proposition isolates the compatibility assumptions under which a local Stokes reconstruction commutes with interpolation up to the $L^2$ projection onto its target space.
It is the abstract consistency result for the reconstruction family; the commuting properties of the specific reconstruction spaces used below follow as direct consequences.

\begin{proposition}[Projected commuting property]
\label{prop:projected.commuting.forms}
Let $\mathcal R_T^{k+1}$ be interpolation-compatible.
Let $v$ be a $k$-form on $T$.
Assume that the local interpolator is well defined and that Stokes' formula holds against every test form in this reconstruction space.
Then
\[
  \DIFF_{\mathcal R,T}^{k}\uI{k}{T}v
  =
  \pi_{\mathcal R,T}^{k+1}\DIFF_Tv.
\]
\end{proposition}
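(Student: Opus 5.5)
Both sides of the claimed identity are elements of $\mathcal R_T^{k+1}$, and the $L^2$ pairing on this space is non-degenerate. It is therefore enough to test against an arbitrary $q_T\in\mathcal R_T^{k+1}$ and show
\[
  \int_T\DIFF_{\mathcal R,T}^{k}\uI{k}{T}v\wedge\star_Tq_T
  =
  \int_T\DIFF_Tv\wedge\star_Tq_T ,
\]
since the right-hand side equals $\int_T\pi_{\mathcal R,T}^{k+1}\DIFF_Tv\wedge\star_Tq_T$ by the definition of the $L^2$ projector. The plan is to expand the left-hand side with the Stokes definition \eqref{eq:def.local.derivative.general.forms}, strip off the projections using the interpolation-compatibility conditions, and recognise Stokes' formula for $v$.

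First, inserting $\underline v_T=\uI{k}{T}v$ into \eqref{eq:def.local.derivative.general.forms} gives a volume term
\[
  (-1)^{k+1}\int_T\pi_T^{k,\ell}v\wedge\DIFF_T(\star_Tq_T)
\]
and a boundary sum $\sum_{F}\epsTF\int_F\piQ{F}{k}\trc_Fv\wedge\trc_F(\star_Tq_T)$.

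For the volume term, rewrite $\DIFF_T(\star_Tq_T)=\star_T r$ with $r\coloneqq\star_T^{-1}\DIFF_T(\star_Tq_T)$. By \eqref{eq:reconstruction.cell.compatibility}, $r\in\PL{\ell}{k}(T)$, so the term is the $L^2$ pairing $\langle\pi_T^{k,\ell}v,r\rangle_T$. The projector can then be removed, giving $\langle v,r\rangle_T=\int_Tv\wedge\DIFF_T(\star_Tq_T)$.

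For each face term, set $\mu_F\coloneqq\star_F^{-1}\trc_F(\star_Tq_T)$, which lies in $\Qtrace{F}{k}$ by \eqref{eq:reconstruction.face.compatibility}. The face term is then $\langle\piQ{F}{k}\trc_Fv,\mu_F\rangle_F$. This equals $\int_F\trc_Fv\wedge\trc_F(\star_Tq_T)$, either by the projector property or, at low regularity, by the weak definition in Remark~\ref{rem:regularity.interpolator.forms}.

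Collecting the two pieces, the left-hand side becomes
\[
  (-1)^{k+1}\int_Tv\wedge\DIFF_T(\star_Tq_T)+\sum_{F\in\mathcal F_T}\epsTF\int_F\trc_Fv\wedge\trc_F(\star_Tq_T).
\]
The assumed Stokes formula, i.e.\ integration by parts of $\DIFF_T(v\wedge\star_Tq_T)$ with the Leibniz rule $\DIFF(v\wedge\star q)=\DIFF v\wedge\star q+(-1)^kv\wedge\DIFF\star q$, identifies this expression with $\int_T\DIFF_Tv\wedge\star_Tq_T$, which is what we need.

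The main point requiring care is the sign and orientation bookkeeping in this last step. The sign $(-1)^{k+1}$ in \eqref{eq:def.local.derivative.general.forms} must match the Leibniz sign, and the oriented boundary sum with the signs $\epsTF$ must match the boundary integral in Stokes' theorem. Since the definition was clearly modelled on exactly this formula, I expect the check to be routine. The remaining subtlety is that in the low-regularity case the face pairings are duality pairings, so the Stokes identity must be read in that same weak sense; this is precisely what the hypothesis provides.
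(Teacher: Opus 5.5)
Your proposal is correct and follows the same route as the paper. You test against an arbitrary $q_T\in\mathcal R_T^{k+1}$, use \eqref{eq:reconstruction.cell.compatibility} and \eqref{eq:reconstruction.face.compatibility} to remove the cell and face projectors, and conclude with Stokes' formula; your rewriting of the pairings as $L^2$ products against $\star_T^{-1}\DIFF_T(\star_Tq_T)$ and $\star_F^{-1}\trc_F(\star_Tq_T)$, and your Leibniz-sign check, spell out details the paper leaves implicit.
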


\begin{proof}
Let $q_T\in\mathcal R_T^{k+1}$.
Using \eqref{eq:def.local.derivative.general.forms} with the interpolant gives
\[
\begin{aligned}
  \int_T
  \DIFF_{\mathcal R,T}^{k}(\uI{k}{T}v)
  \wedge\star_Tq_T
  ={}&
  (-1)^{k+1}
  \int_T
  \pi_T^{k,\ell}v
  \wedge
  \DIFF_T(\star_Tq_T)
  \\
  &+
  \sum_{F\in\mathcal F_T}
  \epsTF
  \int_F
  \piQ{F}{k}\trc_Fv
  \wedge
  \trc_F(\star_Tq_T).
\end{aligned}
\]
Condition \eqref{eq:reconstruction.cell.compatibility} removes the cell projector, while \eqref{eq:reconstruction.face.compatibility} removes the face projectors.
Stokes' formula then yields $\int_T \DIFF_{\mathcal R,T}^{k}(\uI{k}{T}v) \wedge\star_Tq_T = \int_T \DIFF_Tv\wedge\star_Tq_T$.
Since $q_T$ is arbitrary in $\mathcal R_T^{k+1}$, the conclusion follows.
\end{proof}

\begin{corollary}[Projected commuting property of the prescribed reconstruction]
\label{cor:prescribed.projected.commuting.forms}

Let $v$ be a $k$-form on $T$ such that the local interpolator $\uI{k}{T}v$ is well defined and Stokes' formula holds against $\star_Tq_T$ for every $q_T\in\Dpoly{T}{k}$.
Then
\[
  \DIFF_{\ell,T}^{k}\uI{k}{T}v
  =
  \piD{T}{k}\DIFF_Tv.
\]
\end{corollary}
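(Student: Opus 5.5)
The plan is to deduce Corollary~\ref{cor:prescribed.projected.commuting.forms} directly from Proposition~\ref{prop:projected.commuting.forms}, applied with $\mathcal R_T^{k+1}=\Dpoly{T}{k}$. Since by definition $\DIFF_{\ell,T}^{k}=\DIFF_{\mathcal R,T}^{k}$ and $\piD{T}{k}=\pi_{\mathcal R,T}^{k+1}$ for this choice, and the hypotheses on $v$ (well-defined interpolator, Stokes' formula against $\star_Tq_T$) are exactly those of the proposition, it suffices to check that $\Dpoly{T}{k}$ is an admissible reconstruction space and that it is interpolation-compatible in the sense of \eqref{eq:reconstruction.cell.compatibility}--\eqref{eq:reconstruction.face.compatibility}.

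The standing regularity requirements are immediate. $\Dpoly{T}{k}$ is a finite-dimensional subspace of $\DIFF_T\PL{\ell+1}{k}(T)\subset\PL{\ell}{k+1}(T)$, so $\star_Tq_T$ is a polynomial $(n-k-1)$-form. Hence $\star_Tq_T\in H\Lambda^{n-k-1}(T)$, and its traces on faces are polynomial and in particular in $L^2$.

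For the face condition \eqref{eq:reconstruction.face.compatibility}, there is nothing to prove, since it is built into the definition of $\Dpoly{T}{k}$: every $q_T$ in the space satisfies $\star_F^{-1}\trc_F(\star_Tq_T)\in\Qtrace{F}{k}$ for all $F\in\mathcal F_T$.

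For the cell condition \eqref{eq:reconstruction.cell.compatibility}, take $q_T\in\Dpoly{T}{k}$, so $q_T=\DIFF_T p$ with $p\in\PL{\ell+1}{k}(T)$ and $q_T\in\PL{\ell}{k+1}(T)$. The Hodge star on $T$ has constant coefficients, so $\star_Tq_T\in\PL{\ell}{n-k-1}(T)$ and $\DIFF_T(\star_Tq_T)\in\PL{\ell-1}{n-k}(T)$. Therefore $\star_T^{-1}\DIFF_T(\star_Tq_T)\in\PL{\ell-1}{k}(T)\subset\PL{\ell}{k}(T)$. This is just degree counting; exactness of $q_T$ is not even needed here, which is consistent with the codifferential of a closed form being generally nonzero.

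With both conditions verified, Proposition~\ref{prop:projected.commuting.forms} gives $\DIFF_{\ell,T}^{k}\uI{k}{T}v=\piD{T}{k}\DIFF_Tv$. I expect no real obstacle. The only point needing care is bookkeeping of the form degrees under $\star_T$, $\star_F$ and the trace, so that $\star_F^{-1}\trc_F(\star_Tq_T)$ is indeed a $k$-form on $F$ matching the face unknowns; this is exactly the convention already fixed in the definition of $\Dpoly{T}{k}$.
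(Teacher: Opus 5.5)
Your proposal is correct and matches the paper's proof: apply Proposition~\ref{prop:projected.commuting.forms} with reconstruction space $\Dpoly{T}{k}$. The face compatibility condition is built into the definition of this space, and the cell compatibility condition follows from degree reduction under the exterior derivative. Your extra check that $\Dpoly{T}{k}$ satisfies the standing regularity assumptions on reconstruction spaces is a detail the paper leaves implicit.
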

\begin{proof}
The face compatibility condition \eqref{eq:reconstruction.face.compatibility} holds by the definition of $\Dpoly{T}{k}$.
Moreover, for every $q_T\in\Dpoly{T}{k}$, polynomial degree reduction under the exterior derivative gives $\star_T^{-1}\DIFF_T(\star_Tq_T)\in\PL{\ell}{k}(T)$, so that the cell compatibility condition \eqref{eq:reconstruction.cell.compatibility} also holds.
The assertion therefore follows from Proposition~\ref{prop:projected.commuting.forms}.
\end{proof}

\subsection{Stabilisation and canonical hybrid norms}
\label{subsec:stabilization.hybrid.norms}

For $\underline v_T,\underline w_T\in\uX{k}{T}$, define the local stabilisation
\begin{equation}\label{eq:def.stabilization.forms}
  \mathrm s_{\mathcal Q,T}^{k,\ell}
  (\underline v_T,\underline w_T)
  \coloneqq
  \sum_{F\in\mathcal F_T}
  h_F^{-1}
  \int_F
  \left(
    \piQ{F}{k}\trc_Fv_T-v_F
  \right)
  \wedge
  \star_F
  \left(
    \piQ{F}{k}\trc_Fw_T-w_F
  \right).
\end{equation}
When no ambiguity is possible, we write simply $\mathrm s_T^{k,\ell}$.

The hybrid exterior-derivative seminorm is
\[
  |\underline v_h|_{\DIFF,h}^2
  \coloneqq
  \sum_{T\in\mathcal T_h}
  \left(
    \|\DIFF_Tv_T\|_{L^2\Lambda^{k+1}(T)}^2
    +
    \mathrm s_{\mathcal Q,T}^{k,\ell}
    (\underline v_T,\underline v_T)
  \right).
\]
We also define the hybrid $L^2$ norm
\[
  \|\underline v_h\|_{0,h}^2
  \coloneqq
  \sum_{T\in\mathcal T_h}
  \left(
    \|v_T\|_{L^2\Lambda^k(T)}^2
    +
    \sum_{F\in\mathcal F_T}
    h_F
    \|v_F\|_{L^2\Lambda^k(F)}^2
  \right),
\]
and the canonical broken graph norm
\[
  \|\underline v_h\|_{\boldsymbol X,h}^2
  \coloneqq
  \|\underline v_h\|_{0,h}^2
  +
  |\underline v_h|_{\DIFF,h}^2.
\]

\begin{lemma}[Polynomial consistency of the stabilisation]
\label{lem:stabilisation.polynomial.consistency}
Let $p\in\PL{\ell}{k}(T)$ and let $\underline p_T=\uI{k}{T}p$.
Then
\[
  \mathrm s_{\mathcal Q,T}^{k,\ell}
  (\underline p_T,\underline w_T)
  =0
  \qquad
  \forall\underline w_T\in\uX{k}{T}.
\]
\end{lemma}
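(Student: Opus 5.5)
The plan is a direct computation that shows each face term of the stabilisation \eqref{eq:def.stabilization.forms} vanishes identically for $\underline v_T=\underline p_T$. By definition of the local interpolator, $\underline p_T=\uI{k}{T}p=\bigl(\pi_T^{k,\ell}p,(\piQ{F}{k}\trc_Fp)_{F\in\mathcal F_T}\bigr)$. Since $p\in\PL{\ell}{k}(T)$, the cell projector acts as the identity, so the cell component of $\underline p_T$ is $p$ itself. The face components are $p_F=\piQ{F}{k}\trc_Fp$. The trace $\trc_Fp$ is a polynomial $k$-form on $F$, hence square integrable, so the face projection is defined in the strong $L^2$ sense and the weak interpretation of Remark~\ref{rem:regularity.interpolator.forms} is not needed.

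Substituting into \eqref{eq:def.stabilization.forms}, each face factor attached to $\underline p_T$ is
\[
  \piQ{F}{k}\trc_F p_T-p_F
  =
  \piQ{F}{k}\trc_Fp-\piQ{F}{k}\trc_Fp
  =0
  \qquad\forall F\in\mathcal F_T .
\]
Every summand therefore vanishes, whatever the second argument $\underline w_T\in\uX{k}{T}$ is, and $\mathrm s_{\mathcal Q,T}^{k,\ell}(\underline p_T,\underline w_T)=0$ follows. By symmetry of the bilinear form, the same holds with the arguments swapped.

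The argument does not use the admissibility condition \eqref{eq:def.Qtrace.admissible}, and it does not need $\trc_Fp$ to lie in $\Qtrace{F}{k}$. The projector is applied identically in both terms, so they cancel regardless. The only point needing any care is that $\pi_T^{k,\ell}$ reproduces $p$, which holds because $p$ already lies in the target space; there is no genuine obstacle.
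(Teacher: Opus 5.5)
Your proof is correct and is the paper's own argument: since $\pi_T^{k,\ell}$ reproduces $\PL{\ell}{k}(T)$, the cell component of $\uI{k}{T}p$ is $p$ and its face components are $\piQ{F}{k}\trc_Fp$, so every projected residual in the stabilisation vanishes whatever $\underline w_T$ is. Your side remarks are accurate but not needed: the face projection is defined in the strong $L^2$ sense, and the admissibility condition plays no role.
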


\begin{proof}
The cell component of $\uI{k}{T}p$ is $p$, while its face component on $F$ is $\piQ{F}{k}\trc_Fp$.
Every residual in \eqref{eq:def.stabilization.forms} therefore vanishes.
\end{proof}

The stabilisation controls only the projected trace residual $  \piQ{F}{k}\trc_Fv_T-v_F$.
The following result shows that the stability-minimal kernel-trace inclusion is precisely what is needed to recover the full cell-to-face mismatch.
This is the local projected-jump stability mechanism underlying both main stability results.

Define the full jump seminorm
\[
  J_T(\underline v_T)^2
  \coloneqq
  \sum_{F\in\mathcal F_T}
  h_F^{-1}
  \|\trc_Fv_T-v_F\|_{L^2\Lambda^k(F)}^2.
\]

\begin{proposition}[Control of the full jump]
\label{prop:full.jump.control.forms}
For every admissible face-space family satisfying \eqref{eq:def.Qtrace.admissible}, every $T\in\mathcal T_h$, and every $\underline v_T\in\uX{k}{T}$, one has
\begin{equation}\label{eq:full.jump.control.forms}
  J_T(\underline v_T)^2
  \lesssim
  \|\DIFF_Tv_T\|_{L^2\Lambda^{k+1}(T)}^2
  +
  \mathrm s_{\mathcal Q,T}^{k,\ell}
  (\underline v_T,\underline v_T).
\end{equation}
\end{proposition}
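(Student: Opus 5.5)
We will use the stable kernel decomposition of Proposition~\ref{prop:stable.polynomial.kernel.decomposition} to remove the part of the cell trace that the face projector does not see. Fix $T\in\mathcal T_h$ and $\underline v_T\in\uX{k}{T}$. We write $v_T=z_T+w_T$, where $z_T\in\PL{\ell}{k}(T)\cap\Ker\DIFF_T$ is the closed component, with $\trc_Fz_T\in\mathcal S_F^{k,\ell}$, and $w_T=v_T-z_T$ satisfies \eqref{eq:stable.polynomial.kernel.decomposition}.

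On each face $F\in\mathcal F_T$ we split the full residual as
\[
  \trc_Fv_T-v_F
  =
  \bigl(\trc_Fv_T-\piQ{F}{k}\trc_Fv_T\bigr)
  +
  \bigl(\piQ{F}{k}\trc_Fv_T-v_F\bigr).
\]
The second term is exactly the quantity measured by the stabilisation, so after weighting with $h_F^{-1}$ and summing over $F$ it contributes $\mathrm s_{\mathcal Q,T}^{k,\ell}(\underline v_T,\underline v_T)$.

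For the first term, the lower inclusion in \eqref{eq:def.Qtrace.admissible} gives $\trc_Fz_T\in\mathcal S_F^{k,\ell}\subseteq\Qtrace{F}{k}$. Hence $\piQ{F}{k}\trc_Fz_T=\trc_Fz_T$, and
\[
  (I-\piQ{F}{k})\trc_Fv_T=(I-\piQ{F}{k})\trc_Fw_T.
\]
This is the key point, and it is exactly where admissibility enters. Since $I-\piQ{F}{k}$ is an $L^2$-orthogonal projector, it has norm at most one, so $\|(I-\piQ{F}{k})\trc_Fv_T\|_F\leq\|\trc_Fw_T\|_F$. Summing over the faces, using $h_F\simeq h_T$ by mesh regularity, and applying \eqref{eq:stable.polynomial.kernel.decomposition}, we get
\[
  \sum_{F\in\mathcal F_T}h_F^{-1}\|\trc_Fw_T\|_F^2
  \lesssim
  h_T^{-1}\|\trc_{\partial T}w_T\|_{\partial T}^2
  \lesssim
  \|\DIFF_Tv_T\|_T^2 .
\]

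The triangle inequality, $(a+b)^2\leq2a^2+2b^2$, and the two bounds above then give \eqref{eq:full.jump.control.forms}. The hidden constant is independent of the admissible choice of $\Qtrace{F}{k}$, because only the contractivity of the projector and the uniform constant of Proposition~\ref{prop:stable.polynomial.kernel.decomposition} are used.

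There is no real obstacle. The only delicate point is checking that the trace of the closed component lies in the face space, and this is guaranteed by \eqref{eq:stability.minimal.kernel.trace} together with \eqref{eq:def.Qtrace.admissible}.
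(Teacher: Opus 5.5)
Your proof is correct and is essentially the paper's argument. The kernel component $z_T$ from Proposition~\ref{prop:stable.polynomial.kernel.decomposition} has traces in $\mathcal S_F^{k,\ell}\subseteq\Qtrace{F}{k}$, which gives $(\mathrm{Id}-\piQ{F}{k})\trc_Fv_T=(\mathrm{Id}-\piQ{F}{k})\trc_F(v_T-z_T)$. Contractivity of the projector, the trace bound in \eqref{eq:stable.polynomial.kernel.decomposition}, and $h_F\simeq h_T$ then close the estimate, uniformly in the admissible choice of face spaces.
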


\begin{proof}
Let $z_T$ be given by Proposition~\ref{prop:stable.polynomial.kernel.decomposition}.
Since $\trc_Fz_T\in\mathcal S_F^{k,\ell}\subseteq\Qtrace{F}{k}$, $(\mathrm{Id}-\piQ{F}{k})\trc_Fv_T = (\mathrm{Id}-\piQ{F}{k})\trc_F(v_T-z_T)$.
Hence
\[
\begin{aligned}
  \|\trc_Fv_T-v_F\|_F
  &\leq
  \|(\mathrm{Id}-\piQ{F}{k})\trc_Fv_T\|_F
  +
  \|\piQ{F}{k}\trc_Fv_T-v_F\|_F
  \\
  &\leq
  \|\trc_F(v_T-z_T)\|_F
  +
  \|\piQ{F}{k}\trc_Fv_T-v_F\|_F.
\end{aligned}
\]
Squaring, multiplying by $h_F^{-1}$, summing over the faces, and using Proposition~\ref{prop:stable.polynomial.kernel.decomposition} together with the uniform comparability of incident mesh sizes prove \eqref{eq:full.jump.control.forms}.
\end{proof}

\subsection{Comparison of canonical and reconstructed energies}
\label{subsec:hybrid.energy.comparison}

The canonical seminorm is defined independently of any particular reconstruction target and serves as the reference quantity throughout the stability analysis.
In practical discretisations, however, the energy is often expressed in terms of a reconstructed exterior derivative associated with a chosen local reconstruction space.
It is therefore useful to compare the resulting reconstruction-based energy with the canonical one.
The following proposition separates the direct and reverse comparison estimates.

For a reconstruction family $\mathcal R=(\mathcal R_T^{k+1})_{T\in\mathcal T_h}$, define the reconstructed seminorm
\[
  |\underline v_h|_{\mathcal R,\DIFF,h}^2
  \coloneqq
  \sum_{T\in\mathcal T_h}
  \left(
    \|\DIFF_{\mathcal R,T}^{k}\underline v_T\|_{L^2\Lambda^{k+1}(T)}^2
    +
    \mathrm s_{\mathcal Q,T}^{k,\ell}
    (\underline v_T,\underline v_T)
  \right).
\]

\begin{proposition}[Comparison of canonical and reconstructed seminorms]
\label{prop:comparison.broken.reconstructed.energies}
The following assertions hold.

\begin{enumerate}

\item Assume that the reconstruction spaces satisfy the uniform inverse trace estimate
\begin{equation}\label{eq:reconstruction.inverse.trace}
  \sum_{F\in\mathcal F_T}
  h_F
  \left\|
    \star_F^{-1}
    \trc_F(\star_Tq_T)
  \right\|_F^2
  \lesssim
  \|q_T\|_T^2
  \qquad
  \forall q_T\in\mathcal R_T^{k+1},
  \quad
  \forall T\in\mathcal T_h.
\end{equation}
Then
\begin{equation}\label{eq:reconstructed.energy.controlled.by.broken}
  |\underline v_h|_{\mathcal R,\DIFF,h}
  \lesssim
  |\underline v_h|_{\DIFF,h}
  \qquad
  \forall\underline v_h\in\uXh{k}.
\end{equation}

\item Assume instead that, for every $T\in\mathcal T_h$,
\begin{align}
  \DIFF_T\PL{\ell}{k}(T)
  &\subseteq
  \mathcal R_T^{k+1},
  \label{eq:energy.reverse.cell.inclusion}
  \\
  \star_F^{-1}
  \trc_F
  \left(
    \star_T\DIFF_T\PL{\ell}{k}(T)
  \right)
  &\subseteq
  \Qtrace{F}{k}
  \qquad
  \forall F\in\mathcal F_T.
  \label{eq:energy.reverse.face.inclusion}
\end{align}
Then
\begin{equation}\label{eq:broken.energy.controlled.by.reconstructed}
  |\underline v_h|_{\DIFF,h}
  \lesssim
  |\underline v_h|_{\mathcal R,\DIFF,h}
  \qquad
  \forall\underline v_h\in\uXh{k}.
\end{equation}

\end{enumerate}

Consequently, if all the assumptions above hold, the two seminorms are uniformly equivalent and have the same kernel.
\end{proposition}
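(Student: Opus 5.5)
Both assertions reduce to a cellwise comparison of $\|\DIFF_{\mathcal R,T}^k\underline v_T\|_T$ with $\|\DIFF_Tv_T\|_T$ plus stabilisation. The stabilisation terms appear identically in both seminorms. The starting point is a single identity. Integrating by parts the volume term in \eqref{eq:def.local.derivative.general.forms} with $v_T$ itself (a polynomial, so Stokes applies) gives, for every $q_T\in\mathcal R_T^{k+1}$,
\[
  \int_T\DIFF_{\mathcal R,T}^{k}\underline v_T\wedge\star_Tq_T
  =\int_T\DIFF_Tv_T\wedge\star_Tq_T
  -\sum_{F\in\mathcal F_T}\epsTF\int_F(\trc_Fv_T-v_F)\wedge\trc_F(\star_Tq_T),
\]
with the sign fixed by the same convention used in the proof of Proposition~\ref{prop:projected.commuting.forms}. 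Thus the reconstruction equals the projected broken derivative up to a boundary term involving the full jump $\trc_Fv_T-v_F$.

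For assertion~1, take $q_T=\DIFF_{\mathcal R,T}^{k}\underline v_T$ in this identity. Bound the volume term by Cauchy--Schwarz. Bound each face term by $h_F^{-1/2}\|\trc_Fv_T-v_F\|_F\,h_F^{1/2}\|\star_F^{-1}\trc_F(\star_Tq_T)\|_F$, and sum using the inverse trace estimate \eqref{eq:reconstruction.inverse.trace}. This yields $\|q_T\|_T^2\lesssim(\|\DIFF_Tv_T\|_T+J_T(\underline v_T))\|q_T\|_T$. Proposition~\ref{prop:full.jump.control.forms} then replaces $J_T$ by $\|\DIFF_Tv_T\|_T$ plus the stabilisation. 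Squaring and summing over $T$, and adding the common stabilisation, gives \eqref{eq:reconstructed.energy.controlled.by.broken}. The constant is uniform over admissible face spaces because Proposition~\ref{prop:full.jump.control.forms} is.

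For assertion~2, choose $q_T=\DIFF_Tv_T$, which lies in $\mathcal R_T^{k+1}$ by \eqref{eq:energy.reverse.cell.inclusion}. Condition \eqref{eq:energy.reverse.face.inclusion} says $\star_F^{-1}\trc_F(\star_Tq_T)\in\Qtrace{F}{k}$. Hence in each face integral we may insert $\piQ{F}{k}$ and replace $\trc_Fv_T-v_F$ by the projected residual $\piQ{F}{k}\trc_Fv_T-v_F$. The identity then reads
\[
  \|\DIFF_Tv_T\|_T^2=\int_T\DIFF_{\mathcal R,T}^k\underline v_T\wedge\star_T\DIFF_Tv_T+\sum_F(\text{projected residual terms}).
\]
Bound the face terms via $h_F^{-1/2}\|\piQ{F}{k}\trc_Fv_T-v_F\|_F$ times $h_F^{1/2}\|\trc_F(\star_T\DIFF_Tv_T)\|_F$. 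The second factor is controlled by $\|\DIFF_Tv_T\|_T$ through the discrete polynomial trace inequality on the regular mesh, since $\DIFF_Tv_T\in\PL{\ell-1}{k+1}(T)$; no inverse-trace hypothesis on $\mathcal R$ is needed. This gives $\|\DIFF_Tv_T\|_T\lesssim\|\DIFF_{\mathcal R,T}^k\underline v_T\|_T+\mathrm s_{\mathcal Q,T}^{k,\ell}(\underline v_T,\underline v_T)^{1/2}$. Summing over $T$ yields \eqref{eq:broken.energy.controlled.by.reconstructed}.

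Equivalence and equality of kernels follow immediately from the two inequalities. The main point needing care is assertion~2. There the broken derivative must be recovered from the projected stabilisation alone, so one must use the face-compatibility inclusion \eqref{eq:energy.reverse.face.inclusion} to reduce the full jump to the projected residual. Otherwise only the full jump would appear, and it is not available to be bounded in terms of the reconstructed energy. The signs and orientation conventions in the integration by parts also need checking, but they do not affect the estimates.
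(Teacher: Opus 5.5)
Your proposal is correct and follows essentially the same route as the paper. Both use the integration-by-parts identity writing $\DIFF_{\mathcal R,T}^{k}\underline v_T$ as the broken derivative plus a full-jump boundary term. For the first assertion, both combine Cauchy--Schwarz with the inverse trace estimate and Proposition~\ref{prop:full.jump.control.forms}; for the second, both test with $q_T=\DIFF_Tv_T$, use the face inclusion to replace the full jump by the projected residual, and conclude with the polynomial inverse trace inequality.
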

\begin{proof}
For the first assertion, let $q_T\in\mathcal R_T^{k+1}$.
An integration by parts in \eqref{eq:def.local.derivative.general.forms} gives
\[
\begin{aligned}
  \int_T
  \DIFF_{\mathcal R,T}^{k}\underline v_T
  \wedge\star_Tq_T
  ={}&
  \int_T
  \DIFF_Tv_T\wedge\star_Tq_T
  \\
  &+
  \sum_{F\in\mathcal F_T}
  \epsTF
  \int_F
  (v_F-\trc_Fv_T)
  \wedge
  \trc_F(\star_Tq_T).
\end{aligned}
\]
Cauchy--Schwarz, \eqref{eq:reconstruction.inverse.trace}, and the definition of $J_T$ yield $\|\DIFF_{\mathcal R,T}^{k}\underline v_T\|_T \lesssim \|\DIFF_Tv_T\|_T + J_T(\underline v_T)$.
Using Proposition~\ref{prop:full.jump.control.forms}, squaring, and summing over the cells proves \eqref{eq:reconstructed.energy.controlled.by.broken}.

For the second assertion, assume \eqref{eq:energy.reverse.cell.inclusion}-- \eqref{eq:energy.reverse.face.inclusion}.
We may choose $q_T=\DIFF_Tv_T$ in the defining equation of the reconstruction.
Using \eqref{eq:energy.reverse.face.inclusion}, the face residual can be projected onto $\Qtrace{F}{k}$, and we obtain
\[
\begin{aligned}
  \|\DIFF_Tv_T\|_T^2
  ={}&
  \int_T
  \DIFF_{\mathcal R,T}^{k}\underline v_T
  \wedge
  \star_T\DIFF_Tv_T
  \\
  &-
  \sum_{F\in\mathcal F_T}
  \epsTF
  \int_F
  \left(
    v_F-\piQ{F}{k}\trc_Fv_T
  \right)
  \wedge
  \trc_F
  \left(
    \star_T\DIFF_Tv_T
  \right).
\end{aligned}
\]
The polynomial inverse trace inequality therefore gives
\[
  \|\DIFF_Tv_T\|_T
  \lesssim
  \|\DIFF_{\mathcal R,T}^{k}\underline v_T\|_T
  +
  \mathrm s_{\mathcal Q,T}^{k,\ell}
  (\underline v_T,\underline v_T)^{1/2}.
\]
After squaring and summing over the mesh, we obtain \eqref{eq:broken.energy.controlled.by.reconstructed}.
\end{proof}

\begin{remark}[Role of the two energies and computational trade-off]
\label{rem:role.broken.reconstructed.energies}
The seminorm $|\cdot|_{\DIFF,h}$ is intrinsic to the hybrid space and is available for every stability-admissible face family satisfying \eqref{eq:def.Qtrace.admissible}.
It is therefore the reference seminorm for both main stability results.
A reconstructed seminorm may replace it on the right-hand side of the corresponding stability estimates whenever the reverse comparison \eqref{eq:broken.energy.controlled.by.reconstructed} holds, while full equivalence additionally shows that the two seminorms have the same kernel.

This distinction also has a computational consequence.
Enriching a reconstruction space does not add globally coupled unknowns and changes only cell-local, parallelisable problems, whereas enriching the face spaces increases the size of the global system after static condensation.
The canonical broken energy is therefore available with stability-minimal face spaces, while a reconstruction-based formulation may require enrichment of the local reconstruction spaces or of the skeletal spaces in order to verify the reverse comparison.
\end{remark}

\subsection{Vector proxies in three space dimensions}
\label{sec:vector.proxies}

We conclude this section by spelling out the vector proxies in the practical case $n=3$.
This subsection provides a dictionary for the intrinsic construction and illustrates the distinction between stability-minimal face spaces and reconstruction spaces.
For every oriented face $F$, $\boldsymbol n_F$ denotes the unit normal compatible with the chosen orientation.
We use the two-dimensional proxy convention of \cite[Example~2 and Appendix~A]{Bonaldi.Di-Pietro.ea:24}. If a face $1$-form $\alpha_F$ has standard tangential vector proxy $\boldsymbol\alpha_F$, its rotated tangential proxy is the clockwise rotation $\boldsymbol\alpha_{F,\tau} \coloneqq \varrho_{-\pi/2}\boldsymbol\alpha_F$.
Equivalently, this is the vector proxy induced by $\star_F^{-1}$.
If $\alpha_F=\trc_F\alpha_T$ and the cell $1$-form $\alpha_T$ has vector proxy $\boldsymbol\alpha_T$, then
\[
\boldsymbol\alpha_{F,\tau}
=\boldsymbol\alpha_{T|F}\times\boldsymbol n_F.
\]
Under this convention, $\DIFF_F\PL{\ell+1}{0}(F) \,\leftrightarrow\, \operatorname{rot}_F\mathcal P^{\ell+1}(F)$.

With these identifications, the stability-minimal face spaces correspond to
\[
\mathcal S_F^{0,\ell}
\,\leftrightarrow\,
\mathcal P^0(F),
\qquad
\mathcal S_F^{1,\ell}
\,\leftrightarrow\,
\operatorname{rot}_F\mathcal P^{\ell+1}(F),
\qquad
\mathcal S_F^{2,\ell}
\,\leftrightarrow\,
\mathcal P^\ell(F).
\]
Consequently, admissible scalar face spaces for the hybrid gradient satisfy
\begin{equation}\label{eq:vector.proxy.admissible.gradient.face.space}
\mathcal P^0(F)
\subseteq \mathcal Q_F^{0,\ell}
\subseteq \mathcal P^\ell(F),
\end{equation}
whereas admissible rotated tangential face spaces for the hybrid curl satisfy
\begin{equation}\label{eq:vector.proxy.admissible.curl.face.space}
  \operatorname{rot}_F\mathcal P^{\ell+1}(F)
  \subseteq
  \boldsymbol{\mathcal Q}_F^\ell
  \subseteq
  \mathcal P^\ell(F)^2.
\end{equation}
For the hybrid divergence, the stability-minimal normal face space is already the full scalar polynomial space $\mathcal P^\ell(F)$.

\subsubsection{Zero-forms and hybrid gradient reconstructions}
\label{subsec:proxy.gradient}

For $k=0$, the hybrid unknown is $\underline v_T = (v_T,(v_F)_{F\in\mathcal F_T})$, $v_T\in\mathcal P^\ell(T)$, $v_F\in\mathcal Q_F^{0,\ell}$, with $\mathcal Q_F^{0,\ell}$ satisfying \eqref{eq:vector.proxy.admissible.gradient.face.space}.
The standard HHO scalar choice is $\mathcal Q_F^{0,\ell}=\mathcal P^\ell(F)$.
Given a vector polynomial reconstruction space $\boldsymbol{\mathcal R}_T^{\mathrm{grad}}$, the reconstruction $\boldsymbol G_{\mathcal R,T}\underline v_T \in \boldsymbol{\mathcal R}_T^{\mathrm{grad}}$ is characterised by
\[
  (\boldsymbol G_{\mathcal R,T}\underline v_T,
   \boldsymbol q)_T
  =
  -(v_T,\operatorname{div}\boldsymbol q)_T
  +
  \sum_{F\in\mathcal F_T}
  \epsTF
  (v_F,\boldsymbol q_{|F}\cdot\boldsymbol n_F)_F
  \qquad
  \forall\boldsymbol q\in
  \boldsymbol{\mathcal R}_T^{\mathrm{grad}}.
\]
Typical choices are $\nabla\mathcal P^{\ell+1}(T)$ and a full vector polynomial space.
The stabilisation is
\[
  \mathrm s_{\operatorname{grad},T}
  (\underline v_T,\underline w_T)
  \coloneqq
  \sum_{F\in\mathcal F_T}
  h_F^{-1}
  \bigl(
    \piQ{F}{0}(v_{T|F})-v_F,
    \piQ{F}{0}(w_{T|F})-w_F
  \bigr)_F.
\]

\subsubsection{One-forms and hybrid curl reconstructions}
\label{subsec:proxy.curl}

For $k=1$, write
\[
  \underline{\boldsymbol v}_T
  =
  \left(
    \boldsymbol v_T,
    (\boldsymbol v_{F,\tau})_{F\in\mathcal F_T}
  \right),
  \qquad
  \boldsymbol v_T\in\mathcal P^\ell(T)^3,
  \quad
  \boldsymbol v_{F,\tau}\in\boldsymbol{\mathcal Q}_F^\ell,
\]
with $\boldsymbol{\mathcal Q}_F^\ell$ satisfying \eqref{eq:vector.proxy.admissible.curl.face.space}.
Given a vector polynomial reconstruction space $\boldsymbol{\mathcal R}_T^{\mathrm{curl}}$, define $\boldsymbol C_{\mathcal R,T} \underline{\boldsymbol v}_T \in \boldsymbol{\mathcal R}_T^{\mathrm{curl}}$ by
\[
\begin{aligned}
(\boldsymbol C_{\mathcal R,T}
\underline{\boldsymbol v}_T,
\boldsymbol q)_T
={}
(\boldsymbol v_T,
\operatorname{curl}\boldsymbol q)_T
-\sum_{F\in\mathcal F_T}
\epsTF
\bigl(
\boldsymbol v_{F,\tau},
\boldsymbol n_F\times
(\boldsymbol q_{|F}\times\boldsymbol n_F)
\bigr)_F
\end{aligned}
\]
for every $\boldsymbol q\in\boldsymbol{\mathcal R}_T^{\mathrm{curl}}$.
The minus sign is the vector-proxy expression of the two-dimensional wedge product.
With the rotated proxy convention above,
\[
  \int_F
  v_F\wedge\trc_F(\star_Tq_T)
  =
  -
  \bigl(
    \boldsymbol v_{F,\tau},
    \boldsymbol n_F\times
    (\boldsymbol q_{|F}\times\boldsymbol n_F)
  \bigr)_F.
\]
Natural reconstruction spaces include $\operatorname{curl}\mathcal P^{\ell+1}(T)^3$ and $\mathcal P^m(T)^3$.
The first is the ambient exact-image target, whereas the second is a full polynomial reconstruction.
The associated stabilisation is
\[
\begin{aligned}
\mathrm s_{\operatorname{curl},T}
(\underline{\boldsymbol v}_T,
\underline{\boldsymbol w}_T)
\coloneqq
\sum_{F\in\mathcal F_T}
h_F^{-1}
\bigl(
\pi_{\boldsymbol{\mathcal Q},F}^{\ell}
(\boldsymbol v_{T|F}\times\boldsymbol n_F)
-\boldsymbol v_{F,\tau},
\pi_{\boldsymbol{\mathcal Q},F}^{\ell}
(\boldsymbol w_{T|F}\times\boldsymbol n_F)
-\boldsymbol w_{F,\tau}
\bigr)_F.
\end{aligned}
\]
The hybrid exterior-derivative seminorm becomes the $\boldsymbol H(\operatorname{curl})$-like hybrid seminorm used in the Weber analysis developed in \cite{Lemaire.Pitassi:25}, based on the broken curl and the projected rotated tangential jump.
Proposition~\ref{prop:comparison.broken.reconstructed.energies} provides conditions under which this seminorm is uniformly equivalent to the reconstruction-based energy associated with the local curl reconstructions $\boldsymbol C_{\mathcal R,T}$.

\subsubsection{Two-forms and hybrid divergence reconstructions}
\label{subsec:proxy.divergence}

For $k=2$, write
\[
  \underline{\boldsymbol v}_T
  =
  (\boldsymbol v_T,(v_{F,n})_{F\in\mathcal F_T}),
  \qquad
  \boldsymbol v_T\in\mathcal P^\ell(T)^3,
  \quad
  v_{F,n}\in\mathcal P^\ell(F).
\]
Given a scalar polynomial reconstruction space $\mathcal R_T^{\mathrm{div}}$, define $D_{\mathcal R,T}\underline{\boldsymbol v}_T \in \mathcal R_T^{\mathrm{div}}$ by
\[
  (D_{\mathcal R,T}\underline{\boldsymbol v}_T,q)_T
  =
  -(\boldsymbol v_T,\operatorname{grad}q)_T
  +
  \sum_{F\in\mathcal F_T}
  \epsTF
  (v_{F,n},q_{|F})_F
  \qquad
  \forall q\in\mathcal R_T^{\mathrm{div}}.
\]
The stability-minimal normal face space is already the full scalar polynomial space.
The corresponding stabilisation is
\[
  \mathrm s_{\operatorname{div},T}
  (\underline{\boldsymbol v}_T,
   \underline{\boldsymbol w}_T)
  \coloneqq
  \sum_{F\in\mathcal F_T}
  h_F^{-1}
  \bigl(
    \boldsymbol v_{T|F}\cdot\boldsymbol n_F-v_{F,n},
    \boldsymbol w_{T|F}\cdot\boldsymbol n_F-w_{F,n}
  \bigr)_F.
\]

\section{Functional trace framework on polytopal meshes}
\label{sec:functional.framework}

Throughout this section, fix $k\in\{0,\ldots,n-2\}$ and a polynomial degree $m\geq1$.

\paragraph{Scaled graph norms on the auxiliary polynomial spaces.}

For every mesh cell $f$ and every polynomial differential form $\nu_f$ on $\Sh(f)$, we use the scaled graph norm
\begin{equation}\label{eq:polynomial.graph.norm}
  \vvvert\nu_f\vvvert_f^2
  \coloneqq
  \|\nu_f\|_f^2
  +
  h_f^2\|\DIFF_f\nu_f\|_f^2.
\end{equation}
Unless otherwise specified, all dual norms on the auxiliary polynomial test spaces are induced by \eqref{eq:polynomial.graph.norm}.
Since the polynomial degrees are fixed, the polynomial inverse estimate gives $\vvvert\nu_f\vvvert_f \simeq \|\nu_f\|_f$ uniformly on each of the polynomial spaces used below.

\subsection{Stable trace extensions and geometric decomposition}
\label{subsubsec:geometric.decomposition}

\begin{proposition}[Stable polynomial trace extension]
\label{lem:stable.polynomial.trace.extension}
Let $f\in\FM{d}(\Mh)$ be a $d$-cell, let $k\in\{0,\ldots,d-1\}$, and let $r\geq1$.
Then there exists a linear operator $\mathcal E_{r,f}^k:\PLtrim{r}{k}(\Sh(\partial f))\to\PLtrim{r}{k}(\Sh(f))$ such that, for every $\theta_{\partial f}\in\PLtrim{r}{k}(\Sh(\partial f))$,
\begin{equation}\label{eq:trace.extension.trace}
  \trc_{\partial f}\mathcal E_{r,f}^k\theta_{\partial f}
  =
  \theta_{\partial f},
\end{equation}
and
\begin{equation}\label{eq:trace.extension.stability}
  \|\mathcal E_{r,f}^k\theta_{\partial f}\|_{L^2\Lambda^k(f)}
  +
  h_f\|\DIFF\mathcal E_{r,f}^k\theta_{\partial f}\|_{L^2\Lambda^{k+1}(f)}
  \lesssim
  h_f^{1/2}\|\theta_{\partial f}\|_{L^2\Lambda^k(\partial f)}.
\end{equation}
The hidden constant depends only on $d$, $k$, $r$, and the standing mesh-regularity parameters.
\end{proposition}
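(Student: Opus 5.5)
The plan is to build the extension simplex by simplex on the auxiliary submesh $\Sh(f)$ and then get the scaling from a reference configuration.

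\textbf{Step 1: degree-of-freedom extension.} I would use the standard geometric decomposition of the trimmed spaces $\PLtrim{r}{k}$. Every element of $\PLtrim{r}{k}(\Sh(f))$ is uniquely determined by its trimmed moments on the subsimplices of $\Sh(f)$ of dimension $\geq k$. These are the moments against $\PL{r-1+k-\dim s}{\dim s-k}(s)$ on each subsimplex $s$. Given $\theta_{\partial f}\in\PLtrim{r}{k}(\Sh(\partial f))$, I define $\mathcal E_{r,f}^k\theta_{\partial f}$ as the unique element of $\PLtrim{r}{k}(\Sh(f))$ that has the same moments as $\theta_{\partial f}$ on every subsimplex contained in $\partial f$, and zero moments on every subsimplex whose relative interior lies in the interior of $f$.

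\textbf{Step 2: trace identity and linearity.} The trace on $\partial f$ is single-valued and conforming, and it is determined by the boundary moments alone. Unisolvence of the trimmed degrees of freedom on $\Sh(\partial f)$ then gives \eqref{eq:trace.extension.trace}. Linearity of $\mathcal E_{r,f}^k$ is immediate from the construction.

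\textbf{Step 3: stability by local scaling.} On each $d$-simplex $S\in\Sh(f)$, the restriction of the extension depends only on the boundary moments on the subsimplices of $S$ lying in $\partial f$. I then pull $S$ back to the reference simplex by the affine map $x\mapsto x_S+h_Sx$. Equivalence of norms on the finite-dimensional reference space bounds the restriction to $S$ by $\|\theta\|$ on the faces of $S$ in $\partial f$, in the appropriate $h$-weighted form:
\[
\|\omega\|_{S}\lesssim h_S^{1/2}\|\theta_{\partial f}\|_{\partial S\cap\partial f}.
\]
Shape regularity makes the constants uniform.

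There is one subtlety in Step 3. A simplex $S$ may touch $\partial f$ only in a lower-dimensional subsimplex, so it receives no face data directly. The moments on such a subsimplex are determined by $\theta_{\partial f}$ on the $(d-1)$-simplices of $\Sh(\partial f)$ containing it. Because $\theta_{\partial f}$ is polynomial on each of these, a polynomial inverse trace estimate bounds those moments by the $L^2$ norm of $\theta_{\partial f}$ on a neighbouring boundary patch. The patch has uniformly bounded cardinality, and all diameters in it are $\simeq h_f$. Summing over the uniformly bounded number of simplices in $\Sh(f)$ then gives the $L^2$ part of \eqref{eq:trace.extension.stability}.

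\textbf{Step 4: the derivative term.} The bound on $h_f\|\DIFF\mathcal E\theta\|_f$ follows from the $L^2$ bound through the polynomial inverse inequality $h_S\|\DIFF\omega\|_S\lesssim\|\omega\|_S$ on each $S$.

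\textbf{Main obstacle.} I expect the hardest step to be making the scaling argument rigorous and uniform. This requires a careful weighted statement of moment-to-norm equivalence for subsimplices of various dimensions, where each moment scales with $h^{\dim s}$. It also requires checking that the resulting constants depend only on $d$, $k$, $r$ and the mesh-regularity parameters. Since the number of local configurations is not finite, the reference-element argument on its own does not settle this. I would first handle it by bounding everything on the reference simplex, using the inverse trace estimate and the bounded cardinality of the patch, with constants controlled uniformly over shape-regular simplices by the usual compactness of the set of reference shapes.
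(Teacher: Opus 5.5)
Your proposal is correct and follows essentially the same route as the paper: you extend by copying the boundary degrees of freedom of $\theta_{\partial f}$ and setting the interior ones to zero, get the trace identity from unisolvence on the boundary simplices, bound the $L^2$ norm by scaled degree-of-freedom norm equivalence, and control $h_f\|\DIFF\mathcal E_{r,f}^k\theta_{\partial f}\|_f$ by the inverse inequality. Your separate treatment of simplices that meet $\partial f$ only in lower-dimensional subsimplices is what the paper's weighted bound $\sum_{a}h_f^{d-1-a}\sum_{\dim S=a}N_{\mathrm{dof}}(\theta_{\partial f},S)^2\lesssim\|\theta_{\partial f}\|_{\partial f}^2$ on $\Sh(\partial f)$ encodes.
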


\begin{proof}
Let $\theta_{\partial f}\in\PLtrim{r}{k}(\Sh(\partial f))$.
We define $\mathcal E_{r,f}^k\theta_{\partial f}$ by prescribing its finite element degrees of freedom.
Recall that, for a simplex $S\in\Sh(f)$ of dimension $a\geq k$, the degrees of freedom attached to $S$ are $\int_S\trc_S\zeta\wedge\mu$ for all $\mu\in\PL{r+k-a-1}{a-k}(S)$.
For every $S\in\Sh(\partial f)$, we prescribe the degrees of freedom of $\mathcal E_{r,f}^k\theta_{\partial f}$ attached to $S$ to be equal to the corresponding degrees of freedom of $\theta_{\partial f}$.
For every $S\in\Sh(f)$ such that $S\not\subset\partial f$, we set all the degrees of freedom attached to $S$ equal to zero.
By unisolvence, these prescriptions define a unique form $\mathcal E_{r,f}^k\theta_{\partial f}\in\PLtrim{r}{k}(\Sh(f))$, and the resulting operator is linear.

For every $(d-1)$-simplex $S\in\Sh(\partial f)$, all the degrees of freedom of $\trc_S\mathcal E_{r,f}^k\theta_{\partial f}$ and $(\theta_{\partial f})_{|S}$, including those attached to the subsimplices of $S$, coincide.
Unisolvence on $S$ therefore yields \eqref{eq:trace.extension.trace}.

Denote by $N_{\mathrm{dof}}(\zeta,S)$ the norm of the degrees of freedom of $\zeta$ attached to $S$.
The scaled norm equivalence associated with these degrees of freedom on the auxiliary simplicial submeshes $\Sh(f)$ and $\Sh(\partial f)$ gives
\[
  \begin{aligned}
  \|\mathcal E_{r,f}^k\theta_{\partial f}\|_{L^2\Lambda^k(f)}^2
  &\lesssim
  \sum_{a=k}^{d-1}
  h_f^{d-a}
  \sum_{\substack{S\in\Sh(\partial f)\\\dim S=a}}
  N_{\mathrm{dof}}(\theta_{\partial f},S)^2
  \\
  &=
  h_f
  \sum_{a=k}^{d-1}
  h_f^{d-1-a}
  \sum_{\substack{S\in\Sh(\partial f)\\\dim S=a}}
  N_{\mathrm{dof}}(\theta_{\partial f},S)^2
  \lesssim
  h_f\|\theta_{\partial f}\|_{L^2\Lambda^k(\partial f)}^2.
  \end{aligned}
\]
Finally, the polynomial inverse inequality on $\Sh(f)$ yields
\[
  h_f\|\DIFF\mathcal E_{r,f}^k\theta_{\partial f}\|_{L^2\Lambda^{k+1}(f)}
  \lesssim
  \|\mathcal E_{r,f}^k\theta_{\partial f}\|_{L^2\Lambda^k(f)}.
\]
Combining the last two estimates proves \eqref{eq:trace.extension.stability}.
\end{proof}

The following result provides a stable realisation, on the auxiliary simplicial submeshes of the polytopal cells, of the standard extension-based geometric decomposition of finite element differential forms on simplices \cite{Arnold.Falk.ea:06,Arnold.Falk.ea:09}.

Once a trace-compatible family of extension operators is available, the direct-sum decomposition follows by a standard recursive argument.
The main point specific to the present setting is the construction of such a family with uniform graph-norm stability across the hierarchy of polytopal cells.
Geometric decompositions of the same type are used in \cite{Licht:17,Christiansen.Licht:20} to decompose vertical distributional complexes into local complexes indexed by mesh cells in a simplicial setting.

\begin{lemma}[Trace-compatible geometric decomposition]
\label{lem:geometric.extensions}
Let $j\in\{k+1,\ldots,n-1\}$, $q_j\coloneqq j-k-1$, and fix a polynomial degree $r\geq 1$.

For every pair of cells $s\subset f$ with $q_j\leq\dim s$, there exists a linear operator
\[
  \mathcal E_{s,f}^j\colon
  \PLtrimz{r}{q_j}(\Sh(s))
  \longrightarrow
  \PLtrim{r}{q_j}(\Sh(f))
\]
such that $\mathcal E_{f,f}^j=\mathrm{Id}$, and, for every $e\in\FM{\dim f-1}(\partial f)$,
\begin{equation}\label{eq:geometric.extension.trace}
  \trc_e
  \mathcal E_{s,f}^j\zeta_s
  =
  \begin{cases}
    \mathcal E_{s,e}^j\zeta_s,
    &s\subset e,
    \\[1mm]
    0,
    &s\not\subset e.
  \end{cases}
\end{equation}

Moreover, one has the direct geometric decomposition
\begin{equation}\label{eq:geometric.direct.sum}
\PLtrim{r}{q_j}(\Sh(f))
=\bigoplus_{\substack{
s\subset f\\
q_j\leq\dim s
}}
\mathcal E_{s,f}^j
\PLtrimz{r}{q_j}(\Sh(s)).
\end{equation}
Equivalently, every $\nu_f\in\PLtrim{r}{q_j}(\Sh(f))$ admits a unique representation
\[
  \nu_f
  =
  \sum_{\substack{
    s\subset f\\
    q_j\leq\dim s
  }}
  \mathcal E_{s,f}^j\zeta_s,
  \qquad
  \zeta_s\in
  \PLtrimz{r}{q_j}(\Sh(s)).
\]

Setting $d\coloneqq\dim f$, the following uniform stability estimate holds.
\begin{equation}\label{eq:geometric.norm.equivalence}
  \vvvert\nu_f\vvvert_f^2
  \simeq
  \sum_{\substack{
    s\subset f\\
    q_j\leq\dim s
  }}
  h_f^{d-\dim s}
  \vvvert\zeta_s\vvvert_s^2,
\end{equation}
In particular,
\begin{equation}\label{eq:geometric.extension.stability}
  \vvvert
  \mathcal E_{s,f}^j\zeta_s
  \vvvert_f
  \lesssim
  h_f^{(d-\dim s)/2}
  \vvvert\zeta_s\vvvert_s.
\end{equation}
\end{lemma}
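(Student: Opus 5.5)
The construction proceeds by induction on the dimension of the host cell $f$, building the operators $\mathcal E_{s,f}^j$ through the stable trace extension of Proposition~\ref{lem:stable.polynomial.trace.extension}. For $\dim f=\dim s$ the only admissible pair is $s=f$, and we set $\mathcal E_{f,f}^j=\mathrm{Id}$. The trace condition \eqref{eq:geometric.extension.trace} then holds vacuously, since $\zeta_f$ has zero trace on $\partial f$. Now let $\dim f=d>\dim s$, and assume $\mathcal E_{s,e}^j$ has been defined for all $e\in\FM{d-1}(\partial f)$ with $s\subset e$. Define a boundary datum $\theta_{\partial f}$ by $\theta_e\coloneqq\mathcal E_{s,e}^j\zeta_s$ if $s\subset e$, and $\theta_e\coloneqq0$ otherwise. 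Then set
\[
  \mathcal E_{s,f}^j\zeta_s\coloneqq\mathcal E_{r,f}^{q_j}\theta_{\partial f}.
\]

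The first key step is to check that $\theta_{\partial f}$ is a genuine element of the conforming space $\PLtrim{r}{q_j}(\Sh(\partial f))$. This means showing that traces agree on every $(d-2)$-cell $g=e\cap e'$ shared by two boundary faces. By the induction hypothesis applied to $e$ and $e'$, both $\trc_g\theta_e$ and $\trc_g\theta_{e'}$ equal $\mathcal E_{s,g}^j\zeta_s$ if $s\subset g$, and $0$ otherwise. The same holds when only one of $e,e'$ contains $s$: in that case $s\not\subset g$, so both traces vanish. Here one uses that the cellular structure is regular, so that the boundary of a $(d-1)$-cell is covered by its $(d-2)$-subcells. Once conformity is established, \eqref{eq:trace.extension.trace} immediately gives \eqref{eq:geometric.extension.trace}. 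For stability, Proposition~\ref{lem:stable.polynomial.trace.extension} together with the induction hypothesis and $h_e\simeq h_f$ gives
\[
  \|\mathcal E_{s,f}^j\zeta_s\|_f\lesssim h_f^{1/2}\Bigl(\sum_{e\supset s}h_f^{d-1-\dim s}\vvvert\zeta_s\vvvert_s^2\Bigr)^{1/2}.
\]
The number of faces $e$ in the sum is bounded, and the derivative part of the graph norm is absorbed by the inverse inequality. This proves \eqref{eq:geometric.extension.stability}.

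For the direct sum \eqref{eq:geometric.direct.sum}, I plan to use the standard recursive peeling argument, organised as an induction on $d$. Given $\nu_f$, its boundary trace lies in $\PLtrim{r}{q_j}(\Sh(\partial f))$. Apply the decomposition on each boundary face $e$ and glue the pieces by uniqueness. This produces components $\zeta_s$ for the proper subcells $s\subsetneq f$, and it requires checking that the component attached to $s$ is the same when read from any face containing $s$. Then define
\[
  \zeta_f\coloneqq\nu_f-\sum_{s\subsetneq f}\mathcal E_{s,f}^j\zeta_s.
\]
By \eqref{eq:geometric.extension.trace}, this form has zero trace on $\partial f$. For uniqueness, suppose a sum $\sum_s\mathcal E_{s,f}^j\zeta_s$ vanishes. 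Restricting to the top-dimensional proper subcells first (by trace) and descending shows, inductively, that all $\zeta_s$ with $s\subsetneq f$ vanish, and hence $\zeta_f=0$.

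The lower bound in \eqref{eq:geometric.norm.equivalence} follows by summing \eqref{eq:geometric.extension.stability}. The upper bound, $h_f^{d-\dim s}\vvvert\zeta_s\vvvert_s^2\lesssim\vvvert\nu_f\vvvert_f^2$, is the main obstacle, because the components must be extracted stably. I would argue by induction using scaled discrete trace inequalities, which on the fixed simplicial submesh give $h_f\|\trc_{\partial f}\nu_f\|_{\partial f}^2\lesssim\|\nu_f\|_f^2$. Applying the induction hypothesis on each face $e$ then controls $h_f\,h_f^{d-1-\dim s}\vvvert\zeta_s\vvvert_s^2$ for all $s\subsetneq f$. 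Finally, $\zeta_f$ is controlled by the triangle inequality combined with the already established bound \eqref{eq:geometric.extension.stability}. All constants stay uniform because the recursion depth is at most $n$ and incidence counts are bounded by mesh regularity.
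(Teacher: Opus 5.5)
Your proposal is correct and follows essentially the same route as the paper. The operators are built recursively by applying the stable trace extension of Proposition~\ref{lem:stable.polynomial.trace.extension} to the compatible boundary family, the decomposition is obtained by peeling off facet traces with uniqueness by induction, and the reverse norm bound combines the facet-wise induction hypothesis with the discrete trace inequality. The differences are cosmetic: you bound $\zeta_f$ via \eqref{eq:geometric.extension.stability} rather than via $\zeta_f=\nu_f-\mathcal E_{r,f}^{q_j}\trc_{\partial f}\nu_f$ and the trace-extension estimate. The gluing consistency you flag follows at once by applying the lower-dimensional uniqueness to the intrinsic trace $\trc_s\nu_f$.
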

\begin{proof}
We construct the operators recursively with respect to the codimension $\dim f-\dim s$.

If $s=f$, we set $\mathcal E_{f,f}^j=\mathrm{Id}$.
Now let $s\subsetneq f$, and assume that the operators
\[
  \mathcal E_{s,e}^j\colon
  \PLtrimz{r}{q_j}(\Sh(s))
  \longrightarrow
  \PLtrim{r}{q_j}(\Sh(e))
\]
have already been constructed for every $e\subset\partial f$ containing $s$.

Notice that $q_j\leq\dim s\leq d-1$.
Hence Proposition~\ref{lem:stable.polynomial.trace.extension} applies on $f$, with form degree $q_j$ and polynomial degree $r$.

For $\zeta_s\in\PLtrimz{r}{q_j}(\Sh(s))$, define a boundary family on $\Sh(\partial f)$ by
\[
  \left.
  \Theta_{s,\partial f}^j\zeta_s
  \right|_e
  \coloneqq
  \begin{cases}
    \mathcal E_{s,e}^j\zeta_s,
    &s\subset e,
    \\[1mm]
    0,
    &s\not\subset e.
  \end{cases}
\]

We first verify that this boundary family is compatible.
Let $e_1,e_2\subset\partial f$ be two facets and let $g\subset\partial e_1\cap\partial e_2$ be a common subcell.
If $s\subset g$, the induction hypothesis gives $\trc_g \mathcal E_{s,e_1}^j\zeta_s = \mathcal E_{s,g}^j\zeta_s = \trc_g \mathcal E_{s,e_2}^j\zeta_s$.
If $s\not\subset g$, the induction hypothesis gives zero for both traces, independently of whether either facet contains $s$.
Thus $\Theta_{s,\partial f}^j\zeta_s \in \PLtrim{r}{q_j}(\Sh(\partial f))$.

We define
\begin{equation}\label{eq:geometric.extension.definition}
\mathcal E_{s,f}^j\zeta_s
\coloneqq
\mathcal E_{r,f}^{q_j}
\left(
\Theta_{s,\partial f}^j\zeta_s
\right),
\end{equation}
where $\mathcal E_{r,f}^{q_j}$ is the operator given by Proposition~\ref{lem:stable.polynomial.trace.extension}.
The right-inverse property of $\mathcal E_{r,f}^{q_j}$ gives \eqref{eq:geometric.extension.trace}.

We next prove the extension estimate.
The stability of the trace extension yields
\[
\begin{aligned}
\vvvert
  \mathcal E_{s,f}^j\zeta_s
  \vvvert_f^2
  &\lesssim
  h_f
  \left\|
    \Theta_{s,\partial f}^j\zeta_s
  \right\|_{\partial f}^2
  \\
  &=
  h_f
  \sum_{\substack{
    e\in\FM{d-1}(\partial f)\\
    s\subset e
  }}
  \|
    \mathcal E_{s,e}^j\zeta_s
  \|_e^2
  \\
  &\leq
  h_f
  \sum_{\substack{
    e\in\FM{d-1}(\partial f)\\
    s\subset e
  }}
  \vvvert
    \mathcal E_{s,e}^j\zeta_s
  \vvvert_e^2.
\end{aligned}
\]
By the induction hypothesis, $\vvvert \mathcal E_{s,e}^j\zeta_s \vvvert_e^2 \lesssim h_e^{d-1-\dim s} \vvvert\zeta_s\vvvert_s^2$.
The uniform comparability of incident mesh sizes and the uniformly bounded number of facets therefore give $\vvvert \mathcal E_{s,f}^j\zeta_s \vvvert_f^2 \lesssim h_f^{d-\dim s} \vvvert\zeta_s\vvvert_s^2$.
This proves \eqref{eq:geometric.extension.stability}.

We now prove the direct decomposition by induction on $d=\dim f$.
If $d=q_j$, every $q_j$-form on $f$ has zero trace and hence $\PLtrim{r}{q_j}(\Sh(f)) = \PLtrimz{r}{q_j}(\Sh(f))$.
The assertion then reduces to the summand indexed by the cell $s=f$.

Assume $d>q_j$, and let $\nu_f\in\PLtrim{r}{q_j}(\Sh(f))$.
On each facet, the induction hypothesis provides a unique geometric decomposition of the trace of $\nu_f$.
Compatibility of the traces on the intersections of the facets, together with uniqueness in the lower-dimensional decompositions, identifies a single coefficient $\zeta_s\in\PLtrimz{r}{q_j}(\Sh(s))$ for every proper subcell $s\subsetneq f$ with $q_j\leq\dim s$.
Thus
\[
\trc_{\partial f}\nu_f
=\sum_{\substack{
  s\subsetneq f\\
    q_j\leq\dim s
  }}
  \Theta_{s,\partial f}^j\zeta_s.
\]
By the linearity of the trace extension and \eqref{eq:geometric.extension.definition},
\[
  \mathcal E_{r,f}^{q_j}
  \trc_{\partial f}\nu_f
  =
  \sum_{\substack{
    s\subsetneq f\\
    q_j\leq\dim s
  }}
  \mathcal E_{s,f}^j\zeta_s.
\]
Consequently,
\begin{equation}\label{eq:proper.cell.coefficient}
  \zeta_f
  \coloneqq
  \nu_f
  -
  \mathcal E_{r,f}^{q_j}
  \trc_{\partial f}\nu_f
  \in
  \PLtrimz{r}{q_j}(\Sh(f)),
\end{equation}
which gives the required representation of $\nu_f$.

To prove uniqueness, suppose that
\[
  \sum_{\substack{
    s\subset f\\
    q_j\leq\dim s
  }}
  \mathcal E_{s,f}^j\zeta_s
  =0.
\]
Taking the traces on the facets of $f$ and using uniqueness of the lower-dimensional decompositions gives $\zeta_s=0$ for all $s\subsetneq f$.
The remaining cell coefficient is then also zero.
Hence the sum in \eqref{eq:geometric.direct.sum} is direct.

It remains to prove the norm equivalence.
The upper estimate follows from the triangle inequality, \eqref{eq:geometric.extension.stability}, and the uniformly bounded number of subcells.
\[
  \vvvert\nu_f\vvvert_f^2
  \lesssim
  \sum_{\substack{
    s\subset f\\
    q_j\leq\dim s
  }}
  h_f^{d-\dim s}
  \vvvert\zeta_s\vvvert_s^2.
\]

For the converse estimate, let $e\subset\partial f$ be a facet.
The geometric coefficients of $\trc_e\nu_f$ are precisely the coefficients $\zeta_s$ associated with subcells $s\subset e$.
The induction hypothesis therefore gives
\[
  \sum_{\substack{
    s\subset e\\
    q_j\leq\dim s
  }}
  h_e^{d-1-\dim s}
  \vvvert\zeta_s\vvvert_s^2
  \lesssim
  \vvvert
  \trc_e\nu_f
  \vvvert_e^2.
\]
Multiplying by $h_f$, summing over the facets, and using the uniform comparability of incident mesh sizes and the uniformly bounded number of facets, we obtain
\begin{equation}\label{eq:proper.subcell.lower.bound}
  \sum_{\substack{
    s\subsetneq f\\
    q_j\leq\dim s
  }}
  h_f^{d-\dim s}
  \vvvert\zeta_s\vvvert_s^2
  \lesssim
  h_f
  \sum_{e\in\FM{d-1}(\partial f)}
  \vvvert
  \trc_e\nu_f
  \vvvert_e^2.
\end{equation}

The polynomial inverse trace inequalities, together with $\DIFF_e\trc_e\nu_f=\trc_e\DIFF_f\nu_f$, give
\begin{equation}\label{eq:graph.trace.estimate}
  h_f
  \sum_{e\in\FM{d-1}(\partial f)}
  \vvvert
  \trc_e\nu_f
  \vvvert_e^2
  \lesssim
  \vvvert\nu_f\vvvert_f^2.
\end{equation}
Thus all proper-subcell coefficients are controlled by the graph norm of $\nu_f$.

Finally, from \eqref{eq:proper.cell.coefficient}, the stability of the trace extension and the polynomial inverse trace inequality give
\[
\begin{aligned}
  \vvvert\zeta_f\vvvert_f
  &\leq
  \vvvert\nu_f\vvvert_f
  +
  \vvvert
  \mathcal E_{r,f}^{q_j}
  \trc_{\partial f}\nu_f
  \vvvert_f
  \\
  &\lesssim
  \vvvert\nu_f\vvvert_f
  +
  h_f^{1/2}
  \|\trc_{\partial f}\nu_f\|_{\partial f}
  \\
  &\lesssim
  \vvvert\nu_f\vvvert_f.
\end{aligned}
\]
Combining this estimate with \eqref{eq:proper.subcell.lower.bound} and \eqref{eq:graph.trace.estimate} proves the reverse bound in \eqref{eq:geometric.norm.equivalence}.
\end{proof}

\subsection{Functional double complex and geometric coordinates}
\label{subsubsec:functional.coordinates}
For $j\in\{k+1,\ldots,n-1\}$, let $q_j\coloneqq j-k-1$.
For every $d\in\{q_j,\ldots,n\}$, define
\[
\mathbb X_d^j
\coloneqq
\bigtimes_{f\in\FM{d}(\Mh)}
(\PLtrim{m}{q_j}(\Sh(f)))'.
\]
In what follows, we denote elements of the spaces $\mathbb X_d^j$ by capital Greek letters such as $\Gamma$, $\Psi$, $\Xi$, and $\Upsilon$.
We equip $\mathbb X_d^j$ with the norm
\[
\|\Psi^d\|_{\mathbb X_d^j}^2
\coloneqq
\sum_{f\in\FM{d}(\Mh)}
h_f^{n-d}\|\Psi_f^d\|_{(\PLtrim{m}{q_j}(\Sh(f)))'}^2.
\]

For every $j\in\{k+1,\ldots,n-1\}$ and every $d\in\{q_j+1,\ldots,n\}$, we define the functional trace coboundary
\[
\mathbb B_d^j\colon
\mathbb X_{d-1}^j
\longrightarrow
\mathbb X_d^j
\]
by
\[
\left(\mathbb B_d^j\Psi^{d-1}\right)_f(\nu_f)
\coloneqq
\sum_{e\in\FM{d-1}(\partial f)}
\epsilon_{f,e}
\Psi_e^{d-1}\left(\trc_e\nu_f\right)
\]
for every $f\in\FM{d}(\Mh)$ and every $\nu_f\in\PLtrim{m}{q_j}(\Sh(f))$.
This is well-defined since traces preserve the trimmed spaces on the auxiliary simplicial submeshes.

For every $j\in\{k+1,\ldots,n-2\}$ and every $d\in\{q_{j+1},\ldots,n\}$, we define the transpose of the local exterior derivative by
\[
  \mathbb D_d^j\colon
  \mathbb X_d^{j+1}
  \longrightarrow
  \mathbb X_d^j,
  \qquad
  (\mathbb D_d^j\Phi^d)_f(\nu_f)
  \coloneqq
  \Phi_f^d(\DIFF_f\nu_f).
\]
Notice that $\mathbb B$ changes the cell dimension while preserving the test form degree, whereas $\mathbb D$ lowers the test form degree while preserving the cell dimension.

\begin{proposition}[Functional double complex]
\label{prop:functional.complex}

For every $j\in\{k+1,\ldots,n-1\}$ and $d\in\{q_j+1,\ldots,n-1\}$,
\begin{equation}\label{eq:functional.horizontal.complex}
  \mathbb B_{d+1}^j\mathbb B_d^j
  =
  0.
\end{equation}
For every $j\in\{k+2,\ldots,n-2\}$ and $d\in\{q_{j+1},\ldots,n\}$,
\begin{equation}\label{eq:functional.vertical.complex}
  \mathbb D_d^{j-1}\mathbb D_d^j
  =
  0.
\end{equation}
Finally, for every $j\in\{k+1,\ldots,n-2\}$ and $d\in\{q_{j+1},\ldots,n-1\}$,
\begin{equation}\label{eq:functional.double.complex.commutation}
  \mathbb B_{d+1}^j\mathbb D_d^j
  =
  \mathbb D_{d+1}^j\mathbb B_{d+1}^{j+1}.
\end{equation}

Consequently, for every $j\in\{k+1,\ldots,n-1\}$, the sequence
\[
0
\longrightarrow
\mathbb X_{q_j}^j
\xrightarrow{\mathbb B_{q_j+1}^j}
\mathbb X_{q_j+1}^j
\longrightarrow
\cdots
\longrightarrow
\mathbb X_n^j
\longrightarrow
0
\]
is a complex, and, for every $j\in\{k+1,\ldots,n-2\}$ and $d\in\{q_{j+1}+1,\ldots,n\}$, the diagram
\[
\begin{array}{ccc}
\mathbb X_{d-1}^{j+1}
&\xrightarrow{\ \mathbb B_d^{j+1}\ }&
\mathbb X_d^{j+1}
\\[2mm]
\underset{\mathbb D_{d-1}^{j}}{\big\downarrow}
&&
\underset{\mathbb D_d^{j}}{\big\downarrow}
\\[2mm]
\mathbb X_{d-1}^{j}
&\xrightarrow{\ \mathbb B_d^{j}\ }&
\mathbb X_d^{j}
\end{array}
\]
commutes.
\end{proposition}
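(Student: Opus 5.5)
The three identities are transposes of three elementary facts about traces and exterior derivatives of test forms. I would verify each directly from the definitions by evaluating both sides on an arbitrary $\nu_f\in\PLtrim{m}{q_j}(\Sh(f))$. The two consequences, that each row is a complex and that the square commutes, are then just restatements of \eqref{eq:functional.horizontal.complex} and \eqref{eq:functional.double.complex.commutation}.

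\textbf{Horizontal identity \eqref{eq:functional.horizontal.complex}.} Let $f\in\FM{d+1}(\Mh)$ and $\Psi^{d-1}\in\mathbb X_{d-1}^j$. Unfolding the definition twice gives
\[
  (\mathbb B_{d+1}^j\mathbb B_d^j\Psi^{d-1})_f(\nu_f)
  =
  \sum_{g\in\FM{d-1}(\partial f)}
  \Biggl(
    \sum_{\substack{e\in\FM{d}(\partial f)\\ g\subset e}}
    \epsilon_{f,e}\,\epsilon_{e,g}
  \Biggr)
  \Psi_g^{d-1}\bigl(\trc_g\nu_f\bigr).
\]
Here I use transitivity of traces, $\trc_g\trc_e=\trc_g$, which holds on the conforming simplicial spaces. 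The inner sum is the incidence coefficient of $\partial\circ\partial$ in the cellular chain complex of the regular cellular decomposition $\Mh$, so it vanishes. Since $f$ is homeomorphic to a closed ball and the decomposition is regular, each codimension-two face $g$ lies in exactly two facets $e$ of $f$, and these carry opposite sign products. This orientation-incidence fact is the only nontrivial input. I would state it as the standard property $\partial\partial=0$ of cellular chains, or equivalently derive it from Stokes' formula applied twice to a smooth test form, which shows that the signed boundary sum is consistent.

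\textbf{Vertical identity \eqref{eq:functional.vertical.complex}.} For $\Phi\in\mathbb X_d^{j+1}$ we have
\[
  (\mathbb D_d^{j-1}\mathbb D_d^j\Phi)_f(\nu_f)
  =
  \Phi_f(\DIFF_f\DIFF_f\nu_f)
  =0.
\]
Two checks are needed to make this meaningful. First, $\DIFF_f$ maps $\PLtrim{m}{q}(\Sh(f))$ into $\PLtrim{m}{q+1}(\Sh(f))$: this is piecewise polynomial exactness of trimmed spaces, together with the fact that conformity in $H\Lambda$ makes $\DIFF$ of a conforming form conforming. Second, $q_{j-1}+1=q_j$, so the degrees match. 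Hence the operators are well defined.

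\textbf{Commutation \eqref{eq:functional.double.complex.commutation}.} For $\Phi^d\in\mathbb X_d^{j+1}$ and $f\in\FM{d+1}(\Mh)$, both sides evaluated at $\nu_f$ reduce to sums of the form $\sum_e\epsilon_{f,e}\Phi_e^d(\cdot)$. The left side gives $\sum_e\epsilon_{f,e}\Phi^d_e(\DIFF_e\trc_e\nu_f)$, and the right side gives $\sum_e\epsilon_{f,e}\Phi^d_e(\trc_e\DIFF_f\nu_f)$. These coincide because the trace commutes with the exterior derivative on conforming piecewise polynomial forms, $\DIFF_e\trc_e=\trc_e\DIFF_f$, which was already used in \eqref{eq:graph.trace.estimate}.

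I would finish by checking that the stated index ranges make every map well defined, in particular $d-1\ge q_j$ for $\mathbb B$ and $d\ge q_{j+1}$ for $\mathbb D$. The only genuine obstacle is the sign identity in the horizontal step; everything else is bookkeeping.
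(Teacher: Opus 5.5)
Your proposal is correct and follows the paper's proof essentially verbatim: the horizontal identity via transitivity of traces and the vanishing of the incidence sum $\sum_e\epsilon_{f,e}\epsilon_{e,g}$ (cellular $\partial\partial=0$), the vertical identity from $\DIFF_f^2=0$, and the commutation from $\DIFF_e\trc_e=\trc_e\DIFF_f$. Your checks that $\DIFF_f$ preserves the conforming trimmed spaces and that the stated index ranges keep every map well defined are extra details that the paper leaves implicit.
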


\begin{proof}
Let $\Psi^{d-1}\in\mathbb X_{d-1}^j$, $g\in\FM{d+1}(\Mh)$, and $\nu_g\in\PLtrim{m}{q_j}(\Sh(g))$.
By transitivity of traces,
\[
\begin{aligned}
\left(
\mathbb B_{d+1}^j\mathbb B_d^j\Psi^{d-1}
\right)_g(\nu_g)
&=
\sum_{f\in\FM{d}(\partial g)}
\sum_{e\in\FM{d-1}(\partial f)}
\epsilon_{g,f}\epsilon_{f,e}
\Psi_e^{d-1}(\trc_e\nu_g)
\\
&=
\sum_{e\in\FM{d-1}(\partial g)}
\left(
\sum_{\substack{
f\in\FM{d}(\partial g)\\
e\subset\partial f
}}
\epsilon_{g,f}\epsilon_{f,e}
\right)
\Psi_e^{d-1}(\trc_e\nu_g)
\\
&=
0.
\end{aligned}
\]
This proves \eqref{eq:functional.horizontal.complex}.

Identity \eqref{eq:functional.vertical.complex} follows directly from $\DIFF_f^2=0$.

Finally, the commutation of traces with the exterior derivative gives
\[
\begin{aligned}
\left(
\mathbb B_{d+1}^j\mathbb D_d^j\Phi^d
\right)_g(\nu_g)
&=
\sum_{f\in\FM{d}(\partial g)}
\epsilon_{g,f}
\Phi_f^d(\DIFF_f\trc_f\nu_g)
\\
&=
\sum_{f\in\FM{d}(\partial g)}
\epsilon_{g,f}
\Phi_f^d(\trc_f\DIFF_g\nu_g)
\\
&=
\left(
\mathbb D_{d+1}^j
\mathbb B_{d+1}^{j+1}\Phi^d
\right)_g(\nu_g),
\end{aligned}
\]
which proves \eqref{eq:functional.double.complex.commutation}.
\end{proof}

We now use the geometric decomposition in Lemma~\ref{lem:geometric.extensions} to define a cochain complex isomorphic to $\left(\mathbb X_\bullet^j,\mathbb B_\bullet^j\right)$.

Let $\mathfrak S_h^j\coloneqq\left\{s\in\Mh:q_j\leq\dim s\right\}$.
For every cell $s\in\mathfrak S_h^j$ and every $d\geq\dim s$, set
\[
\mathbb C_{d,s}^j
\coloneqq
\bigtimes_{f\in\cof{d}(s)}
\left(\PLtrimz{m}{q_j}(\Sh(s))\right)'.
\]
We equip this space with the norm
\[
\|\lambda\|_{\mathbb C_{d,s}^j}^2
\coloneqq
\sum_{f\in\cof{d}(s)}
h_f^{n-2d+\dim s}
\|\lambda_f\|_{\left(\PLtrimz{m}{q_j}(\Sh(s))\right)'}^2.
\]
The direct sum of the coefficient spaces is equipped with the corresponding $\ell^2$ product norm.

We set $\mathbb C_{d,s}^j=\{0\}$ if $d<\dim s$.
For $d>\dim s$, define $\delta_{d,s}\colon \mathbb C_{d-1,s}^j \longrightarrow \mathbb C_{d,s}^j$ by
\[
\left(\delta_{d,s}\lambda\right)_f(q_s)
\coloneqq
\sum_{\substack{
e\in\FM{d-1}(\partial f)\\
s\subset e
}}
\epsilon_{f,e}\lambda_e(q_s)
\qquad
\forall q_s\in\PLtrimz{m}{q_j}(\Sh(s)).
\]
The incidence relation gives $\delta_{d+1,s} \delta_{d,s}=0$.

For every $d$, define the geometric coordinate map $\mathcal G_d^j\colon \mathbb X_d^j \longrightarrow \bigoplus_{s\in\mathfrak S_h^j} \mathbb C_{d,s}^j$ by
\begin{equation}\label{eq:geometric.coordinate.definition}
\left(\mathcal G_d^j\Psi^d\right)_{s,f}(q_s)
\coloneqq
\Psi_f^d\left(\mathcal E_{s,f}^jq_s\right)
\qquad
\forall q_s\in\PLtrimz{m}{q_j}(\Sh(s)).
\end{equation}

\begin{proposition}[Isomorphism with coface coefficient complexes]
\label{prop:functional.cochain.isomorphism}
Under the assumptions of Lemma~\ref{lem:geometric.extensions}, the map $\mathcal G_d^j$ is an isomorphism and
\begin{equation}\label{eq:cochain.commutativity}
\mathcal G_d^j\mathbb B_d^j
=
\left(
\bigoplus_{s\in\mathfrak S_h^j}\delta_{d,s}
\right)
\mathcal G_{d-1}^j.
\end{equation}
Moreover, $\mathcal G_d^j$ and $(\mathcal G_d^j)^{-1}$ are uniformly bounded so that
\begin{equation}\label{eq:coordinate.norm.equivalence}
\|\mathcal G_d^j\Psi^d\|_{
\bigoplus_{s\in\mathfrak S_h^j}\mathbb C_{d,s}^j
}
\simeq
\|\Psi^d\|_{\mathbb X_d^j}.
\end{equation}
\end{proposition}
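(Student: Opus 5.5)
The plan has three steps: show that $\mathcal G_d^j$ is the transpose of the direct-sum map coming from the geometric decomposition of Lemma~\ref{lem:geometric.extensions}, derive the commutation \eqref{eq:cochain.commutativity} from the trace property \eqref{eq:geometric.extension.trace}, and transfer the norm equivalence \eqref{eq:geometric.norm.equivalence} to dual norms cell by cell.

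\emph{Isomorphism.} Fix $f\in\FM{d}(\Mh)$. By \eqref{eq:geometric.direct.sum}, the map
\[
E_f\colon\bigoplus_{s\subset f,\ q_j\leq\dim s}\PLtrimz{m}{q_j}(\Sh(s))\to\PLtrim{m}{q_j}(\Sh(f)),\qquad (\zeta_s)_s\mapsto\sum_s\mathcal E_{s,f}^j\zeta_s,
\]
is a linear bijection. Hence its transpose $E_f'$ is a bijection between the duals. The components $(\mathcal G_d^j\Psi^d)_{s,f}$, for $s\subset f$, are exactly the components of $E_f'\Psi_f^d$. Regrouping the pairs $(s,f)$ with $s\subset f$ and $\dim f=d$ either by $f$ or by $s$ (so that $f\in\cof{d}(s)$) shows that $\mathcal G_d^j$ is the product over $f$ of the maps $E_f'$, up to a reindexing. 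It is therefore an isomorphism.

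\emph{Commutation.} Let $f\in\FM{d}(\Mh)$, $s\subset f$, and $q_s\in\PLtrimz{m}{q_j}(\Sh(s))$. By definition,
\[
(\mathcal G_d^j\mathbb B_d^j\Psi)_{s,f}(q_s)=\sum_{e\in\FM{d-1}(\partial f)}\epsilon_{f,e}\Psi_e\bigl(\trc_e\mathcal E_{s,f}^jq_s\bigr).
\]
Applying \eqref{eq:geometric.extension.trace} removes the facets $e\not\supset s$ and replaces $\trc_e\mathcal E_{s,f}^jq_s$ by $\mathcal E_{s,e}^jq_s$ when $s\subset e$. The result is $\sum_{e\supset s}\epsilon_{f,e}(\mathcal G_{d-1}^j\Psi)_{s,e}(q_s)=(\delta_{d,s}\mathcal G_{d-1}^j\Psi)_f(q_s)$. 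The case $d=\dim s$ needs a separate check: the only coface is $f=s$, all facets $e$ fail to contain $s$, both sides vanish, and $\mathbb C_{d-1,s}^j=\{0\}$ there, so \eqref{eq:cochain.commutativity} still holds.

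\emph{Norm equivalence.} This step carries the scaling bookkeeping and is the main obstacle. By \eqref{eq:geometric.norm.equivalence}, $E_f$ is an isomorphism with uniform constants between the weighted space with norm $\bigl(\sum_s h_f^{d-\dim s}\vvvert\zeta_s\vvvert_s^2\bigr)^{1/2}$ and $\PLtrim{m}{q_j}(\Sh(f))$ with the graph norm. Transposition preserves this, and the dual of a weighted $\ell^2$ sum is the $\ell^2$ sum with inverse weights. This gives
\[
\|\Psi_f\|'^2\simeq\sum_s h_f^{-(d-\dim s)}\|(E_f'\Psi_f)_s\|'^2.
\]
Since the norms $\vvvert\cdot\vvvert_s$ and $\|\cdot\|_s$ are uniformly equivalent on the fixed polynomial spaces, the choice between them in the duals is immaterial. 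Multiplying by $h_f^{n-d}$ gives the weight $h_f^{n-2d+\dim s}$, which matches the definition of $\|\cdot\|_{\mathbb C_{d,s}^j}$. Summing over $f$ and regrouping by $s$ proves \eqref{eq:coordinate.norm.equivalence}. The care needed is in inverting the weights correctly, in confirming the exponent $n-2d+\dim s$, and in noting that every constant comes from \eqref{eq:geometric.norm.equivalence} and is therefore uniform.
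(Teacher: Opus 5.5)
Your proposal is correct and follows essentially the same route as the paper. The isomorphism comes from the uniqueness of the geometric decomposition, the commutation from the trace property \eqref{eq:geometric.extension.trace}, and the norm equivalence from transferring \eqref{eq:geometric.norm.equivalence} to dual norms cell by cell before summing with the weight $h_f^{n-d}$. The differences are only in presentation: you obtain the local dual estimate as the transpose of a uniformly bounded isomorphism between weighted $\ell^2$ sums, whereas the paper writes out the weighted Cauchy--Schwarz bound and the extension bound \eqref{eq:geometric.extension.stability} explicitly, and your separate check of the case $d=\dim s$ in the commutation identity is a small extra bit of care the paper leaves implicit.
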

\begin{proof}
By \eqref{eq:geometric.direct.sum}, every $\nu_f\in\PLtrim{m}{q_j}(\Sh(f))$ has a unique representation
\[
\nu_f
=
\sum_{\substack{
s\in\mathfrak S_h^j\\
s\subset f
}}
\mathcal E_{s,f}^jq_s,
\qquad
q_s\in\PLtrimz{m}{q_j}(\Sh(s)).
\]
Consequently, a functional on $\PLtrim{m}{q_j}(\Sh(f))$ is uniquely determined by its restrictions to the geometric components, and $\mathcal G_d^j$ in \eqref{eq:geometric.coordinate.definition} is an isomorphism.
Its inverse is characterised by
\[
\Psi_f^d
\left(
\sum_{\substack{
s\in\mathfrak S_h^j\\
s\subset f
}}
\mathcal E_{s,f}^jq_s
\right)
=
\sum_{\substack{
s\in\mathfrak S_h^j\\
s\subset f
}}
\left(
\mathcal G_d^j\Psi^d
\right)_{s,f}(q_s).
\]

To prove the commuting identity \eqref{eq:cochain.commutativity}, let $f\in\cof{d}(s)$ and $q_s\in\PLtrimz{m}{q_j}(\Sh(s))$.
Using \eqref{eq:geometric.extension.trace}, we obtain
\[
\begin{aligned}
\left(
\mathcal G_d^j\mathbb B_d^j\Psi^{d-1}
\right)_{s,f}(q_s)
&=
\sum_{e\in\FM{d-1}(\partial f)}
\epsilon_{f,e}
\Psi_e^{d-1}
\left(
\trc_e\mathcal E_{s,f}^jq_s
\right)
\\
&=
\sum_{\substack{
e\in\FM{d-1}(\partial f)\\
s\subset e
}}
\epsilon_{f,e}
\Psi_e^{d-1}
\left(
\mathcal E_{s,e}^jq_s
\right)
\\
&=
\left(
\delta_{d,s}\mathcal G_{d-1}^j\Psi^{d-1}
\right)_f(q_s).
\end{aligned}
\]
This proves \eqref{eq:cochain.commutativity}.

It remains to prove the norm equivalence in \eqref{eq:coordinate.norm.equivalence}.
Fix $f\in\FM{d}(\Mh)$ and, for every $s\in\mathfrak S_h^j$ such that $s\subset f$, set
\[
\lambda_{s,f}
\coloneqq
\left(
\mathcal G_d^j\Psi^d
\right)_{s,f}
\in
\left(
\PLtrimz{m}{q_j}(\Sh(s))
\right)'.
\]
We first prove the local equivalence
\begin{equation}\label{eq:local.coordinate.norm.equivalence}
\|\Psi_f^d\|_{(\PLtrim{m}{q_j}(\Sh(f)))'}^2
\simeq
\sum_{\substack{
s\in\mathfrak S_h^j\\
s\subset f
}}
h_f^{\dim s-d}
\|\lambda_{s,f}\|_
{\left(\PLtrimz{m}{q_j}(\Sh(s))\right)'}^2.
\end{equation}

Let $\nu_f\in\PLtrim{m}{q_j}(\Sh(f))$ and write its geometric decomposition as
\[
\nu_f
=
\sum_{\substack{
s\in\mathfrak S_h^j\\
s\subset f
}}
\mathcal E_{s,f}^jq_s,
\qquad
q_s\in\PLtrimz{m}{q_j}(\Sh(s)).
\]
By the definition of the coordinate functionals and a weighted Cauchy--Schwarz inequality,
\[
\begin{aligned}
|\Psi_f^d(\nu_f)|
&=
\left|
\sum_{\substack{
s\in\mathfrak S_h^j\\
s\subset f
}}
\lambda_{s,f}(q_s)
\right|
\\
&\le
\left(
\sum_{\substack{
s\in\mathfrak S_h^j\\
s\subset f
}}
h_f^{\dim s-d}
\|\lambda_{s,f}\|_
{\left(\PLtrimz{m}{q_j}(\Sh(s))\right)'}^2
\right)^{1/2}
\left(
\sum_{\substack{
s\in\mathfrak S_h^j\\
s\subset f
}}
h_f^{d-\dim s}
\vvvert q_s\vvvert_s^2
\right)^{1/2}.
\end{aligned}
\]
The stability of the geometric decomposition \eqref{eq:geometric.norm.equivalence} therefore gives
\[
\|\Psi_f^d\|_{(\PLtrim{m}{q_j}(\Sh(f)))'}^2
\lesssim
\sum_{\substack{
s\in\mathfrak S_h^j\\
s\subset f
}}
h_f^{\dim s-d}
\|\lambda_{s,f}\|_
{\left(\PLtrimz{m}{q_j}(\Sh(s))\right)'}^2.
\]

Conversely, for every admissible subcell $s\in\mathfrak S_h^j$ with $s\subset f$ and every $q_s\in\PLtrimz{m}{q_j}(\Sh(s))$,
\[
\begin{aligned}
|\lambda_{s,f}(q_s)|
&=
\left|
\Psi_f^d
\left(
\mathcal E_{s,f}^jq_s
\right)
\right|
\\
&\le
\|\Psi_f^d\|_{(\PLtrim{m}{q_j}(\Sh(f)))'}
\vvvert
\mathcal E_{s,f}^jq_s
\vvvert_f
\\
&\lesssim
h_f^{(d-\dim s)/2}
\|\Psi_f^d\|_{(\PLtrim{m}{q_j}(\Sh(f)))'}
\vvvert q_s\vvvert_s,
\end{aligned}
\]
where the last inequality follows from \eqref{eq:geometric.extension.stability}.
Hence
\[
h_f^{\dim s-d}
\|\lambda_{s,f}\|_
{\left(\PLtrimz{m}{q_j}(\Sh(s))\right)'}^2
\lesssim
\|\Psi_f^d\|_{(\PLtrim{m}{q_j}(\Sh(f)))'}^2.
\]
Summing over the subcells of $f$ and using their uniformly bounded number proves the reverse inequality in \eqref{eq:local.coordinate.norm.equivalence}.

We now sum the local equivalence over all $d$-cells.
By the definition of the product norms and by exchanging the order of summation,
\[
\begin{aligned}
\left\|
  \mathcal G_d^j\Psi^d
\right\|_{
  \bigoplus_{s\in\mathfrak S_h^j}
  \mathbb C_{d,s}^j
}^2
&=
\sum_{\substack{
  s\in\mathfrak S_h^j\\
  \dim s\leq d
}}
\sum_{f\in\cof{d}(s)}
h_f^{n-2d+\dim s}
\|\lambda_{s,f}\|_{
  \left(
    \PLtrimz{m}{q_j}(\Sh(s))
  \right)'
}^2
\\
&=
\sum_{f\in\FM{d}(\Mh)}
h_f^{n-d}
\sum_{\substack{
  s\in\mathfrak S_h^j\\
  s\subset f
}}
h_f^{\dim s-d}
\|\lambda_{s,f}\|_{
  \left(
    \PLtrimz{m}{q_j}(\Sh(s))
  \right)'
}^2
\\
&\simeq
\sum_{f\in\FM{d}(\Mh)}
h_f^{n-d}
\|\Psi_f^d\|_{
  \left(
    \PLtrim{m}{q_j}(\Sh(f))
  \right)'
}^2
\\
&=
\|\Psi^d\|_{\mathbb X_d^j}^2.
\end{aligned}
\]
This proves \eqref{eq:coordinate.norm.equivalence} and completes the proof.
\end{proof}

As an immediate consequence of Proposition~\ref{prop:functional.cochain.isomorphism}, we obtain the following description of the cohomology of the functional trace complex.
\begin{corollary}[Cohomology of the functional trace complex]
\label{cor:functional.cohomology}
There is an isomorphism of complexes
\[
\left(
\mathbb X_\bullet^j,\mathbb B_\bullet^j
\right)
\simeq
\bigoplus_{s\in\mathfrak S_h^j}
\left(
\mathbb C_{\bullet,s}^j,\delta_{\bullet,s}
\right),
\]
and therefore
\[
H^d
\left(
\mathbb X_\bullet^j,\mathbb B_\bullet^j
\right)
\simeq
\bigoplus_{s\in\mathfrak S_h^j}
H^d
\left(
\mathbb C_{\bullet,s}^j,\delta_{\bullet,s}
\right).
\]
After choosing a basis of $\PLtrimz{m}{q_j}(\Sh(s))$, the coefficient complex $\left( \mathbb C_{\bullet,s}^j,\delta_{\bullet,s}\right)$ is a direct sum of $\dim\PLtrimz{m}{q_j}(\Sh(s))$ scalar cochain complexes on the cofaces of $s$.
\end{corollary}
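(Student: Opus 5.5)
The corollary follows by transporting the complex structure of $(\mathbb X_\bullet^j,\mathbb B_\bullet^j)$ through the geometric coordinate maps of Proposition~\ref{prop:functional.cochain.isomorphism}. First, observe that $\delta_{d+1,s}\delta_{d,s}=0$. The composite $(\delta_{d+1,s}\delta_{d,s}\lambda)_g(q_s)$ is a sum over $e\in\FM{d-1}(\Mh)$ with $s\subset e\subset\partial g$ of $\bigl(\sum_f\epsilon_{g,f}\epsilon_{f,e}\bigr)\lambda_e(q_s)$, where $f$ ranges over the $d$-cells with $e\subset\partial f\subset\partial g$. These inner sums vanish by the same incidence identity used in the proof of Proposition~\ref{prop:functional.complex}. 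Note also that every such $f$ automatically contains $s$, so the constraint $s\subset f$ imposed in the definition of $\delta$ loses no terms. Hence each $(\mathbb C_{\bullet,s}^j,\delta_{\bullet,s})$ is a complex, and so is their direct sum with the diagonal differential $\bigoplus_s\delta_{\bullet,s}$.

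Next, I will define the chain map $\mathcal G_\bullet^j$. Proposition~\ref{prop:functional.cochain.isomorphism} gives that each $\mathcal G_d^j$ is a linear bijection and that \eqref{eq:cochain.commutativity} holds, so $\mathcal G_\bullet^j$ is a morphism of complexes. Its inverse is then also a chain map: applying $(\mathcal G_d^j)^{-1}$ on the left and $(\mathcal G_{d-1}^j)^{-1}$ on the right of \eqref{eq:cochain.commutativity} gives the reverse commutation. This yields the isomorphism of complexes. The one point needing care is the range of $d$. At the extreme degrees ($d=q_j$, and $d<\dim s$ where $\mathbb C^j_{d,s}=\{0\}$), I will check that the convention $\mathbb C_{d,s}^j=\{0\}$ is consistent with the commutation relation. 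This works because $\cof{d}(s)$ is empty whenever $d<\dim s$, so both sides of \eqref{eq:cochain.commutativity} vanish there.

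A chain isomorphism maps cocycles bijectively onto cocycles and coboundaries onto coboundaries, so it induces isomorphisms in cohomology. Cohomology also commutes with finite direct sums of complexes, since kernels and images split componentwise under a diagonal differential. Together these give the stated identification of $H^d$.

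For the final sentence, I fix a basis $(\phi_i)_{i}$ of $\PLtrimz{m}{q_j}(\Sh(s))$ and identify $\lambda\in\mathbb C^j_{d,s}$ with the family of scalar arrays $\bigl(\lambda_f(\phi_i)\bigr)_{f\in\cof{d}(s)}$, one array for each $i$. The differential $\delta_{d,s}$ acts by evaluation on the fixed test function $q_s=\phi_i$ and involves only incidence signs between cofaces of $s$. It therefore acts identically and independently on each index $i$. This splits $(\mathbb C^j_{\bullet,s},\delta_{\bullet,s})$ into $\dim\PLtrimz{m}{q_j}(\Sh(s))$ copies of the scalar cochain complex $\mathbb R^{\cof{\bullet}(s)}$ with incidence coboundary. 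I expect no real obstacle here; the only subtle point, already addressed above, is that no incidence terms are lost in the sums defining $\delta$. The norm equivalence \eqref{eq:coordinate.norm.equivalence} is not needed for this algebraic statement, but it shows that the isomorphism is uniformly stable.
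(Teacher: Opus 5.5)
Your proposal is correct and follows the paper, which states this corollary as an immediate consequence of Proposition~\ref{prop:functional.cochain.isomorphism}. As you do, it uses the coordinate maps $\mathcal G_\bullet^j$ as a chain isomorphism, splits the cohomology over the finite direct sum, and obtains the scalar splitting by evaluating the coefficient functionals on a basis of $\PLtrimz{m}{q_j}(\Sh(s))$; the one boundary degree you leave implicit is $d=\dim s$, where the $(s,s)$ coordinate of $\mathcal G_d^j\mathbb B_d^j\Psi$ vanishes because $q_s$ has zero trace on $\partial s$, consistent with $\mathbb C_{d-1,s}^j=\{0\}$, and this case is already covered by the commuting identity you quote.
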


\subsection{Coface--link complexes}
\label{subsubsec:coface.link.complexes}

We now recall the coface--link structure associated with the regular cellular decomposition of $\overline\Omega$.
The topological properties below are standard local link properties of a regular cellular decomposition of a polytopal domain, while the uniform cardinality bound follows from the mesh-regularity assumptions in Subsection~\ref{sec:setting:mesh}.
For background on links in polyhedral and triangulated complexes, we refer to \cite[Sections~2.2.4 and~2.3.2]{Kozlov:08}.

Let $s\in\Mh$ be a cell of dimension $a\coloneqq\dim s<n$.
Its strict cofaces, together with their incidence relations, define a finite regular cell complex denoted by $\operatorname{Lk}_{\Mh}(s)$ and called the \emph{coface link} of $s$.
It has the following properties.

\begin{enumerate}

\item For every $d\in\{a+1,\ldots,n\}$, there is a bijection $\ell_{s,d}:\cof{d}(s)\longrightarrow\FM{d-a-1}(\operatorname{Lk}_{\Mh}(s))$.
For $d=a$, the unique coface $s$ is associated with the formal degree $-1$ term of the augmented cellular cochain complex of the link.

\item The bijections preserve incidence.
If $e\in\FM{d-1}(\partial f)$ and $s\subset e$, then $\ell_{s,d-1}(e)\subset\partial\ell_{s,d}(f)$, and every facet incidence in the link arises in this way.

\item There exist signs $\omega_{s,f}\in\{-1,1\}$, for $f\supset s$, such that, for every incidence $e\subset\partial f$ with $s\subset e$,
\begin{equation}\label{eq:link.incidence.compatibility}
  \epsilon_{f,e}
  =
  \omega_{s,f}\omega_{s,e}
  \epsilon^{\operatorname{Lk}(s)}
  _{\ell_{s,d}(f),\ell_{s,d-1}(e)}.
\end{equation}
The same convention is used for the incidence between the formal degree $-1$ term and the vertices of the link.

\item The link has dimension $\dim\operatorname{Lk}_{\Mh}(s)=n-a-1$.
If $s\not\subset\partial\Omega$, then $\operatorname{Lk}_{\Mh}(s)$ has the reduced cohomology of the sphere $S^{n-a-1}$.
If $s\subset\partial\Omega$, then $\operatorname{Lk}_{\Mh}(s)$ has the reduced cohomology of the ball $B^{n-a-1}$ and is therefore acyclic.

\item The number of cells of $\operatorname{Lk}_{\Mh}(s)$ is uniformly bounded with respect to $s$.
Indeed, its cells are in one-to-one correspondence with the cofaces of $s$, whose number is uniformly bounded by the mesh-regularity assumptions of Subsection~\ref{sec:setting:mesh}.

\end{enumerate}

The exactness properties used in the recursive construction of the stable skeleton are local consequences of this coface--link structure.
In particular, they depend only on the local topology of the mesh around the cell $s$ and require no assumption on the global topology of $\Omega$.

\begin{lemma}[Exactness and cohomology of the coface complexes]
\label{lem:coface.exactness}
Let $s\in\mathfrak S_h^j$ and set $a\coloneqq\dim s$.
For every $a<d<n$,
\begin{equation}\label{eq:coface.exactness}
  \Ker\delta_{d+1,s}
  =
  \operatorname{Im}\delta_{d,s}.
\end{equation}
If $s$ is a boundary cell, the same equality holds for $d=n$ as well, with the convention $\delta_{n+1,s}=0$.
If $s$ is an interior cell, the top-dimensional cohomology is
\begin{equation}\label{eq:coface.top.cohomology}
  H^n
  \left(
    \mathbb C_{\bullet,s}^j,
    \delta_{\bullet,s}
  \right)
  \simeq
  \left(\PLtrimz{m}{q_j}(\Sh(s))\right)'.
\end{equation}

Moreover, for every $d\in\{a+1,\ldots,n\}$ and every $\eta_s^d\in\operatorname{Im}\delta_{d,s}$, let $\lambda_s^{d-1}$ be the unique solution of
\[
  \delta_{d,s}\lambda_s^{d-1}
  =
  \eta_s^d
\]
of minimum norm $\|\cdot\|_{\mathbb C_{d-1,s}^j}$.
Then $\lambda_s^{d-1}$ depends linearly on $\eta_s^d$ and satisfies
\begin{equation}\label{eq:coface.poincare}
  \|\lambda_s^{d-1}\|_{\mathbb C_{d-1,s}^j}^2
  \lesssim
  \sum_{f\in\cof{d}(s)}
  h_f^{n-2d+\dim s+2}
  \|(\eta_s^d)_f\|_{(\PLtrimz{m}{q_j}(\Sh(s)))'}^2,
\end{equation}
uniformly with respect to the mesh cell $s$.
In particular, in every exact degree identified above, this estimate applies to every $\eta_s^d\in\Ker\delta_{d+1,s}$.
\end{lemma}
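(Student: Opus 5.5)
The plan is to reduce the coefficient complex $(\mathbb C_{\bullet,s}^j,\delta_{\bullet,s})$ to scalar cellular cochains on the coface link, read off exactness from the link topology, and then obtain the stable inverse by finite-dimensional compactness with an explicit rescaling.

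First, choose a basis of $\PLtrimz{m}{q_j}(\Sh(s))$, as in Corollary~\ref{cor:functional.cohomology}, so the coefficient complex splits into copies of the scalar complex $d\mapsto\Real^{\cof{d}(s)}$ with coboundary $\sum_{e}\epsilon_{f,e}$. Transport this to the link via the bijections $\ell_{s,d}$ and the sign change $\lambda_f\mapsto\omega_{s,f}\lambda_f$. By \eqref{eq:link.incidence.compatibility}, the coefficient complex becomes isomorphic, with degree shift $d\mapsto d-a-1$, to the augmented cellular cochain complex of $\operatorname{Lk}_{\Mh}(s)$ with coefficients in $(\PLtrimz{m}{q_j}(\Sh(s)))'$. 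The degree $d=a$ corresponds to the augmentation term.

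Next, exactness follows from the link topology. Augmented cellular cochain cohomology is reduced cohomology. For an interior cell the link is a homology $(n-a-1)$-sphere, so everything vanishes except the top degree $n-a-1$, which corresponds to $d=n$ and is isomorphic to the coefficient space; this gives \eqref{eq:coface.top.cohomology}. For a boundary cell the link is acyclic, so the complex is exact in all degrees, including $d=n$. This yields \eqref{eq:coface.exactness}.

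Finally, for the stable inverse, linearity of the minimum-norm solution is standard: it is the Moore--Penrose pseudo-inverse for the chosen Hilbert norms. The main obstacle is uniformity of the constant. The plan is to factor out scalings. Since incident mesh sizes are comparable, replace each $h_f$ by $h_s$ (for a vertex, $h_v$), at the cost of uniform constants. Then the norms read $h_s^{n-2(d-1)+a}\|\cdot\|^2$ on degree $d-1$ and $h_s^{n-2d+a+2}\|\cdot\|^2$ on degree $d$, and these carry the same power of $h_s$. The estimate \eqref{eq:coface.poincare} therefore reduces to an $h$-free bound $\|\lambda\|\lesssim\|\delta\lambda\|$ for the minimum-norm preimage of the unweighted integer incidence matrix, tensored with the identity on the coefficient space. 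Tensoring with the identity preserves the operator norm, and the dual norms are Hilbert norms, so the coefficient space causes no loss. The constant is then $1/\sigma_{\min}^+$ of an integer incidence matrix. Because link cardinalities are uniformly bounded, there are only finitely many combinatorial types of such matrices, with entries in $\{0,\pm1\}$. The maximum over these types of the reciprocal smallest nonzero singular value is a uniform constant, which proves the bound. The last sentence of the statement then follows because, in exact degrees, $\Ker\delta_{d+1,s}=\operatorname{Im}\delta_{d,s}$.
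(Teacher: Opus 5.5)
Your proposal is correct and follows the paper's proof. It uses the same sign-twisted identification of the coefficient complex with the augmented cellular cochain complex of $\operatorname{Lk}_{\Mh}(s)$, reads exactness off the sphere/ball reduced cohomology, and gets a uniform inverse from two observations: both weights carry the same power $h^{\,n-2d+a+2}$, and the minimum-norm preimage can be bounded for a $\{0,\pm1\}$ incidence matrix of bounded size tensored with the identity on the coefficient space.

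The only difference is in how the uniform constant is obtained. You take the maximum of $1/\sigma_{\min}^+$ over the finitely many possible incidence matrices, which gives a non-explicit constant. The paper instead extracts an invertible $\rho\times\rho$ submatrix and bounds its inverse explicitly via Cramer's rule and Hadamard's inequality. It then compares the weighted minimum-norm solution with this explicit one; your rescaling step uses that comparison implicitly.
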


\begin{proof}
We first identify the coefficient complex with the augmented cellular cochain complex of the link.
Set
\[
  E_s^j
  \coloneqq
  \left(\PLtrimz{m}{q_j}(\Sh(s))\right)'.
\]
If $a=n$, the only nonzero space is $\mathbb C_{n,s}^j=E_s^j$, and the assertions are immediate.
We therefore assume $a<n$.
For a finite regular cell complex $L$ and a coefficient space $E$, we denote by $\widetilde C^\bullet(L;E)$ its augmented cellular cochain complex, with the standard degree $-1$ augmentation, by $\widetilde\delta$ its cellular coboundary, and by $\widetilde H^\bullet(L;E)$ the corresponding reduced cohomology; see, e.g., \cite[Section~3.1, pp.~199 and~203]{Hatcher:02}.
We omit the degree indices on $\widetilde\delta$ whenever they are clear from its source and target spaces.

For every $d\geq a$, define
\[
  \mathcal J_{d,s}^j\colon
  \mathbb C_{d,s}^j
  \longrightarrow
  \widetilde C^{d-a-1}
  \left(
    \operatorname{Lk}_{\Mh}(s);
    E_s^j
  \right)
\]
by
\[
  \left(
    \mathcal J_{d,s}^j\lambda
  \right)_{\ell_{s,d}(f)}
  \coloneqq
  \omega_{s,f}\lambda_f,
  \qquad
  f\in\cof{d}(s).
\]
When $d=a$, the unique component indexed by $s$ is identified with the degree $-1$ term of the augmented complex.
The coface--link bijection shows that $\mathcal J_{d,s}^j$ is an isomorphism.

Let $d>a$, $f\in\cof{d}(s)$, and $q_s\in\PLtrimz{m}{q_j}(\Sh(s))$.
By \eqref{eq:link.incidence.compatibility},
\[
\begin{aligned}
\left(
  \mathcal J_{d,s}^j\delta_{d,s}\lambda
\right)_{\ell_{s,d}(f)}(q_s)
&=
\omega_{s,f}
\sum_{\substack{
  e\in\FM{d-1}(\partial f)\\
  s\subset e
}}
\epsilon_{f,e}\lambda_e(q_s)
\\
&=
\sum_{\substack{
  e\in\FM{d-1}(\partial f)\\
  s\subset e
}}
\epsilon^{\operatorname{Lk}(s)}_
{\ell_{s,d}(f),\ell_{s,d-1}(e)}
\omega_{s,e}\lambda_e(q_s)
\\
&=
\left(
  \widetilde\delta\,
  \mathcal J_{d-1,s}^j\lambda
\right)_{\ell_{s,d}(f)}(q_s).
\end{aligned}
\]
Hence
\begin{equation}\label{eq:coface.link.commuting}
  \mathcal J_{d,s}^j\delta_{d,s}
  =
  \widetilde\delta\,
  \mathcal J_{d-1,s}^j,
\end{equation}
so that
\[
  \left(
    \mathbb C_{\bullet,s}^j,
    \delta_{\bullet,s}
  \right)
  \simeq
  \left(
    \widetilde C^{\bullet-a-1}
    \left(
      \operatorname{Lk}_{\Mh}(s);
      E_s^j
    \right),
    \widetilde\delta
  \right).
\]
Consequently,
\[
  H^d
  \left(
    \mathbb C_{\bullet,s}^j,
    \delta_{\bullet,s}
  \right)
  \simeq
  \widetilde H^{d-a-1}
  \left(
    \operatorname{Lk}_{\Mh}(s);
    E_s^j
  \right).
\]

If $s$ is an interior cell, its link has the reduced cohomology of $S^{n-a-1}$.
It therefore vanishes in degrees below $n-a-1$, which gives \eqref{eq:coface.exactness} for $d<n$, while its top-degree cohomology gives \eqref{eq:coface.top.cohomology}.
If $s$ is a boundary cell, its link has the reduced cohomology of a ball and is therefore acyclic in every degree, yielding exactness also for $d=n$.

We now prove \eqref{eq:coface.poincare}.
Let $\widehat\eta_s^d\coloneqq\mathcal J_{d,s}^j\eta_s^d$.
Since $\eta_s^d\in\operatorname{Im}\delta_{d,s}$, the commuting property \eqref{eq:coface.link.commuting} shows that $\widehat\eta_s^d$ belongs to the image of the corresponding coboundary of the augmented link complex.

Choose an orthonormal basis $(e_\alpha)_\alpha$ of $E_s^j$, and let $B$ be the scalar incidence matrix of the relevant map $\widetilde\delta$ in the corresponding consecutive link cochain spaces.
When $d=a+1$, $B$ is the augmented incidence matrix from degree $-1$ to degree $0$.
Its entries belong to $\{-1,0,1\}$, and its numbers of rows and columns are bounded by the number of cells of the link.
By the uniform link-cardinality bound above, these dimensions and hence the rank $\rho\coloneqq\operatorname{rank}B$ are uniformly bounded.
If $\rho=0$, then $\widehat\eta_s^d=0$ and the required estimate follows by taking the zero solution.
We may therefore assume that $\rho\geq1$ and choose row and column index sets $I$ and $J$, both of cardinality $\rho$, such that $M\coloneqq B_{I,J}$ is invertible.
Writing $\widehat\eta_s^d=\sum_\alpha\widehat\eta_{s,\alpha}^d e_\alpha$, we have $\widehat\eta_{s,\alpha}^d\in\operatorname{Im}B$ for every $\alpha$.
Define
\[
  \left(\overline\lambda_{s,\alpha}^{d-1}\right)_J
  \coloneqq
  M^{-1}\left(\widehat\eta_{s,\alpha}^d\right)_I,
  \qquad
  \left(\overline\lambda_{s,\alpha}^{d-1}\right)_{J^c}
  \coloneqq
  0.
\]
Since the columns indexed by $J$ span $\operatorname{Im}B$ and restriction to the rows indexed by $I$ is injective on this space, $B\overline\lambda_{s,\alpha}^{d-1}=\widehat\eta_{s,\alpha}^d$.
Since $M$ is integer-valued and invertible, Cramer's rule and Hadamard's inequality give $\|\overline\lambda_{s,\alpha}^{d-1}\|_{\ell^2} \leq\rho^{\rho/2}\|\widehat\eta_{s,\alpha}^d\|_{\ell^2}$.
Therefore, setting $\overline\lambda_s^{d-1}\coloneqq \sum_\alpha\overline\lambda_{s,\alpha}^{d-1}e_\alpha$, we obtain
\begin{equation}\label{eq:link.poincare.application}
  \widetilde\delta\,
  \overline\lambda_s^{d-1}
  =
  \widehat\eta_s^d,
  \qquad
  \|\overline\lambda_s^{d-1}\|_{\ell^2(E_s^j)}
  \lesssim
  \|\widehat\eta_s^d\|_{\ell^2(E_s^j)}.
\end{equation}

Define
\[
  \widetilde\lambda_s^{d-1}
  \coloneqq
  \left(\mathcal J_{d-1,s}^j\right)^{-1}
  \overline\lambda_s^{d-1}.
\]
The commuting property gives $\delta_{d,s}\widetilde\lambda_s^{d-1}=\eta_s^d$.

Set $h_{s,*}\coloneqq\max_{T\in\cof{n}(s)}h_T$.
Mesh regularity gives $h_f\simeq h_{s,*}$ for every $(d-1)$- or $d$-dimensional coface $f$ of $s$.
Moreover, the maps $\mathcal J_{d-1,s}^j$ and $\mathcal J_{d,s}^j$ only reorder the coefficients and multiply them by signs.
Therefore, setting $A\coloneqq n-2d+a+2$ and using \eqref{eq:link.poincare.application}, we obtain
\[
\begin{aligned}
\|\widetilde\lambda_s^{d-1}\|_{\mathbb C_{d-1,s}^j}^2
&\simeq
h_{s,*}^{A}
\sum_{\tau\in
\FM{d-a-2}(\operatorname{Lk}_{\Mh}(s))}
\|(\overline\lambda_s^{d-1})_\tau\|_{E_s^j}^2
\\
&\lesssim
h_{s,*}^{A}
\sum_{\tau\in
\FM{d-a-1}(\operatorname{Lk}_{\Mh}(s))}
\|(\widehat\eta_s^d)_\tau\|_{E_s^j}^2
\\
&\simeq
\sum_{f\in\cof{d}(s)}
h_f^{n-2d+a+2}
\|(\eta_s^d)_f\|_{E_s^j}^2.
\end{aligned}
\]
When $d=a+1$, the first sum in this display is understood as the single contribution associated with the formal degree $-1$ term of the augmented complex.

Since $\lambda_s^{d-1}$ has minimum norm among all the solutions of $\delta_{d,s}\lambda=\eta_s^d$, we have $\|\lambda_s^{d-1}\|_{\mathbb C_{d-1,s}^j} \leq \|\widetilde\lambda_s^{d-1}\|_{\mathbb C_{d-1,s}^j}$, and \eqref{eq:coface.poincare} follows.
Finally, $\lambda_s^{d-1}$ is the unique solution lying in the weighted orthogonal complement of $\Ker\delta_{d,s}$, and therefore depends linearly on $\eta_s^d$.
\end{proof}

\begin{corollary}[Horizontal exactness below the top dimension]
\label{cor:functional.horizontal.exactness}
For every $j\in\{k+1,\ldots,n-1\}$, the functional trace complex is exact in every degree below $n$.
More precisely,
\begin{equation}\label{eq:functional.horizontal.exactness}
  \Ker\mathbb B_{d+1}^j
  =
  \operatorname{Im}\mathbb B_d^j
  \qquad
  \forall d\in\{q_j+1,\ldots,n-1\},
\end{equation}
and
\begin{equation}\label{eq:functional.first.injectivity}
  \Ker\mathbb B_{q_j+1}^j
  =
  \{0\}.
\end{equation}
\end{corollary}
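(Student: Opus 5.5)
The plan is to transport the question through the isomorphism of complexes provided by Proposition~\ref{prop:functional.cochain.isomorphism} and Corollary~\ref{cor:functional.cohomology}. The geometric coordinate maps $\mathcal G_\bullet^j$ intertwine $\mathbb B_\bullet^j$ with $\bigoplus_{s\in\mathfrak S_h^j}\delta_{\bullet,s}$ by \eqref{eq:cochain.commutativity}. Hence $\Ker\mathbb B_{d+1}^j=(\mathcal G_d^j)^{-1}\bigl(\bigoplus_s\Ker\delta_{d+1,s}\bigr)$, and similarly for the images. It therefore suffices to prove, for every $s\in\mathfrak S_h^j$ and every $d\in\{q_j,\ldots,n-1\}$, that $\Ker\delta_{d+1,s}=\operatorname{Im}\delta_{d,s}$ in the coefficient complex $(\mathbb C_{\bullet,s}^j,\delta_{\bullet,s})$, with the convention that $\operatorname{Im}\delta_{q_j,s}=\{0\}$ in the lowest degree.

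Fix $s$ with $a\coloneqq\dim s\geq q_j$ and split according to the degree $d$ relative to $a$.

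\textbf{Case $d<a$.} Here $\mathbb C_{d,s}^j=\{0\}$, so the kernel is trivially zero and equals the (zero) image.

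\textbf{Case $a<d\leq n-1$.} This is exactly \eqref{eq:coface.exactness} in Lemma~\ref{lem:coface.exactness}, applied for both interior and boundary cells $s$.

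\textbf{Case $d=a$.} The space $\mathbb C_{a,s}^j=(\PLtrimz{m}{q_j}(\Sh(s)))'$ sits in degree $-1$ of the augmented link complex, and $\delta_{a+1,s}$ corresponds to the augmentation. Since $\mathbb C_{a-1,s}^j=\{0\}$, the claim reduces to injectivity of $\delta_{a+1,s}$, i.e.\ vanishing of $\widetilde H^{-1}(\operatorname{Lk}_{\Mh}(s);E_s^j)$. This holds because $a<n$ makes the link nonempty: $s$ has at least one $(a+1)$-dimensional coface, and reduced cohomology in degree $-1$ vanishes for a nonempty complex. This is consistent with the fact that the reduced cohomology of $S^{n-a-1}$ (with $n-a-1\geq0$) and of a ball vanishes in degree $-1$. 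If $a=n$, degree $d=a=n$ is excluded from the range under consideration.

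Summing over $s$ gives exactness of the direct sum in degrees $d\in\{q_j+1,\ldots,n-1\}$, which yields \eqref{eq:functional.horizontal.exactness}. For \eqref{eq:functional.first.injectivity}, take $d=q_j$. The only cells $s\in\mathfrak S_h^j$ with $\mathbb C_{q_j,s}^j\neq\{0\}$ are those with $\dim s=q_j$, and for these we are in the case $d=a$ with $a=q_j\leq n-2<n$, so the augmentation is injective.

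The main obstacle is this bottom degree $d=a$. It is not literally included in the range $a<d<n$ of \eqref{eq:coface.exactness}, so we must verify injectivity of the augmentation separately, using only the nonemptiness of the coface link of a cell of dimension less than $n$ (guaranteed since $\overline\Omega$ is the union of the $n$-cells).
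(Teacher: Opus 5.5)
Your proposal is correct and follows the paper's proof. Like the paper, you transport exactness through the geometric coordinate isomorphism of Corollary~\ref{cor:functional.cohomology}, use Lemma~\ref{lem:coface.exactness} for $\dim s<d<n$, and treat the bottom degree $d=\dim s$ through the degree-$-1$ (augmentation) exactness of the nonempty coface link; your explicit case split and the check that $q_j\leq n-2$ for the injectivity claim spell out what the paper's two-line argument leaves implicit.
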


\begin{proof}
By Corollary~\ref{cor:functional.cohomology}, the cohomology of the functional trace complex is the direct sum of the cohomologies of the coface coefficient complexes $\left( \mathbb C_{\bullet,s}^j, \delta_{\bullet,s} \right)$.
Lemma~\ref{lem:coface.exactness} and the augmented degree-$-1$ exactness of the coface-link complexes show that all these cohomologies vanish in degrees below $n$.
This proves \eqref{eq:functional.horizontal.exactness} and \eqref{eq:functional.first.injectivity}.
\end{proof}

\subsection{Trace-supported data, relative closedness, and stable link solves}
\label{subsubsec:relative.closedness}

We now use the geometric coordinate representation to impose relative closedness without restricting the coefficient spaces $\mathbb C_{d,s}^j$ before solving the link equations.

\begin{definition}[Trace-supported data and relatively closed functionals]
Fix $j\in\{k+1,\ldots,n-1\}$.
A family $\Gamma^{j+1}\in\mathbb X_{j+1}^j$ is said to be \emph{trace-supported} if
\begin{equation}\label{eq:trace.supported}
  \Gamma_g^{j+1}(\nu_g)=0
  \qquad
  \forall g\in\FM{j+1}(\Mh),\quad
  \forall
  \nu_g\in
  \PLtrimz{m}{q_j}(\Sh(g)).
\end{equation}
Equivalently, each functional $\Gamma_g^{j+1}$ depends only on the trace of its argument.
In geometric coordinates, setting $\eta \coloneqq \mathcal G_{j+1}^j\Gamma^{j+1}$, condition \eqref{eq:trace.supported} means that the coordinate indexed by the cell $s=g$ vanishes.
Thus $(\eta_g)_g=0$ for every $g\in\FM{j+1}(\Mh)$.

When $q_j>0$, we will also use the condition
\begin{equation}\label{eq:proper.exact.datum.vanishing}
  (\eta_f)_g(\DIFF_f\chi_f)=0
\end{equation}
for every
\[
  f\in\FM{j}(\Mh),
  \qquad
  g\in\cof{j+1}(f),
  \qquad
  \chi_f\in
  \PLtrimz{m}{q_j-1}(\Sh(f)).
\]
When $q_j=0$, this condition is void.
Unlike \eqref{eq:trace.supported}, which removes the coordinate indexed by the $(j+1)$-cell itself, condition \eqref{eq:proper.exact.datum.vanishing} acts on the coordinate indexed by a $j$-cell.

A family $\Psi^j\in\mathbb X_j^j$ is said to be \emph{relatively closed} if, whenever $q_j>0$,
\begin{equation}\label{eq:relative.closedness}
  \Psi_f^j(\DIFF_f\chi_f)=0
  \qquad
  \forall f\in\FM{j}(\Mh),\quad
  \forall
  \chi_f\in
  \PLtrimz{m}{q_j-1}(\Sh(f)).
\end{equation}
When $q_j=0$, the condition is void.
When $q_j>0$, since $q_{j-1}=q_j-1$, condition \eqref{eq:relative.closedness} is equivalent to requiring that $\mathbb D_j^{j-1}\Psi^j\in\mathbb X_j^{j-1}$ be trace-supported.
Thus relative closedness ensures that the vertical differential of a diagonal functional has the trace-supported property required for the next horizontal solve.
\end{definition}

\begin{lemma}[Stable link solve with relative closedness]
\label{lem:full.link.solve}
Let $\Gamma^{j+1}\in\mathbb X_{j+1}^j$ be trace-supported and set
\[
  \eta
  \coloneqq
  \mathcal G_{j+1}^j\Gamma^{j+1}.
\]
Assume that every geometric coordinate $\eta_s$ with $\dim s\leq j$ belongs to $\operatorname{Im}\delta_{j+1,s}$ and that \eqref{eq:proper.exact.datum.vanishing} holds.

Then there exists a relatively closed $\Psi^j\in\mathbb X_j^j$ such that
\begin{equation}\label{eq:full.link.equation}
  \mathbb B_{j+1}^j\Psi^j
  =
  \Gamma^{j+1}.
\end{equation}
Moreover, $\Psi^j$ can be selected linearly in $\Gamma^{j+1}$ and satisfies
\begin{equation}\label{eq:full.link.stability}
  \|\Psi^j\|_{\mathbb X_j^j}^2
  \lesssim
  \sum_{g\in\FM{j+1}(\Mh)}
  h_g^{n-j+1}
  \|\Gamma_g^{j+1}\|_{
    \left(
      \PLtrim{m}{q_j}(\Sh(g))
    \right)'
  }^2.
\end{equation}

If $j+1<n$ and
\begin{equation}\label{eq:full.link.cocycle}
  \mathbb B_{j+2}^j\Gamma^{j+1}=0,
\end{equation}
then the componentwise range conditions follow from the cocycle condition.
\end{lemma}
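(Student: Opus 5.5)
The plan is to transport the equation $\mathbb B_{j+1}^j\Psi^j=\Gamma^{j+1}$ into geometric coordinates with Proposition~\ref{prop:functional.cochain.isomorphism}, solve it cell by cell on the coface complexes with the stable minimum-norm solves of Lemma~\ref{lem:coface.exactness}, and then pull the solution back with $(\mathcal G_j^j)^{-1}$.

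\textbf{Reduction to coface equations.} By \eqref{eq:cochain.commutativity}, $\Psi^j$ solves \eqref{eq:full.link.equation} if and only if $\lambda\coloneqq\mathcal G_j^j\Psi^j$ satisfies $\delta_{j+1,s}\lambda_s=\eta_s$ for every $s\in\mathfrak S_h^j$. We split according to $\dim s$.
\begin{itemize}
\item For $\dim s=j+1$, we have $\mathbb C_{j,s}^j=\{0\}$, so the equation reduces to $\eta_s=0$. The only coface in degree $j+1$ is $s$ itself, so this is exactly the trace-supported condition \eqref{eq:trace.supported}.
\item For $\dim s\le j$, the assumption $\eta_s\in\operatorname{Im}\delta_{j+1,s}$ allows us to take $\lambda_s$ to be the minimum-norm solution of Lemma~\ref{lem:coface.exactness}. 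This choice is linear in $\eta_s$, and \eqref{eq:coface.poincare} with $d=j+1$ gives
\[
  \|\lambda_s\|_{\mathbb C_{j,s}^j}^2
  \lesssim
  \sum_{g\in\cof{j+1}(s)}
  h_g^{n-2j+\dim s}
  \|(\eta_s)_g\|^2.
\]
\end{itemize}
Summing over $s$ and comparing with the definition of the $\mathbb C_{j+1,s}^j$-norms, the right-hand side equals $h_g^{2}$ times the weight $h_g^{n-2(j+1)+\dim s}$. Hence
\[
  \|\lambda\|^2
  \lesssim
  \sum_{g}
  h_g^2
  \sum_{s\subset g}
  h_g^{n-2j-2+\dim s}
  \|(\eta_s)_g\|^2.
\]
By the local equivalence \eqref{eq:local.coordinate.norm.equivalence}, the inner sum is comparable to $h_g^{n-j-1}\|\Gamma_g^{j+1}\|^2$. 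Together with \eqref{eq:coordinate.norm.equivalence} for $\Psi^j=(\mathcal G_j^j)^{-1}\lambda$, this yields \eqref{eq:full.link.stability} with the weight $h_g^{n-j+1}$.

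\textbf{Relative closedness.} Let $q_j>0$, $f\in\FM{j}(\Mh)$, and $\chi_f\in\PLtrimz{m}{q_j-1}(\Sh(f))$. Since the trace commutes with $\DIFF$, the form $\DIFF_f\chi_f$ has zero trace on $\partial f$, so it lies in $\PLtrimz{m}{q_j}(\Sh(f))$. In the geometric decomposition of $\PLtrim{m}{q_j}(\Sh(f))$ it is therefore the pure $s=f$ component, because $\mathcal E_{f,f}^j=\mathrm{Id}$. Consequently
\[
  \Psi_f^j(\DIFF_f\chi_f)=(\lambda_f)_f(\DIFF_f\chi_f).
\]
For $s=f$ of dimension $j$, the space $\mathbb C_{j,f}^j$ consists of the single component indexed by $f$ (the augmented degree $-1$ term), and $(\delta_{j+1,f}\lambda_f)_g=\epsilon_{g,f}(\lambda_f)_f$. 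Since $j<n$, $f$ has at least one $(j+1)$-coface, so $\delta_{j+1,f}$ is injective and the solution is forced: $(\lambda_f)_f=\epsilon_{g,f}(\eta_f)_g$ for any $g\in\cof{j+1}(f)$. Hypothesis \eqref{eq:proper.exact.datum.vanishing} then gives $(\lambda_f)_f(\DIFF_f\chi_f)=0$, which is \eqref{eq:relative.closedness}. This identification of $\DIFF_f\chi_f$ with the top geometric coordinate is the step I expect to need the most care.

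\textbf{Cocycle case.} Assume $j+1<n$ and \eqref{eq:full.link.cocycle}. Applying \eqref{eq:cochain.commutativity} at level $j+2$ gives $\delta_{j+2,s}\eta_s=0$ for every $s$. For $\dim s=a\le j$ we have $a<j+1<n$, so \eqref{eq:coface.exactness} with $d=j+1$ gives $\eta_s\in\operatorname{Im}\delta_{j+1,s}$. For $\dim s=j$ this is the augmented degree-$0$ exactness of the link, already contained in Lemma~\ref{lem:coface.exactness}. Hence the componentwise range conditions follow from the cocycle condition.
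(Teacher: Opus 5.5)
Your proposal is correct and follows essentially the same route as the paper. You pass to geometric coordinates, solve the independent coface systems $\delta_{j+1,s}\lambda_s=\eta_s$ by the stable minimum-norm link inverses, pull back with $(\mathcal G_j^j)^{-1}$ using the weight bookkeeping $h_g^{2}\cdot h_g^{n-j-1}=h_g^{n-j+1}$ from \eqref{eq:local.coordinate.norm.equivalence}, obtain relative closedness because $\DIFF_f\chi_f$ is a pure $s=f$ coordinate whose coface equation is forced by \eqref{eq:proper.exact.datum.vanishing}, and derive the range conditions in the cocycle case from coface exactness in degree $j+1<n$.
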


\begin{proof}
The trace-supported condition removes the coordinates indexed by the $(j+1)$-cells themselves.
The geometric-coordinate commuting property reduces \eqref{eq:full.link.equation} to the following independent coface systems.
\begin{equation}\label{eq:full.link.coordinate.system}
  \delta_{j+1,s}\lambda_s=\eta_s,
  \qquad
  s\in\mathfrak S_h^j,\quad
  \dim s\leq j.
\end{equation}
By the range assumptions, for every cell $s$ we let $\lambda_s$ be the unique solution of the corresponding coface system in \eqref{eq:full.link.coordinate.system} of minimum norm $\|\cdot\|_{\mathbb C_{j,s}^j}$.
Set
\[
  \lambda
  \coloneqq
  (\lambda_s)_{\substack{
    s\in\mathfrak S_h^j\\
    \dim s\leq j
  }},
  \qquad
  \Psi^j
  \coloneqq
  (\mathcal G_j^j)^{-1}\lambda.
\]
The commuting property \eqref{eq:cochain.commutativity} and \eqref{eq:full.link.coordinate.system} then give $\mathbb B_{j+1}^j\Psi^j = \Gamma^{j+1}$, which proves \eqref{eq:full.link.equation}.

The coordinate norm equivalence and \eqref{eq:coface.poincare}, applied with $d=j+1$, give
\[
\begin{aligned}
  \|\Psi^j\|_{\mathbb X_j^j}^2
  &\lesssim
  \sum_{\substack{
    s\in\mathfrak S_h^j\\
    \dim s\leq j
  }}
  \|\lambda_s\|_{\mathbb C_{j,s}^j}^2
  \\
  &\lesssim
  \sum_{\substack{
    s\in\mathfrak S_h^j\\
    \dim s\leq j
  }}
  \sum_{g\in\cof{j+1}(s)}
  h_g^{n-2j+\dim s}
  \|(\eta_s)_g\|_{
    \left(
      \PLtrimz{m}{q_j}(\Sh(s))
    \right)'
  }^2.
\end{aligned}
\]
Exchanging the order of summation and using \eqref{eq:local.coordinate.norm.equivalence} on every $(j+1)$-cell $g$, together with the vanishing of the coordinate indexed by $s=g$, we obtain
\[
\begin{aligned}
  \|\Psi^j\|_{\mathbb X_j^j}^2
  &\lesssim
  \sum_{g\in\FM{j+1}(\Mh)}
  h_g^{n-j+1}
  \sum_{\substack{
    s\subset g\\
    \dim s\leq j
  }}
  h_g^{\dim s-j-1}
  \|(\eta_s)_g\|_{
    \left(
      \PLtrimz{m}{q_j}(\Sh(s))
    \right)'
  }^2
  \\
  &\lesssim
  \sum_{g\in\FM{j+1}(\Mh)}
  h_g^{n-j+1}
  \|\Gamma_g^{j+1}\|_{
    \left(
      \PLtrim{m}{q_j}(\Sh(g))
    \right)'
  }^2.
\end{aligned}
\]
This proves \eqref{eq:full.link.stability}.
By Lemma~\ref{lem:coface.exactness}, each minimum-norm solution $\lambda_s$ depends linearly on $\eta_s$.
Since the geometric coordinate maps are linear, the resulting $\Psi^j$ depends linearly on $\Gamma^{j+1}$.

It remains to prove that $\Psi^j$ is relatively closed.
If $q_j=0$, there is nothing to prove.
Assume $q_j>0$, let $f\in\FM{j}(\Mh)$, and, for $\chi_f \in \PLtrimz{m}{q_j-1}(\Sh(f))$, set $\beta_f \coloneqq \DIFF_f\chi_f$.
Since the trace commutes with the exterior derivative, $\trc_{\partial f}\beta_f = \DIFF_{\partial f} \trc_{\partial f}\chi_f = 0$, and therefore $\beta_f \in \PLtrimz{m}{q_j}(\Sh(f))$.

By uniqueness of the geometric decomposition \eqref{eq:geometric.direct.sum}, the decomposition of $\beta_f$ contains only the component indexed by $s=f$.
Since $\mathcal E_{f,f}^j=\mathrm{Id}$, it follows that $\Psi_f^j(\beta_f) = \lambda_{f,f}(\beta_f)$.

The only $j$-dimensional coface of the cell $f$ is $f$ itself.
Consequently, for every $g\in\cof{j+1}(f)$, the definition of the coface coboundary gives $(\delta_{j+1,f}\lambda_f)_g(\beta_f) = \epsilon_{g,f}\lambda_{f,f}(\beta_f)$.
On the other hand, \eqref{eq:full.link.coordinate.system} gives $(\delta_{j+1,f}\lambda_f)_g(\beta_f) = (\eta_f)_g(\beta_f)$.
Using \eqref{eq:proper.exact.datum.vanishing}, we obtain $\epsilon_{g,f}\lambda_{f,f}(\beta_f) = (\eta_f)_g(\beta_f) = 0$.
Since $j<n$, the cell $f$ belongs to at least one $(j+1)$-cell, and hence $\lambda_{f,f}(\beta_f)=0$.
It follows that $\Psi_f^j(\DIFF_f\chi_f) = \Psi_f^j(\beta_f) = 0$, which proves \eqref{eq:relative.closedness}.

Finally, assume that $j+1<n$ and that \eqref{eq:full.link.cocycle} holds.
Applying the geometric coordinate map and using \eqref{eq:cochain.commutativity}, the cocycle condition becomes $\delta_{j+2,s}\eta_s=0$ for all $s\in\mathfrak S_h^j$.
For every cell $s$ with $\dim s\leq j$, the coface complex is exact in degree $j+1$, which is below $n$, by Lemma~\ref{lem:coface.exactness}.
Therefore, $\eta_s\in\operatorname{Im}\delta_{j+1,s}$, so the componentwise range conditions follow from exactness.
\end{proof}

\section{Stable conforming lifting}
\label{sec:poincare.proof}

Let $\underline v_h\in\uXh{k}$ and set
\begin{equation}\label{eq:prescribed.differential.data}
  \sigma_T
  \coloneqq
  \DIFF_{\ell,T}^{k}\underline v_T
  \in
  \Dpoly{T}{k},
  \qquad
  T\in\mathcal T_h.
\end{equation}
The proof of Theorem~\ref{thm:main.hybrid.poincare} proceeds in two steps.
The first step constructs a stable compatible polynomial face skeleton whose moments reproduce the action of the prescribed family $\sigma_h=(\sigma_T)_{T\in\mathcal T_h}$ on the relevant closed test forms.
This is the only stage of the argument that addresses global compatibility.
The second step completes the resulting face forms through independent local first-order problems in the mesh cells.
It is entirely cellwise and produces a conforming lifting whose projected exterior derivative equals $\sigma_h$ and whose $L^2$ norm and exterior derivative are controlled by $|\underline v_h|_{\DIFF,h}$.

\paragraph*{Prescribed and auxiliary differential data.}

The prescribed reconstruction $\sigma_T$ is characterised by the hybrid Stokes formula on the test space
\[
  \mathcal M_T^k
  \coloneqq
  \star_T\Dpoly{T}{k},
  \qquad
  \mathcal Z_T^k
  \coloneqq
  \mathcal M_T^k\cap\Ker\DIFF_T.
\]
The recursive skeleton construction is instead organised on full trimmed polynomial test spaces on the auxiliary simplicial submeshes.
These spaces provide the exact complexes, trace structure, and geometric decompositions needed to construct the intermediate-dimensional hierarchy that is absent from the hybrid space.
We therefore supplement the prescribed reconstruction with an auxiliary full trimmed reconstruction built from the same hybrid datum, so that the hybrid Stokes relation becomes available on this larger test complex.

All polynomial differential forms on a mesh cell $f$ are understood on its fixed auxiliary simplicial submesh $\Sh(f)$.
We choose the degree $m$ in \eqref{eq:def.auxiliary.trimmed.derivative.space} sufficiently large, with $m\geq\ell+1$, so that
\[
  \Dpoly{T}{k}
  \subseteq
  \widehat{\mathcal R}_T^{k+1}
  \qquad
  \forall T\in\mathcal T_h,
\]
and set
\[
  \widehat{\mathcal M}_T^k
  \coloneqq
  \star_T\widehat{\mathcal R}_T^{k+1}
  =
  \PLtrim{m}{n-k-1}(\Sh(T)),
  \qquad
  \widehat{\mathcal Z}_T^k
  \coloneqq
  \widehat{\mathcal M}_T^k\cap\Ker\DIFF_T.
\]
The superscript $k$ in these test-space symbols refers to the hybrid input degree; their elements are $(n-k-1)$-forms.
The auxiliary reconstruction is defined by
\begin{equation}\label{eq:auxiliary.differential.data}
  \widehat\sigma_T
  \coloneqq
  \DIFF_{\widehat{\mathcal R},T}^{k}\underline v_T
  \in
  \widehat{\mathcal R}_T^{k+1}.
\end{equation}
It is used only in the analysis and does not modify either the hybrid unknowns or the globally coupled discrete space.

\begin{proposition}[Projection and stability of the auxiliary reconstruction]
\label{prop:auxiliary.projection.control}
For every $T\in\mathcal T_h$,
\begin{equation}\label{eq:auxiliary.projection}
  \piD{T}{k}\widehat\sigma_T
  =
  \sigma_T.
\end{equation}
Moreover,
\begin{equation}\label{eq:auxiliary.graph.control}
  \sum_{T\in\mathcal T_h}
  \left(
    \|\widehat\sigma_T\|_T^2
    +
    \|\sigma_T\|_T^2
  \right)
  \lesssim
  |\underline v_h|_{\DIFF,h}^2.
\end{equation}
\end{proposition}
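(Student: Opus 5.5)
The projection identity \eqref{eq:auxiliary.projection} follows from the Stokes characterisations of the two reconstructions. The bound \eqref{eq:auxiliary.graph.control} follows from the first part of Proposition~\ref{prop:comparison.broken.reconstructed.energies}, applied once to each reconstruction space.

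For \eqref{eq:auxiliary.projection}, fix $T$ and $q_T\in\Dpoly{T}{k}$. By the choice of $m$, $q_T\in\widehat{\mathcal R}_T^{k+1}$. Hence the defining relation \eqref{eq:def.local.derivative.general.forms} for $\widehat\sigma_T$, tested with this $q_T$, has exactly the same right-hand side as the defining relation of $\sigma_T=\DIFF_{\ell,T}^k\underline v_T$ tested with $q_T$. This gives
\[
  \int_T\widehat\sigma_T\wedge\star_Tq_T=\int_T\sigma_T\wedge\star_Tq_T
  \qquad\forall q_T\in\Dpoly{T}{k}.
\]
Since $\sigma_T\in\Dpoly{T}{k}$, this is precisely the statement $\piD{T}{k}\widehat\sigma_T=\sigma_T$.

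For \eqref{eq:auxiliary.graph.control}, the projection identity gives $\|\sigma_T\|_T\le\|\widehat\sigma_T\|_T$, so it suffices to bound $\sum_T\|\widehat\sigma_T\|_T^2$. One could also bound $\sigma_T$ directly in the same way. I will apply the first part of Proposition~\ref{prop:comparison.broken.reconstructed.energies} to $\mathcal R=\widehat{\mathcal R}$, which yields $\sum_T\|\widehat\sigma_T\|_T^2\le|\underline v_h|_{\widehat{\mathcal R},\DIFF,h}^2\lesssim|\underline v_h|_{\DIFF,h}^2$.

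Two things must be checked for that proposition to apply.

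\textbf{Standing assumptions on $\widehat{\mathcal R}_T^{k+1}$.} Its elements satisfy $\star_Tq_T\in\PLtrim{m}{n-k-1}(\Sh(T))\subset H\Lambda^{n-k-1}(T)$. Their face traces are piecewise polynomial, hence in $L^2$, because traces preserve the conforming trimmed spaces.

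\textbf{Inverse trace estimate \eqref{eq:reconstruction.inverse.trace}.} This is the main point requiring care. For $\mu=\star_Tq_T\in\PLtrim{m}{n-k-1}(\Sh(T))$, on each face $F\in\mathcal F_T$ I apply the discrete trace inequality simplex by simplex on $\Sh(T)$. Each face $F$ is a union of faces of simplices of $\Sh(T)$. Uniform shape regularity, the bounded cardinality of $\Sh(T)$, and the comparability of simplex diameters with $h_T\simeq h_F$ then give $h_F\|\trc_F\mu\|_F^2\lesssim\|\mu\|_T^2$. Since $\star_F^{-1}$ and $\star_T$ are isometries, this yields \eqref{eq:reconstruction.inverse.trace} with a constant depending only on $n$, $k$, $m$ and mesh regularity.

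I do not expect a genuine obstacle. The only subtlety is that Proposition~\ref{prop:full.jump.control.forms}, which is used inside the comparison proposition, relies only on the admissibility \eqref{eq:def.Qtrace.admissible}. Consequently the final constant is uniform over admissible face spaces, as required later.
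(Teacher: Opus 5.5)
Your proposal is correct and follows the paper's argument. The projection identity comes from restricting the defining Stokes relation of $\widehat\sigma_T$ to $\Dpoly{T}{k}\subseteq\widehat{\mathcal R}_T^{k+1}$. The bound comes from the first part of Proposition~\ref{prop:comparison.broken.reconstructed.energies}, with the inverse trace estimate justified by fixed-degree polynomial trace inequalities on $\Sh(T)$. The only deviation is that you bound $\|\sigma_T\|_T\leq\|\widehat\sigma_T\|_T$ via the contraction property of $\piD{T}{k}$, instead of applying the comparison proposition a second time to $\Dpoly{T}{k}$; this is a harmless and slightly more economical shortcut.
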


\begin{proof}
Identity \eqref{eq:auxiliary.projection} follows by restricting the defining equation of $\widehat\sigma_T$ to
\[
  \Dpoly{T}{k}
  \subseteq
  \widehat{\mathcal R}_T^{k+1},
\]
since on this subspace the defining equation coincides with the one for $\sigma_T$.
Estimate \eqref{eq:auxiliary.graph.control} follows from Proposition~\ref{prop:comparison.broken.reconstructed.energies}, applied first with the prescribed reconstruction space $\Dpoly{T}{k}$ and then with the auxiliary space $\widehat{\mathcal R}_T^{k+1}$.
For both spaces, the inverse trace estimate \eqref{eq:reconstruction.inverse.trace} follows from the fixed-degree polynomial inverse inequalities on the uniformly shape-regular auxiliary simplicial submeshes.
\end{proof}

By construction,
\[
  \mathcal M_T^k
  \subseteq
  \widehat{\mathcal M}_T^k,
  \qquad
  \mathcal Z_T^k
  \subseteq
  \widehat{\mathcal Z}_T^k.
\]
Identity \eqref{eq:auxiliary.projection} also shows that $\widehat\sigma_T$ and $\sigma_T$ have the same moments against every test form in $\mathcal M_T^k$.
The auxiliary datum therefore provides the corresponding Stokes relation on the full trimmed test complex while leaving unchanged the prescribed moments on the original test space.

\paragraph*{Target of the skeleton construction.}

The global step of the proof constructs polynomial forms
\[
  \lambda^d
  =
  (\lambda_f)_{f\in\FM{d}(\Mh)},
  \qquad
  d\in\{k,\ldots,n-1\},
\]
that are compatible across successive cell dimensions in the sense that
\begin{equation}\label{eq:stable.skeleton.conformity}
  \trc_e\lambda_f
  =
  \lambda_e
  \qquad
  \forall f\in\FM{d}(\Mh),
  \quad
  \forall e\in\FM{d-1}(\partial f).
\end{equation}
For each positive-dimensional cell $f$, we write $\lambda_{\partial f}\coloneqq(\lambda_e)_{e\in\FM{\dim f-1}(\partial f)}$.
The face components must reproduce the auxiliary reconstruction against every closed full trimmed test form,
\begin{equation}\label{eq:auxiliary.skeleton.identity.overview}
  \sum_{F\in\mathcal F_T}
  \epsTF
  \int_F
  \lambda_F\wedge\trc_F\widehat\mu_T
  =
  \int_T
  \widehat\sigma_T\wedge\widehat\mu_T
  \qquad
  \forall T\in\mathcal T_h,
  \quad
  \forall\widehat\mu_T\in\widehat{\mathcal Z}_T^k.
\end{equation}
Restricting this identity to $\mathcal Z_T^k$ and using \eqref{eq:auxiliary.projection} gives the prescribed moment relation
\begin{equation}\label{eq:prescribed.skeleton.identity.overview}
  \sum_{F\in\mathcal F_T}
  \epsTF
  \int_F
  \lambda_F\wedge\trc_F\mu_T
  =
  \int_T
  \sigma_T\wedge\mu_T
  \qquad
  \forall T\in\mathcal T_h,
  \quad
  \forall\mu_T\in\mathcal Z_T^k.
\end{equation}
The construction must also satisfy the uniform estimate
\begin{equation}\label{eq:stable.skeleton.global.stability}
\begin{aligned}
  &
  \sum_{d=k}^{n-1}
  \sum_{f\in\FM{d}(\Mh)}
  h_f^{n-d}
  \|\lambda_f\|_f^2
  \\
  &\qquad+
  \sum_{d=k+1}^{n-1}
  \sum_{f\in\FM{d}(\Mh)}
  h_f^{n-d}
  \|\DIFF_f\lambda_f\|_f^2
  \lesssim
  |\underline v_h|_{\DIFF,h}^2.
\end{aligned}
\end{equation}
The contributions with $d=n-1$ are the weighted face estimates required by the final conforming completion.

\paragraph*{Organisation of the construction.}

Subsection~\ref{subsec:auxiliary.tools} collects the analytical ingredients used throughout the proof.
These include the discrete Poincar\'e inequality for cellular cochains, stable local potential problems, and polynomial moment realisations on the auxiliary simplicial submeshes.

Section~\ref{sec:functional.framework} introduces the functional double complex that generates the missing intermediate-dimensional hierarchy.
Its horizontal differential is the functional trace coboundary and its vertical differential is the transpose of the local exterior derivative.
A trace-compatible geometric decomposition transforms each horizontal row into a direct sum of cellular coefficient complexes on coface links.
The topology of these links provides the required exactness, while their uniformly bounded combinatorial complexity yields stable local horizontal inverses.

Subsection~\ref{subsec:recursive.skeleton} uses this framework to construct the compatible family in \eqref{eq:stable.skeleton.conformity}.
An auxiliary full trimmed hybrid Stokes formula first shows that the top functional datum belongs to the range of the horizontal coboundary.
A descending zig-zag procedure then alternates the vertical differential with stable horizontal solves and produces relatively closed functionals
\[
  \Xi^d
  \in
  \mathbb X_d^d,
  \qquad
  d\in\{k+1,\ldots,n-1\}.
\]
At the terminal level, evaluation on constants defines a cellular $(k+1)$-cochain by
\[
  \bar\xi_f
  \coloneqq
  \Xi_f^{k+1}(1).
\]
Comparison with an explicit reference recursive family shows that $\bar\xi$ is a cellular coboundary.
A stable cellular Poincar\'e solve then determines a $k$-cochain potential and constitutes the only non-local solve in the proof.

The construction is subsequently reversed.
The terminal functionals are represented by top-degree polynomial forms on the $(k+1)$-cells, while the cellular potential is realised by normalised top-degree forms on the $k$-cells.
At each successive dimension, the functionals $\Xi_f^d$ are realised by closed polynomial forms $\xi_f$ with compatible traces.
Local zero-trace moment corrections enforce the prescribed moments, and local first-order problems produce forms $\lambda_f$ satisfying
\[
  \DIFF_f\lambda_f
  =
  \xi_f,
  \qquad
  \trc_{\partial f}\lambda_f
  =
  \lambda_{\partial f}.
\]
This ascending recursion transfers the terminal cochain estimate to the compatible polynomial hierarchy and yields the moment identities \eqref{eq:auxiliary.skeleton.identity.overview} and \eqref{eq:prescribed.skeleton.identity.overview}, together with the stability estimate \eqref{eq:stable.skeleton.global.stability}.
When $k=n-1$, the descending and ascending recursions are absent, and the face skeleton is obtained directly from the hybrid Stokes identity and the cellular cochain Poincar\'e inequality.

Finally, Subsection~\ref{subsec:hybrid.completion} completes the stable face forms in the mesh cells by local moment completions and local first-order problems with prescribed trace.
The resulting conforming form $\lambda_h$ has projected exterior derivative equal to the prescribed reconstructed exterior derivative of $\underline v_h$ and satisfies the stability estimate required in Theorem~\ref{thm:main.hybrid.poincare}.
Since the global compatibility information required by the completion is encoded in the stable skeleton, this final step is entirely cellwise and requires no further non-local solve.

\subsection{Auxiliary analytical tools}
\label{subsec:auxiliary.tools}

\subsubsection{Cochain Poincar\'e inequality and local finite element problems}
\label{subsubsec:cochain.and.local.problems}

We first recall two auxiliary results that will be used in the construction.
The first is a discrete Poincar\'e inequality for scalar cochains on the cellular mesh, while the second concerns local finite element problems for polynomial differential forms on the auxiliary simplicial submeshes.
Both results are quoted from the literature in the form needed below, with the notation adapted to the present setting.

Recall that, for any $k\in\{0,\ldots,n\}$, a \emph{$k$-cochain} is a family of real numbers associated with the $k$-cells of the mesh $\Mh$.

Throughout the proof, Greek letters are used for cochain data and potentials.
We use symbols such as $\xi$ and $\eta$ for scalar or coefficient-cochain data, and $\lambda$ and $\vartheta$ for their preimages under the cellular coboundary.
We slightly abuse notation by using the same Greek symbols for cellular cochains and local polynomial differential forms; the intended meaning is determined by the ambient space.

\begin{lemma}[Discrete Poincar\'e inequality on cochains]
\label{lem:cochain.poincare}
Let $k\in\{0,\ldots,n-1\}$ and let $\xi=(\xi_f)_{f\in\FM{k+1}(\Mh)}$ be a $(k+1)$-cochain.
Assume that there exists a $k$-cochain $\lambda=(\lambda_e)_{e\in\FM{k}(\Mh)}$ such that
\begin{equation}\label{eq:face.cell.coboundary.assumption}
  \sum_{e\in\FM{k}(\partial f)}\epsilon_{f,e}\lambda_e=\xi_f
  \qquad
  \forall f\in\FM{k+1}(\Mh).
\end{equation}
Then the set of $k$-cochains satisfying
\begin{equation}\label{eq:face.cell.coboundary.identity}
  \sum_{e\in\FM{k}(\partial f)}\epsilon_{f,e}\widehat\lambda_e=\xi_f
  \qquad
  \forall f\in\FM{k+1}(\Mh)
\end{equation}
is non-empty, and its unique element $\widehat\lambda=(\widehat\lambda_e)_{e\in\FM{k}(\Mh)}$ of minimum weighted norm
\[
  \|\widehat\lambda\|_{k,h}^2
  \coloneqq
  \sum_{e\in\FM{k}(\Mh)}h_e^{n-2k}|\widehat\lambda_e|^2
\]
depends linearly on $\xi$ and satisfies
\begin{equation}\label{eq:face.cell.poincare.estimate}
  \sum_{e\in\FM{k}(\Mh)}h_e^{n-2k}|\widehat\lambda_e|^2
  \leq
  (C_{\mathrm{coch}}^k)^2
  \sum_{f\in\FM{k+1}(\Mh)}h_f^{n-2k-2}|\xi_f|^2.
\end{equation}
Here $C_{\mathrm{coch}}^k>0$ may depend on the fixed domain $\Omega$, on $n$ and $k$, and on the standing mesh-regularity parameters, but is uniform along the regular mesh family.
\end{lemma}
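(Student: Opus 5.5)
The plan is to obtain Lemma~\ref{lem:cochain.poincare} from the Whitney--de Rham realisation of cellular cochains, as announced in the introduction, combined with the Poincar\'e inequality of the conforming Whitney complex on the simplicial submesh $\Sh$. Existence and linearity are elementary. The set of solutions of \eqref{eq:face.cell.coboundary.identity} is a nonempty affine subspace by assumption. The weighted norm $\|\cdot\|_{k,h}$ is a Hilbert norm on the finite-dimensional space of $k$-cochains, so there is a unique minimiser. It is the unique solution lying in the $\|\cdot\|_{k,h}$-orthogonal complement of the kernel of the cellular coboundary $\delta^k$, and is therefore linear in $\xi\in\operatorname{Im}\delta^k$. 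The substance is thus the estimate \eqref{eq:face.cell.poincare.estimate}. It suffices to exhibit \emph{some} solution $\widetilde\lambda$ satisfying it, since the minimiser has smaller norm.

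To construct $\widetilde\lambda$ I would use the stable cellular-to-conforming maps of \cite[Lemma~6 and Section~3.4]{Di-Pietro.Droniou.ea:25}. These consist of a reconstruction $\mathcal W^{k+1}$ sending a polytopal $(k+1)$-cochain to a conforming piecewise polynomial (Whitney-type) $(k+1)$-form on $\Sh$, and a cochain-valued de Rham map $\mathcal I^k$ that integrates over $k$-cells of $\Mh$. The key properties are the commutation $\DIFF\mathcal W^k=\mathcal W^{k+1}\delta^k$ and $\delta^k\mathcal I^k=\mathcal I^{k+1}\DIFF$ on the relevant discrete spaces, a left-inverse property $\mathcal I^{k+1}\mathcal W^{k+1}=\mathrm{Id}$, and the scaled norm equivalences
\[
\|\mathcal W^{k+1}\xi\|_\Omega^2\simeq\sum_{f}h_f^{n-2k-2}|\xi_f|^2,\qquad\|\mathcal I^k\mu\|_{k,h}\lesssim\|\mu\|_\Omega
\]
for forms $\mu$ in a fixed-degree finite element space on $\Sh$. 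The weights $h^{n-2k-2}$ and $h^{n-2k}$ match the $L^2$ scaling of $(k+1)$- and $k$-forms of unit cell integral.

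With these maps the argument runs as follows. Set $w\coloneqq\mathcal W^{k+1}\xi$. Since $\xi=\delta^k\lambda$, we get $w=\DIFF\mathcal W^k\lambda$, so $w$ is exact within the conforming complex on $\Sh$. The Poincar\'e inequality of that complex provides $\mu$ with $\DIFF\mu=w$ and $\|\mu\|_\Omega\le C_{\mathrm P}\|w\|_\Omega$, with $C_{\mathrm P}$ depending only on $\Omega$ and on the fixed polynomial degrees, uniformly in $h$. Set $\widetilde\lambda\coloneqq\mathcal I^k\mu$. The commutation and left-inverse properties give
\[
\delta^k\widetilde\lambda=\mathcal I^{k+1}\DIFF\mu=\mathcal I^{k+1}\mathcal W^{k+1}\xi=\xi .
\]
Chaining the norm equivalences then yields $\|\widetilde\lambda\|_{k,h}\lesssim\|\mu\|_\Omega\lesssim\|w\|_\Omega\lesssim(\sum_f h_f^{n-2k-2}|\xi_f|^2)^{1/2}$, which is \eqref{eq:face.cell.poincare.estimate}.

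The main obstacle is the uniformity of the conforming Poincar\'e constant $C_{\mathrm P}$ on arbitrary topology. I would handle it by noting that $w$ lies in the range of $\DIFF$ on the discrete complex, so only the discrete range inequality is needed, with no harmonic projection. I would obtain that inequality from a bounded commuting (Falk--Winther) cochain projection applied to a continuous $H\Lambda^k$ preimage of controlled norm, which exists by closed range of $\DIFF$ on $\Omega$. A secondary obstacle is checking that the cited maps act on cochains of the polytopal cells $\FM{k}(\Mh)$ rather than on simplicial ones, with scaling constants depending only on mesh regularity. I expect this to be a matter of carefully quoting \cite{Di-Pietro.Droniou.ea:25}.
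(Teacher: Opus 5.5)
Your proposal is correct and follows the same line as the paper: existence, uniqueness and linearity of $\widehat\lambda$ come from the minimum-norm/orthogonal-complement argument, and the estimate is inherited from one stable solution whose existence rests on \cite[Lemma~6 and Section~3.4]{Di-Pietro.Droniou.ea:25}. The paper quotes the resulting cochain Poincar\'e inequality \cite[Lemma~6, Eq.~(21)]{Di-Pietro.Droniou.ea:25} as a black box and converts its patchwise sums into the global weighted sums, using the vertex scales $h_v$ when $k=0$; you instead unpack that inequality through the Whitney--de Rham realisation and the discrete Poincar\'e inequality of the conforming complex (the link the paper only mentions in its introduction), which makes the dependence of $C_{\mathrm{coch}}^k$ on the Whitney Poincar\'e constant explicit.
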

\begin{proof}
Assumption \eqref{eq:face.cell.coboundary.assumption} is precisely the solvability condition for \eqref{eq:face.cell.coboundary.identity}.
The Poincar\'e inequality for cochains in \cite[Lemma~6, Eq.~(21)]{Di-Pietro.Droniou.ea:25}, applied to the cellular mesh $\Mh$, provides a solution $\lambda^*$ of \eqref{eq:face.cell.coboundary.identity} satisfying the corresponding estimate written as sums over the top-dimensional cell patches.
Since the local mesh sizes of incident cells are uniformly comparable and the number of cells in each local patch is uniformly bounded, these patchwise sums are uniformly equivalent to the global weighted sums appearing in \eqref{eq:face.cell.poincare.estimate}.
Notice that, for $k=0$, we use the positive vertex scales introduced in Subsection~\ref{sec:setting:mesh}, which are uniformly comparable with the local mesh sizes of the incident cells and therefore yield a non-degenerate weighted norm equivalent to the patchwise norm.
Consequently,
\[
\sum_{e\in\FM{k}(\Mh)}h_e^{n-2k}|\lambda^*_e|^2
\lesssim
\sum_{f\in\FM{k+1}(\Mh)}h_f^{n-2k-2}|\xi_f|^2.
\]

Since $\widehat\lambda$ has minimum weighted norm among all the solutions of \eqref{eq:face.cell.coboundary.identity}, \eqref{eq:face.cell.poincare.estimate} follows.
Finally, $\widehat\lambda$ is the unique solution lying in the weighted orthogonal complement of the kernel of the cellular coboundary, and therefore depends linearly on $\xi$.
\end{proof}

\begin{lemma}[Local boundary value problem in finite element spaces]
\label{lem:first.local.problem}
Let $f\in\FM{d}(\Mh)$ be a $d$-cell, with $d\ge1$, let $k\in\{0,\ldots,d-1\}$ be a form degree, and let $r\ge1$ be a polynomial degree.
Let
\[
  \xi\in\PLtrim{r}{k+1}(\Sh(f))
  \qquad\text{and}\qquad
  \theta\in\PLtrim{r+1}{k}(\Sh(\partial f))
\]
be such that $\DIFF\xi=0$ and the following compatibility condition holds.
  \begin{align*}
    \text{if $d=k+1$},&
    \qquad
    \int_f \xi = \int_{\partial f}\theta,\\
    \text{if $d\ge k+2$},&
    \qquad
    \trc_{\partial f}\xi = \DIFF_{\partial f}\theta.
  \end{align*}
Then there exists $\lambda\in\PLtrim{r+1}{k}(\Sh(f))$ such that
\[
  \DIFF\lambda=\xi,
  \qquad
  \trc_{\partial f}\lambda=\theta,
\]
and
\[
  \norm{f}{\lambda}^2
  \lesssim
  h_f\norm{\partial f}{\theta}^2
  +
  h_f^2\norm{f}{\xi}^2.
\]
\end{lemma}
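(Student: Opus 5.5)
The plan is to split the construction into a trace-lifting step and a zero-trace correction step. I would reduce everything to a homogeneous-trace problem that can be solved on the reference configuration by scaling and finite-dimensional compactness.

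First, apply Proposition~\ref{lem:stable.polynomial.trace.extension} with polynomial degree $r+1$ and form degree $k$ to obtain
\[
  \lambda_0\coloneqq\mathcal E_{r+1,f}^k\theta\in\PLtrim{r+1}{k}(\Sh(f)).
\]
This form satisfies $\trc_{\partial f}\lambda_0=\theta$ and
\[
  \|\lambda_0\|_f+h_f\|\DIFF\lambda_0\|_f\lesssim h_f^{1/2}\|\theta\|_{\partial f}.
\]
Next, set $\zeta\coloneqq\xi-\DIFF\lambda_0$. Since $\DIFF\PLtrim{r+1}{k}\subset\PLtrim{r}{k+1}$ on each simplex, with conformity preserved, we have $\zeta\in\PLtrim{r}{k+1}(\Sh(f))$. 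It is closed because $\DIFF\xi=0$. If $d\ge k+2$, then $\trc_{\partial f}\zeta=\trc_{\partial f}\xi-\DIFF_{\partial f}\theta=0$. If $d=k+1$, $\zeta$ is a top-degree form and has no trace constraint, but Stokes' formula gives $\int_f\zeta=\int_f\xi-\int_{\partial f}\theta=0$. In both cases $\zeta$ is a closed form with zero trace, and in top degree it has zero mean.

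It therefore suffices to find $\mu\in\PLtrimz{r+1}{k}(\Sh(f))$ with $\DIFF\mu=\zeta$ and $\|\mu\|_f\lesssim h_f\|\zeta\|_f$. Then $\lambda\coloneqq\lambda_0+\mu$ satisfies all the requirements, since
\[
  \|\zeta\|_f\le\|\xi\|_f+h_f^{-1/2}\|\theta\|_{\partial f}
\]
gives the stated bound after squaring.

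Existence of $\mu$ is exactness of the zero-trace (relative) trimmed finite element complex on $\Sh(f)$, because $f$ is homeomorphic to a closed ball. The relative cohomology of $(f,\partial f)$ vanishes in every degree below $d$. In degree $d$ it is $\Real$, which is detected by the integral, and the zero-mean condition handles this case. The trimmed complex with vanishing boundary traces computes this relative cohomology, by the standard Whitney/bounded-cochain-projection argument of \cite{Arnold.Falk.ea:06} or the relative version in \cite{Licht:17,Christiansen.Licht:20}. One point to check is that the polynomial degree $r+1$ for $\mu$ is enough. This follows because $\DIFF\PLtrimz{r+1}{k}$ exhausts the closed elements of $\PLtrimz{r}{k+1}$, as the degrees of freedom are compatible along the trimmed sequence $\PLtrim{r+1}{k}\to\PLtrim{r}{k+1}$ up to the standard degree shift. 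I would also recheck the degree indexing of the trimmed family here.

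I choose $\mu$ as the minimum $L^2$-norm preimage, which is orthogonal to the closed zero-trace forms; this choice is linear in the data.

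The main obstacle is uniformity of the constant in $\|\mu\|_f\lesssim h_f\|\zeta\|_f$. For a fixed simplicial configuration this is a finite-dimensional statement: $\DIFF$ is injective on the orthogonal complement of its kernel, so a constant exists. Mapping $f$ affinely to unit diameter produces the factor $h_f$ by scaling, so the task is to bound the constant uniformly over all admissible submeshes $\Sh(f)$. Mesh regularity gives a bounded number of simplices with uniformly controlled shape. Hence, after scaling, the possible configurations range over finitely many combinatorial types, each parametrised by a compact set of vertex positions bounded away from degeneracy. The inverse norm of $\DIFF$ restricted to the complement of its kernel depends continuously on the vertex positions within each type. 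Continuity holds because the dimensions of the kernel and image are topological invariants of the type, so they stay constant, and pullbacks to a reference configuration are uniformly bounded. Compactness then yields a uniform bound.

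As a backup, I would invoke directly the local Poincar\'e estimates for trimmed spaces with boundary conditions on shape-regular patches from \cite[Lemmas~16--17]{Di-Pietro.Droniou.ea:25}.
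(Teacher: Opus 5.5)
Your argument is correct in substance but takes a different route from the paper, which does not prove this lemma: it simply quotes \cite[Lemma~11]{Di-Pietro.Droniou.Pitassi:25}. You instead give a self-contained proof built only from ingredients already present in the paper:
\begin{itemize}
\item the degree-of-freedom-based trace extension of Proposition~\ref{lem:stable.polynomial.trace.extension}, used at index $r+1$;
\item exactness of the zero-trace trimmed complex on the triangulated ball $\Sh(f)$, which the paper itself invokes in Lemma~\ref{lem:local.link.descent};
\item a reference-configuration scaling argument of the same type as in Lemma~\ref{lem:local.potential.problem}, now applied to zero-trace spaces.
\end{itemize}
Your route makes transparent that only the local topology of $f$ and the shape regularity of $\Sh(f)$ enter the constant, and it yields a linear selection directly; the citation buys brevity. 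Your compactness/continuity discussion can be shortened. The simplexwise affine pullback to a fixed realisation of each combinatorial type identifies the spaces and commutes with $\DIFF$ and traces, so norm equivalence alone gives the uniform bound. The backup citation is then unnecessary.

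One intermediate claim needs correcting. It is not true that $\DIFF\PLtrim{r+1}{k}(S)\subset\PLtrim{r}{k+1}(S)$, because the trimmed complex preserves the index. Indeed, $\DIFF\PLtrim{r+1}{k}(S)=\DIFF\PL{r+1}{k}(S)$ contains closed forms of exact degree $r$. By contrast, the closed part of $\PLtrim{r}{k+1}(S)$ is $\DIFF\PL{r}{k}(S)$, whose elements have degree at most $r-1$. For $k=0$, for instance, gradients of degree-$(r+1)$ polynomials do not belong to the first-kind N\'ed\'elec space of index $r$. Hence, using $\PLtrim{r}{k+1}\subset\PL{r}{k+1}\subset\PLtrim{r+1}{k+1}$, the form $\zeta=\xi-\DIFF\lambda_0$ lies in $\PLtrim{r+1}{k+1}(\Sh(f))$, not in $\PLtrim{r}{k+1}(\Sh(f))$.

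This slip is harmless. The form $\zeta$ is still closed with zero trace (and has zero mean when $d=k+1$). Exactness of $\PLtrimz{r+1}{k}(\Sh(f))\xrightarrow{\DIFF}\PLtrimz{r+1}{k+1}(\Sh(f))$ at the same index then yields $\mu$ directly. So the ``degree shift'' you intended to recheck does not arise. This also explains why the potential must be taken at index $r+1$: $\theta$, and hence $\lambda_0$, already lives there. The remaining steps, including the final bound $\|\lambda\|_f\lesssim h_f^{1/2}\|\theta\|_{\partial f}+h_f\|\xi\|_f$, go through as you wrote them.
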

\begin{proof}
This is \cite[Lemma~11]{Di-Pietro.Droniou.Pitassi:25}, written in the notation of the present paper.
\end{proof}

\begin{lemma}[Local potential problem without prescribed trace]
\label{lem:local.potential.problem}
Let $f\in\FM{d}(\Mh)$, let $k\in\{0,\ldots,d-1\}$, and let $r\geq1$.
For every $\xi\in\DIFF_f\PLtrim{r}{k}(\Sh(f))$, let $\lambda\in\PLtrim{r}{k}(\Sh(f))$ be the unique solution of $\DIFF_f\lambda=\xi$ of minimum $L^2(f)$ norm.
Then $\lambda$ depends linearly on $\xi$ and satisfies
\[
  \|\lambda\|_f
  \lesssim
  h_f\|\xi\|_f.
\]
\end{lemma}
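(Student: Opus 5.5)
The statement to prove is Lemma~\ref{lem:local.potential.problem}: for $\xi\in\DIFF_f\PLtrim{r}{k}(\Sh(f))$, the minimum-norm preimage $\lambda$ satisfies $\|\lambda\|_f\lesssim h_f\|\xi\|_f$. The plan is a scaling and compactness argument on a reference configuration, combined with the finite-dimensional structure of the spaces.

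First, existence, uniqueness and linearity. The preimage set $\{\lambda\in\PLtrim{r}{k}(\Sh(f)):\DIFF_f\lambda=\xi\}$ is a nonempty affine subspace of a finite-dimensional space. Its element of minimum $L^2(f)$ norm is therefore unique. It is also the unique preimage lying in the $L^2$-orthogonal complement $W_f$ of $\PLtrim{r}{k}(\Sh(f))\cap\Ker\DIFF_f$. Since $\DIFF_f$ restricted to $W_f$ is a bijection onto $\DIFF_f\PLtrim{r}{k}(\Sh(f))$, the map $\xi\mapsto\lambda$ is its inverse and is linear.

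Second, the estimate. The required bound is equivalent to
\[
  \|\lambda\|_f\leq C\,h_f\|\DIFF_f\lambda\|_f\qquad\forall\lambda\in W_f,
\]
with $C$ uniform. For a single cell this holds trivially with some constant, because $\DIFF_f$ is injective on the finite-dimensional space $W_f$. The work is to make the constant uniform. Pull back by the affine map $x\mapsto(x-\boldsymbol x_f)/h_f$, which sends $f$ to a cell $\widehat f$ of unit diameter carrying the submesh $\widehat{\Sh}(f)$. Trimmed spaces are invariant under affine pullback, and pullback commutes with $\DIFF$. The $L^2$ norms of $k$- and $(k+1)$-forms scale by $h_f^{d/2-k}$ and $h_f^{d/2-k-1}$ respectively, which produces exactly one factor $h_f$. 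Orthogonal complements are not preserved by a nonconformal pullback, but this is harmless. The pullback has isotropic scaling, so it maps $L^2$-orthogonality to orthogonality with respect to the pulled-back inner product. In any case, the quantity $\inf_{z\in\Ker\DIFF}\|\lambda-z\|$ is defined without reference to the complement, and the minimum-norm solution realises it. So it suffices to bound the best constant $C(\widehat f)$ in $\inf_{z}\|\widehat\lambda-z\|\le C\|\DIFF\widehat\lambda\|$ on the reference configuration.

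Third, uniformity of $C(\widehat f)$, which I expect to be the main obstacle, since polytopal cells have no single reference element. Mesh regularity says that $\widehat{\Sh}(f)$ has uniformly bounded cardinality and uniformly shape-regular simplices of diameter comparable to $1$. I would argue by compactness of the set of admissible reference configurations. Up to the bounded combinatorial type, which takes finitely many values, the vertex coordinates range over a compact set on which the simplices remain nondegenerate. On this set the trimmed spaces depend continuously on the coordinates via affine pullbacks on each simplex. The kernel of $\DIFF$ in the conforming trimmed space is the closed-form space, whose dimension is determined topologically: it equals the kernel of the previous $\DIFF$ plus the space of exact forms, and the cohomology of $\widehat f$ is that of a ball. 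Hence the relevant dimensions are constant. The constant $C$ equals the inverse of the smallest positive singular value of $\DIFF$ relative to these norms, and it is therefore continuous and finite on the compact set. This gives a uniform bound.

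An alternative, which avoids compactness, is to cite the uniform polynomial Poincar\'e estimates on simplicial submeshes in \cite[Lemmas~16--17]{Di-Pietro.Droniou.ea:25}. Another is to build an explicit preimage simplex by simplex via a geometric decomposition, in the spirit of Lemma~\ref{lem:geometric.extensions} and Lemma~\ref{lem:first.local.problem}. This can be done by taking $\theta$ from the trace of any preimage, solved first on lower-dimensional cells recursively. The minimum-norm solution is then bounded by that explicit preimage.
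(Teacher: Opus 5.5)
Your proof is correct, but it reaches uniformity by a different mechanism than the paper. Both arguments reduce to the finitely many combinatorial types of $\Sh(f)$. Both also bound the minimum-norm solution by comparison: you through the intrinsic quantity $\inf_{z}\|\lambda-z\|$, the paper through a particular preimage $\lambda^*$.

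The paper fixes one reference realisation per type and uses the simplexwise affine pullback from it. This identifies $\PLtrim{r}{k}(\Sh(f))$ with a fixed reference space on which $\DIFF$ has a fixed bounded right inverse. Shape regularity together with $h_S\simeq h_f$ then gives $\|\omega\|_S\simeq h_f^{d/2-k}\|\widehat\omega\|_{\widehat S}$ on every simplex, so $\|\lambda^*\|_f\lesssim h_f\|\xi\|_f$ follows by direct scaling.

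You instead rescale by a single homothety and then use compactness of the set of admissible unit-size configurations together with continuity of the smallest positive singular value. The homothety is conformal, so $L^2$-orthogonality is preserved exactly and your caveat about nonconformal pullbacks is unnecessary.

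The paper's route is quantitative and needs neither a continuity argument nor a dimension count. Yours gives a non-explicit constant and needs the rank of $\DIFF$ to be constant on the closure of the configuration set. That point is simpler than your topological argument suggests. In bases pulled back simplexwise from a fixed reference realisation, the matrix of $\DIFF$ does not depend on the vertex positions, because pullback commutes with $\DIFF$. Only the Gram matrices vary, continuously and positive-definitely on the compact set. Appealing to the cohomology of $\widehat f$ is also imprecise for limit configurations, whose embedded union need not be a ball, although the abstract complex is unchanged.

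Of your alternatives, \cite[Lemmas~16--17]{Di-Pietro.Droniou.ea:25} are invoked in this paper for Koszul complements of polynomial spaces on a single cell. They are not stated for conforming trimmed spaces on $\Sh(f)$, so they would not apply directly. The recursive explicit-preimage construction is only sketched: taking $\theta$ as the trace of an arbitrary preimage gives no control of $\|\theta\|_{\partial f}$.
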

\begin{proof}
The range assumption ensures solvability.
The auxiliary simplicial submeshes have uniformly bounded cardinality, so only finitely many combinatorial types occur.
Fix a reference realisation of each type.
The simplexwise affine pullback identifies the conforming trimmed polynomial spaces with fixed finite-dimensional reference spaces and commutes with the exterior derivative.
On each reference space, the exterior derivative admits a bounded right inverse on its image.
Uniform shape regularity, comparability of the simplex diameters with $h_f$, and scaling therefore give a solution $\lambda^*\in\PLtrim{r}{k}(\Sh(f))$ such that
\[
  \DIFF_f\lambda^*=\xi,
  \qquad
  \|\lambda^*\|_f\lesssim h_f\|\xi\|_f.
\]
The minimum-norm solution has no larger norm.
It is the unique solution in the $L^2(f)$-orthogonal complement of $\Ker(\DIFF_f|_{\PLtrim{r}{k}(\Sh(f))})$, and hence depends linearly on $\xi$.
\end{proof}

\subsubsection{Polynomial bubble moment realisations}
\label{subsubsec:trace.and.bubble}

The following result relies on the full--trimmed polynomial duality pairing on a simplex; see \cite[Corollary~3.3]{Berchenko-Kogan:21}.
\begin{lemma}[Polynomial bubble moment realisation]
\label{lem:polynomial.bubble.moment.realisation}
Let $f\in\FM{d}(\Mh)$ be a $d$-cell, let $k\in\{0,\ldots,d\}$, and let
\[
\mathcal U_f
\subset
\bigtimes_{S\in\Sh(f)}\PL{r}{d-k}(S)
\]
be a finite-dimensional space of piecewise polynomial forms, endowed with the $L^2(f)$ norm.
For a polynomial degree $r_{\mathrm b}$ depending only on $r,d,k$ and on the fixed mesh-regularity parameters, every $\Phi_f\in\mathcal U_f'$ admits a representative $\rho_f\in\PLtrimz{r_{\mathrm b}}{k}(\Sh(f))$ such that
\[
\trc_{\partial f}\rho_f=0,
\qquad
\int_f\rho_f\wedge\mu
=
\Phi_f(\mu)
\qquad
\forall\mu\in\mathcal U_f,
\]
and
\[
\|\rho_f\|_{L^2\Lambda^k(f)}
\lesssim
\|\Phi_f\|_{\mathcal U_f'}.
\]
The representative can be selected linearly in $\Phi_f$.
\end{lemma}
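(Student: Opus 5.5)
The plan is to reduce the statement to a single reference simplex, treat the moment problem there by finite-dimensional duality, and then glue the local representatives together. Duality lets us build the representative simplex by simplex with zero trace on each simplex boundary. Such a form is automatically conforming: its traces on interior interfaces vanish from both sides. It also has zero trace on $\partial f$.

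First, fix on each $S\in\Sh(f)$ a representation $\Phi_f(\mu)=\sum_{S}\Phi_S(\mu_{|S})$, with $\Phi_S\in(\PL{r}{d-k}(S))'$. This comes from extending $\Phi_f$ by Hahn--Banach (Hilbert-space orthogonal extension) from $\mathcal U_f$ to the full broken space $\bigtimes_S\PL{r}{d-k}(S)$, keeping the same $L^2(f)$ dual norm. Since that space is an orthogonal product, $\sum_S\|\Phi_S\|^2=\|\Phi_f\|^2$. Extending linearly via the Riesz representer of the extension also keeps the selection linear.

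Second, on each $S$ we claim the following. There is $r_{\mathrm b}$ such that the pairing
\[
\PLtrimz{r_{\mathrm b}}{k}(S)\times\PL{r}{d-k}(S)\ni(\rho,\mu)\mapsto\int_S\rho\wedge\mu
\]
has trivial right kernel. Then the map $\rho\mapsto\int_S\rho\wedge\cdot$ is onto $(\PL{r}{d-k}(S))'$. We prove the claim with the full--trimmed duality of \cite[Corollary~3.3]{Berchenko-Kogan:21}. Taking $r_{\mathrm b}=r+d+1$ (or any sufficiently large degree), the bubble space $\PLtrimz{r_{\mathrm b}}{k}(S)$ is dual, via the wedge pairing, to a full space $\PL{r_{\mathrm b}-d+k-1}{d-k}(S)$ (degree index up to the convention of that corollary). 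That full space contains $\PL{r}{d-k}(S)$. Hence any $\mu\in\PL{r}{d-k}(S)$ orthogonal to all bubbles vanishes.

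Third, we select $\rho_S$ as the minimum-$L^2$-norm preimage, which is linear in $\Phi_S$. The bound $\|\rho_S\|_S\lesssim\|\Phi_S\|$ comes from scaling. Pull back to the reference simplex by the affine map; this preserves the wedge pairing up to orientation and the bubble and full spaces. The reference right inverse is bounded by a constant depending only on $r,d,k$, and uniform shape regularity controls the Jacobian factors. The powers of $h_S$ cancel because the pairing is an $L^2$ pairing between complementary degrees and both sides carry $L^2$ norms.

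Finally, set $\rho_f|_S=\rho_S$. This form lies in $\PLtrimz{r_{\mathrm b}}{k}(\Sh(f))$, reproduces $\Phi_f$ on $\mathcal U_f$, and satisfies $\|\rho_f\|^2=\sum_S\|\rho_S\|^2\lesssim\|\Phi_f\|^2$. The only delicate point is the second step, the surjectivity on a single simplex, i.e.\ matching the degree bookkeeping in the cited duality. If the precise index from \cite{Berchenko-Kogan:21} is awkward, we can instead use an alternative argument: multiply by the cubic-type bubble $b_S=\prod\lambda_i$ times a fixed full form, and argue nondegeneracy from positivity of $b_S$ in the interior.
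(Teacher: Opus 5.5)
Your proposal is correct and follows essentially the same route as the paper. Both arguments use the same ingredients: a minimum-norm (Riesz) extension of $\Phi_f$ from $\mathcal U_f$ to $\bigtimes_{S\in\Sh(f)}\PL{r}{d-k}(S)$, simplexwise surjectivity of the wedge pairing from the full--trimmed duality of \cite{Berchenko-Kogan:21} (your index $\PLtrimz{r_{\mathrm b}}{k}(S)\leftrightarrow\PL{r_{\mathrm b}+k-d-1}{d-k}(S)$ is the right one), minimum-norm preimages bounded by affine scaling, and gluing of zero-trace simplex bubbles into a conforming form vanishing on $\partial f$. Your bubble fallback is also valid and slightly more elementary, since testing $\mu\in\PL{r}{d-k}(S)$ against $b_S\star\mu\in\PLtrimz{r+d+2}{k}(S)$ gives $\pm\|b_S^{1/2}\mu\|_S^2\neq0$ for $\mu\neq0$, which is exactly the needed nondegeneracy without invoking the duality theorem.
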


\begin{proof}
Set $\mathcal W_f \coloneqq \bigtimes_{S\in\Sh(f)} \PL{r}{d-k}(S)$, with its broken $L^2$ norm.
On every $d$-simplex $S$, the full--trimmed duality \cite[Corollaries~3.3--3.4]{Berchenko-Kogan:21} shows that, for a sufficiently large degree $r_{\mathrm b}$ depending only on $r$, $d$, and $k$, the wedge pairing
\[
\PLtrimz{r_{\mathrm b}}{k}(S)\times\PL{r}{d-k}(S)
\longrightarrow
\Real,
\qquad
(\alpha,\eta)
\mapsto
\int_S\alpha\wedge\eta,
\]
is non-degenerate in the second factor.
On a reference simplex, the representative of minimum $L^2$ norm therefore defines a bounded right inverse of
\[
\PLtrimz{r_{\mathrm b}}{k}(S)\ni\alpha
\mapsto
\left(
    \eta\mapsto\int_S\alpha\wedge\eta
\right)
\in
\left(\PL{r}{d-k}(S)\right)'.
\]
Transport to the uniformly shape-regular simplices and scaling yield a bound independent of $S$.
Taking the direct sum over the simplices yields a uniformly bounded realisation map from $\mathcal W_f'$ to the direct sum of simplex bubble spaces.

Extend $\Phi_f$ to $\widehat\Phi_f\in\mathcal W_f'$ with minimum dual norm.
The Hilbert-space extension theorem gives $\|\widehat\Phi_f\|_{\mathcal W_f'} = \|\Phi_f\|_{\mathcal U_f'}$.
Applying the simplexwise realisation map produces forms $\rho_S\in\PLtrimz{r_{\mathrm b}}{k}(S)$.
Define $\rho_f$ by $(\rho_f)_{|S}=\rho_S$.
Since every $\rho_S$ has zero trace on $\partial S$, the pieces are conforming across the internal interfaces and vanish on $\partial f$.
Thus $\rho_f\in\PLtrimz{r_{\mathrm b}}{k}(\Sh(f))$.

For every $\mu\in\mathcal U_f$, $\int_f\rho_f\wedge\mu = \widehat\Phi_f(\mu) = \Phi_f(\mu)$.
The stability of the simplexwise right inverses, together with the uniformly bounded number of simplices in $\Sh(f)$, then yields
\[
\|\rho_f\|_{L^2\Lambda^k(f)}
\lesssim
\|\widehat\Phi_f\|_{\mathcal W_f'}
=
\|\Phi_f\|_{\mathcal U_f'}.
\]
Choosing both the minimum-norm extension and the simplexwise minimum-norm representatives makes the construction linear.
\end{proof}

\subsection{Recursive construction of the stable compatible skeleton}
\label{subsec:recursive.skeleton}

\subsubsection{Descending construction of moment functionals}
\label{subsubsec:downward}

For the remainder of the recursive skeleton construction, we assume $k\in\{0,\ldots,n-2\}$.
The top-degree case $k=n-1$ does not require the downward recursive construction and is treated directly in Lemma~\ref{lem:top.degree.face.skeleton}.
We choose a polynomial degree $m_{\mathcal S}\geq m+2$, independent of $h$, sufficiently large that $m_{\mathcal S}-1$ dominates all polynomial degrees required by Lemma~\ref{lem:stable.closed.moment.realisation} over the finitely many cell dimensions up to $d=n$.
Throughout the ascending construction, all closed polynomial $(k+1)$-forms $\xi_f$ are constructed in $\PLtrim{m_{\mathcal S}-1}{k+1}(\Sh(f))$, whereas all polynomial $k$-form potentials $\lambda_f$ belong to $\PLtrim{m_{\mathcal S}}{k}(\Sh(f))$.

The vertical differential $\mathbb D$ and the commuting-diagram identities used below are defined in Proposition~\ref{prop:functional.complex}.
The descending construction follows the standard zig-zag through this double complex.
At each level a vertical differential produces a horizontal cocycle, which is then inverted by a stable coface-link solve.

\paragraph{Base case.}

Set $q_* \coloneqq n-k-2 = q_{n-1}$.

For every $T\in\mathcal T_h$, define
\[
  \Gamma_T^n(\zeta_T)
  \coloneqq
  (-1)^{k+1}
  \int_T
  \widehat\sigma_T\wedge\DIFF_T\zeta_T,
  \qquad
  \zeta_T\in
  \PLtrim{m}{q_*}(\Sh(T)),
\]
and set $\Gamma^n \coloneqq (\Gamma_T^n)_{T\in\mathcal T_h} \in \mathbb X_n^{n-1}$.

We also introduce the datum $\Upsilon^{n-1}\in\mathbb X_{n-1}^{n-1}$ defined by
\[
  \Upsilon_F^{n-1}(\nu_F)
  \coloneqq
  (-1)^{k+1}
  \int_F
  v_F\wedge\DIFF_F\nu_F,
  \qquad
  \nu_F\in
  \PLtrim{m}{q_*}(\Sh(F)).
\]

\begin{lemma}[Stable top link solve]
\label{lem:top.link.solve}
There exists a relatively closed family $\Xi^{n-1}\in\mathbb X_{n-1}^{n-1}$ such that
\begin{equation}\label{eq:top.full.system}
  \mathbb B_n^{n-1}\Xi^{n-1}
  =
  \Gamma^n.
\end{equation}
The family can be selected linearly in $\Gamma^n$ and satisfies
\begin{equation}\label{eq:top.link.stability}
  \|\Xi^{n-1}\|_{\mathbb X_{n-1}^{n-1}}^2
  \lesssim
  \sum_{T\in\mathcal T_h}
  \|\widehat\sigma_T\|_T^2
  \lesssim
  |\underline v_h|_{\DIFF,h}^2.
\end{equation}

Moreover, for every $T\in\mathcal T_h$ and every $\zeta_T\in\PLtrim{m}{q_*}(\Sh(T))$ such that $\DIFF_T\zeta_T\in\star_T\Dpoly{T}{k}$, one has
\begin{equation}\label{eq:top.reconstruction.restriction}
  \sum_{F\in\mathcal F_T}
  \epsTF
  \Xi_F^{n-1}(\trc_F\zeta_T)
  =
  (-1)^{k+1}
  \int_T
  \sigma_T\wedge\DIFF_T\zeta_T.
\end{equation}

In addition, if $q_*>0$, then
\begin{equation}\label{eq:top.vertical.compatibility}
  \mathbb B_n^{n-2}
  \mathbb D_{n-1}^{n-2}\Xi^{n-1}
  =
  0,
\end{equation}
whereas, if $q_*=0$,
\begin{equation}\label{eq:top.augmentation}
  \left(
    \mathbb B_n^{n-1}\Xi^{n-1}
  \right)_T(1)
  =
  0
  \qquad
  \forall T\in\mathcal T_h.
\end{equation}
\end{lemma}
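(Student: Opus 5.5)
The plan is to apply Lemma~\ref{lem:full.link.solve} with $j=n-1$ and datum $\Gamma^{n}\in\mathbb X_n^{n-1}$. The key observation is that $\Gamma^n$ lies in the range of the horizontal coboundary, with an explicit (but unstable) preimage given by the face datum $\Upsilon^{n-1}$. That single fact supplies all the hypotheses of the link solve. The stable solution $\Xi^{n-1}$ is then the one produced by that lemma, not $\Upsilon^{n-1}$, since $\Upsilon^{n-1}$ involves the uncontrolled face unknowns $v_F$.

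\textbf{Step 1 ($\Gamma^n=\mathbb B_n^{n-1}\Upsilon^{n-1}$).} Fix $T$ and $\zeta_T\in\PLtrim{m}{q_*}(\Sh(T))$. Since $\DIFF_T\zeta_T\in\PLtrim{m}{n-k-1}(\Sh(T))=\widehat{\mathcal M}_T^k$, the form $q_T\coloneqq\star_T^{-1}\DIFF_T\zeta_T$ is admissible in the definition \eqref{eq:def.local.derivative.general.forms} of $\widehat\sigma_T$ with $\mathcal R=\widehat{\mathcal R}$.
\begin{itemize}
\item The volume term contains $\DIFF_T(\star_Tq_T)=\DIFF_T\DIFF_T\zeta_T=0$, so it vanishes.
\item Commuting trace and exterior derivative turns the face terms into $\sum_{F}\epsTF\int_F v_F\wedge\DIFF_F\trc_F\zeta_T$.
\end{itemize}
After multiplying by $(-1)^{k+1}$, this gives $\Gamma_T^n(\zeta_T)=\sum_F\epsTF\Upsilon^{n-1}_F(\trc_F\zeta_T)=(\mathbb B_n^{n-1}\Upsilon^{n-1})_T(\zeta_T)$. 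I will check the sign conventions linking $\int\omega\wedge\star\mu$ and the orientation of $\Gamma$ carefully.

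\textbf{Step 2 (hypotheses of Lemma~\ref{lem:full.link.solve}).}
\begin{itemize}
\item Trace-supportedness is immediate from Step~1: $\mathbb B$ only sees traces.
\item By \eqref{eq:cochain.commutativity}, $\eta=\mathcal G_n^{n-1}\Gamma^n=(\bigoplus_s\delta_{n,s})\mathcal G_{n-1}^{n-1}\Upsilon^{n-1}$. Hence every coordinate $\eta_s$ lies in $\operatorname{Im}\delta_{n,s}$. This matters because the top degree is not covered by the exactness of Lemma~\ref{lem:coface.exactness} for interior cells.
\item Condition \eqref{eq:proper.exact.datum.vanishing} when $q_*>0$: for $f\in\mathcal F_h$, $T\in\cof{n}(f)$ and $\chi_f\in\PLtrimz{m}{q_*-1}(\Sh(f))$, set $\beta_f=\DIFF_f\chi_f$, which has zero trace. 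Then $(\eta_f)_T(\beta_f)=\Gamma^n_T(\mathcal E^{n-1}_{f,T}\beta_f)=\epsilon_{T,f}\Upsilon^{n-1}_f(\beta_f)$, using \eqref{eq:geometric.extension.trace}. This equals $\pm\int_f v_f\wedge\DIFF_f\DIFF_f\chi_f=0$.
\end{itemize}
The lemma then yields a relatively closed $\Xi^{n-1}$, linear in $\Gamma^n$, satisfying \eqref{eq:top.full.system}. The bound \eqref{eq:full.link.stability} reads $\|\Xi^{n-1}\|^2\lesssim\sum_T h_T^2\|\Gamma^n_T\|'^2$. Since $|\Gamma_T^n(\zeta_T)|\le\|\widehat\sigma_T\|_T\|\DIFF_T\zeta_T\|_T\le h_T^{-1}\|\widehat\sigma_T\|_T\vvvert\zeta_T\vvvert_T$, we get $h_T^2\|\Gamma^n_T\|'^2\le\|\widehat\sigma_T\|_T^2$. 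Proposition~\ref{prop:auxiliary.projection.control} then gives \eqref{eq:top.link.stability}.

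\textbf{Step 3 (remaining identities).}
\begin{itemize}
\item \eqref{eq:top.reconstruction.restriction}: by \eqref{eq:top.full.system} its left-hand side equals $\Gamma_T^n(\zeta_T)=(-1)^{k+1}\int_T\widehat\sigma_T\wedge\star_Tq_T$ with $q_T\in\Dpoly{T}{k}$. By \eqref{eq:auxiliary.projection}, $\widehat\sigma_T$ may be replaced by $\sigma_T$.
\item \eqref{eq:top.vertical.compatibility} for $q_*>0$: by \eqref{eq:functional.double.complex.commutation}, $\mathbb B_n^{n-2}\mathbb D_{n-1}^{n-2}\Xi^{n-1}=\mathbb D_n^{n-2}\Gamma^n$. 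Its value on $\nu_T$ is $\pm\int_T\widehat\sigma_T\wedge\DIFF_T\DIFF_T\nu_T=0$.
\item \eqref{eq:top.augmentation} for $q_*=0$: $\Gamma_T^n(1)$ involves $\DIFF_T1=0$.
\end{itemize}

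I expect the main obstacle to be Step~2. The range condition at the top degree $d=n$ is genuinely nontrivial for interior cells, whose links carry sphere cohomology. It must come from the explicit preimage $\Upsilon^{n-1}$, together with a precise bookkeeping of the geometric coordinates to verify \eqref{eq:proper.exact.datum.vanishing}.
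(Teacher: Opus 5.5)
Your proposal is correct and follows essentially the same route as the paper. The explicit preimage $\Upsilon^{n-1}$ yields $\Gamma^n=\mathbb B_n^{n-1}\Upsilon^{n-1}$, which supplies trace-supportedness, the componentwise range conditions in geometric coordinates, and the proper-exact vanishing condition. Lemma~\ref{lem:full.link.solve}, the projection identity \eqref{eq:auxiliary.projection} and the double-complex commutation then give the remaining claims. The only minor difference is in the stability bound: you read it off directly from \eqref{eq:full.link.stability} via $h_T\|\Gamma_T^n\|_{(\PLtrim{m}{q_*}(\Sh(T)))'}\le\|\widehat\sigma_T\|_T$, whereas the paper redoes the coordinate-level coface Poincar\'e estimate; both give the same result.
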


\begin{proof}
Fix $T\in\mathcal T_h$ and let $\zeta_T\in\PLtrim{m}{q_*}(\Sh(T))$.
Since $\DIFF_T\zeta_T \in \PLtrim{m}{q_*+1}(\Sh(T)) = \star_T\widehat{\mathcal R}_T^{k+1}$, and $\DIFF_T^2\zeta_T=0$, the defining equation of the auxiliary reconstruction gives
\[
  \int_T
  \widehat\sigma_T\wedge\DIFF_T\zeta_T
  =
  \sum_{F\in\mathcal F_T}
  \epsTF
  \int_F
  v_F\wedge\DIFF_F\trc_F\zeta_T.
\]
Multiplying by $(-1)^{k+1}$ yields
\begin{equation}\label{eq:auxiliary.top.range.identity}
  \Gamma^n
  =
  \mathbb B_n^{n-1}\Upsilon^{n-1}.
\end{equation}

Applying the geometric coordinate map to \eqref{eq:auxiliary.top.range.identity} shows that every coordinate of $\mathcal G_n^{n-1}\Gamma^n$ belongs to the corresponding range of $\delta_{n,s}$.

If $\zeta_T\in\PLtrimz{m}{q_*}(\Sh(T))$, all its traces vanish.
Hence $\Gamma_T^n(\zeta_T)=0$, and $\Gamma^n$ is trace-supported.

Assume that $q_*>0$.
Let $F\in\mathcal F_T$ and set $\eta_F \coloneqq \DIFF_F\chi_F$, $\chi_F \in \PLtrimz{m}{q_*-1}(\Sh(F))$.
The trace property of the geometric extension and \eqref{eq:auxiliary.top.range.identity} give
\[
\begin{aligned}
  \Gamma_T^n
  \left(
    \mathcal E_{F,T}^{n-1}\eta_F
  \right)
  &=
  \epsTF
  \Upsilon_F^{n-1}(\eta_F)
  \\
  &=
  (-1)^{k+1}
  \epsTF
  \int_F
  v_F\wedge\DIFF_F^2\chi_F
  =
  0.
\end{aligned}
\]
This proves \eqref{eq:proper.exact.datum.vanishing}.

For every $\chi_F\in\PLtrim{m}{q_*-1}(\Sh(F))$, one also has $\Upsilon_F^{n-1}(\DIFF_F\chi_F) = (-1)^{k+1} \int_F v_F\wedge\DIFF_F^2\chi_F = 0$.
Therefore,
\begin{equation}\label{eq:auxiliary.certificate.vertical.closedness}
  \mathbb D_{n-1}^{n-2}
  \Upsilon^{n-1}
  =
  0.
\end{equation}

If $q_*=0$, condition \eqref{eq:proper.exact.datum.vanishing} is void, while $\DIFF_F1=0$ gives
\begin{equation}\label{eq:auxiliary.certificate.augmentation}
  \Upsilon_F^{n-1}(1)
  =
  0
  \qquad
  \forall F\in\mathcal F_h.
\end{equation}

Lemma~\ref{lem:full.link.solve} now gives a relatively closed solution of \eqref{eq:top.full.system}, selected linearly in $\Gamma^n$.

It remains to prove the stability estimate.
Set $\eta \coloneqq \mathcal G_n^{n-1}\Gamma^n$ and $\lambda \coloneqq \mathcal G_{n-1}^{n-1}\Xi^{n-1}$.
For every $s\in\mathfrak S_h^{n-1}$ with $\dim s\leq n-1$, let $\eta_s$ and $\lambda_s$ denote the corresponding geometric coordinates and set $a\coloneqq\dim s$.
By the selection made in Lemma~\ref{lem:full.link.solve}, $\lambda_s$ is the unique solution of $\delta_{n,s}\lambda_s=\eta_s$ of minimum norm $\|\cdot\|_{\mathbb C_{n-1,s}^{n-1}}$.
From the definition of $\Gamma^n$, the stability of the geometric extensions, and the inverse estimate in the fixed-degree trimmed space, we obtain
\[
\|(\eta_s)_T\|_{
\left(
\PLtrimz{m}{q_*}(\Sh(s))
\right)'
}
\lesssim
h_T^{(n-a)/2-1}
\|\widehat\sigma_T\|_T
\qquad
\forall
T\in\cof{n}(s).
\]
The weighted coface Poincar\'e estimate with $d=n$ yields
\[
\|\lambda_s\|_{\mathbb C_{n-1,s}^{n-1}}^2
\lesssim
\sum_{T\in\cof{n}(s)}
  \|\widehat\sigma_T\|_T^2.
\]
Summing over the cells $s$, using the uniformly bounded number of subcells of each mesh cell, and applying the coordinate norm equivalence gives
\[
\|\Xi^{n-1}\|_{\mathbb X_{n-1}^{n-1}}^2
\lesssim
\sum_{T\in\mathcal T_h}
\|\widehat\sigma_T\|_T^2.
\]
The second estimate in \eqref{eq:top.link.stability} follows from Proposition~\ref{prop:auxiliary.projection.control}.

Let $T\in\mathcal T_h$ and let
\[
\zeta_T
\in
\PLtrim{m}{q_*}(\Sh(T)),
\qquad
\mu_T
\coloneqq
\DIFF_T\zeta_T
\in
\star_T\Dpoly{T}{k}.
\]
Write $\mu_T=\star_Tq_T$ with $q_T\in\Dpoly{T}{k}$.
By \eqref{eq:auxiliary.projection}, $\int_T \widehat\sigma_T\wedge\mu_T = \int_T \sigma_T\wedge\mu_T$.
Using \eqref{eq:top.full.system} and the definition of $\Gamma^n$, we therefore obtain
\[
\begin{aligned}
  \sum_{F\in\mathcal F_T}
  \epsTF
  \Xi_F^{n-1}(\trc_F\zeta_T)
  &=
  \Gamma_T^n(\zeta_T)
  \\
  &=
  (-1)^{k+1}
  \int_T
  \widehat\sigma_T\wedge\DIFF_T\zeta_T
  \\
  &=
  (-1)^{k+1}
  \int_T
  \sigma_T\wedge\DIFF_T\zeta_T,
\end{aligned}
\]
which proves \eqref{eq:top.reconstruction.restriction}.

If $q_*>0$, the identity of the double complex gives
\[
\begin{aligned}
  \mathbb B_n^{n-2}
  \mathbb D_{n-1}^{n-2}\Xi^{n-1}
  &=
  \mathbb D_n^{n-2}
  \mathbb B_n^{n-1}\Xi^{n-1}
  \\
  &=
  \mathbb D_n^{n-2}\Gamma^n.
\end{aligned}
\]
For every admissible $\chi_T$,
\[
  \left(
    \mathbb D_n^{n-2}\Gamma^n
  \right)_T(\chi_T)
  =
  \Gamma_T^n(\DIFF_T\chi_T)
  =
  0,
\]
since $\DIFF_T^2\chi_T=0$.
This proves \eqref{eq:top.vertical.compatibility}.

If $q_*=0$, then $\Gamma_T^n(1) = (-1)^{k+1} \int_T \widehat\sigma_T\wedge\DIFF_T1 = 0$.
Together with \eqref{eq:top.full.system}, this proves \eqref{eq:top.augmentation}.
\end{proof}

\paragraph{Recursive link descent.}
Let $d\in\{n-1,\ldots,k+2\}$ and let $\Xi^d\in\mathbb X_d^d$ be the relatively closed family obtained at the preceding step of the recursion, satisfying $\mathbb B_{d+1}^{d-1}\mathbb D_d^{d-1}\Xi^d=0$.
Define $\Gamma^d \coloneqq (-1)^{k+1} \mathbb D_d^{d-1}\Xi^d \in\mathbb X_d^{d-1}$.

\begin{lemma}[Stable local link descent]
\label{lem:local.link.descent}
The datum $\Gamma^d$ is trace-supported and satisfies \eqref{eq:proper.exact.datum.vanishing} with $j=d-1$, together with
\[
  \mathbb B_{d+1}^{d-1}\Gamma^d=0.
\]
Consequently, there exists a relatively closed $\Xi^{d-1}\in\mathbb X_{d-1}^{d-1}$ such that
\[
  \mathbb B_d^{d-1}\Xi^{d-1}=\Gamma^d.
\]
It can be selected linearly in $\Gamma^d$ and satisfies
\begin{equation}\label{eq:local.link.descent.stability}
  \|\Xi^{d-1}\|_{\mathbb X_{d-1}^{d-1}}^2
  \lesssim
  \|\Xi^d\|_{\mathbb X_d^d}^2.
\end{equation}
\end{lemma}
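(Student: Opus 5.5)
The proof checks the hypotheses of Lemma~\ref{lem:full.link.solve} with $j=d-1$ (so $q_{d-1}=d-k-2$, and $\Gamma^d\in\mathbb X_d^{d-1}=\mathbb X_{j+1}^j$) and then converts the resulting bound into \eqref{eq:local.link.descent.stability}.

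\emph{Trace-supported.} Let $g\in\FM{d}(\Mh)$ and $\nu_g\in\PLtrimz{m}{q_{d-1}}(\Sh(g))$. Then $\DIFF_g\nu_g$ also has vanishing trace on $\partial g$, since the trace commutes with $\DIFF$. Hence
\[
\Gamma^d_g(\nu_g)=(-1)^{k+1}\,\Xi^d_g(\DIFF_g\nu_g)=0
\]
by the relative closedness \eqref{eq:relative.closedness} of $\Xi^d$. When $q_d=q_{d-1}+1>0$ this is exactly the statement that $\mathbb D_d^{d-1}\Xi^d$ is trace-supported, as remarked after the definition.

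\emph{Cocycle condition.} By the inductive hypothesis $\mathbb B_{d+1}^{d-1}\mathbb D_d^{d-1}\Xi^d=0$, hence $\mathbb B_{d+1}^{d-1}\Gamma^d=0$. For $d=n-1$ this is \eqref{eq:top.vertical.compatibility}; for $d<n-1$ it is the identity carried down from the previous step, which the commuting relation \eqref{eq:functional.double.complex.commutation} supplies, as in the proof of Lemma~\ref{lem:top.link.solve}. Since $d<n$, the last assertion of Lemma~\ref{lem:full.link.solve} converts the cocycle into the componentwise range conditions.

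\emph{Condition \eqref{eq:proper.exact.datum.vanishing}.} When $q_{d-1}>0$ we must show $(\eta_f)_g(\DIFF_f\chi_f)=0$ for $f\in\FM{d-1}$, $g\in\cof{d}(f)$ and $\chi_f\in\PLtrimz{m}{q_{d-1}-1}(\Sh(f))$, where $\eta=\mathcal G^{d-1}_d\Gamma^d$. Write
\[
(\eta_f)_g(\DIFF_f\chi_f)=(-1)^{k+1}\,\Xi^d_g\bigl(\DIFF_g\mathcal E^{d-1}_{f,g}\DIFF_f\chi_f\bigr).
\]
The plan is to prove $\mathcal E^{d-1}_{f,g}\DIFF_f\chi_f=\DIFF_g\mathcal E^{d-2}_{f,g}\chi_f$ modulo $\PLtrimz{}{}(\Sh(g))$. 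Indeed, both sides have trace $\DIFF_f\chi_f$ on $f$ and zero trace on the other facets of $g$; the second claim uses \eqref{eq:geometric.extension.trace} together with commutation of trace and $\DIFF$. Their difference is therefore a zero-trace form $\omega_g$, and
\[
\DIFF_g\mathcal E^{d-1}_{f,g}\DIFF_f\chi_f=\DIFF_g\omega_g ,
\]
because $\DIFF_g^2=0$ kills the other term. Relative closedness of $\Xi^d$ (with $\chi=\omega_g$) then gives zero. One subtlety needs care: $\mathcal E^{d-2}_{f,g}$ is indexed by test degree $q_{d-2}=q_{d-1}-1$, and it is defined because $\dim f=d-1\ge q_{d-2}$.

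\emph{Stability.} The estimate \eqref{eq:full.link.stability} gives
\[
\|\Xi^{d-1}\|^2\lesssim\sum_g h_g^{n-d+2}\,\|\Gamma^d_g\|^2_{(\PLtrim{m}{q_{d-1}}(\Sh(g)))'} .
\]
Moreover
\[
|\Gamma^d_g(\nu_g)|\le \|\Xi^d_g\|'\,\vvvert\DIFF_g\nu_g\vvvert_g\lesssim \|\Xi^d_g\|'\,h_g^{-1}\|\nu_g\|_g ,
\]
by the inverse estimate and the graph-norm equivalence. Therefore $h_g^{n-d+2}\|\Gamma^d_g\|'^2\lesssim h_g^{n-d}\|\Xi^d_g\|'^2$, and summing over $g$ yields \eqref{eq:local.link.descent.stability}. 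Linearity is inherited from Lemma~\ref{lem:full.link.solve}.

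The main obstacle is the extension-commutation argument in the third step, namely matching the trace-compatible extensions across the two test degrees.
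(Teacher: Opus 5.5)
Your proof is correct. The trace-supported property, the cocycle identity, the use of the last assertion of Lemma~\ref{lem:full.link.solve} to obtain the range conditions, and the stability bound via the inverse estimate and \eqref{eq:full.link.stability} all match the paper's argument.

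For the cocycle at $d<n-1$, your phrase ``the commuting relation supplies'' should be spelled out. The previous step gives $\mathbb B_{d+1}^{d}\Xi^{d}=(-1)^{k+1}\mathbb D_{d+1}^{d}\Xi^{d+1}$, so commutation gives zero only together with $\mathbb D\mathbb D=0$, which you do not mention.

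The genuine difference is in verifying \eqref{eq:proper.exact.datum.vanishing}.

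\emph{The paper's route.} The paper notes that $\DIFF_g\mathcal E^{d-1}_{f,g}\DIFF_f\chi_f$ is a closed form with zero trace on $\partial g$. It then invokes exactness of the relative trimmed complex on the ball $g$ in degree $q_d<d$ to obtain some zero-trace $\nu_g^\circ$ with the same derivative. Relative closedness of $\Xi^d$ then concludes.

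\emph{Your route.} You write the zero-trace correction down explicitly,
\[
\omega_g=\mathcal E^{d-1}_{f,g}\DIFF_f\chi_f-\DIFF_g\mathcal E^{d-2}_{f,g}\chi_f .
\]
This uses only \eqref{eq:geometric.extension.trace} at two consecutive test degrees and $\DIFF^2=0$.

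\emph{Comparison.} Your route needs no topological input on the cell, such as the relative cohomology of $\Sh(g)$, and it yields an explicit, linear preimage. The price is that it needs the extension family at test degree $q_{d-2}$. That family is available because $q_{d-1}>0$ forces $d-2\geq k+1$; state this next to your remark that $\dim f\geq q_{d-2}$. The paper's route is shorter and uses only the single family $\mathcal E^{d-1}$, but it relies on the relative exactness result from finite element exterior calculus.
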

\begin{proof}
If $\nu_f\in\PLtrimz{m}{d-k-2}(\Sh(f))$, relative closedness of $\Xi_f^d$ gives $\Gamma_f^d(\nu_f) = (-1)^{k+1}\Xi_f^d(\DIFF_f\nu_f) =0$.
Hence $\Gamma^d$ is trace-supported.

If $q_{d-1}>0$, let $e\in\FM{d-1}(\partial f)$ and let $\eta_e=\DIFF_e\chi_e\in\DIFF_e\PLtrimz{m}{d-k-3}(\Sh(e))$.
Set $\nu_f=\mathcal E_{e,f}^{d-1}\eta_e$.
The form $\DIFF_f\nu_f$ has zero trace.
Its trace on $e$ is $\DIFF_e\eta_e=0$, and its trace on every other facet is zero.
Since $q_d=d-k-1<d$, the relative trimmed complex on the ball $f$ is exact in degree $q_d$ \cite{Arnold.Falk.ea:06,Arnold.Falk.ea:09}, and therefore there exists $\nu_f^\circ\in\PLtrimz{m}{d-k-2}(\Sh(f))$ such that $\DIFF_f\nu_f^\circ=\DIFF_f\nu_f$.
Therefore $\Gamma_f^d(\nu_f) = (-1)^{k+1} \Xi_f^d(\DIFF_f\nu_f^\circ) =0$.
This is precisely the proper-exact datum vanishing condition at level $d-1$.
When $q_{d-1}=0$, this condition is void.

For the first step, the cocycle identity is \eqref{eq:top.vertical.compatibility}.
At later steps, the commutation identity of the double complex and the equation already satisfied by $\Xi^d$ give
\[
\begin{aligned}
\mathbb B_{d+1}^{d-1}\Gamma^d
&={}
(-1)^{k+1}
\mathbb D_{d+1}^{d-1}
\mathbb B_{d+1}^{d}\Xi^d
=0,
\end{aligned}
\]
where the last equality follows from the vertical complex identity $\mathbb D\mathbb D=0$.
Since $d<n$, local link exactness gives the required componentwise range conditions.
Lemma~\ref{lem:full.link.solve} therefore gives the relatively closed solution.

Moreover, the polynomial inverse inequality gives
\[
  \|\Gamma_f^d\|_{
    \left(
      \PLtrim{m}{d-k-2}(\Sh(f))
    \right)'
  }
  \lesssim
  h_f^{-1}
  \|\Xi_f^d\|_{
    \left(
      \PLtrim{m}{d-k-1}(\Sh(f))
    \right)'
  }
  \qquad
  \forall f\in\FM{d}(\Mh).
\]
Hence \eqref{eq:full.link.stability}, applied with $j=d-1$, yields
\[
\begin{aligned}
  \|\Xi^{d-1}\|_{\mathbb X_{d-1}^{d-1}}^2
  &\lesssim
  \sum_{f\in\FM{d}(\Mh)}
  h_f^{n-d+2}
  \|\Gamma_f^d\|_{
    \left(
      \PLtrim{m}{d-k-2}(\Sh(f))
    \right)'
  }^2
  \\
  &\lesssim
  \sum_{f\in\FM{d}(\Mh)}
  h_f^{n-d}
  \|\Xi_f^d\|_{
    \left(
      \PLtrim{m}{d-k-1}(\Sh(f))
    \right)'
  }^2
  \\
  &=
  \|\Xi^d\|_{\mathbb X_d^d}^2.
\end{aligned}
\]
This proves \eqref{eq:local.link.descent.stability}.
\end{proof}

Starting from \eqref{eq:top.link.stability} and iterating \eqref{eq:local.link.descent.stability} over the finitely many levels, we obtain relatively closed functionals $\Xi^d\in\mathbb X_d^d$ for $d\in \{k+1,\ldots,n-1\}$, satisfying
\begin{equation}\label{eq:full.downward.identity}
  \mathbb B_d^{d-1}\Xi^{d-1}
  =
  (-1)^{k+1}\mathbb D_d^{d-1}\Xi^d,
  \qquad
  d \in \{k+2,\ldots,n-1\},
\end{equation}
and
\[
  \sum_{d=k+1}^{n-1}
  \|\Xi^d\|_{\mathbb X_d^d}^2
  \lesssim
  \sum_{T\in\mathcal T_h}
  \|\widehat\sigma_T\|_T^2
  \lesssim
  |\underline v_h|_{\DIFF,h}^2.
\]

\subsubsection{Terminal cochain and global cochain solve}
\label{subsubsec:terminal.cochain}

Since $q_{k+1}=0$, evaluation on the constant function defines the terminal $(k+1)$-cochain
\[
  \bar\xi
  \coloneqq
  (\bar\xi_f)_{f\in\FM{k+1}(\Mh)},
  \qquad
  \bar\xi_f
  \coloneqq
  \Xi_f^{k+1}(1).
\]
For $k+2<n$, evaluating \eqref{eq:full.downward.identity} on the constant function gives, for every $g\in\FM{k+2}(\Mh)$,
\[
\begin{aligned}
  \sum_{f\in\FM{k+1}(\partial g)}
  \epsilon_{g,f}\bar\xi_f
  &=
  \left(
    \mathbb B_{k+2}^{k+1}\Xi^{k+1}
  \right)_g(1)
  \\
  &=
  (-1)^{k+1}
  \left(
    \mathbb D_{k+2}^{k+1}\Xi^{k+2}
  \right)_g(1)
  \\
  &=
  (-1)^{k+1}\Xi_g^{k+2}(\DIFF_g1)
  =
  0.
\end{aligned}
\]
When $k+2=n$, the same cocycle identity follows from \eqref{eq:top.augmentation}.

For every $f\in\FM{k+1}(\Mh)$, choose the minimum-norm top-degree form $\xi_f\in\PLtrim{m_{\mathcal S}-1}{k+1}(\Sh(f))$ such that
\[
  \int_f\xi_f\wedge v_f
  =
  \Xi_f^{k+1}(v_f)
  \qquad
  \forall v_f\in\PLtrim{m}{0}(\Sh(f)).
\]
This is a finite-dimensional local mass-matrix problem.
Fixed-degree norm equivalence shows that the minimum-norm selection is linear and satisfies
\begin{equation}\label{eq:terminal.realisation.stability}
  \|\xi_f\|_f
  \lesssim
  \|\Xi_f^{k+1}\|_{\left(\PLtrim{m}{0}(\Sh(f))\right)'}.
\end{equation}
Since $\xi_f$ has top degree, it is closed, and $\int_f\xi_f=\bar\xi_f$.
Moreover, Cauchy--Schwarz and scaling give
\begin{equation}\label{eq:terminal.cochain.datum.stability}
  |\bar\xi_f|
  =
  |\Xi_f^{k+1}(1)|
  \leq
  \|\Xi_f^{k+1}\|_{\left(\PLtrim{m}{0}(\Sh(f))\right)'}
  \vvvert1\vvvert_f
  \lesssim
  h_f^{(k+1)/2}
  \|\Xi_f^{k+1}\|_{\left(\PLtrim{m}{0}(\Sh(f))\right)'}.
\end{equation}

\begin{lemma}[Invariance of the terminal cochain modulo coboundaries]
\label{lem:terminal.class.invariance}
Let $\Xi^d,\widetilde\Xi^d\in\mathbb X_d^d$ for $d\in\{k+1,\ldots,n-1\}$ be two recursive families associated with the same top datum, in the sense that
\begin{equation}\label{eq:same.top.datum}
  \mathbb B_n^{n-1}\Xi^{n-1}
  =
  \mathbb B_n^{n-1}\widetilde\Xi^{n-1},
\end{equation}
and
\begin{equation}\label{eq:two.recursive.families}
\begin{aligned}
  \mathbb B_d^{d-1}\Xi^{d-1}
  &=
  (-1)^{k+1}
  \mathbb D_d^{d-1}\Xi^d,
  \\
  \mathbb B_d^{d-1}\widetilde\Xi^{d-1}
  &=
  (-1)^{k+1}
  \mathbb D_d^{d-1}\widetilde\Xi^d
\end{aligned}
\end{equation}
for every $d\in\{k+2,\ldots,n-1\}$.

Define the associated terminal cochains by
\[
  \bar\xi_f
  \coloneqq
  \Xi_f^{k+1}(1),
  \qquad
  \widetilde{\bar\xi}_f
  \coloneqq
  \widetilde\Xi_f^{k+1}(1).
\]
Then there exists a $k$-cochain $\vartheta=(\vartheta_e)_{e\in\FM{k}(\Mh)}$ such that
\[
  \bar\xi_f-\widetilde{\bar\xi}_f
  =
  \sum_{e\in\FM{k}(\partial f)}
  \epsilon_{f,e}\vartheta_e
  \qquad
  \forall f\in\FM{k+1}(\Mh).
\]
\end{lemma}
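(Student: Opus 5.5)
Since every map in the zig-zag is linear, we pass to differences. Set $\Delta^d\coloneqq\Xi^d-\widetilde\Xi^d\in\mathbb X_d^d$ for $d\in\{k+1,\ldots,n-1\}$. Subtracting the two lines of \eqref{eq:two.recursive.families} and using \eqref{eq:same.top.datum} gives
\[
  \mathbb B_n^{n-1}\Delta^{n-1}=0,
  \qquad
  \mathbb B_d^{d-1}\Delta^{d-1}=(-1)^{k+1}\mathbb D_d^{d-1}\Delta^d,
  \quad d\in\{k+2,\ldots,n-1\}.
\]
The task is to show that the $(k+1)$-cochain $f\mapsto\Delta_f^{k+1}(1)$ is a cellular coboundary. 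The plan is a descending staircase argument in the double complex of Proposition~\ref{prop:functional.complex}. We use horizontal exactness below the top degree (Corollary~\ref{cor:functional.horizontal.exactness}) to peel off horizontal primitives level by level.

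\emph{Top step.} The identity $\mathbb B_n^{n-1}\Delta^{n-1}=0$ involves the top horizontal degree, where exactness may fail because of interior cells. We therefore do not invert at the top. Instead, we look for $\Theta^{n-2}\in\mathbb X_{n-2}^{n-1}$ with $\Delta^{n-1}=\mathbb B_{n-1}^{n-1}\Theta^{n-2}$. This is possible because $\Delta^{n-1}\in\Ker\mathbb B_n^{n-1}$ and $n-1<n$ is an exact degree by \eqref{eq:functional.horizontal.exactness}; the case $n-1=q_{n-1}$ is handled by \eqref{eq:functional.first.injectivity}.

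\emph{Descent.} Inductively, suppose $\Delta^d=\mathbb B_d^d\Theta^{d-1}$ with $\Theta^{d-1}\in\mathbb X_{d-1}^d$. By the commutation \eqref{eq:functional.double.complex.commutation},
\[
  \mathbb B_d^{d-1}\Delta^{d-1}
  =(-1)^{k+1}\mathbb D_d^{d-1}\mathbb B_d^d\Theta^{d-1}
  =(-1)^{k+1}\mathbb B_d^{d-1}\mathbb D_{d-1}^{d-1}\Theta^{d-1}.
\]
Hence $\Delta^{d-1}-(-1)^{k+1}\mathbb D_{d-1}^{d-1}\Theta^{d-1}\in\Ker\mathbb B_d^{d-1}$. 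Since $d<n$, exactness yields $\Theta^{d-2}\in\mathbb X_{d-2}^{d-1}$ with
\[
  \Delta^{d-1}=(-1)^{k+1}\mathbb D_{d-1}^{d-1}\Theta^{d-1}+\mathbb B_{d-1}^{d-1}\Theta^{d-2}.
\]
For $d-1\ge k+2$ the extra vertical term is carried along by the same mechanism. We either absorb it by redefining the class at each level, or we note directly that it vanishes on constants at the terminal level.

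\emph{Terminal level.} At $d-1=k+1$, with $q_{k+1}=0$, the decomposition reads
\[
  \Delta^{k+1}=(-1)^{k+1}\mathbb D_{k+1}^{k+1}\Theta^{k+1}+\mathbb B_{k+1}^{k+1}\Theta^{k},
  \qquad \Theta^{k}\in\mathbb X_k^{k+1}.
\]
Here the index conventions mean that $\mathbb D$ acts on test forms of degree $0$ through $\DIFF$. Evaluating on the constant $1$, the vertical term gives $\Theta^{k+1}_f(\DIFF_f1)=0$. The horizontal term gives
\[
  \sum_{e\in\FM{k}(\partial f)}\epsilon_{f,e}\Theta^k_e(\trc_e1)
  =\sum_{e}\epsilon_{f,e}\Theta^k_e(1).
\]
Setting $\vartheta_e\coloneqq\Theta^k_e(1)$ then gives the claim.

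The main obstacle is the index bookkeeping in the descent. The vertical correction terms $\mathbb D\Theta$ live in $\mathbb X^{j}_{d}$ with $j$ shifted relative to the diagonal, so each intermediate equation must sit in a row where $\mathbb B$ is exact below the top degree. It must also be checked that, when the row index $j$ drops, the exactness range $d\ge q_j+1$ still covers the needed degree. When $d-1=q_j$, injectivity \eqref{eq:functional.first.injectivity} forces the correction to vanish, which terminates the staircase early. The rest is linear algebra using only Proposition~\ref{prop:functional.complex} and Corollary~\ref{cor:functional.horizontal.exactness}; no norm estimates are needed.
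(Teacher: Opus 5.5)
Your overall route is the paper's. You take differences, use exactness at $\mathbb X_{n-1}^{n-1}$ to write $\Delta^{n-1}=\mathbb B_{n-1}^{n-1}\Theta^{n-2}$, and descend using the commutation \eqref{eq:functional.double.complex.commutation} and horizontal exactness below the top degree. You then evaluate the terminal identity on the constant $1$ with $\vartheta_e=\Theta_e^k(1)$. For $k\in\{n-3,n-2\}$ this is complete.

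For $k\leq n-4$, however, your descent step does not close. Its hypothesis is $\Delta^d=\mathbb B_d^d\Theta^{d-1}$, but its output $\Delta^{d-1}=(-1)^{k+1}\mathbb D_{d-1}^{d-1}\Theta^{d-1}+\mathbb B_{d-1}^{d-1}\Theta^{d-2}$ is not of that form. Neither remedy you offer handles the extra term. The observation that it vanishes on constants only disposes of it at level $k+1$. At an intermediate level $d-1\geq k+2$, the term re-enters through $\mathbb D_{d-1}^{d-2}\Delta^{d-1}$ in \eqref{eq:two.recursive.families}, and one gets
\[
  \mathbb B_{d-1}^{d-2}\bigl(\Delta^{d-2}-(-1)^{k+1}\mathbb D_{d-2}^{d-2}\Theta^{d-2}\bigr)
  =
  \mathbb D_{d-1}^{d-2}\mathbb D_{d-1}^{d-1}\Theta^{d-1}.
\]
So the next horizontal solve requires the right-hand side to vanish. ``Redefining the class'' meets the same obstruction, since the recursion links $\Delta^{d-2}$ to $\Delta^{d-1}$ itself, not to a corrected representative.

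The missing ingredient is the vertical complex identity \eqref{eq:functional.vertical.complex}, $\mathbb D_d^{d-1}\mathbb D_d^d=0$, inherited from $\DIFF_f^2=0$. Carry the induction hypothesis in the form $\Delta^d=(-1)^{k+1}\mathbb D_d^d\Theta^d+\mathbb B_d^d\Theta^{d-1}$, with no vertical term at $d=n-1$. Applying $\mathbb D_d^{d-1}$ kills the first term by $\mathbb D\mathbb D=0$. The commutation turns the second into $\mathbb B_d^{d-1}\mathbb D_{d-1}^{d-1}\Theta^{d-1}$. Exactness at $\mathbb X_{d-1}^{d-1}$ then reproduces the hypothesis at level $d-1$; this is precisely the iteration in the paper.

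By contrast, the index bookkeeping you single out as the main obstacle is harmless. Since $q_{d-1}=d-k-2<d-1$ for $k\geq0$, degree $d-1$ always lies in the exact range of \eqref{eq:functional.horizontal.exactness}. The injectivity cases $d-1=q_{d-1}$ and $n-1=q_{n-1}$ you mention never occur.
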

\begin{proof}
Set $\Delta\Xi^d\coloneqq\Xi^d-\widetilde\Xi^d$.
By \eqref{eq:same.top.datum}, $\mathbb B_n^{n-1}\Delta\Xi^{n-1}=0$.
Horizontal exactness below the top dimension, given by Corollary~\ref{cor:functional.horizontal.exactness}, therefore provides $\Phi^{n-2} \in \mathbb X_{n-2}^{n-1}$ such that
\begin{equation}\label{eq:recursion.top.difference}
  \Delta\Xi^{n-1}
  =
  \mathbb B_{n-1}^{n-1}\Phi^{n-2}.
\end{equation}

If $k\leq n-3$, we now descend recursively.
Combining \eqref{eq:two.recursive.families}, \eqref{eq:recursion.top.difference}, and the commuting identity between $\mathbb B$ and $\mathbb D$, we obtain
\[
  \mathbb B_{n-1}^{n-2}
  \left(
    \Delta\Xi^{n-2}
    -
    (-1)^{k+1}
    \mathbb D_{n-2}^{n-2}\Phi^{n-2}
  \right)
  =
  0.
\]
Horizontal exactness therefore provides $\Phi^{n-3}\in\mathbb X_{n-3}^{n-2}$ such that
\[
  \Delta\Xi^{n-2}
  =
  (-1)^{k+1}\mathbb D_{n-2}^{n-2}\Phi^{n-2}
  +
  \mathbb B_{n-2}^{n-2}\Phi^{n-3}.
\]

Iterating this argument yields
\begin{equation}\label{eq:recursion.terminal.difference}
  \Delta\Xi^{k+1}
  =
  (-1)^{k+1}
  \mathbb D_{k+1}^{k+1}\Phi^{k+1}
  +
  \mathbb B_{k+1}^{k+1}\Phi^k
\end{equation}
when $k\leq n-3$.
When $k=n-2$, the same argument directly gives $\Delta\Xi^{k+1}=\mathbb B_{k+1}^{k+1}\Phi^k$.

When $k\leq n-3$, evaluating \eqref{eq:recursion.terminal.difference} on the constant function and using $\DIFF_f1=0$ gives
\[
  \bar\xi_f-\widetilde{\bar\xi}_f
  =
  \sum_{e\in\FM{k}(\partial f)}
  \epsilon_{f,e}\Phi_e^k(1)
  \qquad
  \forall f\in\FM{k+1}(\Mh).
\]
When $k=n-2$, the same identity follows directly from $\Delta\Xi^{k+1}=\mathbb B_{k+1}^{k+1}\Phi^k$.
The result follows by setting $\vartheta_e\coloneqq\Phi_e^k(1)$ for every $e\in\FM{k}(\Mh)$.
\end{proof}

\begin{corollary}[Exactness of the terminal cochain]
\label{cor:terminal.cochain.exactness}
For $f\in\FM{k+1}(\Mh)$, set
\[
  \bar\xi_f
  \coloneqq
  \Xi_f^{k+1}(1).
\]
Then there exists a $k$-cochain $\lambda=(\lambda_e)_{e\in\FM{k}(\Mh)}$ such that
\[
\sum_{e\in\FM{k}(\partial f)}
\epsilon_{f,e}\lambda_e
=\bar\xi_f
\qquad
  \forall f\in\FM{k+1}(\Mh).
\]
\end{corollary}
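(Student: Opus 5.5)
The plan is to compare the recursive family $(\Xi^d)$ produced by the descending construction with an explicit reference recursive family built directly from the hybrid datum, whose terminal cochain is manifestly a coboundary. Lemma~\ref{lem:terminal.class.invariance} then transfers exactness from the reference family to $\bar\xi$.

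The reference family comes from the face unknowns. At the top level we already have $\Upsilon^{n-1}\in\mathbb X_{n-1}^{n-1}$ with $\mathbb B_n^{n-1}\Upsilon^{n-1}=\Gamma^n=\mathbb B_n^{n-1}\Xi^{n-1}$ by \eqref{eq:auxiliary.top.range.identity}, so \eqref{eq:same.top.datum} holds with $\widetilde\Xi^{n-1}\coloneqq\Upsilon^{n-1}$. By \eqref{eq:auxiliary.certificate.vertical.closedness}, $\mathbb D_{n-1}^{n-2}\Upsilon^{n-1}=0$. I would therefore set $\widetilde\Xi^{d}\coloneqq0$ for every $d\in\{k+1,\ldots,n-2\}$. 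The recursive identities \eqref{eq:two.recursive.families} hold trivially: for $d=n-1$ the right-hand side is $(-1)^{k+1}\mathbb D_{n-1}^{n-2}\Upsilon^{n-1}=0=\mathbb B_{n-1}^{n-2}\widetilde\Xi^{n-2}$, and for lower $d$ both sides vanish.

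The terminal cochain of this reference family is identified as follows. If $k\leq n-3$, then $\widetilde\Xi^{k+1}=0$, so $\widetilde{\bar\xi}=0$. If $k=n-2$, the terminal level is $d=k+1=n-1$, and $\widetilde{\bar\xi}_F=\Upsilon_F^{n-1}(1)=0$ by \eqref{eq:auxiliary.certificate.augmentation}. In either case $\widetilde{\bar\xi}=0$. Lemma~\ref{lem:terminal.class.invariance} then yields a $k$-cochain $\vartheta$ with $\bar\xi_f=\sum_{e\in\FM{k}(\partial f)}\epsilon_{f,e}\vartheta_e$, and we take $\lambda\coloneqq\vartheta$.

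The main point to check carefully is that the invariance lemma does not require the reference family to be relatively closed or trace-supported. Its hypotheses are only \eqref{eq:same.top.datum} and \eqref{eq:two.recursive.families}, and its proof uses only horizontal exactness below the top dimension (Corollary~\ref{cor:functional.horizontal.exactness}). This holds on the full spaces $\mathbb X_d^j$ irrespective of global topology, which is exactly why no cohomological obstruction of $\Omega$ appears here. The global topology enters only later, through the stability constant $C_{\mathrm{coch}}^k$ in Lemma~\ref{lem:cochain.poincare}.

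I would also double-check the index conventions in the degenerate case $k=n-2$, where the only level is $d=n-1$ and \eqref{eq:two.recursive.families} is vacuous.
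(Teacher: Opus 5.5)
Your proposal is correct and matches the paper's proof essentially step for step. The paper uses the same reference family $\widetilde\Xi^{n-1}=\Upsilon^{n-1}$, $\widetilde\Xi^d=0$ for $d\le n-2$, checks the recursion via the top range identity and the vertical closedness of $\Upsilon^{n-1}$ (when $k\le n-3$) or the augmentation identity (when $k=n-2$), and then concludes with Lemma~\ref{lem:terminal.class.invariance}.
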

\begin{proof}
Consider the reference family
\[
  \widetilde\Xi^{n-1}
  \coloneqq
  \Upsilon^{n-1},
  \qquad
  \widetilde\Xi^d
  \coloneqq
  0
  \quad
  \forall d\in\{k+1,\ldots,n-2\}.
\]
By \eqref{eq:auxiliary.top.range.identity}, $\mathbb B_n^{n-1}\widetilde\Xi^{n-1} = \mathbb B_n^{n-1}\Upsilon^{n-1} = \Gamma^n$.
Moreover, if $k\leq n-3$, then $q_*>0$, and \eqref{eq:auxiliary.certificate.vertical.closedness} gives $\mathbb D_{n-1}^{n-2} \widetilde\Xi^{n-1} = \mathbb D_{n-1}^{n-2} \Upsilon^{n-1} = 0$.
Hence the family $\widetilde\Xi$ satisfies the required recursion identities.

Its terminal cochain is zero.
Indeed, if $k\leq n-3$, then $\widetilde\Xi^{k+1}=0$, whereas, if $k=n-2$, \eqref{eq:auxiliary.certificate.augmentation} gives $\widetilde\Xi_F^{k+1}(1) = \Upsilon_F^{n-1}(1) = 0$.
Lemma~\ref{lem:terminal.class.invariance} therefore shows that the terminal cochain associated with the selected recursive family is a cellular coboundary.
\end{proof}

Corollary~\ref{cor:terminal.cochain.exactness} shows that $\bar\xi$ is a cellular coboundary.
The comparison with the reference family proves this exactness without assuming that $\Omega$ is contractible.
Lemma~\ref{lem:cochain.poincare} therefore gives a stable minimum-norm $k$-cochain $\widehat\lambda=(\widehat\lambda_e)_{e\in\FM{k}(\Mh)}$ satisfying
\begin{equation}\label{eq:global.cochain.problem}
  \bar\xi_f
  =
  \sum_{e\in\FM{k}(\partial f)}
  \epsilon_{f,e}\widehat\lambda_e
  \qquad
  \forall f\in\FM{k+1}(\Mh).
\end{equation}
By \eqref{eq:face.cell.poincare.estimate} and \eqref{eq:terminal.cochain.datum.stability},
\begin{equation*}
\begin{aligned}
  \sum_{e\in\FM{k}(\Mh)}
  h_e^{n-2k}|\widehat\lambda_e|^2
  &\lesssim
  \sum_{f\in\FM{k+1}(\Mh)}
  h_f^{n-2k-2}|\bar\xi_f|^2
  \\
  &\lesssim
  \sum_{f\in\FM{k+1}(\Mh)}
  h_f^{n-k-1}
  \|\Xi_f^{k+1}\|_{\left(\PLtrim{m}{0}(\Sh(f))\right)'}^2
  =
  \|\Xi^{k+1}\|_{\mathbb X_{k+1}^{k+1}}^2.
\end{aligned}
\end{equation*}

The solution of the global cochain problem \eqref{eq:global.cochain.problem} is the only global solve in the construction.
Choose on every $k$-cell a fixed normalised top form
\[
  \omega_e\in\PLtrim{m_{\mathcal S}}{k}(\Sh(e)),
  \qquad
  \int_e\omega_e=1,
  \qquad
  \|\omega_e\|_e^2\lesssim h_e^{-k},
\]
and set $\lambda_e\coloneqq\widehat\lambda_e\omega_e$.
Then
\begin{equation}\label{eq:terminal.form.stability}
  \sum_{e\in\FM{k}(\Mh)}
  h_e^{n-k}\|\lambda_e\|_e^2
  \lesssim
  \sum_{e\in\FM{k}(\Mh)}
  h_e^{n-2k}|\widehat\lambda_e|^2
  \lesssim
  \|\Xi^{k+1}\|_{\mathbb X_{k+1}^{k+1}}^2.
\end{equation}
Moreover,
\begin{equation}\label{eq:terminal.stokes.compatibility}
  \int_f\xi_f
  =
  \sum_{e\in\FM{k}(\partial f)}
  \epsilon_{f,e}\int_e\lambda_e.
\end{equation}
For every $(k+1)$-cell $f$, define $\lambda_f$ as the minimum-norm solution in $\PLtrim{m_{\mathcal S}}{k}(\Sh(f))$ of
\[
  \DIFF_f\lambda_f=\xi_f,
  \qquad
  \trc_e\lambda_f=\lambda_e
  \quad
  \forall e\in\FM{k}(\partial f).
\]
The compatibility condition required by Lemma~\ref{lem:first.local.problem} is \eqref{eq:terminal.stokes.compatibility}.
Applying that lemma with polynomial parameter $m_{\mathcal S}-1$ and form degree $k$ therefore shows that this local problem is solvable.
Since $\DIFF_f\lambda_f=\xi_f$, its stability estimate gives
\begin{equation}\label{eq:first.upward.stability}
  \|\lambda_f\|_f^2
  +
  h_f^2\|\DIFF_f\lambda_f\|_f^2
  \lesssim
  h_f\|\lambda_{\partial f}\|_{\partial f}^2
  +
  h_f^2\|\xi_f\|_f^2.
\end{equation}

\subsubsection{Local closed-moment realisation}
\label{subsubsec:closed.moment.realisation}

The following result combines the polynomial bubble moment realisation in Lemma~\ref{lem:polynomial.bubble.moment.realisation} with the exactness of the trimmed polynomial complex and a stable local right inverse of the exterior derivative.

\begin{lemma}[Stable closed zero-trace moment realisation]
\label{lem:stable.closed.moment.realisation}
Let $f\in\FM{d}(\Mh)$ and let $q\in\{1,\ldots,d-1\}$.
Let
\[
  \Lambda_f
  \in
  \left(
    \PLtrim{m}{q}(\Sh(f))
  \right)'
\]
satisfy
\begin{equation}\label{eq:closed.moment.absolute.compatibility}
  \Lambda_f(\DIFF_f\nu_f)=0
  \qquad
  \forall
  \nu_f\in
  \PLtrim{m}{q-1}(\Sh(f)).
\end{equation}
For a sufficiently large but fixed polynomial degree $m_{\mathrm{cl}}$, there exists
\[
  \xi_f
  \in
  \PLtrimz{m_{\mathrm{cl}}}{d-q}(\Sh(f))
  \cap
  \Ker\DIFF_f
\]
such that
\begin{equation}\label{eq:closed.moment.zero.trace.realisation}
  \int_f
  \xi_f\wedge\alpha_f
  =
  \Lambda_f(\alpha_f)
  \qquad
  \forall
  \alpha_f\in
  \PLtrim{m}{q}(\Sh(f)).
\end{equation}
The unique representative of minimum $L^2(f)$ norm depends linearly on $\Lambda_f$ and satisfies
\[
  \|\xi_f\|_f
  \lesssim
  \|\Lambda_f\|_{
    \left(
      \PLtrim{m}{q}(\Sh(f))
    \right)'
  }.
\]

The degree $m_{\mathrm{cl}}$ depends only on $d$, $q$, $m$, and the standing regularity parameters.
\end{lemma}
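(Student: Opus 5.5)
The plan is to realise $\Lambda_f$ first by an arbitrary zero-trace bubble and then correct that bubble to a closed form without disturbing its moments, using exactness of the relative trimmed complex on the ball $f$ together with the compatibility condition \eqref{eq:closed.moment.absolute.compatibility}. The degree will be chosen at the end.

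\textbf{Step 1: zero-trace bubble.} Apply Lemma~\ref{lem:polynomial.bubble.moment.realisation} on $f$ with form degree $d-q$ and test space $\mathcal U_f\coloneqq\PLtrim{m}{q}(\Sh(f))$, endowed with the $L^2(f)$ norm; this is a subspace of the broken full spaces $\bigtimes_S\PL{m}{q}(S)$. By fixed-degree norm equivalence, the dual norm for the $L^2$ norm is comparable with the one induced by $\vvvert\cdot\vvvert_f$. The lemma yields $\rho_f\in\PLtrimz{r_{\mathrm b}}{d-q}(\Sh(f))$, linear in $\Lambda_f$, with
\[
  \int_f\rho_f\wedge\alpha_f=\Lambda_f(\alpha_f)
  \qquad\forall\alpha_f\in\PLtrim{m}{q}(\Sh(f))
\]
and $\|\rho_f\|_f\lesssim\|\Lambda_f\|$.

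\textbf{Step 2: closed correction.} Set $\beta_f\coloneqq\DIFF_f\rho_f$, a zero-trace closed $(d-q+1)$-form. We seek $\gamma_f\in\PLtrimz{r}{d-q}(\Sh(f))$ with $\DIFF_f\gamma_f=\beta_f$ and zero moments against $\PLtrim{m}{q}(\Sh(f))$. Then $\xi_f\coloneqq\rho_f-\gamma_f$ is closed, has zero trace, and keeps the moments \eqref{eq:closed.moment.zero.trace.realisation}.

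\emph{Existence of some zero-trace potential.} For $d-q+1\le d$, the relative (zero-trace) trimmed complex on the ball $f$ is exact in degree $d-q+1$; this is the same fact used in Lemma~\ref{lem:local.link.descent}. For $d-q+1=d$ we additionally need $\int_f\beta_f=0$, which holds by Stokes since $\rho_f$ has zero trace. Exactness therefore gives a zero-trace potential $\gamma^0$ of degree $r_{\mathrm b}$.

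\emph{Killing its moments.} The moments of $\gamma^0$ define a functional $\alpha\mapsto\int_f\gamma^0\wedge\alpha$, and we want to remove it by adding a closed zero-trace form. I would first check that this functional annihilates $\DIFF_f\PLtrim{m}{q-1}(\Sh(f))$. Indeed, $\int_f\gamma^0\wedge\DIFF\nu=\pm\int_f\DIFF\gamma^0\wedge\nu=\pm\int_f\DIFF\rho_f\wedge\nu=\int_f\rho_f\wedge\DIFF\nu$ (up to the same sign), and the last quantity equals $\Lambda_f(\DIFF\nu)=0$ by \eqref{eq:closed.moment.absolute.compatibility}.

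\textbf{Main obstacle: the self-referential moment fix.} The point is to realise a functional that annihilates exact tests by a \emph{closed} zero-trace form, which is exactly the statement being proved. I would avoid the circularity by instead choosing $\gamma_f$ as the minimum-norm solution of the linear system "$\DIFF\gamma=\beta_f$, $\gamma$ zero-trace, zero moments against $\PLtrim{m}{q}$", posed in a space of sufficiently large degree $r$.

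Solvability would be proved through the closed-range/duality argument. The moment map from closed zero-trace forms $\PLtrimz{r}{d-q}\cap\Ker\DIFF$ onto the annihilator of $\DIFF\PLtrim{m}{q-1}$ is surjective for $r$ large. To see this, note that closed zero-trace forms are exact zero-trace forms $\DIFF\PLtrimz{r+1}{d-q-1}$, by relative exactness in degree $d-q<d$. Then a test $\alpha$ annihilating all $\DIFF\PLtrimz{r+1}{d-q-1}$ satisfies $\int_f\DIFF\omega\wedge\alpha=\pm\int_f\omega\wedge\DIFF\alpha=0$ for all $\omega$. Bubble nondegeneracy (the duality behind Lemma~\ref{lem:polynomial.bubble.moment.realisation}, for $r$ large) then gives $\DIFF\alpha=0$, hence $\alpha$ is exact by exactness of the trimmed complex on the ball $f$ (contractible) for $q\ge1$, so $\alpha\in\DIFF\PLtrim{m}{q-1}$.

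This identifies the range, so the system is solvable. Setting $m_{\mathrm{cl}}\coloneqq r$, uniform stability $\|\gamma_f\|_f\lesssim\|\rho_f\|_f$ follows by the reference-configuration compactness and scaling argument of Lemma~\ref{lem:local.potential.problem}, since only finitely many combinatorial types of $\Sh(f)$ occur.

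\textbf{Conclusion.} Linearity and the minimum-norm representative follow as in the earlier lemmas: the admissible set is an affine subspace, and the minimum-norm element is the unique solution orthogonal to its kernel. Its norm is bounded by $\|\rho_f-\gamma_f\|_f\lesssim\|\Lambda_f\|$.
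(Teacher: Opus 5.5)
Your argument is correct, but it is organised differently from the paper's. The paper never realises $\Lambda_f$ directly. With $p\coloneqq d-q$, it proceeds as follows. It uses exactness of the absolute trimmed complex to factor $\Lambda_f$ through $\DIFF_f$, defining $\Phi_f(\DIFF_f\alpha_f)\coloneqq(-1)^p\Lambda_f(\alpha_f)$ on $\DIFF_f\PLtrim{m}{q}(\Sh(f))$. It bounds $\Phi_f$ by $h_f\|\Lambda_f\|$ through Lemma~\ref{lem:local.potential.problem}. It then realises $\Phi_f$ by a zero-trace $(p-1)$-form bubble $\rho_f$ via Lemma~\ref{lem:polynomial.bubble.moment.realisation}, and sets $\xi_f\coloneqq\DIFF_f\rho_f$. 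Closedness and zero trace are then automatic, the moments follow from one integration by parts, and the bound follows from the inverse inequality.

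Your range identification is exactly the dual of this construction. It uses the same two ingredients: $(p-1)$-bubble nondegeneracy against $\DIFF_f\alpha$, and absolute exactness in degree $q$. However, it only gives existence. The uniform bound must therefore come from the compactness argument over finitely many reference configurations. That argument is legitimate here, because the constraint system is invariant under simplexwise affine pullback and your range identification holds on every configuration. In the paper, by contrast, the bound comes out of the construction by scaling.

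Once the range is identified, it applies directly to $\Lambda_f$, which lies in the annihilator by \eqref{eq:closed.moment.absolute.compatibility}. Step~1, the correction $\gamma_f$, and both appeals to relative exactness are therefore superfluous. Indeed, $\rho_f$ is itself a zero-trace potential of $\beta_f$, and your duality step only uses the trivial inclusion $\DIFF_f\PLtrimz{r}{p-1}(\Sh(f))\subset\PLtrimz{r}{p}(\Sh(f))\cap\Ker\DIFF_f$. Use degree $r$ rather than $r+1$ in that inclusion, since $\DIFF_f\PLtrimz{r+1}{p-1}(\Sh(f))$ need not lie in $\PLtrim{r}{p}(\Sh(f))$.
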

\begin{proof}
Set $p\coloneqq d-q$, so that $1\leq p\leq d-1$.

Standard exactness of the absolute trimmed complex on the contractible simplicial complex $\Sh(f)$ \cite{Arnold.Falk.ea:06,Arnold.Falk.ea:09} identifies the kernel of $\DIFF_f$ with the image from the preceding degree.
Condition \eqref{eq:closed.moment.absolute.compatibility} therefore shows that $\Lambda_f$ vanishes on this kernel and hence factors through the image of $\DIFF_f$.
More precisely, define a functional on $\DIFF_f\PLtrim{m}{q}(\Sh(f))$ by
\[
\Phi_f(\beta_f)
\coloneqq
(-1)^p\Lambda_f(\alpha_f),
\]
where $\alpha_f\in\PLtrim{m}{q}(\Sh(f))$ is any form satisfying $\DIFF_f\alpha_f=\beta_f$.
The definition is independent of the selected preimage.
By Lemma~\ref{lem:local.potential.problem}, a preimage can be chosen so that $\|\alpha_f\|_f\lesssim h_f\|\beta_f\|_f$, and consequently
\[
|\Phi_f(\beta_f)|
\lesssim
h_f
\|\Lambda_f\|_{
\left(
\PLtrim{m}{q}(\Sh(f))
\right)'
}
\|\beta_f\|_f.
\]

The range $\DIFF_f\PLtrim{m}{q}(\Sh(f))$ consists of piecewise polynomial $(q+1)$-forms.
Lemma~\ref{lem:polynomial.bubble.moment.realisation}, applied with form degree $p-1$, therefore provides, for a sufficiently large but fixed degree $m_{\mathrm b}$, a form $\rho_f\in\PLtrimz{m_{\mathrm b}}{p-1}(\Sh(f))$ such that
\[
\int_f\rho_f\wedge\beta_f
=
\Phi_f(\beta_f)
\qquad
\forall
\beta_f\in
\DIFF_f\PLtrim{m}{q}(\Sh(f)),
\]
and
\begin{equation}\label{eq:closed.moment.bubble.stability}
\|\rho_f\|_f
\lesssim
h_f
\|\Lambda_f\|_{
\left(
\PLtrim{m}{q}(\Sh(f))
\right)'
}.
\end{equation}

Set $\xi_f\coloneqq\DIFF_f\rho_f$.
Then $\DIFF_f\xi_f=0$, and the zero trace of $\rho_f$ gives $\trc_{\partial f}\xi_f = \DIFF_{\partial f}\trc_{\partial f}\rho_f = 0$.
After increasing $m_{\mathrm{cl}}$ if necessary, it follows that $\xi_f \in \PLtrimz{m_{\mathrm{cl}}}{p}(\Sh(f)) \cap \Ker\DIFF_f$.
The polynomial inverse inequality and \eqref{eq:closed.moment.bubble.stability} give
\[
  \|\xi_f\|_f
  =
  \|\DIFF_f\rho_f\|_f
  \lesssim
  h_f^{-1}\|\rho_f\|_f
  \lesssim
  \|\Lambda_f\|_{
    \left(
      \PLtrim{m}{q}(\Sh(f))
    \right)'
  },
\]
which proves the stability estimate.

It remains to verify the prescribed moments.
Let $\alpha_f\in\PLtrim{m}{q}(\Sh(f))$.
Since $\rho_f$ has zero trace, Stokes' formula and the definition of the induced functional give
\[
\begin{aligned}
  \int_f\xi_f\wedge\alpha_f
  &=
  \int_f\DIFF_f\rho_f\wedge\alpha_f
  \\
  &=
  (-1)^p
  \int_f\rho_f\wedge\DIFF_f\alpha_f
  \\
  &=
  (-1)^p
  \Phi_f\left(\DIFF_f\alpha_f\right)
  \\
  &=
  \Lambda_f(\alpha_f).
\end{aligned}
\]
This proves \eqref{eq:closed.moment.zero.trace.realisation}.

Finally, among the forms in $\PLtrimz{m_{\mathrm{cl}}}{p}(\Sh(f))\cap\Ker\DIFF_f$ satisfying the prescribed moments, choose the unique one of minimum $L^2(f)$ norm.
Its norm is no larger than that of the representative constructed above.
Moreover, it is the unique solution lying in the $L^2(f)$-orthogonal complement of the kernel of the corresponding moment map and therefore depends linearly on $\Lambda_f$.
\end{proof}

\subsubsection{Ascending construction of polynomial forms}
\label{subsubsec:upward}

\begin{lemma}[Completion with prescribed trace]
\label{lem:closed.moment.completion}
Let $d\in\{k+2,\ldots,n\}$ and $f\in\FM{d}(\Mh)$.
Suppose that a conforming family of closed forms $\xi_{\partial f}=(\xi_e)_{e\in\FM{d-1}(\partial f)}$ has already been constructed, with $\xi_e\in\PLtrim{m_{\mathcal S}-1}{k+1}(\Sh(e))$ for every $e\in\FM{d-1}(\partial f)$.
Let $\Xi_f^d\in(\PLtrim{m}{d-k-1}(\Sh(f)))'$ satisfy
\begin{equation}\label{eq:closed.moment.compatibility}
\Xi_f^d(\DIFF_f\chi_f)
=(-1)^{k+1}
\sum_{e\in\FM{d-1}(\partial f)}
\epsilon_{f,e}
\int_e\xi_e\wedge\trc_e\chi_f
\end{equation}
for every $\chi_f\in\PLtrim{m}{d-k-2}(\Sh(f))$.
Then the set of forms $\xi_f\in \PLtrim{m_{\mathcal S}-1}{k+1}(\Sh(f))\cap\Ker\DIFF_f$ such that
\[
  \trc_{\partial f}\xi_f=\xi_{\partial f}
\]
and
\[
  \int_f\xi_f\wedge\nu_f
  =
  \Xi_f^d(\nu_f)
  \qquad
  \forall\nu_f\in\PLtrim{m}{d-k-1}(\Sh(f))
\]
is non-empty.
Its unique element of minimum $L^2(f)$ norm depends linearly on the data and satisfies
\begin{equation}\label{eq:closed.moment.completion.stability}
  \|\xi_f\|_f^2
  \lesssim
  \|\Xi_f^d\|_{\left(\PLtrim{m}{d-k-1}(\Sh(f))\right)'}^2
  +
  h_f\|\xi_{\partial f}\|_{\partial f}^2.
\end{equation}
\end{lemma}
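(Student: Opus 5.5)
The plan is a two-step construction: extend the prescribed boundary data into $f$ by a stable closed extension, then correct the interior moments by a closed zero-trace form from Lemma~\ref{lem:stable.closed.moment.realisation}.

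\emph{Step 1 (closed extension of the boundary data).} The family $\xi_{\partial f}$ is conforming, consists of closed forms, and has degree $k+1\le d-1$. Since $\partial f$ is a sphere, we first want $\xi_{\partial f}=\DIFF_{\partial f}\theta$ for some $\theta\in\PLtrim{m_{\mathcal S}}{k}(\Sh(\partial f))$.

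For $k+1<d-1$ this follows from exactness of the trimmed complex on the sphere $\partial f$ in the intermediate degree $k+1$. For $k+1=d-1$ the boundary family is top-degree, so we also need $\sum_e\epsilon_{f,e}\int_e\xi_e=0$. This is obtained by testing \eqref{eq:closed.moment.compatibility} with $\chi_f=1$, which is admissible since $d-k-2=0$ there, and using $\DIFF_f1=0$.

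A stable preimage $\theta$, with $h_{f}^{1/2}\|\theta\|_{\partial f}\lesssim h_f^{3/2}\|\xi_{\partial f}\|_{\partial f}$, comes from the scaling/reference-configuration argument of Lemma~\ref{lem:local.potential.problem}, applied on the finitely many combinatorial types of $\Sh(\partial f)$. Then Lemma~\ref{lem:first.local.problem} is not needed yet: we set $\widetilde\xi_f\coloneqq\DIFF_f\mathcal E^k_{m_{\mathcal S},f}\theta$. This form is closed, has trace $\DIFF_{\partial f}\theta=\xi_{\partial f}$, and by Proposition~\ref{lem:stable.polynomial.trace.extension} satisfies
\[
\|\widetilde\xi_f\|_f\lesssim h_f^{-1}h_f^{1/2}\|\theta\|_{\partial f}\lesssim h_f^{1/2}\|\xi_{\partial f}\|_{\partial f}.
\]

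\emph{Step 2 (moment correction).} Define $\Lambda_f(\nu_f)\coloneqq\Xi_f^d(\nu_f)-\int_f\widetilde\xi_f\wedge\nu_f$ on $\PLtrim{m}{q}(\Sh(f))$ with $q=d-k-1\ge1$.

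Its dual norm is bounded by $\|\Xi_f^d\|+h_f^{1/2}\|\xi_{\partial f}\|_{\partial f}$, using the graph norm \eqref{eq:polynomial.graph.norm}. The compatibility \eqref{eq:closed.moment.absolute.compatibility} is checked by Stokes' formula. For closed $\widetilde\xi_f$ we have
\[
\int_f\widetilde\xi_f\wedge\DIFF_f\chi_f=(-1)^{k+1}\sum_e\epsilon_{f,e}\int_e\xi_e\wedge\trc_e\chi_f,
\]
which matches \eqref{eq:closed.moment.compatibility}; the sign has to be checked carefully here. Hence $\Lambda_f$ vanishes on exact forms.

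Two cases then follow. When $q\le d-1$, that is $k\ge0$, which always holds, Lemma~\ref{lem:stable.closed.moment.realisation} yields a closed zero-trace $\xi^\circ_f$ realising $\Lambda_f$ with $\|\xi^\circ_f\|\lesssim\|\Lambda_f\|$. We set $\xi_f=\widetilde\xi_f+\xi^\circ_f$, which has the right trace, closedness and moments. The choice of $m_{\mathcal S}$ in the descending construction ensures that the degrees fit in $\PLtrim{m_{\mathcal S}-1}{k+1}$.

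\emph{Conclusion.} Non-emptiness and \eqref{eq:closed.moment.completion.stability} follow for this particular $\xi_f$. The minimum-norm element has no larger norm. It is linear in the data because it is the unique solution orthogonal to the kernel of the affine constraints, as in the earlier lemmas.

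The main obstacle is Step 1, namely the stable potential on the sphere $\partial f$. This needs exactness of the trimmed complex on the simplicial sphere in degree $k+1<d-1$, plus the augmentation condition in top degree. Uniform stability again relies on the finitely many reference configurations.
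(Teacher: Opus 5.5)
Your proof is correct, and it matches the paper's proof in Step 2 and in the minimum-norm conclusion. The paper also builds a closed extension $\widetilde\xi_f$ with $\|\widetilde\xi_f\|_f\lesssim h_f^{1/2}\|\xi_{\partial f}\|_{\partial f}$ and subtracts its moments. It checks vanishing on exact forms by the same Stokes computation (your sign is right) and corrects with Lemma~\ref{lem:stable.closed.moment.realisation}.

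The difference is in how $\widetilde\xi_f$ is built. The paper takes it directly from Lemma~\ref{lem:first.local.problem}, with form degree $k+1$, polynomial parameter $m_{\mathcal S}-2$, zero derivative datum, and $\xi_{\partial f}$ itself as boundary datum. The hypotheses of that lemma then reduce to closedness and conformity of $\xi_{\partial f}$ when $d\geq k+3$, and to the zero-integral condition from $\chi_f=1$ when $d=k+2$. You instead go one form degree lower on the boundary: you take a stable potential $\theta$ with $\DIFF_{\partial f}\theta=\xi_{\partial f}$ on the simplicial sphere $\Sh(\partial f)$, extend it with Proposition~\ref{lem:stable.polynomial.trace.extension}, and differentiate.

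This is a valid alternative that reproves the needed special case of the cited lemma using tools from the paper. It shows explicitly where the sphere topology of $\partial f$ enters: vanishing cohomology in degrees $1,\ldots,d-2$, and the top class detected by the integral. The cost is two inputs that the paper delegates to the cited result. The first is exactness of the absolute trimmed complex on the non-contractible complex $\Sh(\partial f)$. The second is a uniform scaling bound for the potential on a piecewise-flat surface. You justify the latter only by analogy with Lemma~\ref{lem:local.potential.problem}. The analogy does hold, because simplexwise affine pullbacks from finitely many reference realisations preserve conformity and commute with $\DIFF$.

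One correction to your degree bookkeeping is needed. Take $\theta\in\PLtrim{m_{\mathcal S}-1}{k}(\Sh(\partial f))$ and extend it with $\mathcal E^k_{m_{\mathcal S}-1,f}$, not at degree $m_{\mathcal S}$.
\begin{itemize}
\item Simplexwise, the closed forms in $\PLtrim{m_{\mathcal S}-1}{k+1}$ are exactly $\DIFF\PL{m_{\mathcal S}-1}{k}$, so they have degree at most $m_{\mathcal S}-2$.
\item By contrast, $\DIFF\PLtrim{m_{\mathcal S}}{k}(\Sh(f))$ generally contains closed pieces of degree $m_{\mathcal S}-1$, so your $\widetilde\xi_f$ need not lie in the target space.
\item You cannot absorb this by enlarging $m_{\mathcal S}$, since it is already fixed by the boundary data.
\end{itemize}
The fix is immediate: exactness of the trimmed complex holds at fixed degree, so the potential exists in $\PLtrim{m_{\mathcal S}-1}{k}$. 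Then $\DIFF\mathcal E^k_{m_{\mathcal S}-1,f}\theta\in\PLtrim{m_{\mathcal S}-1}{k+1}(\Sh(f))$, as required.
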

\begin{proof}
The boundary datum is closed and conforming.
If $d=k+2$, its terminal compatibility follows by taking $\chi_f=1$ in \eqref{eq:closed.moment.compatibility}.
For $d\geq k+3$, the compatibility condition required by Lemma~\ref{lem:first.local.problem} follows from the closedness and conformity of the already constructed boundary forms.
Lemma~\ref{lem:first.local.problem}, applied with polynomial parameter $m_{\mathcal S}-2$, form degree $k+1$, and zero derivative datum, therefore gives a stable closed extension $\widetilde\xi_f \in \PLtrim{m_{\mathcal S}-1}{k+1}(\Sh(f))$ of $\xi_{\partial f}$.
We select the solution of minimum $L^2(f)$ norm, so that it depends linearly on the boundary datum, and
\[
  \|\widetilde\xi_f\|_f
  \lesssim
  h_f^{1/2}\|\xi_{\partial f}\|_{\partial f}.
\]
Define $\mathfrak R_f(\nu_f) \coloneqq \Xi_f^d(\nu_f) - \int_f\widetilde\xi_f\wedge\nu_f$.
Then
\[
  \|\mathfrak R_f\|_{\left(\PLtrim{m}{d-k-1}(\Sh(f))\right)'}
  \lesssim
  \|\Xi_f^d\|_{\left(\PLtrim{m}{d-k-1}(\Sh(f))\right)'}
  +
  h_f^{1/2}\|\xi_{\partial f}\|_{\partial f}.
\]
Stokes' formula and \eqref{eq:closed.moment.compatibility} give $\mathfrak R_f(\DIFF_f\chi_f)=0$ for every $\chi_f\in\PLtrim{m}{d-k-2}(\Sh(f))$.
Lemma~\ref{lem:stable.closed.moment.realisation} provides a closed zero-trace correction $\eta_f$ realising $\mathfrak R_f$ and satisfying
\[
  \|\eta_f\|_f
  \lesssim
  \|\Xi_f^d\|_{\left(\PLtrim{m}{d-k-1}(\Sh(f))\right)'}
  +
  h_f^{1/2}\|\xi_{\partial f}\|_{\partial f}.
\]
By the choice of $m_{\mathcal S}$, $m_{\mathrm{cl}}\leq m_{\mathcal S}-1$, and hence $\eta_f \in \PLtrimz{m_{\mathrm{cl}}}{k+1}(\Sh(f)) \subset \PLtrimz{m_{\mathcal S}-1}{k+1}(\Sh(f))$.
Thus $\widetilde\xi_f+\eta_f$ has all the required properties and satisfies the estimate in \eqref{eq:closed.moment.completion.stability}.
Among the admissible forms, choose the unique one of minimum $L^2(f)$ norm.
It depends linearly on the data and has norm no larger than that of $\widetilde\xi_f+\eta_f$, which completes the proof.
\end{proof}

\paragraph{Inductive upward step.}
Let $d\in\{k+2,\ldots,n-1\}$ and suppose that the forms $\lambda_e$ have been constructed on every $(d-1)$-cell.
Set
\begin{equation}\label{eq:boundary.xi.definition}
  \xi_e\coloneqq\DIFF_e\lambda_e.
\end{equation}
The forms $\xi_e$ are single-valued on common subcells and realise the functionals $\Xi_e^{d-1}$.
From \eqref{eq:full.downward.identity}, for every $\chi_f\in\PLtrim{m}{d-k-2}(\Sh(f))$,
\[
\begin{aligned}
  \Xi_f^d(\DIFF_f\chi_f)
  &={}
  (-1)^{k+1}
  (\mathbb B_d^{d-1}\Xi^{d-1})_f(\chi_f)
  \\
  &={}
  (-1)^{k+1}
  \sum_{e\in\FM{d-1}(\partial f)}
  \epsilon_{f,e}
  \int_e\xi_e\wedge\trc_e\chi_f.
\end{aligned}
\]
Lemma~\ref{lem:closed.moment.completion} constructs a closed form $\xi_f$ satisfying
\begin{equation}\label{eq:upward.xi.properties}
  \trc_{\partial f}\xi_f=\xi_{\partial f},
  \qquad
  \int_f\xi_f\wedge\nu_f
  =
  \Xi_f^d(\nu_f)
  \quad
  \forall\nu_f\in\PLtrim{m}{d-k-1}(\Sh(f)).
\end{equation}
We then define $\lambda_f$ as the minimum-norm solution in $\PLtrim{m_{\mathcal S}}{k}(\Sh(f))$ of
\[
  \DIFF_f\lambda_f=\xi_f,
  \qquad
  \trc_e\lambda_f=\lambda_e
  \quad
  \forall e\in\FM{d-1}(\partial f).
\]
The local compatibility follows from \eqref{eq:upward.xi.properties} and \eqref{eq:boundary.xi.definition}.
Hence Lemma~\ref{lem:first.local.problem}, applied with polynomial parameter $m_{\mathcal S}-1$ and form degree $k$, shows that this local problem is solvable independently on every cell.
Since $\DIFF_f\lambda_f=\xi_f$, its stability estimate gives
\begin{equation}\label{eq:upward.lambda.stability}
  \|\lambda_f\|_f^2
  +
  h_f^2\|\DIFF_f\lambda_f\|_f^2
  \lesssim
  h_f\|\lambda_{\partial f}\|_{\partial f}^2
  +
  h_f^2\|\xi_f\|_f^2.
\end{equation}

After the final step, the face forms satisfy
\begin{equation}\label{eq:final.face.moments}
  \DIFF_F\lambda_F=\xi_F,
  \qquad
  \int_F\xi_F\wedge\nu_F
  =
  \Xi_F^{n-1}(\nu_F)
  \quad
  \forall\nu_F\in\PLtrim{m}{n-k-2}(\Sh(F)).
\end{equation}

For every $F\in\mathcal F_h$ and every $\zeta_F\in\PLtrim{m}{n-k-2}(\Sh(F))$, Stokes' formula gives
\begin{equation}\label{eq:face.stokes}
\begin{aligned}
  \int_F
  \lambda_F\wedge\DIFF_F\zeta_F
  ={}&
  (-1)^k
  \sum_{E\in\FM{n-2}(\partial F)}
  \epsilon_{F,E}
  \int_E
  \lambda_E\wedge\trc_E\zeta_F
  \\
  &+
  (-1)^{k+1}
  \int_F
  \xi_F\wedge\zeta_F.
\end{aligned}
\end{equation}

\subsubsection{Construction of the stable compatible face skeleton}
\label{subsubsec:skeleton.verification}

\begin{lemma}[Stable top-degree face skeleton]
\label{lem:top.degree.face.skeleton}
Let $k=n-1$, and let $\sigma_T$ and $\widehat\sigma_T$ be the prescribed and auxiliary reconstructed differentials defined in \eqref{eq:prescribed.differential.data} and \eqref{eq:auxiliary.differential.data}, respectively.
For a sufficiently large but fixed polynomial degree $m_{\mathcal S}\geq m+2$, there exists a family depending linearly on $\underline v_h$,
\[
  \lambda^{n-1}
  =
  (\lambda_F)_{F\in\mathcal F_h},
  \qquad
  \lambda_F
  \in
  \PLtrim{m_{\mathcal S}}{n-1}(\Sh(F)),
\]
such that
\[
  \sum_{F\in\mathcal F_T}
  \epsTF
  \int_F\lambda_F
  =
  \int_T\widehat\sigma_T
  =
  \int_T\sigma_T
  \qquad
  \forall T\in\mathcal T_h,
\]
and
\[
  \sum_{F\in\mathcal F_h}
  h_F\|\lambda_F\|_F^2
  \lesssim
  |\underline v_h|_{\DIFF,h}^2.
\]
Moreover, the family is compatible on the mesh face skeleton.
\end{lemma}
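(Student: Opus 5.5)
For $k=n-1$ the test spaces $\widehat{\mathcal M}_T^{n-1}=\PLtrim{m}{0}(\Sh(T))$ consist of scalar functions. The only closed ones are the constants, so $\widehat{\mathcal Z}_T^{n-1}=\PL{0}{0}(T)$. The plan is to build the face skeleton in three steps: first derive a cochain identity from the hybrid Stokes formula, then solve a single global cellular cochain Poincar\'e problem, and finally realise the resulting face values by normalised top-degree forms.

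\emph{Step 1: the cochain datum is a coboundary.} I would take $q_T=\star_T^{-1}1$ in the defining equation of $\widehat\sigma_T$. This is admissible because $1\in\PLtrim{m}{0}(\Sh(T))$, and since $\DIFF_T1=0$ the cell term drops, leaving
\[
  \int_T\widehat\sigma_T=\sum_{F\in\mathcal F_T}\epsTF\int_Fv_F .
\]
The same test form lies in $\star_T\Dpoly{T}{n-1}$. Indeed, $\Dpoly{T}{n-1}$ contains $\star_T^{-1}1=\DIFF_T$ of a polynomial $(n-1)$-form, and the trace condition holds because $\star_F^{-1}\trc_F(1)$ is constant, which lies in $\mathcal S_F^{n-1,\ell}\subseteq\Qtrace{F}{n-1}$. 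Combined with \eqref{eq:auxiliary.projection}, this gives $\int_T\sigma_T=\int_T\widehat\sigma_T$. Hence the $n$-cochain $\bar\xi_T\coloneqq\int_T\widehat\sigma_T$ is the coboundary of the $(n-1)$-cochain $(\int_Fv_F)_F$, so the solvability assumption of Lemma~\ref{lem:cochain.poincare} with $k=n-1$ is verified directly, with no recourse to the double complex.

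\emph{Step 2: stable global solve.} I would apply Lemma~\ref{lem:cochain.poincare} to obtain the minimum-weighted-norm solution $\widehat\lambda=(\widehat\lambda_F)_F$ of $\sum_{F\in\mathcal F_T}\epsTF\widehat\lambda_F=\bar\xi_T$. It depends linearly on $\bar\xi$, hence on $\underline v_h$, and satisfies
\[
  \sum_Fh_F^{2-n}|\widehat\lambda_F|^2\lesssim\sum_Th_T^{-n}|\bar\xi_T|^2 .
\]
By Cauchy--Schwarz, $|\bar\xi_T|\le|T|^{1/2}\|\widehat\sigma_T\|_T\lesssim h_T^{n/2}\|\widehat\sigma_T\|_T$. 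Summing and using \eqref{eq:auxiliary.graph.control} then bounds the right-hand side by $|\underline v_h|_{\DIFF,h}^2$.

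\emph{Step 3: polynomial realisation.} On each face I would fix a normalised top form $\omega_F\in\PLtrim{m_{\mathcal S}}{n-1}(\Sh(F))$ with $\int_F\omega_F=1$ and $\|\omega_F\|_F^2\lesssim h_F^{1-n}$; a constant multiple of the volume form on $F$ suffices. Setting $\lambda_F\coloneqq\widehat\lambda_F\omega_F$ reproduces the moment identity by construction. The estimate follows from
\[
  h_F\|\lambda_F\|_F^2\lesssim h_F^{2-n}|\widehat\lambda_F|^2 .
\]
Compatibility on the skeleton holds automatically: top-degree forms on the faces have no trace condition on $(n-2)$-cells, as in \eqref{eq:stable.skeleton.conformity}, and each $\lambda_F$ is single-valued because it is attached to $F$ rather than to a cell.

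The main obstacle is Step~1. There I must confirm that $\star_T^{-1}1$ is an admissible test form in both reconstruction spaces, which is what gives the equality $\int_T\sigma_T=\int_T\widehat\sigma_T$. The other two steps are routine scaling arguments.
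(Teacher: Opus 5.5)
Your proposal is correct and follows the paper's proof essentially step for step. You test the auxiliary reconstruction with $\star_T^{-1}1$ to exhibit $\bar\xi$ as the coboundary of $(\int_F v_F)_F$, apply Lemma~\ref{lem:cochain.poincare} at degree $n-1$ using $|\bar\xi_T|\lesssim h_T^{n/2}\|\widehat\sigma_T\|_T$, and realise $\widehat\lambda_F$ by normalised top-degree forms whose traces on $\partial F$ vanish. Your explicit check that $\star_T^{-1}1\in\Dpoly{T}{n-1}$, via $\mathcal S_F^{n-1,\ell}\subseteq\Qtrace{F}{n-1}$, simply spells out what the paper asserts in one line to obtain $\int_T\widehat\sigma_T=\int_T\sigma_T$.
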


\begin{proof}
Set $\bar\xi_T \coloneqq \int_T\widehat\sigma_T$.
Testing the defining equation of the auxiliary reconstruction against $\star_T^{-1}1\in\widehat{\mathcal R}_T^n$ gives
\[
  \bar\xi_T
  =
  \sum_{F\in\mathcal F_T}
  \epsTF
  \int_Fv_F.
\]
Thus $\bar\xi=(\bar\xi_T)_{T\in\mathcal T_h}$ belongs to the image of the cellular coboundary.
Lemma~\ref{lem:cochain.poincare}, applied at cochain degree $n-1$, provides a linear $(n-1)$-cochain $\widehat\lambda=(\widehat\lambda_F)_{F\in\mathcal F_h}$ such that
\[
  \bar\xi_T
  =
  \sum_{F\in\mathcal F_T}
  \epsTF\widehat\lambda_F
\]
and
\[
  \sum_{F\in\mathcal F_h}
  h_F^{2-n}|\widehat\lambda_F|^2
  \lesssim
  \sum_{T\in\mathcal T_h}
  h_T^{-n}|\bar\xi_T|^2
  \lesssim
  \sum_{T\in\mathcal T_h}
  \|\widehat\sigma_T\|_T^2
  \lesssim
  |\underline v_h|_{\DIFF,h}^2.
\]

For every $F\in\mathcal F_h$, choose a fixed top-degree form $\omega_F \in \PLtrim{m_{\mathcal S}}{n-1}(\Sh(F))$ such that $\int_F\omega_F=1$, $\|\omega_F\|_F^2 \lesssim h_F^{1-n}$, and set $\lambda_F \coloneqq \widehat\lambda_F\omega_F$.
Since $\lambda_F$ has top degree on $F$, its trace on $\partial F$ vanishes, so the resulting skeleton family is compatible.
Moreover,
\[
  \sum_{F\in\mathcal F_h}
  h_F\|\lambda_F\|_F^2
  \lesssim
  \sum_{F\in\mathcal F_h}
  h_F^{2-n}|\widehat\lambda_F|^2
  \lesssim
  |\underline v_h|_{\DIFF,h}^2.
\]
Finally,
\[
  \sum_{F\in\mathcal F_T}
  \epsTF
  \int_F\lambda_F
  =
  \sum_{F\in\mathcal F_T}
  \epsTF\widehat\lambda_F
  =
  \bar\xi_T
  =
  \int_T\widehat\sigma_T.
\]
Since the top-degree reconstruction space contains $\star_T^{-1}1\in\Dpoly{T}{n-1}$, \eqref{eq:auxiliary.projection} also gives $\int_T\widehat\sigma_T = \int_T\sigma_T$, which completes the proof.
\end{proof}

The following lemma is the central structural result of the proof.
It collects the output of the descent through the double complex, the terminal cellular cochain solve, and the upward polynomial realisation into a stable compatible polynomial face skeleton.
The final conforming lifting will be obtained from this skeleton by cellwise operations only.

\begin{lemma}[Stable compatible polynomial face skeleton]
\label{lem:stable.face.skeleton}
For every $\underline v_h\in\uXh{k}$, let $\sigma_T$ and $\widehat\sigma_T$ be defined by \eqref{eq:prescribed.differential.data} and \eqref{eq:auxiliary.differential.data}, respectively.
There exists a family depending linearly on $\underline v_h$,
\[
  \lambda^{n-1}
  =
  (\lambda_F)_{F\in\mathcal F_h},
  \qquad
  \lambda_F
  \in
  \PLtrim{m_{\mathcal S}}{k}(\Sh(F)),
\]
for a sufficiently large but fixed polynomial degree $m_{\mathcal S}\geq m+2$, such that the boundary family $\lambda_{\partial T}\coloneqq(\lambda_F)_{F\in\mathcal F_T}$ is compatible on $\partial T$ for every $T\in\mathcal T_h$, and
\begin{equation}\label{eq:auxiliary.skeleton.identity}
  \sum_{F\in\mathcal F_T}
  \epsTF
  \int_F
  \lambda_F\wedge\trc_F\widehat\mu_T
  =
  \int_T
  \widehat\sigma_T\wedge\widehat\mu_T
  \qquad
  \forall T\in\mathcal T_h,\quad
  \forall\widehat\mu_T\in\widehat{\mathcal Z}_T^k.
\end{equation}
Consequently,
\begin{equation}\label{eq:prescribed.skeleton.identity}
  \sum_{F\in\mathcal F_T}
  \epsTF
  \int_F
  \lambda_F\wedge\trc_F\mu_T
  =
  \int_T
  \sigma_T\wedge\mu_T
  \qquad
  \forall T\in\mathcal T_h,\quad
  \forall\mu_T\in\mathcal Z_T^k.
\end{equation}
For $k\in\{0,\ldots,n-2\}$, the face differentials satisfy $\DIFF_F\lambda_F\in\PLtrim{m_{\mathcal S}-1}{k+1}(\Sh(F))$ for every $F\in\mathcal F_h$.
Moreover,
\begin{equation}\label{eq:face.skeleton.stability}
  \sum_{F\in\mathcal F_h}
  h_F\bigl(\|\lambda_F\|_F^2+\|\DIFF_F\lambda_F\|_F^2\bigr)
  \lesssim
  |\underline v_h|_{\DIFF,h}^2.
\end{equation}
\end{lemma}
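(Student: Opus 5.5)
The case $k=n-1$ is exactly Lemma~\ref{lem:top.degree.face.skeleton}. In top degree, $\widehat{\mathcal Z}_T^{n-1}$ consists of the constant $0$-forms, so \eqref{eq:auxiliary.skeleton.identity} reduces to the moment identity proved there. The term $\DIFF_F\lambda_F$ vanishes, so \eqref{eq:face.skeleton.stability} follows from the stated bound. For $k\le n-2$, I would take $\lambda^{n-1}$ to be the face family produced by the ascending recursion of Subsection~\ref{subsubsec:upward}, which starts from the terminal forms $\lambda_e=\widehat\lambda_e\omega_e$ and the $(k+1)$-cell potentials. Linearity is immediate, because every selection made along the way is linear: the link solves, the cochain solve, the minimum-norm realisations and the local problems.

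Compatibility of $\lambda_{\partial T}$ is built in. At each level, $\trc_e\lambda_f=\lambda_e$ holds by construction (Lemma~\ref{lem:first.local.problem}), so two faces sharing an $(n-2)$-cell $E$ both have trace $\lambda_E$ there. Moreover, $\DIFF_F\lambda_F=\xi_F\in\PLtrim{m_{\mathcal S}-1}{k+1}(\Sh(F))$ by Lemma~\ref{lem:closed.moment.completion}.

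For the moment identity, I would use that $\widehat{\mathcal Z}_T^k$ is the space of closed forms in $\PLtrim{m}{q_*+1}(\Sh(T))$. By exactness of the trimmed complex on the contractible $\Sh(T)$ (here $q_*+1\ge1$), every such form is $\widehat\mu_T=\DIFF_T\zeta_T$ with $\zeta_T\in\PLtrim{m}{q_*}(\Sh(T))$. The traces satisfy $\trc_F\widehat\mu_T=\DIFF_F\trc_F\zeta_T$, so I would insert the face Stokes formula \eqref{eq:face.stokes} with $\zeta_F=\trc_F\zeta_T$. The boundary terms on the $(n-2)$-cells cancel pairwise in $\sum_F\epsTF(\cdot)$, because $\lambda_E$ is single valued and $\sum\epsilon_{T,F}\epsilon_{F,E}=0$. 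The remaining term is $(-1)^{k+1}\sum_F\epsTF\int_F\xi_F\wedge\trc_F\zeta_T$. By \eqref{eq:final.face.moments} this equals $(-1)^{k+1}(\mathbb B_n^{n-1}\Xi^{n-1})_T(\zeta_T)=(-1)^{k+1}\Gamma_T^n(\zeta_T)=\int_T\widehat\sigma_T\wedge\DIFF_T\zeta_T$, using \eqref{eq:top.full.system}. The signs need care, but they are designed to match. Finally, \eqref{eq:prescribed.skeleton.identity} follows from $\mathcal Z_T^k\subseteq\widehat{\mathcal Z}_T^k$ and \eqref{eq:auxiliary.projection}.

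The main work is the stability estimate. I would prove the weighted bound \eqref{eq:stable.skeleton.global.stability} by induction on $d$ from $k$ to $n-1$. The base case is \eqref{eq:terminal.form.stability}. For the closed forms, \eqref{eq:terminal.realisation.stability} and \eqref{eq:closed.moment.completion.stability} give
\[
h_f^{n-d}\|\xi_f\|_f^2\lesssim h_f^{n-d}\|\Xi_f^d\|^2+\sum_{e} h_e^{n-d+1}\|\xi_e\|_e^2 .
\]
Summing over cells and using the bounded overlap of incident cells yields $\sum_f h_f^{n-d}\|\xi_f\|^2\lesssim\sum_{d'\le d}\|\Xi^{d'}\|^2_{\mathbb X}$. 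The potentials are handled the same way: \eqref{eq:first.upward.stability} and \eqref{eq:upward.lambda.stability} give
\[
h_f^{n-d}\|\lambda_f\|^2\lesssim h_f^{n-d+1}\|\lambda_{\partial f}\|^2+h_f^{n-d+2}\|\xi_f\|^2 ,
\]
and the weights telescope across levels. This produces the weighted estimate, controlled by $\sum_d\|\Xi^d\|^2_{\mathbb X_d^d}\lesssim|\underline v_h|_{\DIFF,h}^2$. Because $h_f\lesssim1$ on the bounded domain $\Omega$, the surplus factor $h^2$ in front of $\|\xi_f\|$ is harmless. At $d=n-1$ this gives exactly \eqref{eq:face.skeleton.stability}.

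The step I expect to need the most care is checking that the weight exponents match consistently between the dual norms of the $\mathbb X$ spaces and the $L^2$ norms at each level.
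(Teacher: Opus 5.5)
Your proposal is correct and follows the paper's proof essentially step by step. This covers the $k=n-1$ reduction to Lemma~\ref{lem:top.degree.face.skeleton}, taking the face family from the ascending recursion, and proving the moment identity through a trimmed potential $\zeta_T$ together with \eqref{eq:face.stokes}, \eqref{eq:final.face.moments}, \eqref{eq:top.full.system} and the definition of $\Gamma^n$. It also covers the level-by-level weighted recursions for $\xi_f$ and $\lambda_f$, closed by $\sum_d\|\Xi^d\|_{\mathbb X_d^d}^2\lesssim|\underline v_h|_{\DIFF,h}^2$.

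One small slip: when $k=n-2$ the faces are the $(k+1)$-cells, so $\DIFF_F\lambda_F=\xi_F\in\PLtrim{m_{\mathcal S}-1}{k+1}(\Sh(F))$ comes from the terminal mass-matrix realisation rather than from Lemma~\ref{lem:closed.moment.completion}. This does not affect the argument.
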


\begin{proof}
If $k=n-1$, then both $\widehat{\mathcal Z}_T^{n-1}$ and $\mathcal Z_T^{n-1}$ consist of constant $0$-forms.
The two identities and the stability estimate therefore follow from Lemma~\ref{lem:top.degree.face.skeleton}, since $\DIFF_F\lambda_F=0$ on every face.
We henceforth assume $k\in\{0,\ldots,n-2\}$.
The upward construction provides polynomial forms $\lambda^d=(\lambda_f)_{f\in\FM{d}(\Mh)}$ for $d\in\{k,\ldots,n-1\}$ and $\xi^d=(\xi_f)_{f\in\FM{d}(\Mh)}$ for $d\in\{k+1,\ldots,n-1\}$, with $\xi_f=\DIFF_f\lambda_f\in\PLtrim{m_{\mathcal S}-1}{k+1}(\Sh(f))$, such that
\[
  \trc_e\lambda_f
  =
  \lambda_e
  \qquad
  \forall
  f\in\FM{d}(\Mh),\quad
  \forall
  e\in\FM{d-1}(\partial f).
\]
In particular, the restriction $\lambda^{n-1} = (\lambda_F)_{F\in\mathcal F_h}$ is a compatible face-skeleton family.

Let $T\in\mathcal T_h$ and let $\widehat\mu_T\in\widehat{\mathcal Z}_T^k$.
Since $\widehat\mu_T \in \PLtrim{m}{n-k-1}(\Sh(T)) \cap \Ker\DIFF_T$, the exactness of the trimmed complex on the contractible simplicial submesh $\Sh(T)$ \cite{Arnold.Falk.ea:06,Arnold.Falk.ea:09} gives $\zeta_T \in \PLtrim{m}{n-k-2}(\Sh(T))$ such that $\DIFF_T\zeta_T = \widehat\mu_T$.
This potential is used only as a test in the argument below, so no particular linear or stable selection is required.

Summing \eqref{eq:face.stokes} over the faces of $T$, the terms on the $(n-2)$-skeleton cancel by the conformity of the skeleton family and the boundary identity $\partial^2=0$.
Using \eqref{eq:final.face.moments}, \eqref{eq:top.full.system}, and the definition of $\Gamma^n$, we obtain
\[
\begin{aligned}
  \sum_{F\in\mathcal F_T}
  \epsTF
  \int_F
  \lambda_F\wedge\trc_F\widehat\mu_T
  &=
  (-1)^{k+1}
  \sum_{F\in\mathcal F_T}
  \epsTF
  \int_F
  \xi_F\wedge\trc_F\zeta_T
  \\
  &=
  (-1)^{k+1}
  \sum_{F\in\mathcal F_T}
  \epsTF
  \Xi_F^{n-1}(\trc_F\zeta_T)
  \\
  &=
  (-1)^{k+1}\Gamma_T^n(\zeta_T)
  \\
  &=
  \int_T
  \widehat\sigma_T\wedge\DIFF_T\zeta_T
  \\
  &=
  \int_T
  \widehat\sigma_T\wedge\widehat\mu_T.
\end{aligned}
\]
This proves \eqref{eq:auxiliary.skeleton.identity}.

Now let $\mu_T\in\mathcal Z_T^k$.
Since $\mathcal Z_T^k\subseteq\widehat{\mathcal Z}_T^k$ and $\mu_T=\star_Tq_T$ for some $q_T\in\Dpoly{T}{k}$, \eqref{eq:auxiliary.projection} gives $\int_T \widehat\sigma_T\wedge\mu_T = \int_T \sigma_T\wedge\mu_T$.
Restricting \eqref{eq:auxiliary.skeleton.identity} to these tests proves \eqref{eq:prescribed.skeleton.identity}.

All selections entering the construction of $\lambda^{n-1}$ are linear.
The top and intermediate systems use uniformly stable link inverses, the terminal step uses one global cochain Poincar\'e solve, and the upward sweep uses local moment completions and local first-order problems.
More precisely, \eqref{eq:terminal.realisation.stability}, \eqref{eq:terminal.form.stability}, \eqref{eq:first.upward.stability}, \eqref{eq:closed.moment.completion.stability}, and \eqref{eq:upward.lambda.stability}, after multiplication by the corresponding cell weights, give, for every $d\in\{k+2,\ldots,n-1\}$,
\[
\begin{aligned}
  \sum_{f\in\FM{d}(\Mh)}
  h_f^{n-d}\|\xi_f\|_f^2
  &\lesssim
  \sum_{f\in\FM{d}(\Mh)}
  h_f^{n-d}
  \|\Xi_f^d\|_{\left(\PLtrim{m}{d-k-1}(\Sh(f))\right)'}^2
  \\
  &\qquad+
  \sum_{e\in\FM{d-1}(\Mh)}
  h_e^{n-d+1}\|\xi_e\|_e^2,
\end{aligned}
\]
and, for every $d\in\{k+1,\ldots,n-1\}$,
\[
  \sum_{f\in\FM{d}(\Mh)}
  h_f^{n-d}\|\lambda_f\|_f^2
  \lesssim
  \sum_{e\in\FM{d-1}(\Mh)}
  h_e^{n-d+1}\|\lambda_e\|_e^2
  +
  \sum_{f\in\FM{d}(\Mh)}
  h_f^{n-d+2}\|\xi_f\|_f^2.
\]
Here, we used the uniformly bounded number of cells incident on each subcell and the uniform comparability of incident mesh sizes.
The estimate \eqref{eq:terminal.realisation.stability} starts the first recursion at $d=k+1$ and gives
\[
  \sum_{d=k+1}^{n-1}
  \sum_{f\in\FM{d}(\Mh)}
  h_f^{n-d}\|\xi_f\|_f^2
  \lesssim
  \sum_{d=k+1}^{n-1}\|\Xi^d\|_{\mathbb X_d^d}^2
  \lesssim
  \sum_{T\in\mathcal T_h}\|\widehat\sigma_T\|_T^2.
\]
Iterating the estimates for $\lambda_f$ from \eqref{eq:terminal.form.stability}, using $h_f^2\leq\operatorname{diam}(\Omega)^2$ to bound the additional factors multiplying $\|\xi_f\|_f^2$, therefore gives
\[
\begin{aligned}
  &
  \sum_{d=k}^{n-1}
  \sum_{f\in\FM{d}(\Mh)}
  h_f^{n-d}\|\lambda_f\|_f^2
  \\
  &\qquad+
  \sum_{d=k+1}^{n-1}
  \sum_{f\in\FM{d}(\Mh)}
  h_f^{n-d}\|\xi_f\|_f^2
  \\
  &\qquad\lesssim
  \sum_{T\in\mathcal T_h}
  \|\widehat\sigma_T\|_T^2
  \lesssim
  |\underline v_h|_{\DIFF,h}^2.
\end{aligned}
\]
Taking the contributions corresponding to $d=n-1$ in both sums gives
\[
  \sum_{F\in\mathcal F_h}
  h_F\bigl(\|\lambda_F\|_F^2+\|\DIFF_F\lambda_F\|_F^2\bigr)
  \lesssim
  |\underline v_h|_{\DIFF,h}^2,
\]
which proves \eqref{eq:face.skeleton.stability}.
\end{proof}

In particular, \eqref{eq:global.cochain.problem} is the only non-local solve in the construction.
The face-skeleton family depends linearly on the full hybrid datum through the auxiliary reconstruction $(\widehat\sigma_T)_{T\in\mathcal T_h}$.
It satisfies the full trimmed identity \eqref{eq:auxiliary.skeleton.identity}, whose restriction to the prescribed test space gives \eqref{eq:prescribed.skeleton.identity} with $\sigma_h$.

\subsection{Conforming completion and proof of the main theorem}
\label{subsec:hybrid.completion}

The stable face skeleton is now completed in the mesh cells.
The cell forms are defined independently by local moment completions and local first-order problems with prescribed trace.

\stableconforminglifting*

\begin{proof}[Proof of Theorem~\ref{thm:main.hybrid.poincare}]
For the given $\underline v_h$, let $\sigma_T$ and $\widehat\sigma_T$ be defined by \eqref{eq:prescribed.differential.data} and \eqref{eq:auxiliary.differential.data}, respectively.
Let $(\lambda_F)_{F\in\mathcal F_h}$ be the family given by Lemma~\ref{lem:stable.face.skeleton}.

Assume first that $k\in\{0,\ldots,n-2\}$ and set $\xi_F\coloneqq\DIFF_F\lambda_F$ on every face.
The forms $\xi_F\in\PLtrim{m_{\mathcal S}-1}{k+1}(\Sh(F))$ are closed and have compatible traces because the family $(\lambda_F)_{F\in\mathcal F_h}$ is compatible.
For every $T\in\mathcal T_h$, define the local functional
\[
  \Xi_T^n(\nu_T)
  \coloneqq
  \int_T\widehat\sigma_T\wedge\nu_T
  \qquad
  \forall\nu_T\in\widehat{\mathcal M}_T^k.
\]
Let $\chi_T\in\PLtrim{m}{n-k-2}(\Sh(T))$.
Since $\DIFF_T\chi_T\in\widehat{\mathcal Z}_T^k$, identity \eqref{eq:auxiliary.skeleton.identity} and Stokes' formula on the faces give
\begin{equation}\label{eq:conforming.completion.compatibility}
\begin{aligned}
  \Xi_T^n(\DIFF_T\chi_T)
  &=
  \sum_{F\in\mathcal F_T}
  \epsTF
  \int_F\lambda_F\wedge\DIFF_F\trc_F\chi_T
  \\
  &=
  (-1)^{k+1}
  \sum_{F\in\mathcal F_T}
  \epsTF
  \int_F\xi_F\wedge\trc_F\chi_T.
\end{aligned}
\end{equation}
The contributions on the $(n-2)$-skeleton cancel by conformity and the boundary identity $\partial^2=0$.
Thus \eqref{eq:conforming.completion.compatibility} is the compatibility condition of Lemma~\ref{lem:closed.moment.completion} at $d=n$.
For $k=n-2$, taking $\chi_T=1$ also gives the required zero integral of the boundary datum.

Applying that lemma on every cell gives a closed form $\xi_T\in\PLtrim{m_{\mathcal S}-1}{k+1}(\Sh(T))$ such that
\begin{equation}\label{eq:conforming.completion.differential}
  \trc_F\xi_T=\xi_F
  \quad\forall F\in\mathcal F_T,
  \qquad
  \int_T\xi_T\wedge\nu_T
  =
  \int_T\widehat\sigma_T\wedge\nu_T
  \quad\forall\nu_T\in\widehat{\mathcal M}_T^k.
\end{equation}
Since $\|\Xi_T^n\|_{(\widehat{\mathcal M}_T^k)'}\leq\|\widehat\sigma_T\|_T$, its stability estimate yields
\begin{equation}\label{eq:conforming.completion.differential.bound}
  \|\xi_T\|_T^2
  \lesssim
  \|\widehat\sigma_T\|_T^2
  +
  h_T\|\DIFF_{\partial T}\lambda_{\partial T}\|_{\partial T}^2.
\end{equation}
The choice of minimum norm makes $\xi_T$ linear in the data.

If $k=n-1$, set $\xi_T\coloneqq\widehat\sigma_T$.
This form has top degree and is therefore closed.
The choice $m_{\mathcal S}\geq m+2$ ensures that $\xi_T\in\PLtrim{m_{\mathcal S}-1}{n}(\Sh(T))$.
The moment identity and the bound in \eqref{eq:conforming.completion.differential}--\eqref{eq:conforming.completion.differential.bound} hold directly, with zero boundary differential.
Moreover, Lemma~\ref{lem:top.degree.face.skeleton} gives
\[
  \int_T\xi_T
  =
  \sum_{F\in\mathcal F_T}\epsTF\int_F\lambda_F.
\]

For every $k\in\{0,\ldots,n-1\}$, the compatibility conditions of Lemma~\ref{lem:first.local.problem} are therefore satisfied with derivative datum $\xi_T$ and boundary datum $\lambda_{\partial T}$.
We define $\lambda_T$ as the minimum-norm solution in $\PLtrim{m_{\mathcal S}}{k}(\Sh(T))$ of
\begin{equation}\label{eq:conforming.completion.local.problem}
  \DIFF_T\lambda_T=\xi_T,
  \qquad
  \trc_F\lambda_T=\lambda_F
  \quad\forall F\in\mathcal F_T.
\end{equation}
That lemma, applied with polynomial parameter $m_{\mathcal S}-1$, gives
\[
  \|\lambda_T\|_T^2
  \lesssim
  h_T\|\lambda_{\partial T}\|_{\partial T}^2
  +
  h_T^2\|\xi_T\|_T^2.
\]
The selection depends linearly on the data.

Define $\lambda_h$ by $\left.\lambda_h\right|_T=\lambda_T$ for every $T\in\mathcal T_h$.
The trace identities in \eqref{eq:conforming.completion.local.problem} show that $\lambda_h\in \PLtrim{m_{\mathcal S}}{k}(\Sh)$, and $(\DIFF\lambda_h)_{|T}=\xi_T$.
For every $q_T\in\Dpoly{T}{k}$, the moment identity in \eqref{eq:conforming.completion.differential} and the projection identity \eqref{eq:auxiliary.projection} give
\[
  \int_T\xi_T\wedge\star_Tq_T
  =
  \int_T\widehat\sigma_T\wedge\star_Tq_T
  =
  \int_T\sigma_T\wedge\star_Tq_T.
\]
Hence $\piD{T}{k}\xi_T=\sigma_T$, which proves \eqref{eq:main.lifting.projected.derivative}.

Summing the local estimates, using the uniform comparability of incident mesh sizes and the uniformly bounded number of cells incident on each face, gives
\[
\begin{aligned}
  \|\lambda_h\|_\Omega^2+\|\DIFF\lambda_h\|_\Omega^2
  &\lesssim
  \sum_{T\in\mathcal T_h}\|\widehat\sigma_T\|_T^2
  \\
  &\quad+
  \sum_{F\in\mathcal F_h}
  h_F\bigl(\|\lambda_F\|_F^2+\|\DIFF_F\lambda_F\|_F^2\bigr)
  \\
  &\lesssim
  |\underline v_h|_{\DIFF,h}^2.
\end{aligned}
\]
Here the factors $h_T^2$ are bounded by $\operatorname{diam}(\Omega)^2$ and absorbed in the hidden constant, and the last inequality follows from \eqref{eq:auxiliary.graph.control} and \eqref{eq:face.skeleton.stability}.
This proves \eqref{eq:main.lifting.stability}.
The construction is linear because the stable skeleton and all local minimum-norm solutions are selected linearly.
\end{proof}

\begin{remark}[Scalar moment preservation and conforming smoothers]
\label{rem:scalar.moments.ern.zanotti}
For $k=0$, the lifting constructed above also preserves the hybrid moments up to a constant on each connected component of $\Omega$.
Assume first that $\Omega$ is connected, set $\lambda_h=\mathcal L_h^0\underline v_h$, and define $c_h\coloneqq |\Omega|^{-1}\int_\Omega(\lambda_h-v_h)$ and $E_h\underline v_h\coloneqq\lambda_h-c_h$.
Then
\[
  \uI{0}{h}E_h\underline v_h=\underline v_h,
  \qquad
  \|\DIFF E_h\underline v_h\|_\Omega
  \lesssim |\underline v_h|_{\DIFF,h}.
\]
Indeed, the full moment identity \eqref{eq:conforming.completion.differential}, the auxiliary Stokes formula, and integration by parts give
\[
  \int_T(\lambda_h-v_T)\DIFF_T\nu_T
  =
  \sum_{F\in\mathcal F_T}\epsTF
  \int_F(\trc_F\lambda_h-v_F)\trc_F\nu_T
\]
for every $\nu_T\in\PLtrim{m}{n-1}(\Sh(T))$.
Recall that $m\geq\ell+1$.
Testing first with forms of zero trace and using relative exactness shows that the projection of $\lambda_h-v_T$ onto the broken polynomials of degree $m-1$ on $\Sh(T)$ is a constant $c_T$.
Surjectivity of the trace on the auxiliary boundary simplices then gives $\piQ{F}{0}(\trc_F\lambda_h-v_F)=c_T$ on every face of $T$.
Since constants belong to every admissible face space, conformity across interior faces implies that the constants $c_T$ coincide.
This proves the interpolation identity, and the stability estimate follows because subtracting $c_h$ does not change the exterior derivative.
For a disconnected domain, the same argument applies on each connected component.

For the usual scalar HHO choice $\Qtrace{F}{0}=\PL{\ell}{0}(F)$, these identities give the moment preservation used by Ern and Zanotti in their analysis of HHO methods with $H^{-1}$ loads \cite[Lemma~4.4 and Proposition~4.5]{Ern.Zanotti:20:smoother}.
Their smoother takes values in $H_0^1(\Omega)$, whereas the present construction gives an $H^1(\Omega)$ function.
Its use in that Dirichlet formulation would therefore require an additional completion that enforces zero boundary trace while preserving the moments and stability.
\end{remark}

\subsection{Hybrid Poincar\'e inequality and uniformity}
\label{subsec:skeleton.interpretation.uniformity}

\begin{corollary}[Uniform discrete Poincar\'e inequality in the canonical hybrid seminorm]
\label{cor:hybrid.poincare}
Let $k\in\{0,\ldots,n-1\}$ and $\ell\geq0$, and assume that the selected face spaces satisfy the stability-admissibility condition \eqref{eq:def.Qtrace.admissible}.
For every $\underline v_h\in\uXh{k}$, there exists $\underline w_h\in\uXh{k}$, depending linearly on $\underline v_h$, such that
\begin{equation}\label{eq:main.poincare.same.derivative}
  \DIFF_{\ell,h}^{k}\underline w_h
  =
  \DIFF_{\ell,h}^{k}\underline v_h,
\end{equation}
and
\begin{equation}\label{eq:main.poincare.representative.estimate}
  \|\underline w_h\|_{0,h}
  \leq
  C_{\mathrm{hyb}}^{k,\ell}
  |\underline v_h|_{\DIFF,h}.
\end{equation}
Consequently,
\begin{equation}\label{eq:main.poincare.quotient}
  \inf_{\underline\phi_h\in\Ker\DIFF_{\ell,h}^{k}}
  \|\underline v_h+\underline\phi_h\|_{0,h}
  \leq
  C_{\mathrm{hyb}}^{k,\ell}
  |\underline v_h|_{\DIFF,h}.
\end{equation}
The constant $C_{\mathrm{hyb}}^{k,\ell}$ has the same dependencies as $C_{\mathrm{lift}}^{k,\ell}$ and is likewise independent of the mesh size and of the admissible choice of the face spaces.
\end{corollary}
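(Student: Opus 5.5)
The plan is to take $\underline w_h\coloneqq\uI{k}{h}\lambda_h$, where $\lambda_h=\mathcal L_h^k\underline v_h\in\PLtrim{m_{\mathcal S}}{k}(\Sh)$ is the conforming lifting of Theorem~\ref{thm:main.hybrid.poincare}. Since $\lambda_h$ is a piecewise polynomial in $H\Lambda^k(\Omega)$ on the auxiliary simplicial submesh, its traces on every face are single-valued $L^2$ polynomial forms. The interpolator is therefore well defined, and $\underline w_h$ depends linearly on $\underline v_h$ because both $\mathcal L_h^k$ and $\uI{k}{h}$ are linear.

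First I verify \eqref{eq:main.poincare.same.derivative}. Fix $T\in\mathcal T_h$ and $q_T\in\Dpoly{T}{k}$. Then $\star_Tq_T$ is a smooth polynomial on $T$, and $\lambda_{h|T}$ is in $H\Lambda^k(T)$ and piecewise polynomial with conforming traces. The Stokes formula $\int_T\DIFF\lambda_h\wedge\star_Tq_T=(-1)^{k+1}\int_T\lambda_h\wedge\DIFF_T(\star_Tq_T)+\sum_F\epsTF\int_F\trc_F\lambda_h\wedge\trc_F(\star_Tq_T)$ then holds simplexwise, and the interior simplicial interfaces cancel by conformity. Corollary~\ref{cor:prescribed.projected.commuting.forms} gives $\DIFF_{\ell,T}^k\uI{k}{T}\lambda_h=\piD{T}{k}((\DIFF\lambda_h)_{|T})$, and \eqref{eq:main.lifting.projected.derivative} shows that this equals $\DIFF_{\ell,T}^k\underline v_T$.

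Next I prove \eqref{eq:main.poincare.representative.estimate}. The cell part is immediate: $\|\pi_T^{k,\ell}\lambda_h\|_T\le\|\lambda_h\|_T$. For the face part, $h_F^{1/2}\|\piQ{F}{k}\trc_F\lambda_h\|_F\le h_F^{1/2}\|\trc_F\lambda_h\|_F$. Choose $T\in\cof{n}(F)$. Because $\lambda_h$ is piecewise polynomial of fixed degree $m_{\mathcal S}$ on the uniformly shape-regular submesh $\Sh(T)$, the discrete trace inequality applied on the boundary simplex of $\Sh(T)$ adjacent to each piece of $\Sh(F)$ gives $h_F\|\trc_F\lambda_h\|_F^2\lesssim\|\lambda_h\|_T^2$, using $h_F\simeq h_T\simeq h_S$. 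Summing over cells and faces, with the bounded number of faces per cell, yields $\|\underline w_h\|_{0,h}\lesssim\|\lambda_h\|_\Omega$. By \eqref{eq:main.lifting.stability} this is at most a multiple of $C_{\mathrm{lift}}^{k,\ell}|\underline v_h|_{\DIFF,h}$. The hidden constants depend only on $n,k,m_{\mathcal S}$ and mesh regularity, so they are uniform with respect to the admissible face spaces, and the orthogonal projection $\piQ{F}{k}$ has norm one for every choice.

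Finally, \eqref{eq:main.poincare.quotient} follows by taking $\underline\phi_h\coloneqq\underline w_h-\underline v_h$, which lies in $\Ker\DIFF_{\ell,h}^k$ by \eqref{eq:main.poincare.same.derivative}. The only mildly delicate step is justifying the Stokes formula for $\lambda_h$ against the test forms in $\Dpoly{T}{k}$. This is handled by the simplexwise integration by parts together with $H\Lambda$-conformity of $\lambda_h$, which gives tangential-trace continuity across the simplicial faces interior to $T$. Everything else is routine.
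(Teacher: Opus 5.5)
Your proposal is correct and follows essentially the same route as the paper: you take $\underline w_h=\uI{k}{h}\mathcal L_h^k\underline v_h$, obtain \eqref{eq:main.poincare.same.derivative} from Corollary~\ref{cor:prescribed.projected.commuting.forms} combined with \eqref{eq:main.lifting.projected.derivative}, and bound $\|\underline w_h\|_{0,h}$ using the $L^2$-stability of the projectors, the fixed-degree discrete trace inequality on $\Sh(T)$, and \eqref{eq:main.lifting.stability}. Your simplexwise justification of Stokes' formula for $\lambda_h$ against $\star_Tq_T$ makes explicit a step the paper states in one line.
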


\begin{proof}
Let $\lambda_h=\mathcal L_h^k\underline v_h$ be the form given by Theorem~\ref{thm:main.hybrid.poincare}, and set $\underline w_h\coloneqq\uI{k}{h}\lambda_h$.
Since $\lambda_h$ is conforming and piecewise polynomial, its face traces are well defined in $L^2$ and Stokes' formula applies against every prescribed reconstruction test.
Corollary~\ref{cor:prescribed.projected.commuting.forms} and \eqref{eq:main.lifting.projected.derivative} therefore give
\[
  \DIFF_{\ell,T}^{k}\underline w_T
  =
  \piD{T}{k}\bigl((\DIFF\lambda_h)_{|T}\bigr)
  =
  \DIFF_{\ell,T}^{k}\underline v_T
  \qquad\forall T\in\mathcal T_h,
\]
which proves \eqref{eq:main.poincare.same.derivative}.

The $L^2$ stability of the cell and face projectors, the fixed-degree polynomial inverse trace inequality on $\Sh(T)$, and \eqref{eq:main.lifting.stability} yield
\[
\begin{aligned}
  \|\underline w_h\|_{0,h}^2
  &\leq
  \sum_{T\in\mathcal T_h}
  \left(
    \|\lambda_h\|_T^2
    +
    \sum_{F\in\mathcal F_T}
    h_F\|\trc_F\lambda_h\|_F^2
  \right)
  \\
  &\lesssim
  \|\lambda_h\|_\Omega^2
  \lesssim
  |\underline v_h|_{\DIFF,h}^2.
\end{aligned}
\]
This proves \eqref{eq:main.poincare.representative.estimate}.
The selection is linear because both the lifting and the interpolator are linear.
Finally, \eqref{eq:main.poincare.same.derivative} gives $\underline w_h-\underline v_h\in\Ker\DIFF_{\ell,h}^{k}$, so that
\[
  \inf_{\underline\phi_h\in\Ker\DIFF_{\ell,h}^{k}}
  \|\underline v_h+\underline\phi_h\|_{0,h}
  \leq
  \|\underline w_h\|_{0,h},
\]
and \eqref{eq:main.poincare.quotient} follows from \eqref{eq:main.poincare.representative.estimate}.
\end{proof}

\begin{remark}[Uniformity of the constants and auxiliary degrees]
\label{rem:uniformity.constants.degrees}
The auxiliary polynomial degrees are fixed before considering the mesh size.
The degree $m$ is chosen sufficiently large that $\Dpoly{T}{k}\subseteq\widehat{\mathcal R}_T^{k+1}=\star_T^{-1}\PLtrim{m}{n-k-1}(\Sh(T))$; the choice $m\geq\ell+1$ is sufficient.
The degree $m_{\mathcal S}$ is then chosen large enough for all terminal and closed-moment realisations to lie in $\PLtrim{m_{\mathcal S}-1}{k+1}(\Sh(f))$, while the corresponding local potentials lie in $\PLtrim{m_{\mathcal S}}{k}(\Sh(f))$.
These choices include the volume level and are possible because only finitely many cell dimensions and fixed-degree local polynomial problems occur.
Both degrees are independent of the mesh size, the number of cells, and the hybrid unknown.
Thus the lifting is computable through finite-dimensional cochain and polynomial problems with fixed local polynomial degrees.

For these fixed degrees and local polynomial spaces, the only non-local analytical contribution to $C_{\mathrm{lift}}^{k,\ell}$ is the cellular cochain Poincar\'e constant $C_{\mathrm{coch}}^k$ from Lemma~\ref{lem:cochain.poincare}.
The constant $C_{\mathrm{hyb}}^{k,\ell}$ in Corollary~\ref{cor:hybrid.poincare} can be chosen as a uniformly bounded interpolation factor times $C_{\mathrm{lift}}^{k,\ell}$.
All remaining factors arise from uniformly controlled coface-link inverses and local finite-dimensional polynomial operations.
The link constants are controlled by the uniformly bounded combinatorial complexity of the coface links, while the polynomial operations are controlled by scaling and reference-element norm equivalences.
These local factors are not tracked individually, and no attempt is made to optimise either the auxiliary degrees or $C_{\mathrm{lift}}^{k,\ell}$.
\end{remark}

\begin{remark}[Relation with DDR conforming liftings]
\label{rem:relation.with.ddr.lifting}
The recursive nature of the construction is closely related to the conforming liftings developed for Discrete de Rham complexes in \cite{Di-Pietro.Droniou.Pitassi:25}.
In the DDR setting, a discrete $k$-form has polynomial components on all cells of dimensions $d\in\{k,\ldots,n\}$, and the discrete exterior derivative and potential reconstruction are defined recursively over the cell dimension.
The conforming DDR lifting uses this complete cellular datum to build a conforming representative with projection properties on cells of every dimension and is a right inverse of the DDR interpolator; see \cite[Theorem~3 and Remark~5]{Di-Pietro.Droniou.Pitassi:25}.
The local boundary value problem in Lemma~\ref{lem:first.local.problem} is one of the shared ingredients of the two constructions.

The hybrid space $\uXh{k}$ contains only cell polynomial forms and face trace unknowns.
The intermediate cellular data are generated from $\underline v_h$ through the functional recursion and the terminal cellular cochain solve, and are then realised by a compatible polynomial hierarchy.
The lifting of Theorem~\ref{thm:main.hybrid.poincare} preserves the projected exterior derivative in \eqref{eq:main.lifting.projected.derivative} and is controlled by $|\underline v_h|_{\DIFF,h}$, but it does not in general reproduce the original hybrid unknown under interpolation.
Indeed, the stability estimate forces the lifting to vanish on every hybrid datum with zero canonical seminorm.

The relation with DDR is thus at the level of the hierarchical construction and the local polynomial problems.
An explicit identification with an auxiliary DDR unknown would require specifying the corresponding moment data and verifying their compatibility with the DDR reconstructions and differential.
\end{remark}

\section{A Poincar\'e inequality modulo conforming closed forms}
\label{sec:poincare.conforming.closed}

The lifting of Theorem~\ref{thm:main.hybrid.poincare} preserves the prescribed reconstructed exterior derivative after projection and is controlled by the canonical hybrid seminorm.
We now prove a complementary estimate for the broken cell polynomial field itself, with quotient space given by the continuous conforming kernel $\mathcal Z_{\mathrm c}^k(\Omega)$ defined in \eqref{eq:def.conforming.closed.space}.

This is a closed-data realisation of the cellular-to-hybrid transfer principle announced in the Introduction.
It retains the coface-link geometry, terminal cellular Poincar\'e correction, and local polynomial realisation mechanisms of the stable skeleton construction.
In Section~\ref{sec:poincare.proof}, our construction uses the full functional double complex to propagate a general reconstructed exterior-derivative datum.
Here the reference cell forms are already closed, so the construction reduces to a descent of their trace incompatibility followed by closed local extensions.
In particular, the vertical propagation becomes trivial, while the horizontal coface-link descent, the terminal cellular correction, and the upward polynomial completion remain.

We first extract the cellwise closed polynomial component of the hybrid unknown.
Its incompatibility is then propagated from the cell traces to the $k$-skeleton, where a single cellular cochain Poincar\'e problem removes the terminal obstruction.
The resulting data are finally completed upward by closed local trace extensions.
Thus the only non-local operation remains the cochain solve of Lemma~\ref{lem:cochain.poincare}.

\subsection{Statement of the conforming-kernel estimate}
\label{subsec:poincare.conforming.closed.statement}

\poincareconformingclosed*

The proof is split into two ingredients.
The first one follows from the local polynomial analysis of Section~\ref{sec:hho.forms}.
The second one is a closed-data version of the descent--ascent construction of Section~\ref{sec:poincare.proof}.

\subsection{Stable reduction to cellwise closed forms}
\label{subsec:stable.cellwise.closed.reduction}

Let $\mathcal F_h^\circ\coloneqq\{F\in\mathcal F_h\mid\card(\cof{n}(F))=2\}$ be the set of interior mesh faces.
For every $F\in\mathcal F_h^\circ$, fix an arbitrary ordering $\cof{n}(F)=\{T_F^+,T_F^-\}$.
For a family of cellwise closed forms $z_h=(z_T)_{T\in\mathcal T_h}$ with $z_T\in\PL{\ell}{k}(T)\cap\Ker\DIFF_T$, define the seminorm by
\[
  J_{\mathrm{cl},h}^k(z_h)^2
  \coloneqq
  \sum_{F\in\mathcal F_h^\circ}
  h_F^{-1}
  \left\|
    \trc_Fz_{T_F^+}
    -
    \trc_Fz_{T_F^-}
  \right\|_F^2.
\]
The chosen ordering of the adjacent cells is irrelevant to this seminorm.

\begin{proposition}[Stable reduction to cellwise closed forms]
\label{prop:stable.cellwise.closed.reduction}

For every $\underline v_h\in\uXh{k}$, let $z_T\in\PL{\ell}{k}(T)\cap\Ker\DIFF_T$ be the linear selection given by Proposition~\ref{prop:stable.polynomial.kernel.decomposition}.
Then
\begin{equation}
\label{eq:cellwise.closed.volume.error}
  \sum_{T\in\mathcal T_h}
  \|v_T-z_T\|_T^2
  \lesssim
  |\underline v_h|_{\DIFF,h}^2,
\end{equation}
and
\begin{equation}
\label{eq:cellwise.closed.face.residual}
  \sum_{T\in\mathcal T_h}
  \sum_{F\in\mathcal F_T}
  h_F^{-1}
  \|\trc_Fz_T-v_F\|_F^2
  \lesssim
  |\underline v_h|_{\DIFF,h}^2.
\end{equation}
In particular,
\begin{equation}
\label{eq:closed.jump.controlled.by.hybrid.energy}
  J_{\mathrm{cl},h}^k(z_h)^2
  \lesssim
  |\underline v_h|_{\DIFF,h}^2.
\end{equation}
\end{proposition}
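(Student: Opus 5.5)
The argument is purely local and combines Proposition~\ref{prop:stable.polynomial.kernel.decomposition} with Proposition~\ref{prop:full.jump.control.forms}. Fix $\underline v_h$ and let $z_T$ be the linear selection of Proposition~\ref{prop:stable.polynomial.kernel.decomposition} on each $T$.

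For the volume estimate \eqref{eq:cellwise.closed.volume.error}, I will use \eqref{eq:stable.polynomial.kernel.decomposition} directly. It gives $\|v_T-z_T\|_T\lesssim h_T\|\DIFF_Tv_T\|_T$. Since $h_T\leq\operatorname{diam}(\Omega)$, squaring and summing over $T$ bounds the left-hand side by $\sum_T\|\DIFF_Tv_T\|_T^2\leq|\underline v_h|_{\DIFF,h}^2$. The factor $\operatorname{diam}(\Omega)^2$ is absorbed in the hidden constant, whose dependence on $\Omega$ is allowed.

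For the face residual \eqref{eq:cellwise.closed.face.residual}, I will split on each $F\in\mathcal F_T$ as $\trc_Fz_T-v_F=(\trc_Fv_T-v_F)-\trc_F(v_T-z_T)$. The first term is controlled in the scaled sum $\sum_F h_F^{-1}\|\cdot\|_F^2$ by $J_T(\underline v_T)^2$. Proposition~\ref{prop:full.jump.control.forms} then bounds $J_T(\underline v_T)^2$ by $\|\DIFF_Tv_T\|_T^2+\mathrm s_{\mathcal Q,T}^{k,\ell}(\underline v_T,\underline v_T)$. For the second term, the trace part of \eqref{eq:stable.polynomial.kernel.decomposition} gives $h_T^{1/2}\|\trc_{\partial T}(v_T-z_T)\|_{\partial T}\lesssim h_T\|\DIFF_Tv_T\|_T$. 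Using $h_F\simeq h_T$, this yields $\sum_{F\in\mathcal F_T}h_F^{-1}\|\trc_F(v_T-z_T)\|_F^2\lesssim\|\DIFF_Tv_T\|_T^2$, with no leftover power of $h$. Summing over $T$ then gives \eqref{eq:cellwise.closed.face.residual}.

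For the jump estimate \eqref{eq:closed.jump.controlled.by.hybrid.energy}, the face unknown $v_F$ is single-valued. So, on each interior face,
\[
  \trc_Fz_{T_F^+}-\trc_Fz_{T_F^-}
  =
  (\trc_Fz_{T_F^+}-v_F)-(\trc_Fz_{T_F^-}-v_F).
\]
The triangle inequality bounds $J_{\mathrm{cl},h}^k(z_h)^2$ by twice the left-hand side of \eqref{eq:cellwise.closed.face.residual}, since each interior face is counted once from each side.

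No step is a genuine obstacle. The only point requiring care is the scaling of the trace term: the $h_T^{1/2}$ weighting in Proposition~\ref{prop:stable.polynomial.kernel.decomposition} must exactly compensate the $h_F^{-1}$ weight, which uses the comparability of $h_F$ and $h_T$ for incident faces. Linearity of the selection is inherited from Proposition~\ref{prop:stable.polynomial.kernel.decomposition}.
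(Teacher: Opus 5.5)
Your proof is correct and follows the paper's argument step for step. The volume bound uses \eqref{eq:stable.polynomial.kernel.decomposition} with $h_T\leq\operatorname{diam}(\Omega)$; the face residual uses the splitting $\trc_Fz_T-v_F=(\trc_Fv_T-v_F)-\trc_F(v_T-z_T)$, controlled by Proposition~\ref{prop:full.jump.control.forms}, the trace part of \eqref{eq:stable.polynomial.kernel.decomposition} and $h_F\simeq h_T$; and the jump bound follows from the triangle inequality with the single-valued face unknown $v_F$.
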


\begin{proof}

Estimate \eqref{eq:cellwise.closed.volume.error} follows immediately from \eqref{eq:stable.polynomial.kernel.decomposition}, after summing over the mesh and using $h_T\leq\operatorname{diam}(\Omega)$.

For every $T\in\mathcal T_h$ and $F\in\mathcal F_T$, $\trc_Fz_T-v_F=\trc_F(z_T-v_T)+(\trc_Fv_T-v_F)$.
The first term is controlled by Proposition~\ref{prop:stable.polynomial.kernel.decomposition} and the uniform comparability of incident mesh sizes, whereas the second one is controlled by Proposition~\ref{prop:full.jump.control.forms}.
Squaring, multiplying by $h_F^{-1}$, and summing proves \eqref{eq:cellwise.closed.face.residual}.

Finally, on every interior face $F=T_F^+\cap T_F^-$, $\trc_Fz_{T_F^+}-\trc_Fz_{T_F^-}=(\trc_Fz_{T_F^+}-v_F)-(\trc_Fz_{T_F^-}-v_F)$.
Estimate \eqref{eq:closed.jump.controlled.by.hybrid.energy} therefore follows from \eqref{eq:cellwise.closed.face.residual}.
\end{proof}

\subsection{Stable conforming closure of cellwise closed forms}
\label{subsec:stable.conforming.closure}
Starting from a family that is closed on every cell, the next proposition constructs a globally conforming closed form by correcting only the incompatibility of the traces.

\begin{proposition}[Stable conforming closure of cellwise closed forms]
\label{prop:stable.conforming.closure}

Let $z_h=(z_T)_{T\in\mathcal T_h}$ with $z_T\in\PL{\ell}{k}(T)\cap\Ker\DIFF_T$.
There exist a fixed integer $m_{\mathrm c}\geq\ell+2$ and a form
\[
  z_h^{\mathrm c}
  \in\mathcal Z_{\mathrm c}^k(\Omega)\cap\PLtrim{m_{\mathrm c}}{k}(\Sh),
\]
depending linearly on $z_h$, such that
\begin{equation}
\label{eq:stable.conforming.closure}
  \sum_{T\in\mathcal T_h}
  \left\|
    z_T-\left.z_h^{\mathrm c}\right|_T
  \right\|_T^2
  \lesssim
  J_{\mathrm{cl},h}^k(z_h)^2.
\end{equation}
\end{proposition}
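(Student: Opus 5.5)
The idea is to build a conforming closed form by a descent--ascent on traces only, mirroring Section~\ref{sec:poincare.proof} with trivial vertical propagation.

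\textbf{Descent.} Each $z_T$ is closed, so its trace incompatibility on each interior face is a closed polynomial form. We encode the incompatibility of $z_h$ on the $(n-1)$-skeleton as the jump family $(\trc_Fz_{T_F^+}-\trc_Fz_{T_F^-})_F$, suitably oriented. This is equivalent to a functional in $\mathbb X_{n-1}$, acting on trimmed test forms of the complementary degree through the wedge pairing, and its norm is bounded by $J_{\mathrm{cl},h}^k(z_h)$ after weighting. We then descend dimension by dimension. At each level, the incompatibility on the $d$-skeleton is a horizontal cocycle: around each $(d-1)$-cell the signed traces sum to zero by $\partial^2=0$, since they come from single-valued cell forms. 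The exactness below top degree in Corollary~\ref{cor:functional.horizontal.exactness}, together with the stable link solves of Lemmas~\ref{lem:coface.exactness} and~\ref{lem:full.link.solve}, then yields corrections on the $(d-1)$-cells. The weights $h_f^{n-d}$ keep every step bounded by $J_{\mathrm{cl},h}^k(z_h)^2$. Because all the data are closed, $\mathbb D$ never enters, and relative closedness is automatic.

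\textbf{Terminal solve.} At level $k+1$ we evaluate on constants to obtain a $(k+1)$-cochain. We show it is a coboundary by comparing with the reference family given by the traces of $z_T$ themselves, in the spirit of Lemma~\ref{lem:terminal.class.invariance} and Corollary~\ref{cor:terminal.cochain.exactness}. The single global solve of Lemma~\ref{lem:cochain.poincare} then produces a stable $k$-cochain potential. We realise this potential by the normalised forms $\omega_e$, which gives a compatible hierarchy of corrections.

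\textbf{Ascent.} Going up from dimension $k+1$ to $n$, we build closed conforming correction forms $c_f$ on each cell. On each cell we apply the closed extension of Lemma~\ref{lem:first.local.problem} with zero derivative datum, supplemented where needed by the zero-trace closed moment corrections of Lemma~\ref{lem:stable.closed.moment.realisation}. This yields closed $c_T\in\PLtrim{m_{\mathrm c}}{k}(\Sh(T))$ whose boundary traces match $\trc z_T$ minus a single-valued skeleton form. We then set $z_h^{\mathrm c}|_T\coloneqq z_T-c_T$. This form is conforming, because its traces agree across faces, and it is closed on each cell; together these give $\DIFF z_h^{\mathrm c}=0$ globally. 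The bound $\|c_T\|_T^2\lesssim h_T\|\text{boundary data}\|^2_{\partial T}$ then gives \eqref{eq:stable.conforming.closure}.

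The main obstacle is to certify exactness of the terminal cochain without any topological assumption on $\Omega$. The reference family built from the actual traces $\trc z_T$ must satisfy the same recursion, so that Lemma~\ref{lem:terminal.class.invariance} applies. I would first try to obtain this from the identity that every skeleton incompatibility equals $\mathbb B$ applied to the trace functionals of the $z_T$, which are themselves closed.
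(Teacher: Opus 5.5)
Your overall shape (descent, one terminal cochain solve, closed upward extensions) matches the paper, and your ascent is essentially its Step~3. The gap is in the descent, which does not go through as described, and in the terminal exactness, which you flag yourself and leave open.

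\textbf{Why the descent fails.} The identity you verify for the jumps is that signed traces summed over the cofaces of each lower-dimensional cell vanish. That is a chain-type identity $\partial\partial=0$. Corollary~\ref{cor:functional.horizontal.exactness} and Lemma~\ref{lem:full.link.solve} concern the coboundary $\mathbb B$ instead: its cocycle condition is a sum over the facets of each higher-dimensional cell, and only under that condition do they produce preimages on lower-dimensional cells.
\begin{itemize}
\item Whichever way you pair the jumps with test forms, the resulting face functional is in general neither in $\Ker\mathbb B_n^{n-1}$ nor trace-supported. Its image on $T$ pairs the different jumps on the faces of $T$ against the traces of a single test form, which does not vanish.
\item Pairing a $k$-form on a face with forms of complementary degree $n-1-k$ does not even land in the spaces $\mathbb X_{n-1}^j$, whose test degree is at most $q_{n-1}=n-k-2$. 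In Section~\ref{sec:poincare.proof} the functionals $\Xi^d$ are realised by closed $(k+1)$-forms $\DIFF_f\lambda_f$, not by $k$-forms.
\end{itemize}
So the cited link solves do not apply, and nothing makes their outputs closed $k$-forms; your claim that relative closedness is automatic is unsupported. For the same reason, your ``reference family given by the traces of $z_T$'' is not an element of $\mathbb X_\bullet$: those traces are multivalued on every cell of dimension below $n$. Your terminal comparison therefore has nothing to compare with.

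\textbf{The missing idea.} Make the traces single-valued directly, at the level of forms, by coface averaging.
\begin{itemize}
\item Set $\overline z_T\coloneqq z_T$, and define $\overline z_f$ recursively as the arithmetic mean of $\trc_f\overline z_g$ over $g\in\cof{d+1}(f)$. These forms are closed because traces commute with $\DIFF$.
\item On interior faces, $\trc_F\overline z_T-\overline z_F$ is, up to sign, half the jump, so the face-level defect is bounded by $J_{\mathrm{cl},h}^k(z_h)$.
\item The weighted defects $\trc_f\overline z_g-\overline z_f$ propagate down level by level. This uses the degree-zero Poincar\'e inequality on the connected $1$-skeleton of $\operatorname{Lk}_{\Mh}(f)$ (vertices are the $(d+1)$-cofaces, edges the $(d+2)$-cofaces), together with inverse trace estimates.
\end{itemize}
With this construction, the terminal cochain $\eta_f\coloneqq\sum_{e\in\FM{k}(\partial f)}\epsilon_{f,e}\int_e\overline z_e$ is a coboundary by definition, so no topological certificate is needed. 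Stokes' formula rewrites $\eta_f$ in terms of the defects $\overline z_e-\trc_e\overline z_f$, which bounds it. Lemma~\ref{lem:cochain.poincare} then serves only for stability: it replaces the potential $(\int_e\overline z_e)_e$, whose size is not controlled by $J_{\mathrm{cl},h}^k(z_h)$, with a stable one.

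Your closed upward extensions, applied to $\tau_e-\trc_e\overline z_f$ with zero derivative datum, then finish the proof. The closed moment corrections of Lemma~\ref{lem:stable.closed.moment.realisation} are not needed, since no moments are prescribed.
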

\begin{proof}
The proof consists of a reference descent, one terminal cochain correction, and a closed upward completion.

\medskip
\noindent\emph{Step 1: reference descent.}

Set $\overline z_T\coloneqq z_T$ for every $T\in\mathcal T_h$.
Then, recursively for $d \in \{n-1,\ldots,k\}$ and $f\in\FM{d}(\Mh)$, define
\begin{equation}
\label{eq:reference.closed.descent}
  \overline z_f
  \coloneqq
  \frac{1}{
    \card(\cof{d+1}(f))
  }
  \sum_{g\in\cof{d+1}(f)}
  \trc_f\overline z_g.
\end{equation}
Trace commutation with the exterior derivative gives $\DIFF_f\overline z_f=0$ for all $f\in\FM{d}(\Mh)$, $d\in\{k,\ldots,n\}$.
All these forms belong to fixed-degree polynomial spaces.
After choosing $m_{\mathrm c}\geq\ell+2$, we regard them as elements of $\PLtrim{m_{\mathrm c}}{k}(\Sh(f))$.

For every $d\in\{k,\ldots,n-1\}$, introduce the quantity
\[
  E_d^{\mathrm{ref}}
  \coloneqq
  \sum_{f\in\FM{d}(\Mh)}
  \sum_{g\in\cof{d+1}(f)}
  h_f^{n-d-2}
  \|
    \trc_f\overline z_g-\overline z_f
  \|_f^2.
\]
When $d=0$, the norm of a $0$-form on a vertex is understood as its absolute value.

At the top level, a boundary face has a single $n$-coface and gives no contribution.
If $F\in\mathcal F_h^\circ$, each difference $\trc_F\overline z_T-\overline z_F$ is, up to sign, one half of the jump between the traces from the adjacent cells.

Consequently,
\begin{equation}
\label{eq:reference.top.defect}
  E_{n-1}^{\mathrm{ref}}
  \lesssim
  J_{\mathrm{cl},h}^k(z_h)^2.
\end{equation}

We next prove recursively that
\begin{equation}
\label{eq:reference.defect.descent}
  E_d^{\mathrm{ref}}
  \lesssim
  E_{d+1}^{\mathrm{ref}}
  \qquad
  \forall d\in\{k,\ldots,n-2\}.
\end{equation}
Fix $f\in\FM{d}(\Mh)$ and set $u_g\coloneqq\trc_f\overline z_g$ for every $g\in\cof{d+1}(f)$.
The definition \eqref{eq:reference.closed.descent} says that $\overline z_f$ is the arithmetic mean of the family $(u_g)_g$.
The vertices of the coface link $\operatorname{Lk}_{\Mh}(f)$ correspond to the $(d+1)$-cofaces of $f$, whereas its edges correspond to the $(d+2)$-cofaces.
The link has positive dimension and vanishing reduced cohomology in degree zero, so its $1$-skeleton is connected.
Its cardinality is uniformly bounded; see Subsection~\ref{subsubsec:coface.link.complexes} and Lemma~\ref{lem:coface.exactness}.
The standard degree-zero Poincar\'e inequality on this $1$-skeleton then gives
\[
  \sum_{g\in\cof{d+1}(f)}
  \|u_g-\overline z_f\|_f^2
  \lesssim
  \sum_{G\in\cof{d+2}(f)}
  \|u_{g_G^+}-u_{g_G^-}\|_f^2,
\]
where $g_G^+$ and $g_G^-$ are the two $(d+1)$-cofaces of $f$
contained in $\partial G$.
For each such $G$, adding and subtracting $\trc_f\overline z_G$ and using the triangle inequality gives
\[
  \|u_{g_G^+}-u_{g_G^-}\|_f^2
  \lesssim
  \sum_{\substack{
    g\in\FM{d+1}(\partial G)\\
    f\subset g
  }}
  \|u_g-\trc_f\overline z_G\|_f^2.
\]
Consequently,
\begin{equation}
\label{eq:reference.graph.poincare}
  \sum_{g\in\cof{d+1}(f)}
  \|u_g-\overline z_f\|_f^2
  \lesssim
  \sum_{G\in\cof{d+2}(f)}
  \sum_{\substack{
    g\in\FM{d+1}(\partial G)\\
    f\subset g
  }}
  \left\|
    u_g-\trc_f\overline z_G
  \right\|_f^2.
\end{equation}
By transitivity of traces, $u_g-\trc_f\overline z_G=\trc_f(\overline z_g-\trc_g\overline z_G)$.
The fixed-degree polynomial inverse trace inequality, together with the mesh-regularity relation $h_f\simeq h_g$, therefore gives
\[
  h_f^{n-d-2}
  \left\|
    u_g-\trc_f\overline z_G
  \right\|_f^2
  \lesssim
  h_g^{n-d-3}
  \left\|
    \overline z_g-\trc_g\overline z_G
  \right\|_g^2.
\]
Multiplying \eqref{eq:reference.graph.poincare} by $h_f^{n-d-2}$, summing over $f$, and using the uniformly bounded number of subcells of each mesh cell proves \eqref{eq:reference.defect.descent}.
Combining \eqref{eq:reference.top.defect} and \eqref{eq:reference.defect.descent}, we obtain
\begin{equation}
\label{eq:reference.all.defects}
  E_d^{\mathrm{ref}}
  \lesssim
  J_{\mathrm{cl},h}^k(z_h)^2
  \qquad
  \forall d\in\{k,\ldots,n-1\}.
\end{equation}

\medskip
\noindent\emph{Step 2: terminal cochain correction.}

Define the terminal $(k+1)$-cochain
\[
  \eta_f
  \coloneqq
  \sum_{e\in\FM{k}(\partial f)}
  \epsilon_{f,e}
  \int_e\overline z_e,
  \qquad
  f\in\FM{k+1}(\Mh).
\]
For $k=0$, integration over a vertex is evaluation of the $0$-form, consistently with the positive orientation fixed in Subsection~\ref{sec:setting:mesh}.
Since $\DIFF_f\overline z_f=0$, Stokes' formula yields
\begin{equation}
\label{eq:terminal.cochain.from.reference.defect}
  \eta_f
  =
  \sum_{e\in\FM{k}(\partial f)}
  \epsilon_{f,e}
  \int_e
  \left(
    \overline z_e-\trc_e\overline z_f
  \right).
\end{equation}
Cauchy--Schwarz, scaling on the $k$-cells, and the uniform comparability of incident mesh sizes give
\[
  \sum_{f\in\FM{k+1}(\Mh)}
  h_f^{n-2k-2}|\eta_f|^2
  \lesssim
  E_k^{\mathrm{ref}}.
\]

By definition, $\eta$ is a cellular coboundary.
Hence Lemma~\ref{lem:cochain.poincare} provides the minimum-norm $k$-cochain $\widehat\lambda=(\widehat\lambda_e)_{e\in\FM{k}(\Mh)}$ such that
\begin{equation}
\label{eq:terminal.cochain.correction}
  \sum_{e\in\FM{k}(\partial f)}
  \epsilon_{f,e}\widehat\lambda_e
  =
  \eta_f
  \qquad
  \forall f\in\FM{k+1}(\Mh),
\end{equation}
and
\[
  \sum_{e\in\FM{k}(\Mh)}
  h_e^{n-2k}|\widehat\lambda_e|^2
  \lesssim
  E_k^{\mathrm{ref}}.
\]

As in the terminal realisation leading to \eqref{eq:terminal.form.stability}, fix on every $k$-cell a normalised top form
\[
  \omega_e
  \in
  \PLtrim{m_{\mathrm c}}{k}(\Sh(e)),
  \qquad
  \int_e\omega_e=1,
  \qquad
  \|\omega_e\|_e^2
  \lesssim
  h_e^{-k}.
\]
For $k=0$, this means $\omega_e=1$.
Set $\lambda_e\coloneqq-\widehat\lambda_e\omega_e$ and $\tau_e \coloneqq\overline z_e+\lambda_e$.
The forms $\tau_e$ are closed, and \eqref{eq:terminal.cochain.from.reference.defect}-- \eqref{eq:terminal.cochain.correction} give
\begin{equation}
\label{eq:terminal.closed.compatibility}
  \sum_{e\in\FM{k}(\partial f)}
  \epsilon_{f,e}
  \int_e\tau_e
  =
  0
  \qquad
  \forall f\in\FM{k+1}(\Mh).
\end{equation}
Moreover,
\begin{equation}
\label{eq:terminal.closed.correction.stability}
  \sum_{e\in\FM{k}(\Mh)}
  h_e^{n-k}\|\lambda_e\|_e^2
  \lesssim
  E_k^{\mathrm{ref}}.
\end{equation}
For $k=0$, the left-hand side is understood as $\sum_{e\in\FM{0}(\Mh)}h_e^n|\lambda_e|^2$.

\medskip
\noindent\emph{Step 3: closed upward completion.}

We now construct $\lambda_f$ and $\tau_f=\overline z_f+\lambda_f$ successively for $d\in \{k+1,\ldots,n\}$.
Suppose that the compatible closed forms $\tau_e$ have already been constructed on every cell of dimension $d-1$.
For $f\in\FM{d}(\Mh)$, define the boundary family
\begin{equation}
\label{eq:def.closed.boundary.correction}
  \left.\theta_{\partial f}\right|_e
  \coloneqq
  \tau_e-\trc_e\overline z_f
  \qquad
  \forall e\in\FM{d-1}(\partial f).
\end{equation}
The induction hypothesis and transitivity of traces show that this family is compatible on $\partial f$.
Both terms in \eqref{eq:def.closed.boundary.correction} are closed, and therefore $\DIFF_{\partial f}\theta_{\partial f}=0$.
If $d=k+1$, the terminal compatibility condition follows from \eqref{eq:terminal.closed.compatibility} and Stokes' formula, which gives
\[
\begin{aligned}
  \int_{\partial f}\theta_{\partial f}
  &=
  \sum_{e\in\FM{k}(\partial f)}
  \epsilon_{f,e}\int_e\tau_e
  -
  \int_{\partial f}\trc_{\partial f}\overline z_f
  \\
  &=
  -\int_f\DIFF_f\overline z_f
  =
  0.
\end{aligned}
\]
Thus the hypotheses of Lemma~\ref{lem:first.local.problem}, applied with polynomial parameter $m_{\mathrm c}-1$ and zero differential datum, are satisfied.
We select the minimum-norm solution
\begin{equation}
\label{eq:closed.upward.local.problem}
  \lambda_f
  \in
  \PLtrim{m_{\mathrm c}}{k}(\Sh(f)),
  \qquad
  \DIFF_f\lambda_f=0,
  \qquad
  \trc_{\partial f}\lambda_f
  =
  \theta_{\partial f}.
\end{equation}
It depends linearly on the boundary datum and satisfies
\begin{equation}
\label{eq:closed.upward.local.stability}
  \|\lambda_f\|_f^2
  \lesssim
  h_f
  \|\theta_{\partial f}\|_{\partial f}^2.
\end{equation}
Set $\tau_f \coloneqq \overline z_f+\lambda_f$.
Then $\DIFF_f\tau_f=0$ and $\trc_e\tau_f=\tau_e$ for all $e\in\FM{d-1}(\partial f)$.

It remains to prove stability.
Define
\[
  E_d^{\mathrm{corr}}
  \coloneqq
  \sum_{f\in\FM{d}(\Mh)}
  h_f^{n-d}\|\lambda_f\|_f^2,
  \qquad
  d\in\{k,\ldots,n\},
\]
with the convention described after \eqref{eq:terminal.closed.correction.stability} when $k=d=0$.
Equation \eqref{eq:terminal.closed.correction.stability} gives
\begin{equation}
\label{eq:closed.correction.base.stability}
  E_k^{\mathrm{corr}}
  \lesssim
  E_k^{\mathrm{ref}}.
\end{equation}
For $d\geq k+1$, using \eqref{eq:def.closed.boundary.correction} and \eqref{eq:closed.upward.local.stability}, we obtain
\[
\begin{aligned}
  E_d^{\mathrm{corr}}
  \lesssim{}&
  \sum_{f\in\FM{d}(\Mh)}
  \sum_{e\in\FM{d-1}(\partial f)}
  h_f^{n-d+1}\|\lambda_e\|_e^2
  \\
  &+
  \sum_{f\in\FM{d}(\Mh)}
  \sum_{e\in\FM{d-1}(\partial f)}
  h_f^{n-d+1}
  \|
    \overline z_e-\trc_e\overline z_f
  \|_e^2.
\end{aligned}
\]
The uniformly bounded number of cells incident on each subcell and the mesh-regularity relation $h_f\simeq h_e$ control the first sum by $E_{d-1}^{\mathrm{corr}}$.
For the second one, using additionally $h_e^2\leq\operatorname{diam}(\Omega)^2$, we obtain
\begin{equation}
\label{eq:closed.correction.recursive.stability}
  E_d^{\mathrm{corr}}
  \lesssim
  E_{d-1}^{\mathrm{corr}}
  +
  E_{d-1}^{\mathrm{ref}}.
\end{equation}
Iterating \eqref{eq:closed.correction.recursive.stability}, and using \eqref{eq:closed.correction.base.stability} and \eqref{eq:reference.all.defects}, gives
\begin{equation}
\label{eq:closed.correction.all.levels.stability}
  E_d^{\mathrm{corr}}
  \lesssim
  J_{\mathrm{cl},h}^k(z_h)^2
  \qquad
  \forall d\in\{k,\ldots,n\}.
\end{equation}

At the top level, $\overline z_T=z_T$ and $z_T-\tau_T=-\lambda_T$.
Hence the case $d=n$ of \eqref{eq:closed.correction.all.levels.stability} gives
\[
  \sum_{T\in\mathcal T_h}
  \|z_T-\tau_T\|_T^2
  \lesssim
  J_{\mathrm{cl},h}^k(z_h)^2.
\]
The traces of the top-dimensional forms $\tau_T$ agree on every interior face, so the piecewise form $z_h^{\mathrm c}$ defined by $\left.z_h^{\mathrm c}\right|_T\coloneqq\tau_T$ for every $T\in\mathcal T_h$ belongs to $H\Lambda^k(\Omega)$.
Its exterior derivative vanishes cellwise and hence globally.
All selections are linear. The coface averages are linear, the cochain solution in Lemma~\ref{lem:cochain.poincare} is the minimum-norm one, and the local solutions in \eqref{eq:closed.upward.local.problem} are also selected with minimum norm.
This proves \eqref{eq:stable.conforming.closure}.
\end{proof}

\subsection{Proof of the conforming-kernel Poincar\'e inequality}
\label{subsec:proof.poincare.conforming.closed}

\begin{proof}[Proof of Theorem~\ref{thm:poincare.conforming.closed}]

Let $(z_T)_{T\in\mathcal T_h}$ be given by Proposition~\ref{prop:stable.cellwise.closed.reduction}.
Apply Proposition~\ref{prop:stable.conforming.closure} to the resulting family $z_h=(z_T)_{T\in\mathcal T_h}$, and denote the selected conforming closed form by $z_h^{\mathrm c}$.
Using \eqref{eq:cellwise.closed.volume.error}, \eqref{eq:closed.jump.controlled.by.hybrid.energy}, and \eqref{eq:stable.conforming.closure}, we obtain
\[
\begin{aligned}
  \sum_{T\in\mathcal T_h}
  \left\|
    v_T-\left.z_h^{\mathrm c}\right|_T
  \right\|_T^2
  &\lesssim
  \sum_{T\in\mathcal T_h}
  \|v_T-z_T\|_T^2
  +
  \sum_{T\in\mathcal T_h}
  \left\|
    z_T-\left.z_h^{\mathrm c}\right|_T
  \right\|_T^2
  \\
  &\lesssim
  |\underline v_h|_{\DIFF,h}^2.
\end{aligned}
\]
This proves \eqref{eq:poincare.conforming.closed.representative}.
Since the selected $z_h^{\mathrm c}$ is admissible in the infimum in \eqref{eq:poincare.modulo.conforming.closed.space}, the quotient estimate follows.
\end{proof}

\begin{remark}[Relation between the two Poincar\'e inequalities]
\label{rem:poincare.closed.implies.hybrid}
Theorem~\ref{thm:poincare.conforming.closed} also implies Corollary~\ref{cor:hybrid.poincare}.
Indeed, let $z_h^{\mathrm c}$ be the conforming closed form supplied by that theorem and set
$\underline w_h\coloneqq\underline v_h-\uI{k}{h}z_h^{\mathrm c}$.
Corollary~\ref{cor:prescribed.projected.commuting.forms} gives
\[
  \DIFF_{\ell,h}^{k}\underline w_h
  =
  \DIFF_{\ell,h}^{k}\underline v_h,
\]
since $\DIFF z_h^{\mathrm c}=0$.
The fixed polynomial degree of $z_h^{\mathrm c}$ ensures that its interpolator is well defined.

Set $e_T\coloneqq v_T-\left.z_h^{\mathrm c}\right|_T$.
For every $T\in\mathcal T_h$ and $F\in\mathcal F_T$, the face component satisfies
\[
  w_F
  =
  v_F-\piQ{F}{k}\trc_Fv_T
  +
  \piQ{F}{k}\trc_Fe_T.
\]
The $L^2$ stability of the projectors and the fixed-degree inverse trace inequality on $\Sh(T)$ therefore yield
\[
\begin{aligned}
  \|\underline w_h\|_{0,h}^2
  &\lesssim
  \sum_{T\in\mathcal T_h}\|e_T\|_T^2
  +
  \sum_{T\in\mathcal T_h}
  \sum_{F\in\mathcal F_T}
  h_F\|v_F-\piQ{F}{k}\trc_Fv_T\|_F^2
  \\
  &\lesssim
  |\underline v_h|_{\DIFF,h}^2.
\end{aligned}
\]
The last inequality follows from
\eqref{eq:poincare.conforming.closed.representative}
and $h_F^2\leq\operatorname{diam}(\Omega)^2$.
The construction is linear and proves Corollary~\ref{cor:hybrid.poincare}, with a possibly different uniform constant.
\end{remark}

\begin{remark}[An alternative proof using distributional forms]
\label{rem:distributional.poincare.proof}
Theorem~\ref{thm:poincare.conforming.closed} also follows from the Poincar\'e--Friedrichs inequalities for discrete distributional forms in \cite[Section~6]{Christiansen.Licht:20}.
Consider the distributional complex on $\Sh$ with local spaces
$\PLtrim{\ell+1}{j}(S)$ for $j\in\{0,\ldots,n\}$.
Taking the full boundary as the relative subcomplex removes boundary jump terms and imposes no boundary condition on the conforming kernel.

Apply the inequality at the broken $k$-form space to $v_h$, and let
$z_h^{\mathrm c}\in\mathcal Z_{\mathrm c}^k(\Omega)\cap\PLtrim{\ell+1}{k}(\Sh)$
be its $L^2$ projection onto the kernel.
The distributional norm contains cell derivatives and interior trace jumps weighted by the inverse face size.
Since jumps inside each polytopal cell vanish, mesh regularity gives
\[
\begin{aligned}
  \|v_h-z_h^{\mathrm c}\|_\Omega^2
  &\lesssim
  \sum_{T\in\mathcal T_h}\|\DIFF_Tv_T\|_T^2+
  \sum_{F\in\mathcal F_h^\circ}
  h_F^{-1}
  \|\trc_Fv_{T_F^+}-\trc_Fv_{T_F^-}\|_F^2.
\end{aligned}
\]
Proposition~\ref{prop:full.jump.control.forms} bounds the right-hand side by
$|\underline v_h|_{\DIFF,h}^2$.

The difference between the two arguments is methodological.
The alternative proof uses the Poincar\'e constant of the distributional complex on $\Sh$ as an intermediate analytical input.
In \cite{Christiansen.Licht:20}, this constant is controlled through simplicial chain and Whitney-form Poincar\'e constants, together with local stability factors.
The present proof starts directly from the cellular estimate of Lemma~\ref{lem:cochain.poincare} on $\Mh$ and propagates its control through coface-link problems and local polynomial extensions.
The connection of this cellular estimate with Whitney forms is established in \cite[Section~3.4]{Di-Pietro.Droniou.ea:25}.
Thus the dependence on $C_{\mathrm{coch}}^k$ is obtained without invoking a global Poincar\'e inequality for distributional forms.
\end{remark}

\subsection{Poincar\'e inequalities in vector proxies in three space dimensions}
\label{sec:poincare.vector.proxies}

The vector proxies of Subsection~\ref{sec:vector.proxies} identify the three-dimensional cases with the hybrid gradient, curl, and divergence.
We now use this correspondence to state the consequences of Theorem~\ref{thm:poincare.conforming.closed} for zero-, one-, and two-forms.

We write $|\cdot|_{\operatorname{grad},h}$, $|\cdot|_{\operatorname{curl},h}$, and $|\cdot|_{\operatorname{div},h}$ for the respective vector-proxy expressions of $|\cdot|_{\DIFF,h}$ described in Subsection~\ref{sec:vector.proxies}.

We shall repeatedly use the elementary observation that, if $V$ is a subspace of $L^2\Lambda^k(\Omega)$ and $v\perp V$ in the $L^2$ product, then
\[
  \|v\|_{L^2(\Omega)}
  \leq
  \inf_{z\in V}
  \|v-z\|_{L^2(\Omega)}.
\]
Indeed, for every $z\in V$, $\|v\|_{L^2(\Omega)}^2=(v,v-z)_\Omega \leq \|v\|_{L^2(\Omega)}\|v-z\|_{L^2(\Omega)}$.

\subsubsection{Zero-forms and hybrid gradient Poincar\'e inequalities}
\label{subsec:poincare.proxy.gradient}

For $k=0$, the continuous conforming closed space is
\[
  \mathcal Z_{\mathrm c}^0(\Omega)
  =
  \left\{
    z\in H^1(\Omega):
    \operatorname{grad}z=0
  \right\}.
\]
If $\Omega$ is connected, then $\mathcal Z_{\mathrm c}^0(\Omega)=\mathbb R$.
For $k=0$, the broken cell polynomial $v_h$ introduced in Section~\ref{sec:hho.forms} is scalar, and Theorem~\ref{thm:poincare.conforming.closed} gives
\begin{equation}
\label{eq:hybrid.gradient.poincare.wirtinger}
  \inf_{c\in\mathbb R}
  \|v_h-c\|_{L^2(\Omega)}
  \leq
  C_{\mathrm{conf}}^{0,\ell}
  |\underline v_h|_{\operatorname{grad},h}.
\end{equation}

\begin{corollary}[Hybrid gradient Poincar\'e--Wirtinger inequality]
\label{cor:hybrid.gradient.poincare.wirtinger}
Assume that $\Omega$ is connected and that $(v_h,1)_\Omega=0$.
Then
\[
  \|v_h\|_{L^2(\Omega)}
  \leq
  C_{\mathrm{conf}}^{0,\ell}
  |\underline v_h|_{\operatorname{grad},h}.
\]
\end{corollary}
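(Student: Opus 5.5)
This corollary follows from the quotient estimate \eqref{eq:hybrid.gradient.poincare.wirtinger} combined with the elementary orthogonality observation stated just before the subsection. We first identify the conforming kernel. Since $\Omega$ is connected, $\mathcal Z_{\mathrm c}^0(\Omega)=\{z\in H^1(\Omega):\operatorname{grad}z=0\}$ consists exactly of the constants. Indeed, a function in $H^1$ with vanishing weak gradient is constant on each connected component of $\Omega$. Therefore the infimum in \eqref{eq:poincare.modulo.conforming.closed.space} for $k=0$ is the infimum over $c\in\mathbb R$, and Theorem~\ref{thm:poincare.conforming.closed} yields \eqref{eq:hybrid.gradient.poincare.wirtinger} with the constant $C_{\mathrm{conf}}^{0,\ell}$. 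Here $|\underline v_h|_{\operatorname{grad},h}$ is simply the vector-proxy expression of $|\underline v_h|_{\DIFF,h}$ from Subsection~\ref{subsec:proxy.gradient}.

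Next we apply the orthogonality observation with $V=\mathbb R\subset L^2(\Omega)$, viewed as the constant functions, and $v=v_h$. The assumption $(v_h,1)_\Omega=0$ means $v_h\perp V$ in the $L^2(\Omega)$ product. The observation then gives $\|v_h\|_{L^2(\Omega)}\leq\inf_{c\in\mathbb R}\|v_h-c\|_{L^2(\Omega)}$. Chaining this with \eqref{eq:hybrid.gradient.poincare.wirtinger} gives the claim with exactly the same constant $C_{\mathrm{conf}}^{0,\ell}$.

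There is no real obstacle; the only points needing care are routine. First, the broken polynomial $v_h$ is in $L^2(\Omega)$, so the $L^2$ inner product with constants makes sense. Second, connectedness is used only to identify $\mathcal Z_{\mathrm c}^0(\Omega)$ with $\mathbb R$. Without it, one would instead impose zero mean on each connected component, and the same argument would apply componentwise.
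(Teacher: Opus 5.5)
Your proposal is correct and follows the paper's argument exactly: you identify $\mathcal Z_{\mathrm c}^0(\Omega)=\mathbb R$ for connected $\Omega$, invoke the $k=0$ case of Theorem~\ref{thm:poincare.conforming.closed}, and conclude via the $L^2$-orthogonality observation, with the same constant $C_{\mathrm{conf}}^{0,\ell}$.
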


The zero-mean condition is precisely the $L^2$-orthogonality of $v_h$ to $\mathcal Z_{\mathrm c}^0(\Omega)=\mathbb R$.
The corollary therefore follows directly from \eqref{eq:hybrid.gradient.poincare.wirtinger} and the observation above.

The same estimate yields the Poincar\'e inequality associated with the mixed-order, $L^2$-scaled hybrid setting used in \cite[Eqs.~(31), (34), and~(50)]{Dalphin.Ducreux.Lemaire.Pitassi:25}.
Indeed, taking $r\geq1$ and $\ell=r$, restricting the cell component to $\mathcal P^{r-1}(T)$, choosing $\mathcal Q_F^{0,r}=\mathcal P^r(F)$, and setting $h_\flat\coloneqq\min_{T\in\mathcal T_h}h_T$, uniform equivalence of incident mesh sizes gives
\[
  h_\flat\|v_h\|_{L^2(\Omega)}
  \lesssim
  \left(
    \sum_{T\in\mathcal T_h}
    \left(
      h_T^2\|\operatorname{grad}v_T\|_T^2
      +
      \sum_{F\in\mathcal F_T}
      h_T\|v_{T|F}-v_F\|_F^2
    \right)
  \right)^{1/2}
\]
under the same zero-mean condition.

\subsubsection{One-forms and hybrid curl Poincar\'e inequalities}
\label{subsec:poincare.proxy.curl}

For $k=1$, the continuous conforming closed space has vector proxy
\[
  \boldsymbol{\mathcal Z}_{\mathrm c,\operatorname{curl}}(\Omega)
  \coloneqq
  \boldsymbol H(\operatorname{curl};\Omega)
  \cap
  \Ker(\operatorname{curl}).
\]
Under the one-form vector proxy, the broken cell polynomial is denoted by $\boldsymbol v_h$.
Theorem~\ref{thm:poincare.conforming.closed} gives
\begin{equation}
\label{eq:hybrid.curl.poincare.modulo.closed}
  \inf_{\boldsymbol z\in
    \boldsymbol{\mathcal Z}_{\mathrm c,\operatorname{curl}}(\Omega)}
  \|
    \boldsymbol v_h-\boldsymbol z
  \|_{L^2(\Omega)}
  \leq
  C_{\mathrm{conf}}^{1,\ell}
  |\underline{\boldsymbol v}_h|_{\operatorname{curl},h}.
\end{equation}

For the second Weber setting, introduce the normal harmonic space
\[
  \boldsymbol{\mathcal H}_{\mathrm n}(\Omega)
  \coloneqq
  \left\{
    \boldsymbol z
    \in
    \boldsymbol H(\operatorname{curl};\Omega)
    \cap
    \boldsymbol H_0(\operatorname{div};\Omega):
    \operatorname{curl}\boldsymbol z=0,
    \quad
    \operatorname{div}\boldsymbol z=0
  \right\}.
\]
The topological description recalled in \cite[Eq.~(12)]{Lemaire.Pitassi:25} gives
\begin{equation}
\label{eq:continuous.curl.kernel.decomposition}
  \boldsymbol{\mathcal Z}_{\mathrm c,\operatorname{curl}}(\Omega)
  =
  \operatorname{grad}
  \left(
    H^1(\Omega)\cap L_0^2(\Omega)
  \right)
  \oplus
  \boldsymbol{\mathcal H}_{\mathrm n}(\Omega).
\end{equation}

\begin{corollary}[Second hybrid Weber inequality in the curl gauge]
\label{cor:second.hybrid.weber.closed.gauge}
Let $n=3$, $k=1$, and assume that
\[
  \left(
    \boldsymbol v_h,
    \boldsymbol z
  \right)_\Omega
  =
  0
  \qquad
  \forall
  \boldsymbol z
  \in
  \operatorname{grad}
  \left(
    H^1(\Omega)\cap L_0^2(\Omega)
  \right)
  \oplus
  \boldsymbol{\mathcal H}_{\mathrm n}(\Omega).
\]
Then
\begin{equation}
\label{eq:second.hybrid.weber.closed.gauge}
  \|\boldsymbol v_h\|_{L^2(\Omega)}
  \leq
  C_{\mathrm{conf}}^{1,\ell}
  |\underline{\boldsymbol v}_h|_{\operatorname{curl},h}.
\end{equation}
\end{corollary}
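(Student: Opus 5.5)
The plan is to mirror the proof of Corollary~\ref{cor:hybrid.gradient.poincare.wirtinger}: use the quotient estimate \eqref{eq:hybrid.curl.poincare.modulo.closed} together with the orthogonality observation stated at the beginning of Subsection~\ref{sec:poincare.vector.proxies}. The space to which $\boldsymbol v_h$ must be orthogonal is the conforming curl kernel $V\coloneqq\boldsymbol{\mathcal Z}_{\mathrm c,\operatorname{curl}}(\Omega)$. By the topological decomposition \eqref{eq:continuous.curl.kernel.decomposition}, this space is exactly $\operatorname{grad}(H^1(\Omega)\cap L_0^2(\Omega))\oplus\boldsymbol{\mathcal H}_{\mathrm n}(\Omega)$. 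The gauge assumption of the corollary therefore says precisely that $\boldsymbol v_h\perp V$ in the $L^2(\Omega)$ product.

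The argument runs in three steps.

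First, I check that the hypothesis gives orthogonality to every element of $V$, not only to a dense subset. The hypothesis is stated for every element of the direct sum, and \eqref{eq:continuous.curl.kernel.decomposition} is an equality of sets. Hence $(\boldsymbol v_h,\boldsymbol z)_\Omega=0$ for all $\boldsymbol z\in V$, and no closure argument is needed.

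Second, I apply the elementary observation with $v=\boldsymbol v_h$, which lies in $L^2(\Omega)^3$ because it is piecewise polynomial. This gives
\[
  \|\boldsymbol v_h\|_{L^2(\Omega)}\leq\inf_{\boldsymbol z\in V}\|\boldsymbol v_h-\boldsymbol z\|_{L^2(\Omega)}.
\]

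Third, I bound the right-hand side by $C_{\mathrm{conf}}^{1,\ell}|\underline{\boldsymbol v}_h|_{\operatorname{curl},h}$ using \eqref{eq:hybrid.curl.poincare.modulo.closed}. That estimate is the vector-proxy form of \eqref{eq:poincare.modulo.conforming.closed.space} in Theorem~\ref{thm:poincare.conforming.closed} with $n=3$ and $k=1$. The constant is therefore the same $C_{\mathrm{conf}}^{1,\ell}$, with no additional factor.

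The only step that needs care is the identification in the second and third steps between the one-form objects and their vector proxies. I have to confirm that the $L^2$ inner product $\langle\omega,\mu\rangle_\Omega=\int_\Omega\omega\wedge\star\mu$ on one-forms coincides with the Euclidean $L^2$ product of the vector proxies. I also have to confirm that $\mathcal Z_{\mathrm c}^1(\Omega)$ corresponds to $\boldsymbol H(\operatorname{curl};\Omega)\cap\Ker(\operatorname{curl})$. Both are standard, so I do not expect an obstacle there.
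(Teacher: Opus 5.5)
Your proposal is correct and matches the paper's argument: the decomposition \eqref{eq:continuous.curl.kernel.decomposition} turns the gauge condition into $L^2$-orthogonality of $\boldsymbol v_h$ to $\boldsymbol{\mathcal Z}_{\mathrm c,\operatorname{curl}}(\Omega)$. The elementary orthogonality observation combined with \eqref{eq:hybrid.curl.poincare.modulo.closed} then gives the bound with the same constant $C_{\mathrm{conf}}^{1,\ell}$.
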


By \eqref{eq:continuous.curl.kernel.decomposition}, the gauge condition in the corollary is exactly the $L^2$-orthogonality of $\boldsymbol v_h$ to $\boldsymbol{\mathcal Z}_{\mathrm c,\operatorname{curl}}(\Omega)$.
Hence \eqref{eq:second.hybrid.weber.closed.gauge} follows directly from \eqref{eq:hybrid.curl.poincare.modulo.closed} and the observation above.

The same argument applies in the weighted $L^2$ product.
If $0<\mu_{\flat}\leq\mu\leq\mu_{\sharp}$ almost everywhere in $\Omega$ and
\[
  (\mu\boldsymbol v_h,\boldsymbol z)_\Omega=0
  \qquad
  \forall\boldsymbol z\in
  \boldsymbol{\mathcal Z}_{\mathrm c,\operatorname{curl}}(\Omega),
\]
then
\[
  \|\mu^{1/2}\boldsymbol v_h\|_{L^2(\Omega)}
  \leq
  \mu_{\sharp}^{1/2}
  C_{\mathrm{conf}}^{1,\ell}
  |\underline{\boldsymbol v}_h|_{\operatorname{curl},h}.
\]
Thus the gauge argument introduces no additional stability constant.
The unit-coefficient estimate uses exactly $C_{\mathrm{conf}}^{1,\ell}$, while the weighted variant only adds the explicit factor $\mu_{\sharp}^{1/2}$.
For piecewise constant $\mu$, this is the weighted useful variant used in the HHO stability analysis of magnetostatic and div--curl systems in \cite{Lemaire.Pitassi:25,Dalphin.Ducreux.Lemaire.Pitassi:25}.

Estimate \eqref{eq:second.hybrid.weber.closed.gauge} is the unit-coefficient form of the gauge-fixed second hybrid Weber estimate in \cite[Remark~16]{Lemaire.Pitassi:25}.
The construction gives a different proof of the discrete stability step, with the global information transferred through the polynomial face skeleton rather than through an operator-specific Helmholtz--Hodge decomposition.

\subsubsection{Two-forms and hybrid divergence Poincar\'e inequalities}
\label{subsec:poincare.proxy.divergence}

For $k=2$, the continuous conforming closed space has vector proxy
\[
  \boldsymbol{\mathcal Z}_{\mathrm c,\operatorname{div}}(\Omega)
  \coloneqq
  \boldsymbol H(\operatorname{div};\Omega)
  \cap
  \Ker(\operatorname{div}).
\]
Under the two-form vector proxy, the broken cell polynomial is again denoted by $\boldsymbol v_h$.
Theorem~\ref{thm:poincare.conforming.closed} gives
\begin{equation}
\label{eq:hybrid.divergence.poincare.modulo.closed}
  \inf_{\boldsymbol z\in
    \boldsymbol{\mathcal Z}_{\mathrm c,\operatorname{div}}(\Omega)}
  \|
    \boldsymbol v_h-\boldsymbol z
  \|_{L^2(\Omega)}
  \leq
  C_{\mathrm{conf}}^{2,\ell}
  |\underline{\boldsymbol v}_h|_{\operatorname{div},h}.
\end{equation}

\begin{corollary}[Hybrid divergence Poincar\'e inequality]
\label{cor:hybrid.divergence.poincare}
Assume that
\[
  \left(
    \boldsymbol v_h,
    \boldsymbol z
  \right)_\Omega
  =
  0
  \qquad
  \forall
  \boldsymbol z
  \in
  \boldsymbol{\mathcal Z}_{\mathrm c,\operatorname{div}}(\Omega).
\]
Then
\[
  \|\boldsymbol v_h\|_{L^2(\Omega)}
  \leq
  C_{\mathrm{conf}}^{2,\ell}
  |\underline{\boldsymbol v}_h|_{\operatorname{div},h}.
\]
\end{corollary}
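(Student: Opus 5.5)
The statement is Corollary~\ref{cor:hybrid.divergence.poincare}, and I would obtain it from the modulo-kernel estimate \eqref{eq:hybrid.divergence.poincare.modulo.closed} by the same orthogonality argument used for the gradient and curl cases. The estimate \eqref{eq:hybrid.divergence.poincare.modulo.closed} is simply Theorem~\ref{thm:poincare.conforming.closed} with $n=3$ and $k=2$, read through the vector-proxy dictionary of Subsection~\ref{subsec:proxy.divergence}. Under that dictionary, $\mathcal Z_{\mathrm c}^2(\Omega)$ corresponds to $\boldsymbol{\mathcal Z}_{\mathrm c,\operatorname{div}}(\Omega)$, the broken $L^2$ norm of $2$-forms coincides with the $L^2$ norm of the proxies, and $|\cdot|_{\DIFF,h}$ is $|\cdot|_{\operatorname{div},h}$.

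I would first check that the proxy identifications are isometric. The Hodge star pairing on $2$-forms in $\Real^3$ agrees with the Euclidean inner product of the associated vectors, so the $L^2$ norms match. The broken divergence and the normal-trace stabilisation recover $\|\DIFF_Tv_T\|_T$ and $\mathrm s^{2,\ell}_{\mathcal Q,T}$, so the seminorms coincide, and the constant $C_{\mathrm{conf}}^{2,\ell}$ carries over unchanged.

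Next, $\boldsymbol{\mathcal Z}_{\mathrm c,\operatorname{div}}(\Omega)$ is a linear subspace of $L^2(\Omega)^3$, and the hypothesis says exactly that $\boldsymbol v_h$ is $L^2$-orthogonal to it. The elementary observation stated before the gradient case therefore gives
\[
  \|\boldsymbol v_h\|_{L^2(\Omega)}\leq\inf_{\boldsymbol z\in\boldsymbol{\mathcal Z}_{\mathrm c,\operatorname{div}}(\Omega)}\|\boldsymbol v_h-\boldsymbol z\|_{L^2(\Omega)}.
\]
Combining this with \eqref{eq:hybrid.divergence.poincare.modulo.closed} yields the claim, with the same constant and no additional factor. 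No step here is a genuine obstacle, since all the substance lies in Theorem~\ref{thm:poincare.conforming.closed}.
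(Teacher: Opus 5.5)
Your proposal is correct and follows essentially the paper's own route: the orthogonality hypothesis is exactly $L^2$-orthogonality of $\boldsymbol v_h$ to the subspace $\boldsymbol{\mathcal Z}_{\mathrm c,\operatorname{div}}(\Omega)$. The elementary orthogonality observation then turns the modulo-kernel estimate \eqref{eq:hybrid.divergence.poincare.modulo.closed}, that is, Theorem~\ref{thm:poincare.conforming.closed} with $n=3$, $k=2$ read through the vector proxies, into the stated bound with the same constant $C_{\mathrm{conf}}^{2,\ell}$.
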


The assumed orthogonality is exactly the $L^2$-orthogonality of $\boldsymbol v_h$ to $\boldsymbol{\mathcal Z}_{\mathrm c,\operatorname{div}}(\Omega)$.
The corollary therefore follows directly from \eqref{eq:hybrid.divergence.poincare.modulo.closed} and the observation above.

\section{Conclusions and perspectives}
\label{sec:conclusions}

We have established a uniform stable conforming lifting and a uniform discrete Poincar\'e inequality for Hybrid High-Order differential forms on general polyhedral meshes, at every form degree, in arbitrary space dimension, and on domains with arbitrary topology.
The first controls both the conforming form and its exterior derivative in $L^2$, while the projection of the exterior derivative agrees with the prescribed reconstruction.
The second controls the broken cell polynomial form modulo the continuous conforming kernel of the exterior derivative.
Both estimates are uniform with respect to the mesh size and to every stability-admissible choice of the face spaces, whose essential local requirement is the inclusion of the traces of the polynomial kernel.
Interpolating the lifting also gives the hybrid Poincar\'e inequality of Corollary~\ref{cor:hybrid.poincare}.

The two results are complementary realisations of the same cellular-to-hybrid transfer principle.
For general reconstructed differential data, the missing intermediate-dimensional structure is generated through the functional double complex, transferred to a stable compatible polynomial skeleton, and completed in a conforming finite element space.
For cellwise closed data, the vertical part of this construction is no longer needed. The trace defect is descended to the terminal cellular cochain, corrected by the same Poincar\'e solve, and completed upward through closed local extensions.
In both cases, $C_{\mathrm{coch}}^k$ is the only non-local analytical stability constant, while the remaining factors entering $C_{\mathrm{lift}}^{k,\ell}$ and $C_{\mathrm{conf}}^{k,\ell}$ arise from uniformly controlled local coface-link and finite element operations.

In three dimensions, the conforming-kernel estimate recovers the hybrid Poincar\'e--Wirtinger inequality for the gradient, the second hybrid Weber estimate under the standard gauge, and the corresponding divergence-kernel estimate.
Its weighted curl consequence is the form entering the stability analysis of HHO magnetostatic and div--curl discretisations on domains with general topology.
These consequences show that stability results previously expressed through operator-specific arguments can be organised within a single exterior-calculus mechanism without identifying the reconstructed kernel with the continuous conforming kernel.

The transfer viewpoint also suggests a natural treatment of boundary conditions.
Replacing the absolute cellular cochain Poincar\'e problem by its relative counterpart should lead, through the same local-to-global architecture, to conforming-kernel estimates with vanishing traces.
For zero-forms, this would recover the standard HHO Poincar\'e inequality with homogeneous Dirichlet boundary conditions, whereas in the three-dimensional one-form case it would yield the analogous estimate with homogeneous tangential boundary conditions, corresponding to the setting of the first Weber inequality.
Further directions include mixed boundary conditions, extensions to cell- and face-dependent polynomial degrees, and the transfer of Sobolev-type or other functional inequalities to hybrid spaces.
The relation with discrete distributional differential form complexes also suggests studying analogous constructions for broader classes of non-conforming and skeletal discretisations.


\section*{Acknowledgements}
The author acknowledges the funding of the European Union via the MSCA EffECT, project number 101146324.


\bibliographystyle{unsrturl}
\bibliography{references}

@article{Ern.Zanotti:20:smoother,
  author  = {Ern, Alexandre and Zanotti, Pietro},
  title   = {A quasi-optimal variant of the hybrid high-order method for elliptic partial differential equations with {$H^{-1}$} loads},
  journal = {IMA Journal of Numerical Analysis},
  year    = {2020},
  volume  = {40},
  number  = {4},
  pages   = {2163--2188},
  doi     = {10.1093/imanum/drz057}
}

@article{Dong.Ern:26,
  author  = {Dong, Zhaonan and Ern, Alexandre},
  title   = {$\boldsymbol{H}(\operatorname{curl})$-reconstruction of piecewise polynomial fields with application to {$hp$} a posteriori nonconforming error analysis for {Maxwell}'s equations},
  journal = {SIAM Journal on Numerical Analysis},
  volume  = {64},
  number  = {4},
  pages   = {1390--1413},
  year    = {2026},
  doi     = {10.1137/25M1789433}
}

@misc{Di-Pietro.Droniou:25b,
  author        = {Di Pietro, Daniele A. and Droniou, J{\'e}r{\^o}me},
  title         = {From Finite Elements to Hybrid High-Order methods},
  year          = {2025},
  eprint        = {2503.00425},
  archivePrefix = {arXiv},
  primaryClass  = {math.NA},
  doi           = {10.48550/arXiv.2503.00425}
}

@book{Hatcher:02,
  author    = {Hatcher, Allen},
  title     = {Algebraic Topology},
  publisher = {Cambridge University Press},
  address   = {Cambridge},
  year      = {2002},
  isbn      = {978-0-521-79540-1}
}

@book{Kozlov:08,
  author    = {Kozlov, Dmitry N.},
  title     = {Combinatorial Algebraic Topology},
  series    = {Algorithms and Computation in Mathematics},
  volume    = {21},
  publisher = {Springer},
  address   = {Berlin, Heidelberg},
  year      = {2008},
  isbn      = {978-3-540-71961-8}
}

@article{Arnold.Falk.ea:06,
  author  = {Arnold, Douglas N. and Falk, Richard S. and Winther, Ragnar},
  title   = {Finite Element Exterior Calculus, Homological Techniques, and Applications},
  journal = {Acta Numerica},
  volume  = {15},
  pages   = {1--155},
  year    = {2006},
  doi     = {10.1017/S0962492906210018}
}

@article{Arnold.Falk.ea:09,
  author  = {Arnold, Douglas N. and Falk, Richard S. and Winther, Ragnar},
  title   = {Geometric Decompositions and Local Bases for Spaces of Finite Element Differential Forms},
  journal = {Computer Methods in Applied Mechanics and Engineering},
  volume  = {198},
  number  = {21--26},
  pages   = {1660--1672},
  year    = {2009},
  doi     = {10.1016/j.cma.2008.12.017}
}

@article{Arnold.Falk.ea:10,
  author  = {Arnold, Douglas N. and Falk, Richard S. and Winther, Ragnar},
  title   = {Finite Element Exterior Calculus: From {Hodge} Theory to Numerical Stability},
  journal = {Bulletin of the American Mathematical Society},
  volume  = {47},
  number  = {2},
  pages   = {281--354},
  year    = {2010},
  doi     = {10.1090/S0273-0979-10-01278-4}
}

@book{Arnold:18,
  author    = {Arnold, Douglas N.},
  title     = {Finite Element Exterior Calculus},
  series    = {CBMS-NSF Regional Conference Series in Applied Mathematics},
  volume    = {93},
  publisher = {Society for Industrial and Applied Mathematics},
  address   = {Philadelphia},
  year      = {2018},
  isbn      = {978-1-61197-553-6},
  doi       = {10.1137/1.9781611975543}
}

@article{Berchenko-Kogan:21,
  author        = {Berchenko-Kogan, Yakov},
  title         = {Duality in Finite Element Exterior Calculus and {Hodge} Duality on the Sphere},
  journal       = {Foundations of Computational Mathematics},
  volume        = {21},
  number        = {5},
  pages         = {1153--1180},
  year          = {2021},
  doi           = {10.1007/s10208-020-09478-5},
  eprint        = {1906.06354},
  archivePrefix = {arXiv},
  primaryClass  = {math.NA}
}

@article{Bonaldi.Di-Pietro.ea:24,
  author        = {Bonaldi, Francesco and Di Pietro, Daniele A. and Droniou, J{\'e}r{\^o}me and Hu, Kaibo},
  title         = {An Exterior Calculus Framework for Polytopal Methods},
  journal       = {Journal of the European Mathematical Society},
  year          = {2025},
  note          = {Published online first},
  doi           = {10.4171/JEMS/1602},
  eprint        = {2303.11093},
  archivePrefix = {arXiv},
  primaryClass  = {math.NA}
}

@article{Chave.Di-Pietro.Lemaire:22,
  author  = {Chave, Florent and Di Pietro, Daniele A. and Lemaire, Simon},
  title   = {A Discrete {Weber} Inequality on Three-Dimensional Hybrid Spaces with Application to the {HHO} Approximation of Magnetostatics},
  journal = {Mathematical Models and Methods in Applied Sciences},
  volume  = {32},
  number  = {1},
  pages   = {175--207},
  year    = {2022},
  doi     = {10.1142/S0218202522500051}
}

@article{Christiansen.Licht:20,
  author  = {Christiansen, Snorre H. and Licht, Martin W.},
  title   = {{Poincar{\'e}--Friedrichs} Inequalities of Complexes of Discrete Distributional Differential Forms},
  journal = {BIT Numerical Mathematics},
  volume  = {60},
  number  = {2},
  pages   = {345--371},
  year    = {2020},
  doi     = {10.1007/s10543-019-00784-1}
}

@book{Cicuttin.Ern.Pignet:21,
  author    = {Cicuttin, Matteo and Ern, Alexandre and Pignet, Nicolas},
  title     = {Hybrid High-Order Methods: A Primer with Applications to Solid Mechanics},
  series    = {SpringerBriefs in Mathematics},
  publisher = {Springer},
  address   = {Cham},
  year      = {2021},
  doi       = {10.1007/978-3-030-81477-9}
}

@article{Dalphin.Ducreux.Lemaire.Pitassi:25,
  author  = {Dalphin, J{\'e}r{\'e}my and Ducreux, Jean-Pierre and Lemaire, Simon and Pitassi, Silvano},
  title   = {{Hybrid High-Order} Approximations of Div--Curl Systems on Domains with General Topology},
  journal = {SIAM Journal on Scientific Computing},
  volume  = {47},
  number  = {5},
  pages   = {A2988--A3016},
  year    = {2025},
  doi     = {10.1137/25M1744356}
}

@misc{Di-Pietro.Droniou.Pitassi:25,
  author        = {Di Pietro, Daniele A. and Droniou, J{\'e}r{\^o}me and Pitassi, Silvano},
  title         = {Conforming Lifting and Adjoint Consistency for the {Discrete de Rham} Complex of Differential Forms},
  year          = {2025},
  eprint        = {2509.21449},
  archivePrefix = {arXiv},
  primaryClass  = {math.NA},
  doi           = {10.48550/arXiv.2509.21449}
}

@misc{Di-Pietro.Droniou.ea:25,
  author        = {Di Pietro, Daniele A. and Droniou, J{\'e}r{\^o}me and Hanot, Marien-Lorenzo and Pitassi, Silvano},
  title         = {Uniform {Poincar{\'e}} Inequalities for the Discrete {de Rham} Complex of Differential Forms},
  year          = {2025},
  eprint        = {2501.16116},
  archivePrefix = {arXiv},
  primaryClass  = {math.NA},
  doi           = {10.48550/arXiv.2501.16116}
}

@book{Di-Pietro.Droniou:20,
  author    = {Di Pietro, Daniele A. and Droniou, J{\'e}r{\^o}me},
  title     = {The {Hybrid High-Order} Method for Polytopal Meshes: Design, Analysis, and Applications},
  series    = {Modeling, Simulation and Applications},
  volume    = {19},
  publisher = {Springer},
  address   = {Cham},
  year      = {2020},
  doi       = {10.1007/978-3-030-37203-3}
}

@article{Di-Pietro.Ern.Lemaire:14,
  author  = {Di Pietro, Daniele A. and Ern, Alexandre and Lemaire, Simon},
  title   = {An Arbitrary-Order and Compact-Stencil Discretization of Diffusion on General Meshes Based on Local Reconstruction Operators},
  journal = {Computational Methods in Applied Mathematics},
  volume  = {14},
  number  = {4},
  pages   = {461--472},
  year    = {2014},
  doi     = {10.1515/cmam-2014-0018}
}

@article{Di-Pietro.Ern:15,
  author  = {Di Pietro, Daniele A. and Ern, Alexandre},
  title   = {A Hybrid High-Order Locking-Free Method for Linear Elasticity on General Meshes},
  journal = {Computer Methods in Applied Mechanics and Engineering},
  volume  = {283},
  pages   = {1--21},
  year    = {2015},
  doi     = {10.1016/j.cma.2014.09.009}
}

@article{Di-Pietro.Hanot:24,
  author  = {Di Pietro, Daniele A. and Hanot, Marien-Lorenzo},
  title   = {Uniform {Poincar{\'e}} Inequalities for the Discrete {de Rham} Complex on General Domains},
  journal = {Results in Applied Mathematics},
  volume  = {23},
  pages   = {100496},
  year    = {2024},
  doi     = {10.1016/j.rinam.2024.100496}
}

@article{Falk.Winther:14,
  author  = {Falk, Richard S. and Winther, Ragnar},
  title   = {Local Bounded Cochain Projections},
  journal = {Mathematics of Computation},
  volume  = {83},
  number  = {290},
  pages   = {2631--2656},
  year    = {2014},
  doi     = {10.1090/S0025-5718-2014-02827-5}
}

@article{Falk.Winther:15,
  author  = {Falk, Richard S. and Winther, Ragnar},
  title   = {Double Complexes and Local Cochain Projections},
  journal = {Numerical Methods for Partial Differential Equations},
  volume  = {31},
  number  = {2},
  pages   = {541--551},
  year    = {2015},
  doi     = {10.1002/num.21922}
}

@article{Lemaire.Pitassi:25,
  author  = {Lemaire, Simon and Pitassi, Silvano},
  title   = {Discrete {Weber} Inequalities and Related {Maxwell} Compactness for Hybrid Spaces over Polyhedral Partitions of Domains with General Topology},
  journal = {Foundations of Computational Mathematics},
  volume  = {25},
  number  = {3},
  pages   = {725--763},
  year    = {2025},
  doi     = {10.1007/s10208-024-09648-9}
}

@article{Licht:17,
  author  = {Licht, Martin W.},
  title   = {Complexes of Discrete Distributional Differential Forms and Their Homology Theory},
  journal = {Foundations of Computational Mathematics},
  volume  = {17},
  number  = {4},
  pages   = {1085--1122},
  year    = {2017},
  doi     = {10.1007/s10208-016-9315-y}
}

\end{document}